\documentclass{article}
\usepackage{graphicx} %

\usepackage{amsmath,amssymb,amsthm,amsfonts,latexsym,bbm,xspace,graphicx,float,mathtools,mathdots}
\usepackage{braket,caption,subcaption,ellipsis,xcolor,textcomp}
\usepackage{combelow} %
\usepackage[citecolor=blue]{hyperref}
\usepackage[nameinlink]{cleveref}
\crefname{ineq}{inequality}{inequalities}
\creflabelformat{ineq}{#2{\upshape(#1)}#3}

\usepackage[letterpaper,margin=1in]{geometry}
\usepackage{enumitem} 

\newtheorem{theorem}{Theorem}

\usepackage{chngcntr}
\counterwithin{theorem}{section}

\usepackage{tikz}
\usepackage{pgfplots}
\pgfplotsset{compat=1.9}

\newtheorem{lemma}[theorem]{Lemma}

\newtheorem{proposition}[theorem]{Proposition}
\newtheorem{fact}[theorem]{Fact}

\newtheorem{assumption}[theorem]{Assumption}

\theoremstyle{definition}
\newtheorem{definition}[theorem]{Definition}
\newtheorem{remark}[theorem]{Remark}

\DeclareMathOperator*{\argmax}{arg\,max}

\hyphenation{Naka-shima}

\newcommand{\ignore}[1]{}

\newcommand{\defined}{\coloneqq}
\usepackage{natbib}
\usepackage{autonum} 

\newcommand{\lp}{\left(}
\newcommand{\rp}{\right)}
\newcommand{\lb}{\left[}
\newcommand{\lbr}{\left\{}
\newcommand{\rbr}{\right\}}
\newcommand{\rb}{\right]}
\newcommand{\lv}{\left\lvert}
\newcommand{\rv}{\right \rvert}

\newcommand{\Xsample}{\mathcal{X}\xspace}

\newcommand{\muhat}{\widehat{\mu}}
\newcommand{\mutilde}{\widetilde{\mu}}
\newcommand{\sigmatilde}{\widetilde{\sigma}}

\newcommand{\Phat}{\widehat{P}}

\newcommand{\sigmahat}{\widehat{\sigma}}

\newcommand{\convdist}{\stackrel{d}{\longrightarrow}}
\newcommand{\convas}{\stackrel{a.s.}{\longrightarrow}}

\newcommand{\reals}{\mathbb{R}}
\newcommand{\iid}{\text{i.i.d.}\xspace}

\newcommand{\mc}[1]{\mathcal{#1}}

\newcommand{\wor}{WoR\xspace}

\newcommand{\klinf}{\mathrm{KL}^{+}_{\inf}}
\newcommand{\klinfminus}{\mathrm{KL}^{-}_{\inf}}
\newcommand{\dkl}{d_{\mathrm{KL}}}

\newcommand{\eqas}{\stackrel{a.s.}{=}}
\newcommand{\simiid}{\stackrel{\iid}{\sim}}

\newcommand{\hatKplus}{\widehat{K}^{+}_n}
\newcommand{\hatKminus}{\widehat{K}^{-}_n}

\newcommand{\Kmax}{K^{\max}}

\newcommand{\gammabet}{\gamma_1^{(\mathrm{bet})}}
\newcommand{\widthbet}{w_n^{(\mathrm{bet})}}
\newcommand{\widthfan}{w_n^{(\dag)}}

\newcommand{\CIbet}{C_n^{(\mathrm{bet})}}
\newcommand{\CIfan}{C_n^{\dag}}
\newcommand{\CItilde}{\widetilde{C}_n}
\newcommand{\Deltahat}{\widehat{\Delta}}
\newcommand{\prpieb}{\text{PrPl-EB}\xspace}
\newcommand{\mpeb}{\text{MP-EB}\xspace}
\newcommand{\klinfmulti}{\mathrm{KL}_{\inf}}
\newcommand{\numerator}{\text{numerator}\xspace}
\newcommand{\denominator}{\text{denominator}\xspace}

\newcommand{\rhoM}{\lceil \rho M \rceil}

\title{On the near-optimality of betting confidence sets \\ for  bounded means}
\usepackage{authblk}
\author[1]{Shubhanshu Shekhar}
\author[1, 2]{Aaditya Ramdas} 
\affil[1]{Department of Statistics and Data Science, Carnegie Mellon University}
\affil[2]{Machine Learning Department, Carnegie Mellon University}
\affil[ ]{\texttt{\{shubhan2, aramdas\}@andrew.cmu.edu}}
\date{}
\begin{document}
\maketitle

\begin{abstract}
Constructing nonasymptotic confidence intervals~(CIs) for the mean of a univariate distribution from independent and identically distributed~(\iid) observations is a fundamental task in  statistics. For bounded observations, a classical nonparametric approach proceeds by inverting standard concentration bounds, such as Hoeffding's or Bernstein's inequalities. Recently, an alternative betting-based approach for defining CIs and their time-uniform variants called confidence sequences  (CSs), has been shown to be empirically superior to the classical methods. In this paper, we provide theoretical justification for this improved empirical performance of betting CIs and CSs. 
Our main contributions are as follows: \textbf{(i)} We first compare CIs using the values of their first-order asymptotic widths (scaled by $\sqrt{n}$), and show that  the  betting CI of~\citet{waudby2023estimating}  has a smaller limiting width than  existing empirical Bernstein~(EB)-CIs. \textbf{(ii)} Next, we establish two lower bounds  that characterize the minimum width achievable by any method for constructing CIs/CSs in terms of certain inverse information projections. \textbf{(iii)} Finally, we show that the betting CI and CS match the fundamental limits, modulo an additive logarithmic term and a multiplicative constant. Overall these results imply that  the betting CI~(and CS) admit stronger theoretical guarantees than the existing state-of-the-art EB-CI~(and CS); both in the asymptotic and finite-sample regimes. 
\end{abstract}

\section{Introduction}
\label{sec:introduction}
    This paper studies the fundamental limits of the width of nonasymptotic confidence intervals~(CIs) and confidence sequences (CSs) for the mean $\mu$ of a distribution $P^*$ on $\mc{X}=[0,1]$ from an \iid sample $X_1, X_2, \ldots, X_n$ drawn according to $P^*$.  We define these below, but note first that all the methods analyzed in this paper are fully nonparametric, and make no assumptions whatsoever about knowledge of any aspect of the distribution $P^*$ except that it is bounded on $[0,1]$. Throughout this paper, we use $P^*$ to denote the  distribution generating the observations $(X_t)_{t \geq 1}$, and use $P$ when referring to an arbitrary probability distribution.
    
    Formally, a level-$(1-\alpha)$ CI for $\mu$ is a $\sigma(X_1, \ldots, X_n)$-measurable subset ${C}_n$ of the  domain $\mc{X}$ that satisfies the coverage guarantee $\mathbb{P}\lp \mu \in {C}_n \rp \geq 1-\alpha$. CIs can be constructed for datasets of fixed  (and non-random) size, and cannot be used for tasks that involve processing streams of observations of possibly data-dependent random size. In such cases, a time-uniform variant of CIs, called confidence sequences~(CSs), are the appropriate tool for inference. Given $X_1, X_2, \ldots \simiid P^*$, a level-$(1-\alpha)$ CS for the mean $\mu$ is a collection of sets $\{C_t: t \geq 1\}$ such that $C_t$ is $\sigma(X_1, \ldots, X_t)$ measurable for all $t \geq 1$, and  these sets satisfy the following uniform coverage guarantee: $\mathbb{P}\lp \forall t \in \mathbb{N}: \mu \in C_t \rp \geq 1-\alpha$.

    The size of a CI,  assuming it satisfies the $(1-\alpha)$ coverage guarantee,  is a natural metric for evaluating its quality. Since we are concerned with observations on the unit interval, a good measure of the `size' is the \emph{width} of the CI, which is the length of the smallest interval containing the CI. Formally, the width of ${C}_n$ is 
    \begin{align}
        |{C}_n| = \inf \{ b-a: {C}_n \subset [a,b]\}. 
    \end{align}
    Note that the width of a CI is a possibly random quantity, and most sensible strategies for constructing CIs usually ensure that it converges to $0$ almost surely. In fact, for our setting of bounded univariate observations,  the width typically converges to $0$ at a $\mc{O}\lp 1/\sqrt{n} \rp$ rate. Another desirable property of CIs, in addition to the order optimal convergence rate, is \emph{variance adaptivity}. That is, we want the leading constant (in the width of the CI) to be proportional to the standard deviation of the distribution. This means, for example, that the width of the CI for observations drawn from a $\texttt{Bernoulli}(0.99)$ distribution will be significantly tighter than that for a $\texttt{Bernoulli}(0.5)$ distribution (for the same value of $n$) for variance adaptive CIs. 
    
    A standard approach for constructing non-asymptotic CIs proceeds by inverting finite-sample concentration inequalities. For bounded random variables, the earliest such result was derived by~\citet{hoeffding1963probability}, who constructed a CI with the order-optimal width~(i.e., decaying at $1/\sqrt{n}$ rate). However, the resulting CI, referred to as Hoeffding's CI, is not variance adaptive, as it uses the worst case variance of $1/4$ for random variables supported on the unit interval. This was addressed by CIs based on the inequalities of \citet{bennett1962probability} and~\citet{bernstein1927theory}, but these methods require  knowledge of the variance (or at least a good upper bound), which limits their applicability. This has led to the construction of so-called empirical Bernstein~(EB) inequalities, that manage to replace the true (unknown) variance with their empirical estimates~\citep{audibert2009exploration, maurer2009empirical}. Additionally, time-uniform analogs~(i.e., confidence sequences) of all these CIs have been recently derived~\citep{howard2021time}.

    In a recent discussion paper, \citet{waudby2023estimating} developed a new approach for constructing CIs and CSs for the mean of bounded random variables (both with and without replacement, the latter being a setting we briefly return to at the end of this paper). They consider a family of hypothesis tests, each testing whether the true mean $\mu$ equals $m$, for all values of $m \in [0,1]$. The CI is then defined by inversion (as is standard): it is the set of the values of $m$, for which the corresponding null hypothesis has not yet been rejected. The key novelty is the type of test that is employed, both for the CI and the CS, which involves ``testing by betting''. In short, one converts the hypothesis test into a game, such that if the null is true, no bettor can make money in that game, but if the null is false, a smart bettor can grow their wealth exponentially. We introduce the details of this strategy in~\Cref{subsec:betting-approach}. Using this strategy, the authors constructed two new CIs and CSs: a closed-form ``predictable plug-in'' empirical Bernstein CI/CS~(that they refer to as \prpieb CI/CS), as well as a non-closed-form (but easy to numerically solve) betting CIs/CSs. 
    
    In a thorough empirical evaluation spanning a variety of competing methods derived over the last 60 years, these new CIs and CSs were shown to \emph{significantly} outperform all existing methods. However, while some intuition was provided by the authors, the theoretical results of~\citet{waudby2023estimating} do not explain this large improvement. We were also not able to find any information-theoretic lower bounds on the best possible width achievable by any CI/CS. \emph{The main motivation of our current paper is to close the gap between theory and practice for this basic and well-studied problem}.

    \begin{figure}[t]
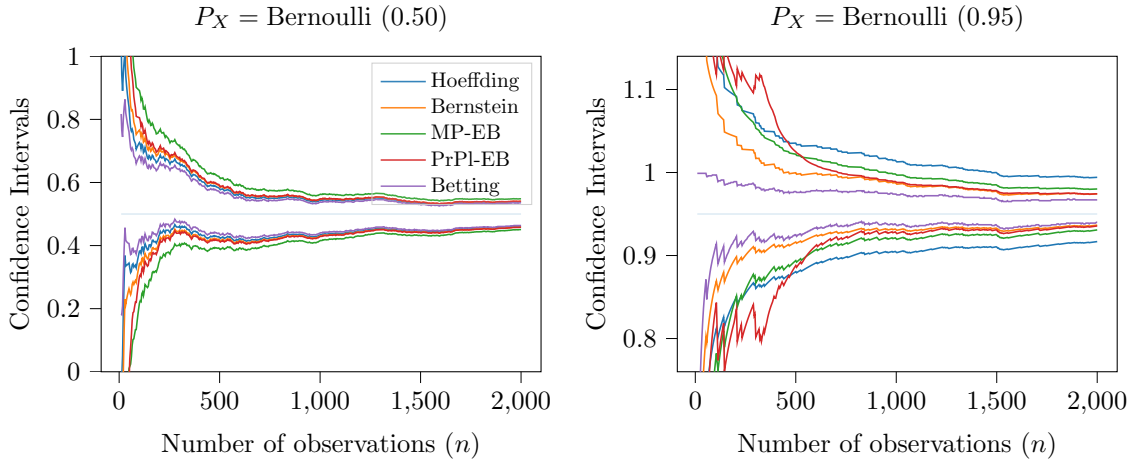

        \def\figwidth{0.45\columnwidth}
        \def\figheight{0.35\columnwidth} %
        \centering
        \hspace*{-1em}
        \input{Figures/Bernoulli_0_5_.tex}
        \input{Figures/Bernoulli_0_95_.tex}
        \caption{Comparison of the widths of different level-$(1-\alpha)$ CIs of the mean of \iid Bernoulli observations with means $0.50$~(left) and $0.95$~(right), with $\alpha=0.005$. In both cases,  the betting CI of~\citet{waudby2023estimating} dominates other methods, while the plot on the right further differentiates the variance-adaptive CIs from Hoeffding CI, that fails to exploit the low-variance observations. In the right plot, the CIs can be clipped to lie in $[0,1]$ --- we have not done that for higher visual clarity.  
        }
        \label{fig:comparison-of-CIs}
    \end{figure}

    \paragraph{Overview of results.} Our results provide theoretical justification of the benefits of the betting CI over competing methods. Despite the CIs being nonasymptotically valid, it turns out to be quite insightful to first compare their asymptotic behavior. To do this, we use the notion of first-order limiting widths of CIs, denoted by $\gamma_1$. In particular, for any level-$(1-\alpha)$ CI, whose width is a possibly random quantity $w_n$, we define its first-order limiting (half-)width as 
        \begin{align}
            \gamma_1 =  \limsup_{n \to \infty} \sqrt{n}\, w_n.  \label{eq:first-order}
        \end{align}
    For all the CIs we consider in this paper, $\gamma_1$ is  either a constant, or bounded above by a constant almost surely. We refer to $\gamma_1$ as the `first-order' limiting (half-)width, because it is solely determined by the dominant $\mc{O}(1/\sqrt{n})$  term in the width. This already leads to a separation amongst the CIs: they do not all have the same first order limiting width. For those that do,  we can also similarly analyze the `higher-order' limiting widths the $\mc{O}(1/n)$,  $\mc{O}(1/n^{3/2})$, etc.\ terms; this is not the goal of the paper but we return to this briefly in~\Cref{subsec:second-order}. Using $\gamma_1$, we can define a total  order over the space of CIs: 
    for any two level-$(1-\alpha)$ CIs, denoted by $\text{CI}^{(a)}$ and $\text{CI}^{(b)}$, we have  
    \begin{align}
        \text{CI}^{(a)} \leq_{\gamma} \text{CI}^{(b)} \quad \Longleftrightarrow \quad 
        \gamma_1^{(a)} \stackrel{a.s.}{\leq} \gamma_1^{(b)}.  \label{eq:CI-ordering}
    \end{align}
    In words, $\text{CI}^{(a)}$ is smaller (hence better) than $\text{CI}^{(b)}$, if the first-order limiting width $\gamma_1^{(a)}$ is smaller than $\gamma_1^{(b)}$. We use a strict inequality $<_\gamma$, and equality $=_{\gamma}$ in the natural way. In our first main result,~\Cref{prop:betting-CI-vs-EB-CI} in~\Cref{sec:limiting-width-betting},  we use this ordering to compare  betting CIs with some popular existing CIs (namely Hoeffding, Bernstein, \mpeb, and \prpieb CIs, formally defined in~\Cref{sec:preliminaries}). More specifically, we show that the betting CI, with a suitable choice of bets, is a strict improvement over Hoeffding and \mpeb CIs, and its limiting width is at least as good as that of the Bernstein CI, without the knowledge of the variance of the distribution~(needed to construct Bernstein CI).  To summarize, this result implies that 
    \begin{align}
        \text{betting CI } \leq_{\gamma} \prpieb \text{ CI}  =_{\gamma} \text{Bernstein CI} <_{\gamma} \mpeb \text{ CI} <_{\gamma}^* \text{ Hoeffding CI}. 
    \end{align}
    \sloppy The asterisk in the last inequality is to indicate that the strict inequality is valid when $\sigma < (1/2) \sqrt{\log(2/\alpha)/\log(4/\alpha)}$. For example, with $\alpha=0.05$, this condition reduces to $\sigma < 0.458$~(recall that $\sigma$ is upper bounded by $0.5$). 

    The previous result establishes the benefits of betting CI in an asymptotic sense, by showing that $\gammabet$ compares favorably with the limiting first-order widths of competing CIs. Our next set of results, presented in~\Cref{sec:nonasymp-CI}, demonstrate the near-optimality of the betting CI in a non-asymptotic regime. 
    
    We first establish a fundamental, method-agnostic, lower bound on the width achievable by any valid level-$(1-\alpha)$  CI in~\Cref{subsec:lower-bound-ci}. This lower bound is in terms of certain inverse information projection terms. Such nonasymptotic width (or, more generally, volume) lower bounds appear to be quite rare in the nonparametric \emph{estimation} (but not testing) literature, and we have not been able to find analogous references for other problem classes. 
    
    We then show that the betting CI width nearly matches the lower bound in~\Cref{subsec:upper-bound-ci}. To elaborate,  for  $P^* \in \mc{P}([0,1])$ and an $m \in [0,1]$, define the information projection $\klinf(P^*, m)$~(resp. $\klinfminus(P^*, m))$ as the smallest value of $\dkl(P^*, P)$  over all distributions $P \in \mc{P}([0,1])$ with mean at least $m$~(resp. at most $m$). Accordingly, the inverse information projection, $\klinf(P^*, \cdot)^{-1}(x)$, is the smallest $m$ such that $\klinf(P^*, m)$ is at least $x$~(the inverse of $\klinfminus$ is defined similarly). The information projection terms $\klinf$ and $\klinfminus$, formally introduced in~\Cref{def:klinf}, have been used in the multi-armed bandits literature to characterize the optimal instance-dependent cumulative regret~\citep{lai1985asymptotically, honda2010asymptotically}. Our results provide an analogous instance-dependent characterization of the width of CIs using their inverse. In particular, if $w_n$ denotes the width of any level-$(1-\alpha)$ CI, and $\widthbet$ denotes the width of the betting CI, then we have the following, for $a(n, \alpha) \defined \log\lp (1-\alpha)^{1-\alpha} \alpha^{2\alpha -1}\rp/n$: 
    \begin{align}
        &w_n \geq  \max \lbr \klinf(P^*, \cdot)^{-1}\lp a(n,\alpha) \rp - \mu, \; \mu -  \klinfminus(P^*, \cdot)^{-1}\lp a(n, \alpha)\rp \rbr,  \\
        \text{and} \quad 
        & \widthbet \leq 2\max \lbr \klinf(P^*, \cdot)^{-1}\lp b(n, \alpha) \rp - \mu, \; \mu - \klinfminus(P^*, \cdot)^{-1}\lp b(n, \alpha) \rp \rbr, \\
        \quad \text{with} \quad & a(n, \alpha) = \frac{ \log \lp (1-\alpha)^{1-\alpha} \alpha^{2\alpha -1} \rp}{n}, \quad \text{and} \quad b(n, \alpha) = \frac{\log(1/\alpha) + \mc{O}(\log n)}{n}. 
    \end{align}
    Since $a(n, \alpha) \approx \log(1/\alpha)/n$ as $\alpha \to 0$, this proves that the  betting CI  nearly matches the fundamental performance limit, but for two differences:  an extra leading multiplicative factor of $2$, and an additional $\mc{O}(\log(n)/n)$ term in the argument of the inverse information projection. The main takeaway form these results is that (i) the optimal instance-dependent width of any CI is precisely characterized by an instance-dependent `complexity parameter' based on the inverse of $\klinf$ and $\klinfminus$, and (ii) the width of the betting CI  also depends on nearly the same complexity term.  To the best of our knowledge, none of the existing CIs admit such upper bounds on their widths. 

    Having analyzed the performance of betting CI, we then establish similar guarantees for the betting CS in~\Cref{sec:nonasymp-CS}. Due to their time-uniform coverage guarantees, it is  not reasonable to order CSs by comparing their widths at some specific value of $n$ (for any typical CS, one can always find another CS that is tighter than it for some $n$, while being looser at some other $n$). To address this, we introduce a notion of \emph{effective width}, that is more suitable for comparing CSs (Definition~\ref{def:effective-width}). More specifically, we define the effective width of a CS after $n$ observations, as the smallest $w>0$ for which the expected number of observations needed for the width of the CS to go below $w$, is at most $n$. 
    Using this definition, we first establish a general lower bound on the effective width of any valid level-$(1-\alpha)$ CS in~\Cref{prop:lower-bound-2} in~\Cref{subsec:lower-bound-cs}. This lower bound also depends on an appropriate inverse information projection term, similar to the corresponding bound for CIs discussed above. We then obtain a nearly matching upper bound~(modulo the same constant multiplicative factor, and an additive logarithmic term in the argument of the inverse information projection) on the effective width of the betting CS in~\Cref{prop:betting-cs-width}.  
    
    Overall, our results provide rigorous justification for the empirically observed supremacy of the betting CI/CSs of~\citet{waudby2023estimating} over previously state-of-the-art methods, and also opens us several interesting directions for future work.

    \paragraph{Outline of the paper.} We present the background information about some popular existing CIs in~\Cref{subsec:existing-CIs}, and then formally describe the betting approach to constructing CIs in~\Cref{subsec:betting-approach}. The next three sections contain the details of the main technical results. In particular, in~\Cref{sec:limiting-width-betting} we show that the betting-CI of \citet{waudby2023estimating} is at least as good as the  best existing EB-CI in terms of the first-order limiting width defined in~\eqref{eq:first-order}. Next, in~\Cref{sec:nonasymp-CI}, we show that the betting CI has nearly-optimal width even in the finite sample setting, by establishing a method-agnostic lower bound, and then showing that the width of the betting CI is close to the lower bound. Finally, in~\Cref{sec:nonasymp-CS}, we show that the betting CS also satisfies a similar notion of near-optimality. We discuss some extensions of our results in~\Cref{sec:discussion}, and conclude the paper with some interesting questions for future work in~\Cref{sec:conclusion}.

\section{Preliminaries} 
\label{sec:preliminaries}
    We begin by describing some popular existing CIs in~\Cref{subsec:existing-CIs}, and then present the betting based approach for CI/CS constructing~\Cref{subsec:betting-approach}. Finally, we end with a discussion on a class of powerful betting strategies used for constructing CI/CS in~\Cref{subsec:betting-strategies}. 

    \subsection{CIs/CSs based on concentration inequalities}
    \label{subsec:existing-CIs}
        A standard approach for constructing CIs is by inverting high probability, non-asymptotic, concentration inequalities for the empirical mean based on \iid observations. Here, we recall three of the most popular CIs constructed using exponential concentration inequalities for bounded observations. 
        \paragraph{Hoeffding CI.} \citet{hoeffding1963probability} derived one of the earliest concentration inequalities for the mean of bounded random variables. The main idea is to use the general Cramer-Chernoff method, with an appropriate exponential upper bound on the moment generating function of bounded random variables. 
        On inverting the resulting concentration inequality, we get the following closed form CI: 
        \begin{align}
            \label{eq:hoeffding-ci}
            C_n^{(H)} = \lb \muhat_n \pm \frac{w_n^{(H)}}{2}\rb \defined \lb \muhat_n - \frac{w_n^{(H)}}{2}, \, \muhat_n + \frac{w_n^{(H)}}{2} \rb, \quad \text{with } \muhat_n = \frac{1}{n} \sum_{i=1}^n X_i, \quad \text{and } w_n^{(H)}= 2\sqrt{\frac{\log(2/\alpha)}{2n}}. 
        \end{align}
        Note that the width of Hoeffding CI converges to zero at an order-optimal $1/\sqrt{n}$ rate. Despite this order-optimality, it can be quite loose for random variables with low variance; in fact, the Hoeffding CI can be interpreted as assuming of a worst case variance of $1/4$.
        
        \paragraph{Bernstein CI.}  The aforementioned weakness can be addressed by constructing a CI based on a variance-aware concentration inequality, such as Bernstein exponential concentration inequality~\citep[Chapter 2]{boucheron2013concentration}. In particular, inverting this concentration inequality results in the following CI: 
        \begin{align}
            \label{eq:bernstein-ci}
            C_n^{(B)} = \lb \muhat_n \pm  \frac{w_n^{(B)}}{2} \rb, \quad \text{where } w_n^{(B)} = 2\sigma \sqrt{ \frac{ 2 \log(2/\alpha)}{n}} + \frac{4\log(2/\alpha)}{3n}. 
        \end{align}
        Since the maximum value of $\sigma$ of a random variable supported on $[0,1]$ is $1/2$,  the dominant $\mc{O}(1/\sqrt{n})$ term reduces to the Hoeffding CI width for this value of $\sigma$. However for problem instances with $\sigma \ll 1/2$, the above CI is a significant improvement over Hoeffding's. One drawback of Bernstein CI is that constructing it requires the additional information of the variance $\sigma$, or at least a good upper bound on it. In our setup, we do not assume that such additional side information is available.
        
        \paragraph{Maurer \& Pontil's Empirical Bernstein~(MP-EB) CI.}  \citet{maurer2009empirical} addressed the above issue, by deriving a fully data-driven empirical Bernstein CI: 
        \begin{align}
            \label{eq:emp-bern-ci-maurer}
            &C_n^{(\mpeb)} = \lb \muhat_n \pm  \frac{w_n^{(\mpeb)}}{2} \rb,\\ 
            \text{where } \quad  &w_n^{(\mpeb)} = 2\sigmahat_n\sqrt{\frac{2 \log(4/\alpha)}{n}} + \frac{14 \log(4/\alpha)}{3(n-1)}, \; \text{and} \; \sigmahat_n^2 = \frac{1}{n-1} \sum_{i=1}^n (X_i - \muhat_n)^2.  
        \end{align}
        The above CI satisfies many qualities of a good CI: it is variance-adaptive, has  order-optimal dependence on $n$, and it is fully data-driven, not requiring any prior information about the variance. However, it can still be significantly improved in theory and practice.  

        \paragraph{Predictable Plugin-EB (\prpieb) CI.} Recently, \citet{waudby2023estimating} obtained an improved empirical Bernstein CI using a martingale technique that avoids a union bound, thus achieving a better dependence on $\alpha$. 
        Define $\muhat_0:=0$, $V_0:=1/4$, and
            \begin{align}
                \psi_E(\lambda) \defined \frac{-\log(1-\lambda) - \lambda}{4}, \quad \muhat_{t} = \frac{1}{t}\sum_{i=1}^{t}X_i,\quad   tV_t =  \frac{1}{4} + \sum_{i=1}^t (X_i - \muhat_{i-1})^2, 
                \quad 
                \lambda_{t,n} = \sqrt{\frac{2 \log(2/\alpha)}{nV_{t-1}}}.
                \label{eq:eb-ci-lambda}
            \end{align}
            Then, the level-$(1-\alpha)$ \prpieb CI after $n$ observations is defined as  
            \begin{align}
                &C_n^{(\prpieb)}  = \lb \mutilde_n \pm \frac{w_n^{(\prpieb)}}{2}\rb, \label{eq:prpieb-def} \\  
                \text{where} \quad  & \mutilde_n \defined \frac{\sum_{t=1}^n \lambda_{t,n}X_i}{\sum_{t=1}^n \lambda_{t,n}}, \quad \text{and} \quad w_{n}^{(\prpieb)} = 2\frac{\log(2/\alpha) + 4\sum_{t=1}^n \psi_{E}(\lambda_{t,n})(X_i - \muhat_{i-1})^2}{\sum_{t=1}^n \lambda_{t,n}}.  \label{eq:prpieb-width}
            \end{align}
        The first-order limiting width, introduced in~\eqref{eq:first-order}, for the three classical CIs introduced above~(Hoeffding, Bernstein, \mpeb) can be easily calculated: 
        \begin{align}
            &\gamma_1^{(H)} = 2\sqrt{\frac{\log(2/\alpha)}{2}}, \quad 
            \gamma_1^{(B)} = 2\sigma \sqrt{2\log(2/\alpha)}, \quad \text{and} \quad 
            \gamma_1^{(\mpeb)} \eqas 2\sigma\sqrt{2\log(4/\alpha)}. %
        \end{align}
        These values succinctly characterize the relative strengths of these three CIs: 
        \begin{itemize}
            \item Hoeffding~CI achieves the order-optimal $1/\sqrt{n}$ dependence, but is not variance adaptive. 
            \item Bernstein~CI obtains a strictly smaller limiting width than Hoeffding CI for all values of $\sigma$ (since $\sigma$ can be at most $1/2$), but requires the additional information about the variance $\sigma^2$.  
            \item \mpeb~CI relaxes the requirement of knowing the variance, at the price of a larger $\alpha$-dependent term, which results from the EB-CI spending some of the $\alpha$ budget on estimating the variance. 
        \end{itemize}
        \citet{waudby2023estimating} showed that the \prpieb~CI has a limiting width equal to~$\gamma_1^{(B)}$, thus closing the gap that exists between \mpeb and Bernstein CI.
        This improvement is also witnessed in practice, with \prpieb being significantly tighter than \mpeb in simulations across a wide range of examples.
        In~\Cref{prop:betting-CI-vs-EB-CI} of this paper, we prove that the limiting width of the betting CI is no larger than $\gamma_1^{(B)}$, suggesting the possibility that the betting CI might admit stronger performance guarantees than existing CIs. This motivates our subsequent investigation of the width of the betting CI~(and CS) in the non-asymptotic regime, in which we characterize the fundamental limits of performance of any CI/CS, and then show that the betting CI/CS nearly matches this limit.

        We end this section by noting that there also exist time-uniform analogs of the above confidence intervals~(i.e., CSs), and while we refer the reader to~\citet{howard2021time, waudby2023estimating} for the details, we will analyze the CS widths as well later in this paper. 

        \subsection{The betting approach for CI construction}
        \label{subsec:betting-approach}
            \citet{waudby2023estimating} proposed a general method for constructing CIs/CSs for the mean of bounded random variables, by inverting the following continuum of hypothesis testing problems: 
            \begin{align}
                H_{0,m}: \mu = m, \quad \text{versus} \quad H_{1,m}: \mu \neq m, \quad \text{for all } m \in [0,1]. 
            \end{align}
            Then the CI~(resp.\ CS) after $n$ observations is defined as those values of $m$ for which $H_{0,m}$ has not yet been rejected by the corresponding hypothesis test~(resp. sequential hypothesis test). For simplicity, we discuss the construction of betting CIs here, and present the details of the betting CS at the beginning of~\Cref{sec:nonasymp-CS}. 
            
            To design each  test required for the betting CI, \citet{waudby2023estimating} build upon the principle of \emph{testing-by-betting}, recently popularized by \citet{shafer2021testing}. This principle states that the evidence against~$H_{0,m}$ can be precisely quantified by the wealth of a fictitious gambler who sequentially bets against the null in a fair (under the null) repeated game of chance, starting with an initial wealth of $1$ and never betting more money than they have. Denoting the wealth after $n$ rounds as $W_n(m)$ (with $W_0(m)=1$), the the fairness of the game under the null implies that $\mathbb{E}[W_n(\mu)] \leq 1$. 
            Markov's inequality then implies that one can reject $H_{0,m}$ at level $\alpha$ if $W_n(m)$ exceeds $1/\alpha$. Inverting these tests defines the CI at time $n$ as 
            \begin{align}
             C_n = \{m \in [0,1]: W_n(m) < 1/\alpha\}, \quad \text{with} \quad W_n(m) \defined \prod_{i=1}^n f_i(X_i, \lambda_i(m), m),  \label{eq:general-betting-CI}
            \end{align}
            \sloppy where $f_i$ is a predictable~(i.e., $\mc{F}_{i-1}$-measurable), nonnegative `payoff' function satisfying $\mathbb{E}[f_i(X, \lambda, \mu) \mid \mc{F}_{i-1}] \leq 1$ for all feasible values of  of $\lambda$, and $\lambda_i(m)$ is the  predictable `bet', denoting the fraction of the wealth, $W_{i-1}(m)$, that is put at stake by the bettor in round $i$. 
            This approach thus  transforms the task of constructing a CI/CS into that of designing wealth processes~(or equivalently, payoff functions and betting strategies) that grow quickly for $H_{0,m}$ with $m \neq \mu$. 

            \begin{remark}
                \label{remark:prpieb-ci-as-betting}
                Note that the Hoeffding and Bernstein CIs, as well as \prpieb, can be written as special cases of the general betting framework introduced above. For example, Hoeffding CI can be defined as the intersection of two level-$(1-\alpha/2)$ betting CIs, $C_n^0$ and $C_n^1$, that use the same bets ($i.e., \lambda_i(m)$, for all $i, m$), but different payoff functions $f_i^0$ and $f_i^1$ for $i \in [n]$, defined as 
                \begin{align}
                    f_i^j(x, \lambda, m) = \exp ( (-1)^j\lambda(x-m) - \lambda^2/8), \; \text{for } j \in \{0, 1\}, \quad \text{and} \quad \lambda_i(m) = \sqrt{\frac{8 \log(2/\alpha)}{n}}, 
                \end{align}
                for all $t \in [n]$ and $m \in [0,1]$. 
                Similarly, we can recover \prpieb CI using 
                \begin{align}
                    f_i^j(x, \lambda, m) = \exp \lp (-1)^j \lambda (x-m) - \psi_E(\lambda)(x- \muhat_{i-1})^2\rp, \quad \text{and} \quad \lambda_i(m) = \lambda_{i,n} \defined \sqrt{\frac{2 \log(2/\alpha)}{n V_{i-1}}},  
                \end{align}
                for all $m \in [0,1]$. 
                The bets $\{\lambda_i(m): i \in [n], \; m \in [0,1]\}$ in both examples are optimized for a fixed value of $n$, and hence we refer to them as CIs. We briefly discuss the constructing of betting CSs in~\Cref{def:betting-CS-def}, at the beginning of~\Cref{sec:nonasymp-CS}.  
            \end{remark}

            While the \prpieb CI has the benefit of a closed form expression, it does not utilize the full power of the general framework, as it can be interpreted as playing a sub-fair betting game at the (unknown) true mean $\mu$. Mathematically, this means that the wealth process $\{W_n(\mu): n \geq 1\}$ is a nonnegative super-martingale. \citet{waudby2023estimating} also developed an alternative, and more powerful, approach that relies on nonnegative martingales, that results in a less conservative CI in practice. However, the drawback of the resulting CI that we refer to as betting CI is that the intervals do not have a closed-form expression, and have to be obtained numerically. We recall this construction next. 
            \begin{definition}[Betting CI]
            \label{def:betting-CI-def}
                Given $X_1, X_2, \ldots, X_n$ drawn \iid from a distribution $P^*$ supported on $[0,1]$ with mean $\mu$, consider an instance of the general CI of~\eqref{eq:general-betting-CI}, with 
                \begin{align}
                    f_i(x, \lambda, m) = 1 + \lambda(x-m), \quad 
                    \text{and} \quad \lambda_{t}(m) \in \lb \frac{-1}{1-m}, \frac{1}{m} \rb, \quad \text{for all} \;m \in [0,1]. 
                \end{align}
                Then, we define the betting CI after $n$ observations as $C_n^{(bet)} = \{m \in [0, 1]: W_n(m) < 1/\alpha\}$. 
                Recall that for all $m \in [0,1]$ and $t \in [n]$,  the bet $\lambda_t(m)$ is $\mc{F}_{t-1}$ measurable, but can use knowledge of the horizon $n$. 
            \end{definition}
            The key component influencing the practical and theoretical performance of the betting CI is the strategy used for choosing the bets or the betting fractions,  $\{\lambda_t: t\geq 1, \; m \in[0,1]\}$.  \citet{waudby2023estimating} proposed and empirically compared various betting strategies that aim to optimize, either exactly or approximately, the growth rate of the wealth process for $m \neq \mu$. We formally describe this optimality criterion, and  recall a powerful betting strategy next. 
            
    \subsection{Log-optimal betting and regret}
    \label{subsec:betting-strategies}
        The width of the CI (or, as we return to later, CS) constructed by the betting method depends strongly on the choice of the betting strategy. Good betting strategies ensure that the wealth for $m \neq \mu$ grows rapidly, which in turn leads to CIs or CSs whose width shrinks fast. 
        We begin by introducing the $\log$-optimal betting strategy~\citep{kelly1956new} that selects $\lambda_t(m) = \lambda^*(m)$ for all $t \geq 1$,  and $m \in [0,1]$, where $\lambda^*(m)$ is defined as 
        \begin{align}
            \lambda^*(m) \in \argmax_{\lambda \in \lb - \frac{1}{1-m}, \frac{1}{m}\rb } \; \mathbb{E}_{X \sim P^*} \lb \log \lp 1 + \lambda(X-m) \rp \rb.  \label{eq:log-optimal}
        \end{align}
        The above constraints on $\lambda$ ensure that the quantity inside the logarithm is nonnegative.
        Clearly this strategy cannot be employed in practice, as the distribution $P^*$ is unknown. Instead, practical betting CIs/CSs proceed by choosing a sequence of predictable bets, $\{\lambda_t(m): t \geq 1, m \in [0,1]\}$, which incrementally approximate the $\log$-optimal bets in a data-driven manner. The quality of a betting strategy can be measured by its  regret; that is, the cumulative suboptimality with respect to the optimal betting strategy:
        \begin{align}
            \mc{R}_n(m) =  \sum_{t=1}^n \log\lp 1 + \lambda^*(m)(X_t-m) \rp - \sum_{t=1}^n \log\lp 1 + \lambda_t(m)(X_t-m) \rp; \quad m \in [0,1].  \label{def:regret-1}
        \end{align}
        While the term $\mc{R}_n(m)$ is a random quantity (as a function of $X_1^n$), there exist strategies~(called \emph{no-regret} strategies) that can guarantee uniformly decaying average regret over all possible realizations of $X_1^n$. 
        Formally, we say a betting strategy is \emph{no-regret} if for every $n \geq 1$ and $m \in [0,1]$, there exists a deterministic sequence, $\{r_{n}(m): n \geq 1\}$ with $r_n(m)/n \to 0$ for all $m \in [0,1]$, satisfying
            \begin{align}
                \mc{R}_n(m) \leq r_n(m) \quad \text{for all } n \in \mathbb{N}. 
            \end{align}
        Many strategies, such as the mixture method or the online Newton step strategy~\citep{hazan2007logarithmic, cutkosky2018black}, admit a logarithmic regret bound for all $n$ and $m$. That is, for these strategies, we have $r_n(m) = c_m \log(n)$ for some constant $c_m$ depending on $m$. For our nonasymptotic results in~\Cref{sec:nonasymp-CI} and~\Cref{sec:nonasymp-CS}, we will consider an instance of the general betting CS of~\citet{waudby2023estimating} in which the bets are chosen via the following version of the mixture method~\citep[\S~4.3]{hazan2016introduction}. 
        \begin{definition}[Mixture method] 
            \label{def:mixture-method}  
            This betting strategy sets $\lambda_1(m)=0$, for all $m \in [0,1]$, and sets $\lambda_t(m)$ to be the weighted average of all possible $\lambda$ values, with the weight assigned to a candidate value $\lambda$ being proportional to the hypothetical wealth~(denoted by $W_{t-1}^{\lambda}(m)$) if the bettor chose $\lambda_i(m) = \lambda$ for all $1 \leq i\leq t-1$. More formally, we have 
            \begin{align}
                \lambda_t(m) = \frac{\int_{-1/(1-m)}^{1/m} \lambda\, W_{t-1}^{\lambda}(m) d\lambda }{\int_{-1/(1-m)}^{1/m} W_{t-1}^{\lambda}(m) d\lambda}, \quad \text{with} \quad  
                W_{t-1}^{\lambda}(m) \defined \prod_{i=1}^{t-1} \lp 1 + \lambda(X_i-m) \rp. 
            \end{align}
            This betting scheme is known to have a  regret bound $\mc{R}_n(m) \leq \log(n) + 2$ for all $m \in [0,1]$, and this can be further upper bounded by $c \log(n)$, for a constant $c\approx 1.8$ for all values of $n \geq 13$.  
        \end{definition}
        Using (discrete or continuous) mixtures to design appropriate (super-)martingales is a standard technique in sequential inference~\citep{robbins1970statistical, howard2021time, waudby2023estimating}. Indeed, in the present context of betting CSs, \citet{waudby2023estimating} proposed a parallelizable discrete mixture strategy, while \citet{orabona2021tight} developed a method of employing continuous mixtures for CS construction by leveraging their regret bounds. More specifically, \citet{orabona2021tight} proposed the following CS using the regret~(denoted by $r_n$) of  Cover's universal portfolio~\citep{cover1991universal, cover1996universal} scheme:
        \begin{align}
            &C_n = \left\{m \in [0, 1]: \log \widetilde{W}_n(m)< \log(1/\alpha) + r_n \right\}, \label{eq:orabona-jun} \\
            \text{where} \quad &\log \widetilde{W}_n(m) \defined \sup_{\lambda \in \lb \frac{-1}{1-m}, \frac{1}{m} \rb} \sum_{t=1}^n \log( 1+ \lambda(X_t - m)).  
        \end{align}
        Our nonasymptotic results about betting CIs and CSs stated in~\Cref{prop:betting-ci-width} and~\Cref{prop:betting-cs-width} respectively are also applicable to the above CS with some minor modifications.

\section{Limiting width of the betting CI} 
\label{sec:limiting-width-betting}
    We now present the first main result of this paper, which says that for the same choice of bets, the limiting width of the betting CI is upper bounded by that of the \prpieb CI. The  limiting width of \prpieb CI was shown by~\citet{waudby2023estimating} to be equal to that of the Bernstein CI~(we recall this in~\Cref{fact:eb-ci}). Thus our result shows that the betting CI is also at least as good as as the Bernstein CI in terms of the order induced by the first-order limiting width. 
    \begin{fact}
        \label{fact:eb-ci} 
        The  \prpieb CI of~\eqref{eq:prpieb-def} is a valid level-$(1-\alpha)$ CI for the mean $\mu$ of $P^*$, and it has the following first-order limiting width: 
        \begin{align}
            \gamma_1^{(\prpieb)} \eqas 2\sigma \sqrt{2 \log(2/\alpha)}. 
        \end{align}
    \end{fact}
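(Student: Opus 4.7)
The plan is to separately verify the coverage property and then derive the limiting first-order width, both via the interpretation of the \prpieb CI as an inverted test-by-betting construction in the spirit of \Cref{remark:prpieb-ci-as-betting}.

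For validity, I would write $C_n^{(\prpieb)} = C_n^0 \cap C_n^1$, where $C_n^j = \{m \in [0,1] : W_n^j(m) < 2/\alpha\}$ and, up to the normalisation of $\psi_E$, $W_n^j(m) = \prod_{t=1}^n \exp\bigl((-1)^j \lambda_{t,n}(X_t - m) - c\,\psi_E(\lambda_{t,n})(X_t - \muhat_{t-1})^2\bigr)$ for the appropriate constant $c$. The key analytic input is the Fan--Grama--Liu-type inequality $e^{\lambda y} \leq 1 + \lambda y + \bigl(-\log(1-\lambda) - \lambda\bigr)y^2$, valid for $\lambda \in [0,1)$ and $y \leq 1$. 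Applying it with $y = \pm(X_t - \mu)$ and using the $\mc{F}_{t-1}$-predictability of $\lambda_{t,n}$ and $\muhat_{t-1}$ shows that $W_n^j(\mu)$ is a nonnegative supermartingale started at $1$. Markov's inequality gives $\mathbb{P}(W_n^j(\mu) \geq 2/\alpha) \leq \alpha/2$, and a union bound over $j \in \{0,1\}$ delivers the $(1-\alpha)$-coverage. The closed-form expressions for $\mutilde_n$ and $w_n^{(\prpieb)}$ in~\eqref{eq:prpieb-width} then follow by solving the resulting linear-in-$m$ condition $\log W_n^j(m) < \log(2/\alpha)$ for each $j$ and intersecting.

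For the limiting width, the plan is to compute the almost-sure limit of $\sqrt{n}\, w_n^{(\prpieb)}/2$ by analysing the numerator and denominator of~\eqref{eq:prpieb-width} separately. The strong law, applied both to $\muhat_{t-1} \to \mu$ a.s.\ and to the empirical variance in the definition $tV_t = 1/4 + \sum_{i \leq t}(X_i - \muhat_{i-1})^2$, yields $V_{t-1} \to \sigma^2$ a.s. Since $\lambda_{t,n} = \sqrt{2\log(2/\alpha)/(nV_{t-1})}$, Cesaro's lemma then gives $\frac{1}{\sqrt{n}}\sum_{t=1}^n \lambda_{t,n} \to \sqrt{2\log(2/\alpha)}/\sigma$ a.s. For the numerator, I would Taylor-expand $\psi_E(\lambda) = \lambda^2/8 + O(\lambda^3)$ near zero, so that $4\psi_E(\lambda_{t,n}) = \log(2/\alpha)/(nV_{t-1}) + O(n^{-3/2})$; summing and invoking both $V_{t-1} \to \sigma^2$ and the SLLN $\frac{1}{n}\sum_{t=1}^n (X_t - \muhat_{t-1})^2/V_{t-1} \to 1$ yields $4\sum_{t=1}^n \psi_E(\lambda_{t,n})(X_t - \muhat_{t-1})^2 \to \log(2/\alpha)$ a.s. The full numerator therefore converges to $2\log(2/\alpha)$ a.s., and dividing the two limits gives $\sqrt{n}\, w_n^{(\prpieb)} \to 2\sigma\sqrt{2\log(2/\alpha)}$ almost surely.

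The main obstacle I anticipate is controlling the Taylor remainder uniformly across $t$: one has to argue $\sum_{t=1}^n O(\lambda_{t,n}^3)(X_t - \muhat_{t-1})^2 = o(1)$ a.s., which requires $V_{t-1}$ to remain bounded away from $0$; this is precisely what the $1/4$ initialisation of $tV_t$ guarantees. A related technicality is the transfer between the data-driven centering $\muhat_{t-1}$ and the true mean $\mu$ inside the ratio $(X_t - \muhat_{t-1})^2/V_{t-1}$, which can be handled by expanding $(X_t - \muhat_{t-1})^2 = (X_t - \mu)^2 + 2(X_t - \mu)(\mu - \muhat_{t-1}) + (\mu - \muhat_{t-1})^2$ and observing that the cross and quadratic correction terms contribute $o(1)$ to the Cesaro average, by a martingale-differences strong law and the a.s.\ convergence $\muhat_{t-1} \to \mu$.
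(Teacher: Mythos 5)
This is stated as a \emph{Fact} in the paper — the result is imported from \citet{waudby2023estimating}, and the paper itself gives no proof beyond the ingredients collected in \Cref{fact:limiting-computations}. Your limiting-width argument is exactly that calculation: the denominator $\tfrac{1}{\sqrt n}\sum\lambda_{t,n}\to\sqrt{2\log(2/\alpha)}/\sigma$ via $V_{t-1}\to\sigma^2$ and Cesaro, and the numerator $\to 2\log(2/\alpha)$ via the Taylor expansion $4\psi_E(\lambda)=\lambda^2/2+O(\lambda^3)$; dividing and simplifying gives $2\sigma\sqrt{2\log(2/\alpha)}$. That half of your proposal is correct and matches the cited source. (Incidentally, \Cref{fact:limiting-computations} as printed contains a typo — the second limit should be $\log(2/\alpha)$, not $\sqrt{\log(2/\alpha)}$; your value is the right one.)

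There is, however, a genuine gap in your validity argument, and it is precisely where the ``empirical'' in empirical Bernstein earns its name. You propose to apply the Fan--Grama--Liu inequality with $y=\pm(X_t-\mu)$. Doing so would deliver a supermartingale whose variance proxy is $(X_t-\mu)^2$, not $(X_t-\muhat_{t-1})^2$, and hence a CI whose width depends on the unknown $\mu$ rather than on the observable $\muhat_{t-1}$. To obtain the actual \prpieb form you must apply Fan's inequality with the \emph{predictable} center, $y=X_t-\muhat_{t-1}$ (which is still $\ge -1$ since both lie in $[0,1]$), obtaining
\[
\exp\!\bigl(\lambda(X_t-\mu)-4\psi_E(\lambda)(X_t-\muhat_{t-1})^2\bigr)
\;\le\; \bigl(1+\lambda(X_t-\muhat_{t-1})\bigr)\,e^{\lambda(\muhat_{t-1}-\mu)},
\]
and then take conditional expectation and invoke the scalar fact $(1-z)e^{z}\le 1$ for all $z\in\mathbb{R}$ (with $z=\lambda(\muhat_{t-1}-\mu)$) to conclude the supermartingale property. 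Without that second step the offset $\lambda(\muhat_{t-1}-\mu)$ is uncontrolled and the argument does not close. This is exactly the lemma from \citet{fan2015exponential} that \citet{waudby2023estimating} lean on; the rest of your sketch (Markov, union bound over $j\in\{0,1\}$, solve the linear-in-$m$ condition) is fine once this is fixed.
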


    \citet{waudby2023estimating} proved that for certain betting strategies, the width of the betting CI converges to $0$ at the order-optimal $1/\sqrt{n}$ rate. However, they did not establish whether or not the width of the betting CI is variance adaptive, and how it compares with their \prpieb CI. Nevertheless, through extensive empirical evaluation, they showed that the betting CI is variance-adaptive, and is often much tighter than all the competitors in practice. 
    Our next result takes the first step to address this theory-practice gap, by showing that the limiting  width of the betting CI is upper bounded by that of the \prpieb CI~(and hence, also the usual Bernstein CI). This establishes that the width of the betting CI is variance-adaptive, and furthermore, it is also asymptotically tighter than (or at least, no worse than) the improved \prpieb CI. This result also  motivates our further exploration of the non-asymptotic behavior of betting CIs/CSs  in the subsequent sections of this paper. 
    
    The betting CI we analyze in this section is constructed  with the same bets used to define the \prpieb CI. More specifically, we have the following, with $(\lambda_{t,n})$ as defined in~\eqref{eq:eb-ci-lambda}: 
        \begin{align}
            &\CIbet = \{m \in [0,1]: W_n(m) < 1/\alpha\}, \\ \text{where} \quad  \label{eq:betting-ci-with-prpl-bets}
            &W_n(m) =  \frac{1}{2} \lp W_n^+(m) + W_n^-(m) \rp, \quad 
            \text{and} \quad 
             W_n^{\pm}(m) = \prod_{t=1}^n \lp 1 \pm \lambda_{t,n}(X_t - m) \rp. 
       \end{align}
    We now state the main result of this section. 
    \begin{theorem}
        \label{prop:betting-CI-vs-EB-CI}
         Suppose the width  of the betting CI defined in~\eqref{eq:betting-ci-with-prpl-bets} after $n$ observations is denoted by $|\CIbet| = w_n^{(bet)}$. Then, we have 
        \begin{align}
            \limsup_{n \to \infty} \sqrt{n} \times w_n^{(bet)} 
            \;\stackrel{a.s.}{\leq}\;   \lim_{n \to \infty} \sqrt{n} \times w_n^{(\prpieb)}  \;\eqas\; 2\sigma \sqrt{2 \log(2/\alpha)}. 
        \end{align}
        In other words, with the same choice of bets as used by the \prpieb~CI defined in~\eqref{eq:prpieb-def}, the limiting width achieved by the betting CI~(\Cref{def:betting-CI-def}) is always no worse than the \prpieb~CI. 
    \end{theorem}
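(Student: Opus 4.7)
The plan is to bound the endpoints of the betting confidence interval by exploiting the elementary inequality $W_n(m) \geq \tfrac{1}{2}\max(W_n^+(m), W_n^-(m))$, which yields the set inclusion $\CIbet \subseteq D_n^+ \cap D_n^-$ with $D_n^{\pm} \defined \{m \in [0,1] : W_n^{\pm}(m) < 2/\alpha\}$. Since each factor $1 - \lambda_{t,n}(X_t - m)$ in $W_n^-(m)$ is strictly increasing in $m$ (and remains positive under the standard truncation of the \prpieb bets), the upper endpoint of $\CIbet$ is no larger than the unique root of $W_n^-(m) = 2/\alpha$ exceeding $\hat\mu_n$, and symmetrically for the lower endpoint via $D_n^+$. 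The proof thereby reduces to a one-sided asymptotic analysis of $\log W_n^{\pm}(m)$.

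Next, I would derive a quadratic expansion of $\log W_n^{\pm}(m)$ at the scale $m = \hat\mu_n + v/\sqrt{n}$ with $v$ in a bounded interval. Writing $y_t \defined \lambda_{t,n}(X_t - m) = O(n^{-1/2})$ and using $\log(1 \pm y) = \pm y - y^2/2 + O(|y|^3)$ gives a Taylor residual of order $O(\sum_t \lambda_{t,n}^3) = O(n^{-1/2}) = o(1)$. The quadratic sum satisfies $\sum_t \lambda_{t,n}^2 (X_t - m)^2 \to 2\log(2/\alpha)$ almost surely, by the strong law applied to $V_{t-1} \to \sigma^2$ and to the empirical variance. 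For the linear sum, I would decompose $\sum_t \lambda_{t,n}(X_t - m) = \sum_t \lambda_{t,n}(X_t - \hat\mu_n) - (m - \hat\mu_n)\sum_t \lambda_{t,n}$, and further split the first piece as $\sqrt{2\log(2/\alpha)/n}\,\sum_t Z_t (X_t - \hat\mu_n)$, where $Z_t \defined V_{t-1}^{-1/2} - \sigma^{-1}$ is $\mc{F}_{t-1}$-measurable with $\mathbb{E}[Z_t^2] = O(1/t)$. A conditional second-moment martingale bound then yields $\sum_t Z_t(X_t - \hat\mu_n) = O_p(\sqrt{\log n})$, which, multiplied by the $1/\sqrt{n}$ prefactor, is $o_p(1)$. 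Combining these estimates gives
\begin{equation*}
\log W_n^{\pm}(\hat\mu_n + v/\sqrt{n}) \;=\; \mp\, v\sqrt{2\log(2/\alpha)}/\sigma \;-\; \log(2/\alpha) \;+\; o(1) \quad \text{a.s.}
\end{equation*}
Solving $\log W_n^{-}(m) = \log(2/\alpha)$ at this scale gives $v \to \sigma\sqrt{2\log(2/\alpha)}$, and the symmetric analysis of $D_n^+$ bounds the half-width of $\CIbet$ by $\sigma\sqrt{2\log(2/\alpha)/n} + o(n^{-1/2})$, matching (and in fact attaining equality with) the limiting width in \Cref{fact:eb-ci}.

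The hard part will be the control of the linear sum $\sum_t \lambda_{t,n}(X_t - \hat\mu_n)$: a direct telescoping bound $\sum_t |\lambda_{t,n} - \lambda_n^\star| = O(\sqrt{n})$ leaves a stray $O(1)$ additive term in the expansion, which would translate into a non-vanishing additive perturbation of the asymptotic half-width and thereby prevent matching $\gamma_1^{(\prpieb)}$. To achieve the sharp $o(1)$ rate one must exploit that $Z_t$ is predictable and combine this with the $t^{-1/2}$ concentration rate of $V_{t-1}$ via a conditional second-moment (martingale) argument. A secondary technical point is verifying that the expansion holds uniformly for $v$ in a bounded neighborhood of the candidate endpoint; this follows because the dominant linear and quadratic sums are polynomials in $v$ of degree at most $2$ whose coefficients are controlled by the same almost-sure estimates, so a standard inverse-function argument locates the root of $W_n^-(m) = 2/\alpha$ at the predicted location.
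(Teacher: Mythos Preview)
Your approach is genuinely different from the paper's and is workable in spirit, but there is a real gap in the mode of convergence. The theorem requires an \emph{almost-sure} bound on $\limsup_n \sqrt{n}\,w_n^{(\mathrm{bet})}$, yet your control of the linear sum is stated only in probability: you write that a ``conditional second-moment martingale bound'' yields $\sum_t Z_t(X_t-\hat\mu_n)=O_p(\sqrt{\log n})$, and then conclude the displayed expansion ``a.s.''. These do not match. (A related detail: $\sum_t Z_t(X_t-\hat\mu_n)$ is not itself a martingale, since $\hat\mu_n$ uses all $n$ observations; you would first need to split off $\hat\mu_n-\mu$ and then apply a martingale bound to $\sum_t Z_t(X_t-\mu)$.) To get the a.s.\ conclusion you would need a Borel--Cantelli or LIL-type argument on top of the second-moment bound---this can be done, since the predictable quadratic variation grows only logarithmically, but it is not in the proposal.

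For comparison, the paper's route avoids this issue entirely. Instead of Taylor-expanding $\log(1\pm y)$ and centering at $\hat\mu_n$, the paper applies Fan's one-sided inequality $\log(1+\lambda x)\geq \lambda x - 4\psi_E(\lambda)x^2$ to get
\[
\log W_n^{\pm}(m)\;\geq\;\pm\sum_t \lambda_{t,n}(X_t-m)\;-\;4\sum_t \psi_E(\lambda_{t,n})(X_t-m)^2,
\]
and then shows that for $m$ in a preliminary $O(n^{-1/2})$-width band (obtained via a crude first pass with $(X_t-m)^2\leq 1$), the quadratic term satisfies $\sum_t \psi_E(\lambda_{t,n})(X_t-m)^2 \leq \sum_t \psi_E(\lambda_{t,n})(X_t-\hat\mu_{t-1})^2 + o(1)$ a.s. The right-hand side is \emph{exactly} the numerator of $w_n^{(\prpieb)}$, so the comparison $w_n^{(\mathrm{bet})}\leq w_n^{(\prpieb)}+o(n^{-1/2})$ drops out directly without ever isolating and controlling $\sum_t \lambda_{t,n}(X_t-\hat\mu_n)$ on its own. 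This is what buys the paper the a.s.\ conclusion cleanly: the ``hard'' linear term you identify never appears, because both the betting bound and the \prpieb width share the same linear piece $\sum_t \lambda_{t,n}(X_t-m)$ and the comparison is made at the level of the quadratic terms only.
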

    \noindent \emph{Proof outline of~\Cref{prop:betting-CI-vs-EB-CI}.}  
    The first step in proving this result is to show that $C_n^{(\mathrm{bet})}$,  the betting CI, is always contained inside a larger CI denoted by $C_n^{\dagger}$, and furthermore, this larger CI has a finite limiting width $c_1<\infty$. Using the inclusion $C_n^{(\mathrm{bet})} \subset C_n^{\dagger}$, we can rewrite the betting CI as 
    \begin{align}
        C_n^{(\mathrm{bet})} = \{m \in C_n^{\dagger}: W_n(m) < 1/\alpha \}. 
    \end{align}
    This simple modification, replacing ``$m \in [0,1]$'' to ``$m \in C_n^{\dagger}$'' plays an important role in our analysis. This is because for  large values of $n$, we know that $|C_n^{\dagger}| \leq 2c_1/\sqrt{n}$, which allows us to restrict our attention to values of  $m$ in a narrow band, instead of the whole unit interval. We exploit this to obtain the required approximation and concentration results, to bound width of betting CI with that of \prpieb CI. In particular, we show that $w_n^{(bet)} \leq w_n^{(\prpieb)} + o(1/\sqrt{n})$, which leads to the required conclusion about the limiting widths. The details of the argument are presented in~\Cref{proof:betting-CI-vs-EB-CI}. 
    \hfill \qedsymbol
    
    \Cref{prop:betting-CI-vs-EB-CI} characterizes the performance of a betting CI  in the asymptotic regime. In the next section, we analyze its behavior in the non-asymptotic regime.

\section{Non-asymptotic analysis of betting CI}
\label{sec:nonasymp-CI}
    To benchmark the performance of the betting CI in the non-asymptotic regime, we first   establish a fundamental, method-agnostic, lower bound on the width of any CI, in terms of certain inverse information projections~(\Cref{def:klinf}) in~\Cref{prop:lower-bound-1}.
    This result gives us a precise, problem-dependent measure of complexity~(or hardness) of estimating the mean $\mu$ of a distribution $P^*$. 
    Then, in~\Cref{prop:betting-ci-width}, we show that the width of the betting CI nearly matches that lower bound, modulo a leading multiplicative constant and a logarithmic term in the argument of the information projection. 

    \subsection{Lower bound on any CI width}
    \label{subsec:lower-bound-ci}
        Let $\mc{C}$ denote a method of constructing CIs for the mean of a univariate distribution; that is, given observations $X_1, \ldots, X_n$ drawn $\iid$ from a distribution $P^* \in \mc{P}_0 \subset \mc{P}(\mathbb{R})$, and a confidence level $\alpha \in (0,1)$, the method $\mc{C}$ gives us a CI, denoted by $C_n = \mc{C}(X^n, \alpha)$, which satisfies $\mathbb{P}\lp \mu \in C_n \rp \geq 1-\alpha$. Here $\mc{P}_0$ denotes some pre-specified class of probability distributions, such as all distributions supported on $[0,1]$. Furthermore, assume that the method $\mc{C}$ returns CIs whose width can be upper bounded by a deterministic quantity~(as a function of $n$, $\alpha$, and the distribution $P^*$). Formally, assume there exists a function $w(n,P^*,\alpha)$ such that 
        \begin{align}
            |\mc{C}(X^n, \alpha)| = |C_n| \defined \inf \{b-a: C_n \subset [a, b]\} \stackrel{a.s.}{\leq}  w(n, P^*, \alpha). 
        \end{align}
        For some CIs, such as Hoeffding and Bernstein CIs, the actual widths are deterministic 
        \begin{align}
            w_H(n, P^*, \alpha) = 2\sqrt{ \frac{\log(2/\alpha)}{2 n}}, \quad \text{and} \quad 
            w_B(n, P^*, \alpha) = 2\sigma \sqrt{\frac{2 \log(2/\alpha)}{n}} + \frac{4 \log(1/\alpha)}{3n}, 
        \end{align}
        for distributions $P^* \in \mc{P}_0 =  \mc{P}\lp [0,1]\rp$. 

        Our main result of this section characterizes the achievable limit~(i.e., minimum width) of CIs in terms of the minimum KL divergence~(or relative entropy) between the distribution $P^*$, and a class of distributions whose means are either larger or smaller than some level $m$. We refer to these terms as information projections, and recall their formal definition below. 
        \begin{definition}[Information projection]
            \label{def:klinf} Let $P^*$ denote any distribution with mean $\mu$, lying in a class of distributions, $\mc{P}_0$, supported on $\mathbb{R}$. For any $m \in \mathbb{R}$, let  $\mc{P}_{0,m}^+$~(resp. $\mc{P}_{0,m}^-$) denote a subset of $\mc{P}_0$, containing distributions whose means are at least~(resp. at most) $m \in \reals$; that is,  $\mc{P}^+_{0,m} = \{Q \in \mc{P}_0: \mu_Q \geq m\}$, and $\mc{P}_{0,m}^- = \{Q \in \mc{P}_0: \mu_Q \leq m\}$. Using these, we define the following two information projection terms: 
            \begin{align}
                \klinf(P^*, m, \mc{P}_0) \defined \inf \{ \dkl(P^*, Q): Q \in \mc{P}^+_{0,m} \}, \quad \text{and} \quad 
                \klinfminus(P^*, m, \mc{P}_0) \defined \inf \{ \dkl(P^*, Q): Q \in \mc{P}^-_{0,m} \}. 
            \end{align}
            For any $x \geq 0$, we define the inverse information projections as 
            \begin{align}
                &\klinf(P^*, \cdot, \mc{P}_0)^{-1}(x) \defined \inf \lbr m \in \reals: \klinf(P^*, m, \mc{P}_0) \geq x \rbr,  \label{eq:inverse-klinf} \\
                \text{and} \quad 
                &\klinfminus(P^*, \cdot, \mc{P}_0)^{-1}(x) \defined \sup \lbr m \in \reals: \klinfminus(P^*, m, \mc{P}_0) \geq x \rbr. \label{eq:inverse-klinf-minus}
            \end{align}
        \end{definition}
        \begin{remark}
            \label{remark:klinf-def}
            Unlike the rest of this paper, the results in this section~(as well as in~\Cref{subsec:lower-bound-cs}) are valid for arbitrary classes of distributions on the real line, and not necessarily restricted to distributions supported on $[0,1]$. When, dealing with arbitrary distribution classes, we will include $\mc{P}_0$ in the expression $\klinf(P, m, \mc{P}_0)$. However, for distributions supported on $[0,1]$, we will drop the $\mc{P}_0$ dependence, to simplify the notation.   
        \end{remark}
        
        We now show a lower bound on the width that can be achieved by any method~($\mc{C}$) of constructing level-$(1-\alpha)$ CIs for the mean  of real-valued random variables.
        \begin{proposition}
            \label{prop:lower-bound-1} Suppose $\mc{C}$ denotes any method for constructing confidence intervals whose  width satisfies $|\mc{C}(X^n, \alpha)|\leq w(n, P, \alpha)$ almost surely, for all $P$ belonging to some class of distributions $\mc{P}_0 \subset \mc{P}(\mathbb{R})$.  Introduce the term $a(n, \alpha) =  \log\lp (1-\alpha)^{1-\alpha}\alpha^{2\alpha-1} \rp / n$, and  define  
            \begin{align}
                w^*(n, P, \alpha)  =  \max \lbr \klinf(P, \cdot, \mc{P}_0)^{-1}\lp {a(n, \alpha)}\rp - \mu, \; \mu -  \klinfminus(P, \cdot, \mc{P}_0)^{-1}\lp {a(n, \alpha)}\rp \rbr.  
            \end{align}
            Recall that $\klinf(P, \cdot, \mc{P}_0)^{-1}(x)$ and $\klinfminus(P, \cdot, \mc{P}_0)^{-1}$ were introduced in~\Cref{def:klinf}. Then, the width of the CI must satisfy $w(n, P, \alpha) \geq w^*(n, P, \alpha)$. 
        \end{proposition}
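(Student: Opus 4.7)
The plan is to use a classical change-of-measure argument combined with the data-processing inequality for KL divergence: the coverage guarantee at alternative means forces the true $P$ to be far in KL from every plausible alternative with mean far from $\mu$, and this separation constrains the width.

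First I would fix $w = w(n,P,\alpha)$, an arbitrary $\epsilon > 0$, and pick any $Q \in \mc{P}_0$ with mean $\mu_Q \ge \mu + w + \epsilon$. Since $|\mu_Q - \mu| > w \ge |C_n|$ almost surely, the events $\{\mu \in C_n\}$ and $\{\mu_Q \in C_n\}$ are disjoint, so the coverage guarantee under $P$ forces $\Pr_{P^n}(\mu_Q \in C_n) \le \alpha$, while the coverage guarantee applied under $Q$ gives $\Pr_{Q^n}(\mu_Q \in C_n) \ge 1-\alpha$. Applying the data-processing inequality to the indicator of $\{\mu_Q \in C_n\}$ then yields
\begin{align}
n\,\dkl(P, Q) \;=\; \dkl(P^n, Q^n) \;\ge\; \dkl\bigl(\mathrm{Ber}(p_P),\,\mathrm{Ber}(p_Q)\bigr) \;\ge\; \dkl\bigl(\mathrm{Ber}(\alpha),\,\mathrm{Ber}(1-\alpha)\bigr),
\end{align}
where the last step uses monotonicity of $\dkl(\mathrm{Ber}(\cdot),\mathrm{Ber}(\cdot))$ on the region $p_P \le \alpha < 1-\alpha \le p_Q$ (valid for $\alpha < 1/2$).

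Next I would verify the arithmetic identity $\dkl(\mathrm{Ber}(\alpha),\mathrm{Ber}(1-\alpha)) = (1-2\alpha)\log\bigl((1-\alpha)/\alpha\bigr) \ge n\,a(n,\alpha)$, which after rearrangement reduces to the trivial inequality $-\alpha\log(1-\alpha) \ge 0$. This shows $\dkl(P,Q) \ge a(n,\alpha)$ for every $Q \in \mc{P}_0$ with $\mu_Q \ge \mu + w + \epsilon$, so taking the infimum gives $\klinf(P,\mu+w+\epsilon,\mc{P}_0) \ge a(n,\alpha)$. By definition~\eqref{eq:inverse-klinf} this implies $\klinf(P,\cdot,\mc{P}_0)^{-1}(a(n,\alpha)) \le \mu + w + \epsilon$, and letting $\epsilon \downarrow 0$ yields $w \ge \klinf(P,\cdot,\mc{P}_0)^{-1}(a(n,\alpha)) - \mu$. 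A completely symmetric argument with $\mu_Q \le \mu - w - \epsilon$, using the downward family $\mc{P}_{0,m}^-$ and the definition~\eqref{eq:inverse-klinf-minus}, gives $w \ge \mu - \klinfminus(P,\cdot,\mc{P}_0)^{-1}(a(n,\alpha))$; combining the two one-sided bounds yields $w \ge w^*(n,P,\alpha)$.

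There is no substantial conceptual obstacle; the care required lies in the bookkeeping around the inverse projections. One needs $\mc{P}_0$ to contain alternatives attaining means arbitrarily close to $\mu + w + \epsilon$ and $\mu - w - \epsilon$ for the infimum defining $\klinf$ (resp.\ $\klinfminus$) to be nontrivial, and the characterization of $\klinf^{-1}$ as the smallest $m$ with $\klinf(P,m,\mc{P}_0) \ge x$ relies on $\klinf$ being nondecreasing in $m$ (which it is, since the feasible set shrinks as $m$ grows). In the degenerate cases where no such alternative exists (e.g.\ $\mu + w + \epsilon$ exceeds the supremum of means attainable in $\mc{P}_0$), the right-hand side of the claimed lower bound is $-\infty$ and the inequality holds trivially, so these cases need no separate treatment.
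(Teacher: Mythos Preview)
Your proposal is correct and follows essentially the same route as the paper: coverage under $P$ and under an alternative $Q$ with well-separated mean, data-processing on the indicator $\{\mu_Q\in C_n\}$, and then inversion of the resulting $\klinf$ bound. The only cosmetic differences are that the paper frames the argument as an explicit hypothesis test and appeals to continuity of $\klinf$ at the boundary, whereas you use an $\epsilon$-perturbation and let $\epsilon\downarrow 0$; also, your Bernoulli-KL bound $\dkl(\mathrm{Ber}(\alpha),\mathrm{Ber}(1-\alpha))=(1-2\alpha)\log((1-\alpha)/\alpha)$ is slightly sharper than the paper's $n\,a(n,\alpha)$ (as you verify), but you then relax it to match the stated constant.
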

        \begin{remark}
            \label{remark:lower-bound-tightness} 
            The key conclusion of this result is that best achievable width~(modulo a leading multiplicative constant) of any CI is characterized by the distribution dependent information projection terms. To the best of our knowledge, this is the first result providing an explicit characterization of the smallest achievable width of CI in terms of a distribution-dependent complexity term. As we see later in~\Cref{subsec:lower-bound-instantiations}, for some common distributions (namely, Bernoulli and Gaussian), the evolution of the above lower bound  with the sample size $n$ exactly matches that of the  widths of the optimal CIs (again, modulo a constant multiplicative term). 
        \end{remark}

        \begin{remark}
            \label{remark:random-width-ci}
            \Cref{prop:lower-bound-1}, as stated, is not directly  applicable to CIs whose width after $n$ observations is a random quantity, such the the \mpeb CI of~\citet{maurer2009empirical}, or the \prpieb CI, and betting CI of~\citet{waudby2023estimating}. However, this result can be used to obtain a lower bound on the width of the best deterministic envelope containing these CIs. Such an envelope can be constructed by using a part of the available $\alpha$ on replacing the random terms in the CIs, with their population counterparts, plus a deviation term. We obtain such a deterministic envelope for the betting CI in~\Cref{subsec:upper-bound-ci}. 
        \end{remark}

        \noindent \emph{Proof outline of~\Cref{prop:lower-bound-1}.} The proof of this statement relies on the duality between confidence intervals~(CIs) and (fixed sample-size) hypothesis tests. In other words, if we can construct tight level-$(1-\alpha)$ CIs around the mean of a distribution $P^*$, it means that we can also accurately test for the mean, and the detection boundary of such a test is determined by the width of the CI. In particular, we proceed in the following steps: 
        \begin{itemize}
            \item  Given $X_1, \ldots, X_n \simiid P^*$, we first consider a hypothesis testing problem with $H_0: P^* = P$ versus $H_1: P^* = Q$, where we assume that $\mu_Q$~(the mean of $Q$) is at least $\mu_P + w(n, P, \alpha)$~(here, we use $\mu_P$ and $\mu_Q$ to denote the means of $P$ and $Q$ respectively). For this problem, we define a test $\Psi$ that rejects the null if $\mu_Q \in \mc{C}(X^n, \alpha)$, and show that this test controls the type-I and type-II errors at level $\alpha$ for this hypothesis testing problem. 
        
            \item  Next, using the properties of the test $\Psi$, the chain rule of KL divergence, and the data processing inequality, we show that the KL divergence between $P$ and $Q$ must satisfy $\dkl(P, Q) \geq a(n,\alpha)$. Since the right side in this inequality is independent of $Q$, we conclude that $\klinf\big(P, \mu_P + w(n, P, \alpha) \big) \geq a(n, \alpha)\defined  \log\lp (1-\alpha)^{1-\alpha} \alpha^{2\alpha-1} \rp/n $, by taking an infimum over all possible distributions $Q$ in the set $\mc{P}_{0,\mu_P + w(n, P, \alpha)}^{+}$. Recall that $\mc{P}_{0,\mu_P + w(n, P, \alpha)}^{+}$ denotes the class of all distributions in $\mc{P}_0$, with mean at least $\mu_P + w(n, P, \alpha)$. 
        
            \item  Finally, we use the definition of $w^* \equiv w^*(n, P, \alpha)$, and the continuity of $\klinf(P, \cdot)$,  to observe that $\klinf\big(P, \mu_P + w(n, P, \alpha) \big) \geq \klinf\big(P, \mu_P + w^*(n, P, \alpha) \big)$. This implies one part of the required result due to the monotonicity of $\klinf(P, \cdot)$. 
        \end{itemize}
        Repeating the same argument, but with $Q$ such that $\mu_Q$ is smaller than $\mu_P-w(n, P, \alpha)$ gives us the other term depending on $\klinfminus$. The details of this proof are in~\Cref{proof:lower-bound-1}. \hfill \qedsymbol

    \subsection{Width achieved by betting CI}
    \label{subsec:upper-bound-ci}
        Having established an unimprovable limit on the width of any level-$(1-\alpha)$ CI in~\Cref{prop:lower-bound-1}, we now analyze how close the betting CI comes to achieving this optimal performance.  Our main result shows the width of the betting CI nearly matches the lower bound derived in \Cref{prop:lower-bound-1}, but with two minor differences: there is an extra leading multiplicative factor of $2$~(see~\Cref{remark:lower-bound-tightness}), and there is an extra $\mc{O}(\log n /n)$ term in the  argument of the inverse information projection terms. Despite these two sources of suboptimality, the main takeaway of this section is that the width of the betting CI has the `right' functional form, depending on the same inverse information projection based complexity measure that arise in the method-agnostic lower bound. To the best of our knowledge, none of the existing CIs are known to admit similar upper bounds on their widths.

        Before proceeding to the statement of the result, we need to introduce some additional notation.
        In particular, recall from~\citet{honda2010asymptotically} that for distributions supported on $[0,1]$, the two information projection terms~(\Cref{def:klinf}) have the following dual representation for $a \in \{+, -\}$, that makes their connection to the betting CI/CS explicit: 
        \begin{align}
            \text{KL}_{\inf}^a(P^*, m) = \sup_{\lambda \in S_a} \mathbb{E}[\log(1+\lambda(X-m))], \quad \text{where} \quad S_+ = \lb\frac{-1}{1-m}, 0\rb, \quad \text{and} \quad S_- = \lb0, \frac{1}{m}\rb. \label{eq:klinf-dual}
        \end{align}
        For any $m \in [0,1]$, we also introduce their corresponding optimal betting fractions 
        \begin{align}
             &\lambda^*_a(m) \in \argmax_{\lambda \in S_a} \; \mathbb{E}_{X \sim P^*} \lb \log \lp 1 + \lambda(X-m) \rp \rb,\quad  
             \text{for} \; a \in \{+, -\}. \label{eq:lambda-def-dual}
        \end{align}
        We can now state the result characterizing the width of the betting CI. 
        \begin{theorem}
            \label{prop:betting-ci-width}
            Consider a betting CI constructed using the mixture betting strategy of~\Cref{def:mixture-method}.  Then, the width of such a level-$(1-\alpha/3)$ CI, denoted by $w^{(bet)}(n, P^*, \alpha/3)$,  satisfies:
            \begin{align}
                 w^{(bet)}(n, P^*, \alpha/3) \leq    2\max \lbr \klinf(\Phat_n, \cdot)^{-1}\lp \log(3n^2/\alpha) \rp - \mu, \;  \mu - \klinfminus(\Phat_n, \cdot)^{-1}\lp \log(3n^2/\alpha) \rp \rbr, \label{eq:ci-random-upper-bound}
            \end{align}
            where $\Phat_n$ denotes the empirical distribution of $\{X_1, \ldots, X_n\}$. 
            Since the above upper bound on $w^{(bet)}(n, P^*, \alpha/3)$ is random, we next derive a deterministic bound on it, that holds with probability at least $1-\alpha$, for $n$ large enough~(precise conditions in~\Cref{appendix:n-large-enough}): 
            \begin{align}
                &w^{(bet)}(n, P^*, \alpha/3) \leq \max \lbr \klinf(P^*, \cdot)^{-1}\lp b(n, \alpha) \rp - \mu, \;  \mu - \klinfminus(P^*, \cdot)^{-1}\lp b(n, \alpha) \rp \rbr + \frac{1}{n^2}, \\ 
                &\text{where} \quad b(n,\alpha) = \frac{\log(3n^2/\alpha)}{n} + \frac{9 \log(3n^2/\alpha)}{n\sigma^2} = \mc{O}\lp \frac{ \log(n/\alpha)}{n} \rp.  \label{eq:ci-deterministic-upper-bound}
            \end{align}
        \end{theorem}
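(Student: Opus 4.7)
The plan is a two-stage argument: first deduce the random bound \eqref{eq:ci-random-upper-bound} from the mixture-method regret guarantee together with the dual representation \eqref{eq:klinf-dual}, and then pass to the deterministic bound \eqref{eq:ci-deterministic-upper-bound} via a uniform concentration of the empirical projections around their population counterparts.

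\emph{Stage 1.} I would instantiate the betting CI with two parallel wealth processes $W_n^{(1)}$ and $W_n^{(2)}$, each produced by applying the mixture method of \Cref{def:mixture-method} to bets restricted to $S_+ = [-1/(1-m),0]$ and $S_- = [0,1/m]$ respectively, and take $W_n = \tfrac{1}{2}(W_n^{(1)} + W_n^{(2)})$. The logarithmic-regret bound $r_n = \mc{O}(\log n)$ applied to each half gives
\begin{align}
    \log W_n^{(1)}(m) \geq \sup_{\lambda \in S_+}\sum_{t=1}^n \log\!\bigl(1+\lambda(X_t-m)\bigr) - r_n = n\,\klinf(\Phat_n,m) - r_n,
\end{align}
where the equality uses \eqref{eq:klinf-dual} with the empirical measure $\Phat_n$ in place of $P^*$, and an analogous bound holds for $W_n^{(2)}$ with $\klinfminus(\Phat_n,m)$. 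Since $W_n \geq \tfrac{1}{2}\max(W_n^{(1)},W_n^{(2)})$, membership $m \in \CIbet$ forces $\max\{\klinf(\Phat_n,m),\klinfminus(\Phat_n,m)\} \leq (r_n + \log(6/\alpha))/n$, which is at most $x_n \defined \log(3n^2/\alpha)/n$ once $n$ is moderately large. Monotonicity of $m \mapsto \klinf(\Phat_n,m)$ (nondecreasing) and $m \mapsto \klinfminus(\Phat_n,m)$ (nonincreasing) then bounds the upper and lower endpoints of $\CIbet$ by $\klinf(\Phat_n,\cdot)^{-1}(x_n)$ and $\klinfminus(\Phat_n,\cdot)^{-1}(x_n)$ respectively; writing their difference as $(a-\mu)+(\mu-b) \leq 2\max(a-\mu,\mu-b)$ yields \eqref{eq:ci-random-upper-bound}.

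\emph{Stage 2.} I would then establish the one-sided uniform concentration
\begin{align}
    \klinf(P^*,m) \leq \klinf(\Phat_n,m) + \delta_n, \qquad \klinfminus(P^*,m) \leq \klinfminus(\Phat_n,m) + \delta_n,
\end{align}
with $\delta_n = 9\log(3n^2/\alpha)/(n\sigma^2)$, holding simultaneously for all $m$ in a small neighborhood of $\mu$ with probability at least $1 - 2\alpha/3$. Combined with Stage 1 and the monotonicity of the inverse projections, this pushes the empirical inverse projections in \eqref{eq:ci-random-upper-bound} forward to their population counterparts evaluated at $x_n + \delta_n = b(n,\alpha)$, producing \eqref{eq:ci-deterministic-upper-bound}. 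The $1/n^2$ slack is used only to absorb clipping of $\klinf(P^*,\cdot)^{-1}$ to $[0,1]$ on the low-probability event where $b(n,\alpha)$ exceeds the natural range of the projection.

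The main obstacle is producing the $\sigma^{-2}$ scaling in $\delta_n$ uniformly in $m$. The dual form recasts the task as a one-sided uniform control of the empirical process $\lambda \mapsto \frac{1}{n}\sum_t \log(1+\lambda(X_t-m)) - \E[\log(1+\lambda(X-m))]$ over $\lambda \in S_\pm$, but the log-payoff becomes unbounded as $\lambda$ approaches the endpoints of $S_\pm$, so a naive supremum-bound is not feasible. My plan is to (i) invoke \Cref{prop:betting-CI-vs-EB-CI} or an elementary preliminary width estimate to confine $m$ to an $\mc{O}(\sqrt{\log(n/\alpha)/n})$-window around $\mu$, on which the population-optimal bet $\lambda^*_\pm(m)$ from \eqref{eq:lambda-def-dual} scales like $(m-\mu)/\sigma^2$ and stays well inside $S_\pm$; (ii) apply a Bernstein inequality pointwise at $\lambda = \lambda^*_\pm(m)$ (whose log-payoff is bounded and has variance controlled by $\sigma^2$ and $(m-\mu)^2$), producing a deviation that, after a standard self-bounding inversion of the resulting quadratic-in-$\sqrt{\klinf(P^*,m)}$ inequality, yields the stated $\delta_n = 9\log(3n^2/\alpha)/(n\sigma^2)$; and (iii) discretize $m$ on a $1/\mathrm{poly}(n)$ grid and use Lipschitz continuity of $m \mapsto \klinf(\Phat_n,m)$ (and its population counterpart) to upgrade to uniform-in-$m$ control at a logarithmic cost already absorbed into $\log(3n^2/\alpha)$. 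Calibrating the absolute constants (the $9$ inside $\delta_n$ and the factor of $2$ in front of the $\max$) is where most of the arithmetic lives.
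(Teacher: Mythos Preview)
Your two-stage plan mirrors the paper's proof closely: Stage~1 is essentially identical (the paper uses a single mixture over the full interval $[-1/(1-m),1/m]$ rather than two half-range mixtures, but the regret bound yields the same lower bound on $W_n(m)$), and Stage~2 has the same skeleton of localize--concentrate--discretize, including the preliminary $\mc{O}(\sqrt{\log(n/\alpha)/n})$ confinement of $m$ and the observation that $\lambda^*_\pm(m)\approx (m-\mu)/\sigma^2$ on that window.

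Two points deserve correction. First, the paper does not apply Bernstein to the log-payoff. It Taylor-expands $\log(1+\lambda^*_+(m)(X-m))$ to second order and applies Hoeffding \emph{separately} to the sample mean and to the sample second moment; the leading deviation term is then $|\lambda^*_+(m)|\sqrt{\log(4/\beta)/(2n)}$, and substituting $|\lambda^*_+(m)|\lesssim |\Delta_m|/\sigma^2$ with $|\Delta_m|\le b_n$ is exactly what produces the $\sigma^{-2}$ in $b(n,\alpha)$. Your Bernstein-plus-self-bounding route is plausible, but since the variance of the log-payoff scales like $(\lambda^*_+)^2\sigma^2\approx \Delta_m^2/\sigma^2$, it would more naturally yield a $\sigma^{-1}$ dependence, so you would not recover the stated constant $9/\sigma^2$ without reverting to a cruder bound. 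Second, your explanation of the $1/n^2$ slack is incorrect: it is not a clipping artifact. In the paper the concentration of $\hatKplus(P^*,m)$ around $\klinf(P^*,m)$ is established \emph{only at grid points} $m_i$ with spacing $1/n^2$ (the union bound over these $\lesssim n^2$ points is what contributes the $\log(n^2)$ inside $b(n,\alpha)$), and the additive $1/n^2$ is the discretization error incurred when sandwiching the true CI endpoints between consecutive grid points. Your proposed Lipschitz upgrade to genuinely uniform-in-$m$ control would, if carried out, eliminate this term rather than explain it.
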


        \begin{remark}
            \label{remark:anytime-valid-bound} Since the regret guarantee of the mixture method of~\Cref{def:mixture-method} is valid for all $n$, the first part of~\Cref{prop:betting-ci-width}; that is, the bound in~\eqref{eq:ci-random-upper-bound}, is simultaneously true for all values of $n$. Thus,~\eqref{eq:ci-random-upper-bound} is in fact a random upper bound on the width of the betting CS constructed using the mixture method. Interestingly,  the optimal complexity term characterizing the (random) width of the CS, again turns out to depend on  information projections, now computed for the empirical probability distribution, $\Phat_n$, instead of the true distribution $P^*$.  
            The second part of~\Cref{prop:betting-ci-width}, however, is valid only for a fixed value of $n$.   
        \end{remark}
        \begin{remark}
            \label{remark:comparision-betting-ci}  As mentioned in~\Cref{remark:random-width-ci}, since the bound on the width of the betting CI obtained in~\eqref{eq:ci-random-upper-bound} is a random quantity, it is not subject to the fundamental limit obtained in~\Cref{prop:lower-bound-1}. To address this, we use one-third of the available $\alpha$ on constructing the CI, and the remaining two-thirds on obtaining a deterministic upper bound on the random width. The size of the resulting level-$(1-\alpha)$, deterministic CI  is then subject to the result of~\Cref{prop:lower-bound-1}, and it matches the dependence of the lower bound on the information projection term, with two differences. It has an extra leading multiplicative factor of $2$, and it has an additional $\mc{O}(\log n /n )$ additive term in the argument of $\klinf(P^*, \cdot)^{-1}$. This term arises due to the regret incurred by the betting strategy, as well as a union bound over a uniform grid consisting of $m = \mc{O}(n^2)$ points, that we used to obtain the deterministic upper bound on the inverse empirical information projection. 
        \end{remark}
        \begin{remark}
        \label{remark:comparison-betting-ci-2}
            It is easy to verify that the term $a(n, \alpha)$ used in the statement of the lower bound~(\Cref{prop:lower-bound-1}) is approximately equal to $\log(1/\alpha)/n$ in the limit of $\alpha \to 0$.  
            Thus, the main conclusion of~\Cref{prop:betting-ci-width} is that we can construct a deterministic envelope containing the betting CI, whose width has very nearly the same functional dependence on $n$, $P$, and $\alpha$, as the method-independent lower bound.  To the best of our knowledge, none of the other commonly used CIs~(such as Hoeffding, Bernstein, \mpeb, or \prpieb) are known to exhibit this near-optimal behavior. 
        \end{remark}
        \noindent \emph{Proof outline of~\Cref{prop:betting-ci-width}.} 
            We begin by appealing to the logarithmic regret of the betting strategy, to show that for any $m \neq \mu$, the wealth after $n$ steps, $W_n(m)$, grows at an exponential rate (with $n$). Furthermore, the growth rate of the wealth is approximately equal to the maximum of two empirical information projection terms. Inverting this lower bound on the wealth gives us the bound on the width stated in~\eqref{eq:ci-random-upper-bound}. 

            To obtain the deterministic upper bound on the width, we need a high probability bound for the concentration of the empirical information projection about its population value for (almost) all $m \in [0,1]$. Existing concentration inequalities, such as those derived by~\citet[\S~7]{honda2015non}, when applied directly are not sufficiently tight for our purposes. Instead, we take an alternative approach, by first obtaining a wider CI that contains $\CIbet$, and has a  width  of $2b_n(n, \alpha)$. By focusing on this narrow band, we show that the information projection can be well approximated by the first two terms of their Taylor's expansion. Hence by controlling the deviations of the first two empirical moments from their population terms, we get a sufficiently tight concentration result for the empirical information projection about its population value. The final step is to take a union bound for $m$ values over grid $\mc{M}_n$ of size $n^2+1$, consisting of equally spaced points inside the larger band of size $2b(n, \alpha)$. 
            This gives us a lower bound on the wealth, $W_n(m_i)$, for points $m_i \in \mc{M}_n$. We obtain the final bound~\eqref{eq:ci-deterministic-upper-bound}  by taking the inverse of the information projection (i.e., $\klinf(P^*, \cdot)^{-1}$) over the points in the grid $\mc{M}_n$, and then adding the grid spacing term, $1/n^2$, to account for the possible discretization error. 
        The details of these steps are in~\Cref{proof:betting-ci-width}. \hfill \qedsymbol

\section{Non-asymptotic analysis of betting CS}
\label{sec:nonasymp-CS}
    We now turn our attention to the betting confidence sequences~(CS) constructed by~\citet{waudby2023estimating}, and show that they can be also be shown to achieve a near-optimal performance, similar to the betting CI. The construction of betting CSs follows the same general steps introduced in~\Cref{subsec:betting-approach} as we recall below. 
    \begin{definition}[Betting CS]
        \label{def:betting-CS-def} Consider a stream of observations, $X_1, X_2, \ldots$, drawn \iid from a distribution $P^*$ supported on $[0,1]$ with mean $\mu$. For all $m \in [0,1]$, set $W_0(m)=1$, and define the wealth process as 
        \begin{align}
            W_n(m) = W_{n-1}(m) \times \lp 1 + \lambda_n(m)(X_n - m) \rp, \quad \text{for all } m \in [0, 1], 
        \end{align}
        where $\{\lambda_n(m): n \geq 1, \, m \in [0,1]\}$ is any sequence of predictable bets. Then, the level-$(1-\alpha)$ betting CS is a collection of sets $\{C_n \subset [0,1]: n \geq 1\}$, defined as 
        \begin{align}
            C_n = \{m \in [0,1]: W_n(m) < 1/\alpha \}, \label{eq:betting-cs-def}
        \end{align}
        satisfying the uniform coverage guarantee $\mathbb{P}\lp \forall n \geq 1: \mu \in C_n \rp \geq 1-\alpha$. As a consequence of the uniform coverage guarantee, we can also assume that the sets in the CS are nested; that is, $C_n \subset C_{n'}$ for all $n' \leq n$. In other words, for any $n \geq 1$, we can let $C_n$ be the running intersection of all $C_{n'}$ for $n'\leq n$ due to the time-uniform nature of CS.
    \end{definition}
    The main difference between the betting CI of~\Cref{def:betting-CI-def}, and the betting CS defined above, is that the bets $\{ \lambda_t(m): t \geq 1, \, m \in [0,1]\}$ are not optimized with a pre-specified horizon $n$. The uniform coverage guarantee of the betting CS is a simple consequence of Ville's inequality, a time-uniform variant of Markov's inequality, and we refer the reader to~\citet{waudby2023estimating} for further details.

    To analyze the behavior of betting CSs, with possibly random widths, we first introduce the notion of ``effective width'' in~\Cref{def:effective-width}. Then, in~\Cref{prop:lower-bound-2} we obtain a lower bound on the effective width of any CS, and show that the betting CS instantiated with the mixture method stated in~\Cref{def:mixture-method},  nearly matches this fundamental limit in~\Cref{prop:betting-cs-width}. 
    
    \subsection{Lower Bound on any CS effective width}
    \label{subsec:lower-bound-cs}
        In this section, we establish fundamental limits on the performance of any valid level-$(1-\alpha)$~CS, with possibly random widths. We begin by introducing a notion of \emph{effective width}, that we will use to characterize the performance of  CSs. 
        \begin{definition}[Effective width]
            \label{def:effective-width} 
             Let $\mc{C}$ denote a method for constructing confidence sequences. Given a stream of observations $X_1, X_2, \ldots \simiid P^*$,  define the random stopping times 
             \begin{align}
                 T_w \equiv T_w(P^*, \alpha) \defined \inf \lbr n \geq 1:  |\mc{C}(X^n, \alpha)| \leq w \rbr, 
                 \quad \text{for } w \in (0, \infty). 
             \end{align}
             Then, the effective width of the CS  after $n$ observations, is defined as:
             \begin{align}
                 w_e(n, P^*, \alpha) \defined \inf \lbr w > 0: \mathbb{E}[T_w] \leq n  \rbr. 
             \end{align}
            In words, the effective width of a CS after $n$ observations is the minimum $w>0$, for which the expected number of observations needed by the CS to reduce its width below $2w$, no larger  than $n$. 
        \end{definition}
        \begin{remark}
            While this stopping time based definition of effective width turns out to be particularly well suited to analyzing confidence sequences, we note that the time-uniform coverage guarantee is not necessary for this definition to be valid. In particular, the same definition can also be applied to CIs with random or even deterministic widths. 
            In fact,  the definition of effective width reduces to that of the usual width for the case of CIs with deterministic widths. To see why this is true, suppose $\mc{C}$ constructs CIs with deterministic widths $w(n, P^*, \alpha)$, that are  non-increasing~in $n$. Then, the term $T_w$ in this case is simply a constant. Hence, $w_e(n, P^*, \alpha)$ is equal to $\inf \{w \geq 0: T_w(P^*, \alpha) \leq n\}$. Now, note that for all $w \in [w(n, P^*, \alpha),  w(n-1, P^*, \alpha))$, we have $T_w = n$. This implies that 
            \begin{align}
                w_e(n, P^*, \alpha ) = \inf \{ w: w \in [w(n, P^*, \alpha), w(n-1, P^*, \alpha)) \} = w(n, P^*, \alpha), 
            \end{align}
            as claimed.
            For example, in the case of Hoeffding CI, we have $T_w = \left \lceil2 \log(2/\alpha)/w^2 \right \rceil$, for all $w>0$, which implies that
            \begin{align}
                w_e^{H}(n, P^*, \alpha) = \inf \lbr w \geq 0:  \left\lceil \frac{2\log(2/\alpha)}{w^2} \right\rceil \leq n \rbr = 2\sqrt{\frac{\log(2/\alpha)}{2n}} = w_H(n, P^*, \alpha). 
            \end{align}
        \end{remark}
    
        We now present the main result of this section, that  characterizes the minimum achievable effective width of a confidence sequence~(CS). 
        \begin{proposition}
            \label{prop:lower-bound-2}
            Suppose $\mc{C}$ is a method for constructing confidence sequences that satisfies $\mathbb{E}[T_w(P, \alpha)] < \infty$ for all $w>0$, and for all $P \in \mc{P}_0 \subset \mc{P}(\reals)$. With  $a(n, \alpha) \defined \log\lp (1-\alpha)^{1-\alpha} \alpha^{2\alpha-1} \rp/n $, introduce 
            \begin{align}
                w_e^*(n, P, \alpha)  = \max \lbr \klinf(P, \cdot, \mc{P}_0)^{-1}\lp {a(n, \alpha)} - \mu \rp, \; \mu -  \klinfminus(P, \cdot, \mc{P}_0)^{-1}\lp {a(n, \alpha)}\rp \rbr,  
           \end{align}
           where the inverse information projection terms, $\klinf(P, \cdot, \mc{P}_0)^{-1}(\cdot)$ and $\klinfminus(P, \cdot, \mc{P}_0)^{-1}(\cdot)$, were defined in~\eqref{eq:inverse-klinf}. 
           Then the effective width of the CS constructed by method $\mc{C}$ using \iid observations from $P \in \mc{P}_0$ must satisfy $w_e(n, P, \alpha) \geq w_e^*(n, P, \alpha)$. 
        \end{proposition}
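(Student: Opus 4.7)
The plan is to mimic the CI-to-test reduction used in the proof of Proposition~\ref{prop:lower-bound-1}, but to replace the fixed-sample test with a \emph{sequential} one based on the data-dependent stopping time $T_w$, and then convert $\mathbb{E}_P[T_w] \le n$ into an information inequality via Wald's identity for the KL divergence. Throughout, fix an arbitrary $w > 0$ with $\mathbb{E}_P[T_w(P, \alpha)] \le n$; the task is to establish $w \ge w_e^*(n, P, \alpha)$.

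Consider first the upper half-width bound involving $\klinf$. Fix any $Q \in \mc{P}_0$ with $\mu_Q > \mu_P + w$ (writing $\mu_P, \mu_Q$ for the means of $P, Q$), and let $T \defined T_w(P, \alpha)$, which is an $\mc{F}_n$-stopping time depending only on the observed sequence. By hypothesis $\mathbb{E}_P[T] \le n$ and $\mathbb{E}_Q[T] < \infty$, so $T$ is a.s.\ finite under both measures. Define the binary statistic $\Psi \defined \mathbf{1}\{\mu_Q \in \mc{C}(X^T, \alpha)\}$. Time-uniform coverage applied to $Q$ gives $\mathbb{P}_Q(\Psi = 1) \ge 1 - \alpha$. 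For the error under $P$, the same guarantee applied to $P$ gives $\mu_P \in \mc{C}(X^T, \alpha)$ with $P$-probability at least $1-\alpha$; whenever this occurs, the width bound $|\mc{C}(X^T, \alpha)| \le w < \mu_Q - \mu_P$ precludes $\mu_Q$ from also lying in the CS, so $\Psi = 0$. Hence $\mathbb{P}_P(\Psi = 1) \le \alpha$.

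Next, I would invoke Wald's identity for the log-likelihood ratio (equivalently, the chain rule of relative entropy combined with Doob's optional stopping, following the template used in Kaufmann, Capp\'e, and Garivier, 2016, for bandit lower bounds) to write
\[
 \mathbb{E}_P[T] \cdot \dkl(P, Q) \;=\; \dkl\bigl(P_{X^T},\, Q_{X^T}\bigr),
\]
where $P_{X^T}$ and $Q_{X^T}$ denote the laws of the stopped sample $(X_1, \ldots, X_T)$. Applying the data-processing inequality to the binary summary $\Psi$ and using the error bounds above,
\[
 \dkl\bigl(P_{X^T},\, Q_{X^T}\bigr) \;\ge\; \dkl\bigl(\mathbb{P}_P(\Psi = 1),\, \mathbb{P}_Q(\Psi = 1)\bigr) \;\ge\; \dkl(\alpha, 1-\alpha),
\]
by monotonicity of the binary KL. A direct calculation shows $\dkl(\alpha, 1-\alpha) \ge n \cdot a(n, \alpha)$, and combining with $\mathbb{E}_P[T] \le n$ yields $\dkl(P, Q) \ge a(n, \alpha)$. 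Taking the infimum over all $Q \in \mc{P}_0$ with $\mu_Q > \mu_P + w$, and then letting $\mu_Q \downarrow \mu_P + w$ using lower semicontinuity of $m \mapsto \klinf(P, m, \mc{P}_0)$, we obtain $\klinf(P, \mu_P + w, \mc{P}_0) \ge a(n, \alpha)$, equivalently $w \ge \klinf(P, \cdot, \mc{P}_0)^{-1}(a(n, \alpha)) - \mu_P$. A symmetric argument with $\mu_Q < \mu_P - w$ produces the matching $\klinfminus$ bound, and together they give $w \ge w_e^*(n, P, \alpha)$, as claimed.

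The main technical obstacle I anticipate is the sequential Wald identity: unlike in Proposition~\ref{prop:lower-bound-1}, where $\dkl(P^n, Q^n) = n\, \dkl(P, Q)$ follows immediately from independence, here $T$ is data-dependent, so justifying $\dkl(P_{X^T}, Q_{X^T}) = \mathbb{E}_P[T]\, \dkl(P, Q)$ requires an integrability argument. The standard remedy is to truncate at $T \wedge N$, where Wald applies cleanly, and then let $N \to \infty$ via monotone/dominated convergence for KL; this is valid precisely because $\mathbb{E}_P[T] < \infty$ is assumed. The only other subtlety, handling the boundary case $\mu_Q = \mu_P + w$, is routine since $\klinf(P, \cdot, \mc{P}_0)$ is monotone and lower semicontinuous in $m$ for the classes $\mc{P}_0$ of interest.
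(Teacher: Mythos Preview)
Your proposal is correct and follows essentially the same approach as the paper's proof: construct a sequential test from the CS stopping time, control its type-I and type-II errors via time-uniform coverage, and then apply the Wald-type identity $\mathbb{E}_P[T]\,\dkl(P,Q)\ge \dkl(\text{Bern}(p),\text{Bern}(q))$ together with data processing (the paper cites \citet[Lemma~19]{JMLRkaufman16a} for exactly the step you sketch via truncation). The only cosmetic differences are that the paper defines $\Psi=\mathbf{1}\{\mu_Q\notin C_T\}$ (the complement of yours), uses the strict-inequality stopping time $T=\inf\{n:|C_n|<w_0\}$ for a $w_0>w_e$, and lower-bounds the binary KL by $n\,a(n,\alpha)$ directly rather than passing through $\dkl(\alpha,1-\alpha)$; none of these affect the argument.
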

        \emph{Proof outline of~\Cref{prop:lower-bound-2}.}
        The proof of this proposition uses a similar  high-level strategy outlined for proving~\Cref{prop:lower-bound-1}. The key difference is that we use now set up a sequential testing problem, for which we design a test using the given method $\mc{C}$  for constructing CSs.  We present the details in~\Cref{proof:lower-bound-2}.  \hfill \qedsymbol

    \subsection{Effective width of betting CS}
    \label{subsec:upper-bound-cs}
        In the previous subsection, we obtained a lower bound on the effective width of any method, $\mc{C}$, for constructing confidence sequences. We now show that the betting CS proposed by~\citet{waudby2023estimating} nearly matches this lower bound. We  first present a simplifying assumption about the data generating distribution $P^*$. 
        \begin{assumption}
        \label{assump:support}
            The distribution $P^*$ is supported over a  closed interval that is a strict subset of $[0,1]$. That is, $\text{supp}(P^*) = [A, B]$ with $A, B \in (0,1)$. 
        \end{assumption}       
        This assumption implies that for all values of $m$, the logarithmic increment of the wealth process, $\log(1+\lambda_t(m)(X_t-m))$, is a bounded random variable. While this assumption is not strictly necessary for obtaining the next result, it does simplify the final expression of the upper bound. 
        \begin{theorem}
            \label{prop:betting-cs-width}
            Suppose~\Cref{assump:support} is true, and consider a betting CS constructed using the mixture betting strategy~(\Cref{def:mixture-method}) that incurs a logarithmic regret. Then, the effective width of this CS satisfies: 
            \begin{align}
                &w_e^{(bet)}(n, P^*, \alpha) \leq 2\max \lbr \klinf(P^*, \cdot)^{-1}(b(n,\alpha)) - \mu, \; \mu - \klinfminus(P^*, \cdot)^{-1}(b(n,\alpha)) \rbr,\\
                 \text{where } &b(n,\alpha) = \frac{ 2\log(n^2/\alpha) + 2C }{n}, 
            \end{align}
            for a constant $C$ that depends on the distribution $P^*$~(\Cref{assump:support}).  
        \end{theorem}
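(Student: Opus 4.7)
The plan is to mirror the analysis of the betting CI in \Cref{prop:betting-ci-width}, but handle the time-uniform nature of the CS by converting width tail probabilities into a bound on $\mathbb{E}[T_w]$. Because the sets $C_k$ are nested, we have the identity
\begin{equation}
\mathbb{E}[T_w] \;=\; \sum_{k=0}^{\infty}\mathbb{P}(T_w>k) \;=\; \sum_{k=0}^{\infty}\mathbb{P}(|C_k|>w),
\end{equation}
so it will suffice to show that for $w=w^{*}(n)$ equal to the right-hand side in the theorem, the tail probabilities $\mathbb{P}(|C_k|>w)$ are summable from $k=n$ onward with total mass at most a constant. This reduces the problem to finding, for each $k$, a probability-$(1-\alpha/k^{2})$ deterministic envelope on $|C_k|$ whose width matches the claimed $2\klinf^{-1}(b(k,\alpha))$ shape.

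First I would lower-bound the wealth using the logarithmic regret of the mixture method: for each $m\in[0,1]$ and each sign $a\in\{+,-\}$,
\begin{equation}
\log W_k(m)\;\geq\; \sup_{\lambda\in S_a}\sum_{t=1}^{k}\log\bigl(1+\lambda(X_t-m)\bigr)\;-\;r_k \;=\; k\,\klinf^{a}(\widehat P_k,m)\;-\;r_k,
\end{equation}
where the equality uses the dual representation~\eqref{eq:klinf-dual} and $r_k=O(\log k)$. Hence $m\notin C_k$ as soon as the empirical information projection $\klinf^{a}(\widehat P_k,m)$ exceeds $\bigl(\log(1/\alpha)+r_k\bigr)/k$ for some sign. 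Next I would transfer the empirical $\klinf$ to the population $\klinf$ by the Taylor-expansion argument used in the proof outline of \Cref{prop:betting-ci-width}: restrict attention to the outer envelope $C_k^{\dagger}$ of width $O(1/\sqrt{k})$, write $\klinf^{\pm}(P,m)\approx (m-\mu_P)^{2}/(2\sigma_P^{2})$ inside this band, and control $(\widehat\mu_k-\mu)^{2}$ and $|\widehat\sigma_k^{2}-\sigma^{2}|$ via Hoeffding or Bernstein bounds. Crucially, Assumption~\ref{assump:support} gives $\log(1+\lambda(X_t-m))$ uniformly bounded over the band, so the concentration error is of order $\sqrt{\log(k/\alpha)/k}$, and after squaring (through the Taylor expansion) contributes $O(\log(k/\alpha)/k)$ to $\klinf$, matching the regret contribution. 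A union bound over a uniform grid $\mathcal{M}_k$ of $O(k^{2})$ points in $C_k^{\dagger}$, together with a grid-spacing adjustment of $1/k^{2}$ absorbed into the constant $C$, yields with probability at least $1-\alpha/k^{2}$ that
\begin{equation}
|C_k|\;\leq\; 2\max\Bigl\{\klinf(P^{*},\cdot)^{-1}\bigl(b(k,\alpha)\bigr)-\mu,\;\mu-\klinfminus(P^{*},\cdot)^{-1}\bigl(b(k,\alpha)\bigr)\Bigr\},
\end{equation}
with $b(k,\alpha)=(2\log(k^{2}/\alpha)+2C)/k$.

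Finally I would assemble the effective width bound. Fix $w=w^{*}(n)$. Since $b(k,\alpha)$ is decreasing in $k$ (for $k$ past a mild threshold) and $\klinf^{-1}$ is monotone, the bound on $|C_k|$ is $\leq w$ for every $k\geq n$. Trivially bounding $\mathbb{P}(|C_k|>w)\leq 1$ for $k<n$ and by $\alpha/k^{2}$ for $k\geq n$ gives
\begin{equation}
\mathbb{E}[T_w]\;\leq\; n+\sum_{k\geq n}\frac{\alpha}{k^{2}}\;\leq\; n+\alpha\pi^{2}/6,
\end{equation}
which (after absorbing the constant into $C$, or equivalently replacing $n$ by $n-1$ in the envelope) gives $\mathbb{E}[T_{w^{*}(n)}]\leq n$ and therefore $w_e^{(\mathrm{bet})}(n,P^{*},\alpha)\leq w^{*}(n)$, as required.

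The main obstacle will be the uniform-in-$m$ concentration of $\klinf(\widehat P_k,m)$ at the right rate: naive Hoeffding on $\log(1+\lambda(X_t-m))$ only delivers a $\sqrt{\log(k/\alpha)/k}$ deviation, which after multiplying by $k$ would swamp the $r_k=O(\log k)$ regret term. Recovering the advertised $O(\log(k/\alpha)/k)$ scaling requires exploiting that inside the narrow band $C_k^{\dagger}$ the function $\klinf^{\pm}(P,\cdot)$ is approximately quadratic around $\mu$, so that the relevant deviations enter squared (through $(\widehat\mu_k-\mu)^{2}$ and $(\widehat\sigma_k^{2}-\sigma^{2})^{2}$), together with the boundedness afforded by Assumption~\ref{assump:support} to keep all constants finite. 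The rest of the argument is bookkeeping over the grid $\mathcal{M}_k$ and the sum over $k\geq n$.
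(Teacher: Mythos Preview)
Your approach is workable but takes a markedly different route from the paper's. You replicate the full CI machinery of \Cref{prop:betting-ci-width} (empirical-to-population $\klinf$ transfer via Taylor expansion on a narrow band, grid union bound, concentration of $\muhat_k,\sigmahat_k^2$) at each sample size $k$, and then sum the tail probabilities $\mathbb{P}(|C_k|>w)$ to control $\mathbb{E}[T_w]$. The paper instead avoids concentration and gridding entirely: it first shows $T_w\le\tau_w^+\vee\tau_w^-$, where $\tau_w^{\pm}$ are the first-crossing times of the \emph{oracle} wealth processes $W_k^{*,\pm}(\mu\pm w/2)$ built with the fixed optimal bets $\lambda^*_{\pm}$. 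Because $\log W_k^{*,+}(\mu+w/2)=\sum_{i=1}^k Z_i$ is a sum of i.i.d.\ terms with mean exactly $\klinf(P^*,\mu+w/2)$, Wald's identity (optional stopping) gives directly
\[
\mathbb{E}[\tau_w^+]\;\le\;\frac{\log(1/\alpha)+C+c\log\mathbb{E}[\tau_w^+]}{\klinf(P^*,\mu+w/2)},
\]
with \Cref{assump:support} used only to bound the single overshoot $Z_{\tau_w^+}\le C$. Inverting this yields $b(n,\alpha)$ immediately.

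What each approach buys: the paper's Wald-based argument is structurally simpler (no ``$n$ large enough'' hypotheses, no grid, a single constant $C$ from the overshoot), and it explains \emph{why} the CS bound is cleaner than the CI bound---the expectation in the definition of effective width lets the population $\klinf$ appear directly, without ever passing through $\klinf(\widehat P_k,\cdot)$. Your approach would recover the same functional form, but your constant $C$ would aggregate several sources (grid spacing, concentration constants, the $\alpha\pi^2/6$ slack, and the ``$k$ large enough'' threshold inherited from the Taylor-band argument), and you would need to carry the ``$n$ large enough'' caveats from the CI proof throughout. In short, you are re-proving the harder theorem to deduce the easier one; the paper exploits the stopping-time structure to shortcut the hard part.
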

        \begin{remark}
            As in the case of~\Cref{prop:betting-ci-width}, this result implies that the effective width is of the same order as the method-agnostic lower bound established in~\Cref{prop:lower-bound-2}, modulo a constant multiplicative factor and an additive $2\log(n^2)+2C$ term in the argument to the inverse information projection. 
            The term $C$ arises due to the possible overshoot of the wealth process beyond the threshold $1/\alpha$, and it is generally unavoidable, without further assumptions. 
            Interestingly, the upper bound has much a smaller logarithmic penalty term, as compared to the upper bound on the betting CI width of~\Cref{prop:betting-ci-width}. This is because of the expectation in the definition of effective width, which causes the population information projection~(instead of the empirical information projection) to naturally appear in the upper-bound expression,  and we don't need to apply a  concentration result over a uniform grid to get the required form matching the lower bound. 
        \end{remark}
        \noindent \emph{Proof outline of~\Cref{prop:betting-cs-width}.} By appealing to the regret bound of the mixture betting strategy, we can reduce the problem into that of analyzing the `oracle' wealth processes $\{W^{*,a}_t(m): t \geq 1, \; m \in [0,1]\}$ for $a \in \{+, -\}$, with $W^{*,a}_t(m) = \prod_{i=1}^t\lp 1 + \lambda^*_a(m)(X_i-m) \rp$. Due to the \iid nature of the multiplicative factors, this process is significantly more tractable to analyze than the true wealth process.
        In particular, we proceed as follows: 
        \begin{itemize}
            \item In the first step, we show that for any $w$, we can upper bound the stopping time  $T_w$ in terms of the maximum of two stopping times, $\tau_w^+$ and $\tau_w^-$, associated with the oracle wealth process. In particular, $\tau_w^+$~(resp. $\tau_w^-$) is the first time, the $\log$ of the oracle wealth process at $m=\mu+w/2$~(resp. $m=\mu-w/2$) exceeds $\log(n^2/\alpha)$.  This implies the bound $\mathbb{E}[T_w] \leq \max \{ \mathbb{E}[\tau_w^+], \, \mathbb{E}[\tau_w^-] \}$. 

            \item The key benefit of working with the oracle process is that $\log(W_n^{*, \pm}(m))$ are the sum of \iid terms, as the oracle bets defined in~\eqref{eq:lambda-def-dual} are  fixed, non-random quantities. This is unlike the actual wealth process with predictable  bets. Hence, by applying appropriate optional stopping arguments, we show that $\mathbb{E}[\tau_w^+]$~(resp. $\mathbb{E}[\tau_w^-]$)  can be upper bounded in terms of $\klinf(P^*, \mu+w)$~(resp. $\klinfminus(P^*, \mu-w)$). 

            \item The final step in concluding the proof involves inverting the two information projection terms that bound $\mathbb{E}[T_w]$, and some simplification to get the required bound. 
        \end{itemize}
        The details are in~\Cref{proof:betting-cs-width}. 
        \hfill \qedsymbol

\section{Discussion, extensions, and open questions}
\label{sec:discussion}
    In~\Cref{subsec:lower-bound-instantiations}, we instantiate our general lower bound for CIs~(\Cref{prop:lower-bound-1}) for some univariate distributions, and compare them with the optimal CI width for those distributions. Next, in~\Cref{subsec:multivariate}, we show that the techniques we developed for obtaining the lower bounds on the widths of CIs and CSs can be easily generalized to the case of multivariate observations. In~\Cref{subsec:without-replacement}, we extend our asymptotic analysis to the practically interesting case of estimating the mean of $M$ unknown values in the unit interval, by uniformly sampling them without replacement. We end  with some preliminary discussion about appropriate notions of second-order limiting widths of CIs  in~\Cref{subsec:second-order}.

    \subsection{Instantiations of the lower bound}
    \label{subsec:lower-bound-instantiations}
        Since the non-asymptotic lower bounds in~\Cref{prop:lower-bound-1} and~\Cref{prop:lower-bound-2} are stated in terms of the abstract information projections, they might be difficult to interpret. For many distributions, such as members of one-parameter exponential family, the lower bounds admit closed-from expressions. In this section, we study the tightness of the CI lower bound~(since the expressions for the CS lower bound are almost identical), by comparing its instantiations for two specific distributions with the width of an oracle CI constructed using the quantiles of the true distributions.

        \paragraph{Bernoulli distribution.} We begin with the simplest case, where the distribution $P^*$ is a Bernoulli with mean $\mu$. In this case, $\klinf$ and $\klinfminus$ can be explicitly calculated. In particular, both $\klinf(P^*, m)$ and $\klinfminus(P^*, m)$ are equal to  $\mu \log(\mu/m) + (1-\mu) \log( (1-\mu)/(1-m))$ for all $m \geq \mu$ and $m \leq \mu$ respectively. We can numerically invert these expressions to obtain a good approximation of the lower bounds derived in~\Cref{prop:lower-bound-1}. 
        
        To evaluate the tightness of the lower bound, we compare it to the width of an oracle CI constructed with the knowledge of the distribution of the sample mean of $X_1, \ldots, X_n$. In particular, $\sum_{i=1}^n X_i$ is distributed according to the Binomial distribution with parameters $n$ and $\mu$. Hence, we can define an oracle CI as $C_n^* = [\mu \pm w^*/2]$, with $w^* = (z_{1-\alpha/2} - z_{\alpha/2})/n$, where $z_{\beta}$ denotes the $\beta$-quantile of the Binomial distribution with parameters $n$ and $\mu$. We plot the variation of the lower bound, and the width of the oracle CI, with the sample size $n$ for $\mu = 0.9$ in~\Cref{fig:lower-bound}. 
    
         \paragraph{Gaussian distribution.} Similarly, for the class of Gaussian distributions with variance $\sigma^2$,  $\klinf(P^*, m)$ and $\klinfminus(P^*, m)$ $ = (\mu - m)^2/2\sigma^2$, if $m \geq \mu$. Hence, the result of~\Cref{prop:lower-bound-1} implies that the width of any CI for this class must satisfy $w(n, P^*, \alpha) \geq 2\sigma\sqrt{ a(n, \alpha)/2}$, where $a(n,\alpha)$ is as defined in~\Cref{prop:lower-bound-1}.  

         As with the Bernoulli case, we will compare the lower bound with the width of the oracle CI, defined as $C_n^* = [\mu \pm w^*/2]$ with  $w^* = \sigma(z_{1-\alpha/2} - z_{\alpha})/\sqrt{n}$, where $z_{\beta}$ denotes the $\beta$-quantile of the standard normal distribution. The variation of the predicted lower bound and the oracle upper bound with the sample size $n$, for $P^* = N(0, 4)$ is shown in~\Cref{fig:lower-bound}. 
        \begin{figure}
            \centering
            \includegraphics[width=0.45\columnwidth]{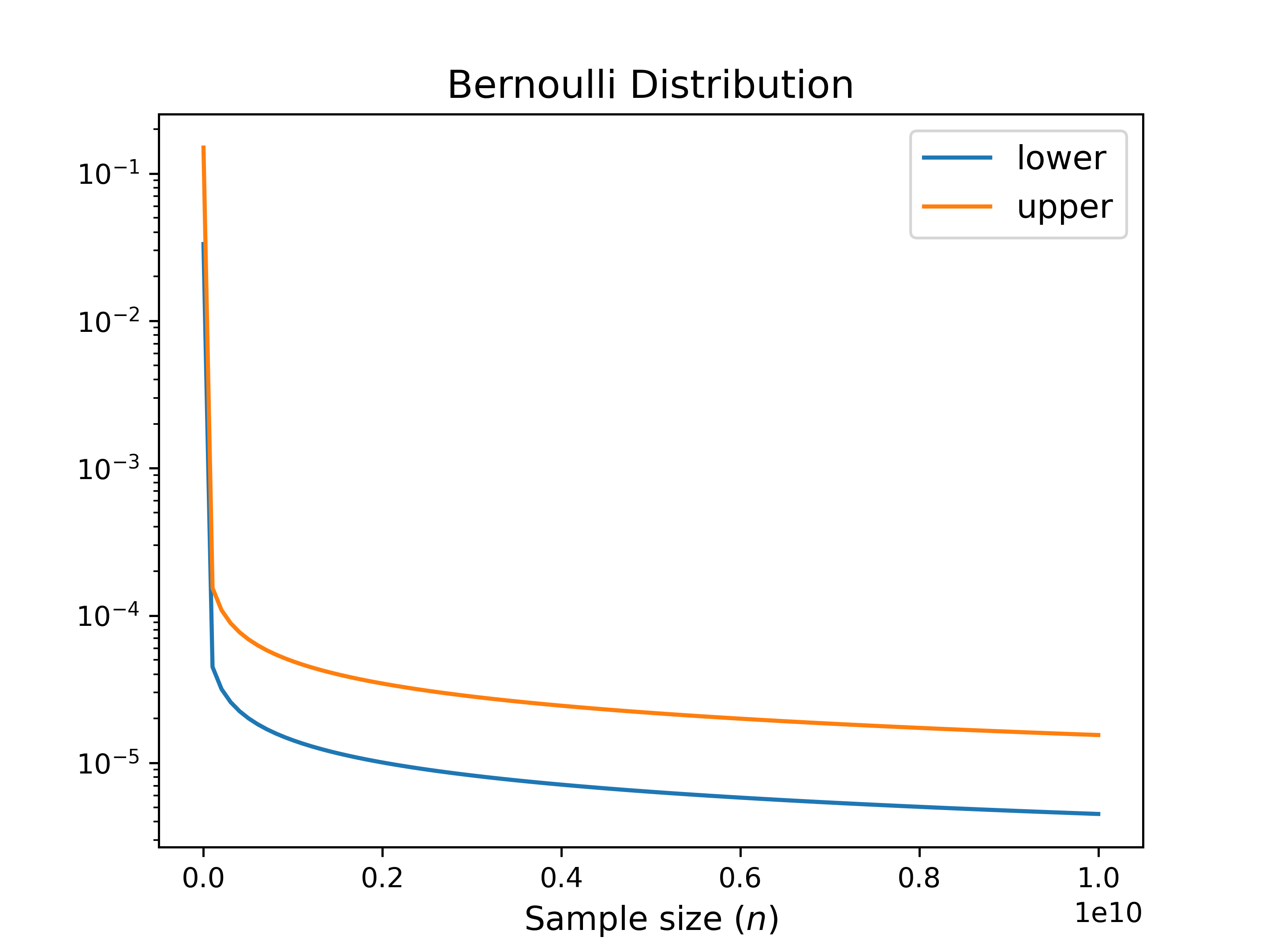}
            \includegraphics[width=0.45\columnwidth]{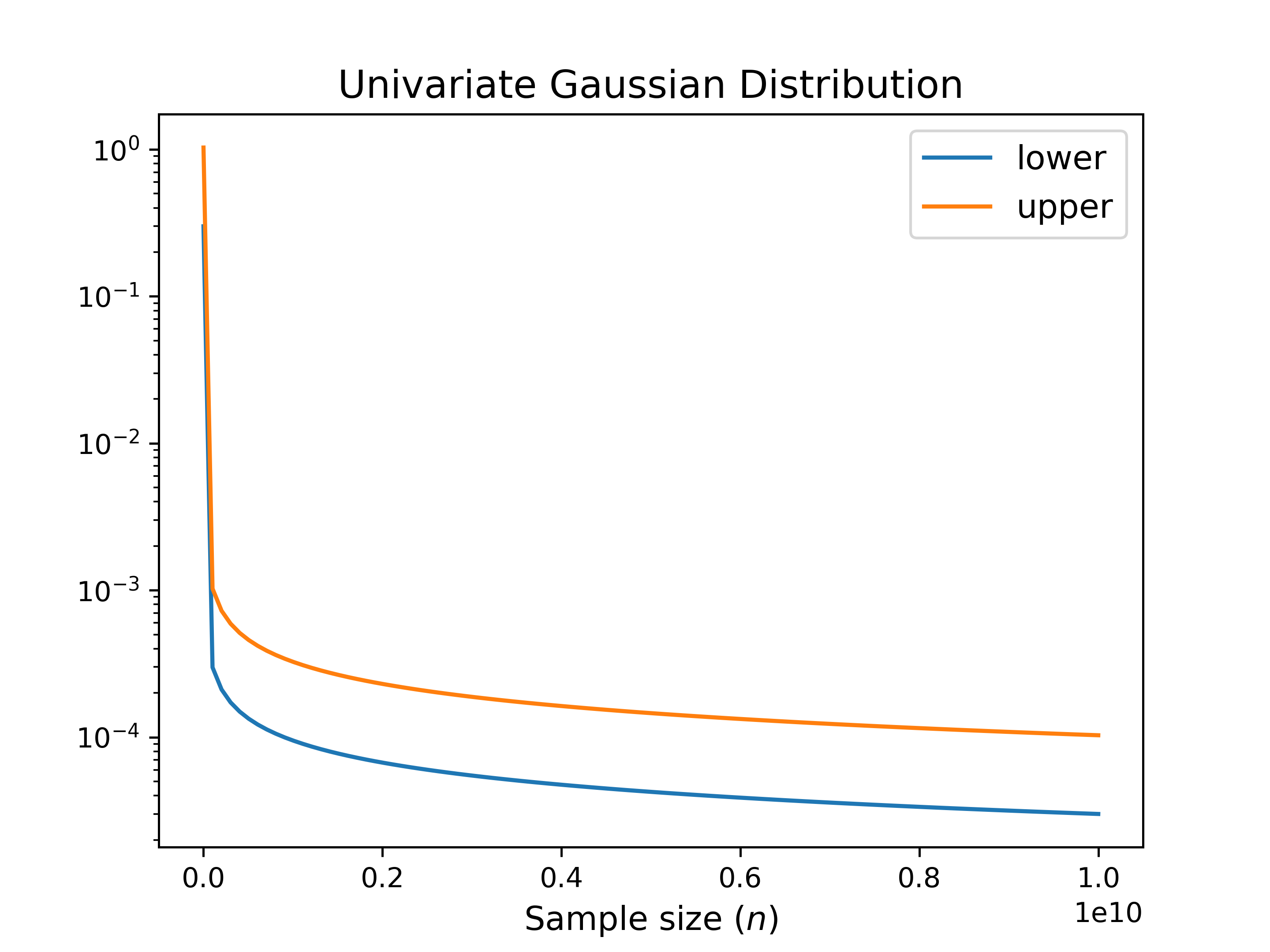}
            \caption{The figures plot the variation~(with the sample size $n$) of the predicted lower bound~(\Cref{prop:lower-bound-1}) on the width of any CI~(denoted by `lower'), and  the width of the oracle CI~(denoted by `upper')  for the mean of Bernoulli~(left) and Gaussian~(right) distributions~(with  $\alpha$  set to $0.01$). In both cases, the ratio of the upper and lower bounds is approximately $3.43$ for all values of $n \geq 1000$, indicating that the lower bound captures the `right' behavior of the optimal width with $n$, modulo a constant multiplicative factor.}
            \label{fig:lower-bound}
        \end{figure}

    \subsection{Extending lower bounds to the multivariate case}
    \label{subsec:multivariate}
        While our focus in this paper is on the case of univariate observations, we note that the techniques used to obtain the lower bounds~(\Cref{prop:lower-bound-1} and~\Cref{prop:lower-bound-2}) can be easily extended to more general observation spaces.
        To illustrate this, we state the generalizations of these two lower bounds for the case of observations lying in $\mathcal{X} = \mathbb{R}^d$, for some integer $d \geq 1$. In particular, suppose we are given observations $X_1, X_2, \ldots$, drawn \iid from a distribution $P^* \in \mc{P}_0 \subset \mc{P}(\mc{X})$, with mean vector $\mu$. 

        Our first result considers a method~($\mc{C}$) for constructing confidence intervals for $P^*$ based on observations $X_1, \ldots, X_n$, which satisfies the condition that 
        \begin{align}
            \sup_{x, x' \in \mc{C}(X_1^n, \alpha)} \; \|x-x'\|_2 \leq w(n, P^*, \alpha), \label{eq:mutltivariate-width-condition-1}
        \end{align}
        where $\|\cdot\|_2$ denotes the usual $\ell_2$-norm on $\reals^d$. 
        In other words, the confidence set constructed by the method $\mc{C}$ based on $n$ observations is contained in a ball of width no larger than  a deterministic value $w(n, P^*, \alpha)$.  To state the lower bound, we need to introduce a notion of  information projection for multivariate case: 
        \begin{align}
            &\klinfmulti(P, r, \mc{P}_0) = \inf_{Q \in \mc{P}_0: \|\mu_Q-\mu_P\|_2 \geq r} \dkl(P, Q),\label{eq:klinf-multi} \\
            \text{and} \quad  
            &\klinfmulti(P, \cdot, \mc{P}_0)^{-1}(x) = \inf \{r \geq 0: \klinfmulti(P, r, \mc{P}_0) \geq x \}. \label{eq:klinf-multi-inv}
        \end{align}
        \begin{proposition}
        \label{prop:multivariate-lower-bound-1}
             Suppose $\mc{C}$ denotes any strategy for constructing confidence sets for the mean vector $\mu$ satisfying~\eqref{eq:mutltivariate-width-condition-1} for all $P \in \mc{P}_0 \subset \mc{P}(\mc{X})$ for $\mc{X} = \mathbb{R}^d$.   Introduce $a(n, \alpha) = \frac{ \log\lp (1-\alpha)^{1-\alpha} \alpha^{2\alpha-1} \rp }{n}$, and 
            \begin{align}
                w^*(n, P, \alpha)  =  \klinfmulti(P, \cdot, \mc{P}_0)^{-1}\lp {a(n, \alpha)}\rp, 
            \end{align}
            where $\klinfmulti(P, \cdot, \mc{P}_0)^{-1}$ was defined in~\eqref{eq:klinf-multi-inv}.  Then, the width of the confidence set must satisfy $w(n, P, \alpha) \geq w^*(n, P, \alpha)$. 
        \end{proposition}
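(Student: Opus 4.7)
The plan is to mirror the proof of \Cref{prop:lower-bound-1} essentially verbatim, exploiting the same confidence-set/hypothesis-test duality and replacing one-sided mean comparisons by $\ell_2$-ball comparisons. Fix any $P \in \mc{P}_0$ with mean vector $\mu_P$ and abbreviate $w \defined w(n, P, \alpha)$. For each admissible competitor $Q \in \mc{P}_0$ satisfying $\|\mu_Q - \mu_P\|_2 > w$, I would consider the simple binary hypothesis test $H_0\co P^* = P$ versus $H_1\co P^* = Q$ based on $X_1, \ldots, X_n \simiid P^*$, and build the test
\begin{align}
\Psi_n \defined \ind\{\mu_Q \in \mc{C}(X_1^n, \alpha)\},
\end{align}
which rejects $H_0$ precisely when the reported confidence set contains the alternative mean vector.

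The next step would be to verify that $\Psi_n$ controls both error probabilities at level $\alpha$. Under $H_0$, the coverage guarantee for $\mc{C}$ ensures $\mu_P \in \mc{C}(X_1^n, \alpha)$ with probability at least $1-\alpha$; combining this with the diameter condition~\eqref{eq:mutltivariate-width-condition-1} and the strict inequality $\|\mu_Q - \mu_P\|_2 > w$ forbids $\mu_P$ and $\mu_Q$ from lying in $\mc{C}(X_1^n, \alpha)$ simultaneously, giving $\mathbb{P}_P(\Psi_n = 1) \leq \alpha$. Symmetrically, coverage under $Q$ yields $\mathbb{P}_Q(\Psi_n = 0) \leq \alpha$. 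Applying the data-processing inequality to the coarsening $X_1^n \mapsto \Psi_n$ and using the tensorization identity $\dkl(P^{\otimes n}, Q^{\otimes n}) = n\,\dkl(P, Q)$, exactly as in the proof of \Cref{prop:lower-bound-1}, then yields
\begin{align}
\dkl(P, Q) \;\geq\; a(n, \alpha).
\end{align}

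To close the argument, I would take an infimum over all admissible $Q$. Since the above inequality holds for every $Q \in \mc{P}_0$ with $\|\mu_Q - \mu_P\|_2 > w$, the definition~\eqref{eq:klinf-multi} gives $\klinfmulti(P, w + \varepsilon, \mc{P}_0) \geq a(n, \alpha)$ for every $\varepsilon > 0$. Monotonicity of $r \mapsto \klinfmulti(P, r, \mc{P}_0)$ together with a standard $\varepsilon \downarrow 0$ passage yields $\klinfmulti(P, w, \mc{P}_0) \geq a(n, \alpha)$, and inverting via~\eqref{eq:klinf-multi-inv} produces the desired conclusion $w(n, P, \alpha) = w \geq \klinfmulti(P, \cdot, \mc{P}_0)^{-1}(a(n, \alpha))$.

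The key structural change compared to the univariate proof is that the two one-sided tests used in \Cref{prop:lower-bound-1}, one built around $\klinf$ and the other around $\klinfminus$, collapse to a single two-sided test because the $\ell_2$-diameter bound~\eqref{eq:mutltivariate-width-condition-1} is directionally symmetric and pairs naturally with the norm constraint $\|\mu_Q - \mu_P\|_2 \geq r$ appearing in~\eqref{eq:klinf-multi}; consequently, no outer maximum appears in the final bound. I do not expect a serious obstacle, but the one point requiring a little care is the regularity of $\klinfmulti(P, \cdot, \mc{P}_0)$ needed to pass from the strict inequality $\|\mu_Q - \mu_P\|_2 > w$ to the non-strict counterpart and then to invert. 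This is a standard monotone/lower-semicontinuity argument, but it may need class-specific assumptions on $\mc{P}_0$ (e.g., closure under weak convergence of the mean functional), analogous to those implicitly used for $\klinf$ and $\klinfminus$ in the univariate setting.
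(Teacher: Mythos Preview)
Your proposal is correct and follows essentially the same approach as the paper: set up the simple binary test $\Psi_n = \ind\{\mu_Q \in \mc{C}(X_1^n,\alpha)\}$ for any $Q$ with $\|\mu_Q-\mu_P\|_2 > w(n,P,\alpha)$, use the diameter bound to control both error probabilities at level $\alpha$, apply tensorization and data processing to obtain $\dkl(P,Q)\ge a(n,\alpha)$, then take the infimum over $Q$ and invert via monotonicity. Your remark that the two one-sided arguments of the univariate case collapse into a single test here, and your attention to the strict-versus-nonstrict passage, are both apt; the paper handles the latter point more tersely by taking the infimum directly and appealing to monotonicity.
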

        The proof of this statement follows by adapting the arguments developed for proving~\Cref{prop:lower-bound-1}, and the details are in~\Cref{proof:multivariate-lower-bound-1}. 

        Next, we introduce the notion of effective width for the multivariate case, that is a straightforward generalization of the analogous univariate term stated in~\Cref{def:effective-width}. 
        Let $\mc{C}$ denote a method for constructing confidence sequences for the mean $\mu$ of a distribution $P^* \in \mc{P}_0 \subset \mc{P}(\mc{X})$. Given a stream of observations $X_1, X_2, \ldots \simiid P^*$,  define the random stopping time 
        \begin{align}
            T_w \equiv T_w(P^*, \alpha) \defined \inf \lbr n \geq 1:  \sup_{x, x' \in \mc{C}(X^n, \alpha)} \, \|x-x'\|_2 \leq w \rbr,  \quad \text{for } w \in (0, \infty). 
        \end{align}
        Then, the effective width of the CS  after $n$ observations, is defined as:
        \begin{align}
            w_e(n, P^*, \alpha) \defined \inf \lbr w \geq 0: \mathbb{E}[T_w(P^*, \alpha)] \leq n \rbr. 
        \end{align}
        Our next result obtains a lower bound on the effective width of any scheme for constructing a CS for the mean vector based on $\iid$ observations. 
        \begin{proposition}
        \label{prop:multivariate-lower-bound-2}
            Suppose $\mc{C}$ is a method for constructing confidence sequences that satisfies $\mathbb{E}[T_w(P, \alpha)] < \infty$ for all $w>0$, and for all $P \in \mc{P}_0 \subset \mc{P}(\mc{X})$ for $\mc{X} = \mathbb{R}^d$. With  $a(n, \alpha) \defined  \log\lp (1-\alpha)^{1-\alpha} \alpha^{2\alpha-1} \rp /n$, introduce the following term: 
            \begin{align}
                w^*(n, P, \alpha)  =  \klinfmulti(P, \cdot, \mc{P}_0)^{-1}\lp {a(n, \alpha)} \rp. 
           \end{align}
           Then the effective width of the CS constructed by strategy $\mc{C}$ using \iid observations from $P \in \mc{P}_0$ must satisfy $w_e(n, P, \alpha) \geq w^*(n, P, \alpha)$. 
        \end{proposition}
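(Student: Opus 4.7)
The plan is to mirror the proof of \Cref{prop:lower-bound-2}, replacing the univariate information projections $\klinf$ and $\klinfminus$ with the multivariate quantity $\klinfmulti$ defined in~\eqref{eq:klinf-multi}, and replacing interval-length by the $\ell_2$-diameter used in the definition of the multivariate effective width. The starting point will be a reduction from the CS method $\mc{C}$ to a sequential two-point hypothesis test. Specifically, fix any alternative distribution $Q \in \mc{P}_0$ with $\|\mu_Q - \mu_P\|_2 > w$, where $w > w_e(n, P, \alpha)$ is arbitrary. I would use the stopping time $T_w(P, \alpha)$ associated with $\mc{C}$ under $P$ (which is finite in expectation by hypothesis) and define a test $\Psi$ that rejects $H_0 : P^* = P$ in favor of $H_1 : P^* = Q$ if $\mu_Q \in \mc{C}(X^{T_w}, \alpha)$, and accepts otherwise.

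The second step will control both error types at level $\alpha$. Under $P$, coverage gives $\mu_P \in \mc{C}(X^{T_w}, \alpha)$ with probability at least $1-\alpha$; since the stopped confidence set has $\ell_2$-diameter at most $w < \|\mu_P - \mu_Q\|_2$, the points $\mu_P$ and $\mu_Q$ cannot simultaneously lie in it, so $\mu_Q \notin \mc{C}(X^{T_w}, \alpha)$ with probability at least $1-\alpha$, giving type-I error $\leq \alpha$. The type-II error is bounded symmetrically using coverage under $Q$. Note that I will also need the same finite-expectation property under $Q$; this can be arranged either by imposing $\mathbb{E}_Q[T_w] < \infty$ directly (analogous to the univariate statement) or by an absolute-continuity/truncation argument that truncates at a large deterministic $N$ and lets $N \to \infty$ at the end.

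The third step applies the standard sequential data-processing inequality for two-point tests between $P^{\otimes T}$ and $Q^{\otimes T}$ (a stopping-time version of the chain rule for KL, combined with Wald's identity). This yields
\begin{equation*}
\mathbb{E}_P[T_w] \cdot \dkl(P, Q) \;\geq\; \log\!\bigl((1-\alpha)^{1-\alpha} \alpha^{2\alpha-1}\bigr),
\end{equation*}
exactly as in the univariate proof producing the constant $a(n,\alpha)$. Since $w > w_e(n, P, \alpha)$, the definition of effective width gives $\mathbb{E}_P[T_w] \leq n$, so $\dkl(P, Q) \geq a(n, \alpha)$. Because $Q$ was an arbitrary distribution in $\mc{P}_0$ with $\|\mu_Q - \mu_P\|_2 > w$, taking the infimum over all such $Q$ delivers $\klinfmulti(P, w, \mc{P}_0) \geq a(n, \alpha)$, and the monotonicity/right-continuity properties of $\klinfmulti(P, \cdot, \mc{P}_0)$ (in its radius argument) then give $w \geq \klinfmulti(P, \cdot, \mc{P}_0)^{-1}(a(n, \alpha))$. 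Letting $w \downarrow w_e(n, P, \alpha)$ yields the claim.

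The main obstacle I anticipate is the subtlety in invoking the sequential KL inequality with exactly the constant $\log((1-\alpha)^{1-\alpha}\alpha^{2\alpha-1})$; the usual statement produces $\dkl(\mathrm{Bern}(\alpha)\|\mathrm{Bern}(1-\alpha))$, and matching it to the form of $a(n,\alpha)$ used in the univariate companion result requires reusing the exact manipulation from the proof of \Cref{prop:lower-bound-2}. A secondary technical point is ensuring the sequential test is well-defined when $\mathbb{E}_Q[T_w] = \infty$, which is handled by a routine truncation-and-limit argument. Beyond these points, every remaining step is a direct lifting of the univariate argument, since nothing in the reduction uses the dimension $d$ of the ambient space except the triangle/separation inequality for $\ell_2$-balls, which is automatic.
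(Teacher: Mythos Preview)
Your proposal is correct and follows essentially the same route as the paper's proof: set up a two-point sequential test using the CS, control both errors at level $\alpha$ via the coverage guarantee plus the diameter separation, apply Wald's identity together with the stopped data-processing inequality to get $\mathbb{E}_P[T]\,\dkl(P,Q)\geq\log\bigl((1-\alpha)^{1-\alpha}\alpha^{2\alpha-1}\bigr)$, bound the expected stopping time by $n$ via the definition of effective width, take the infimum over $Q$, and invert. The only cosmetic differences are that the paper labels the hypotheses as $H_0:P^*=Q$ versus $H_1:P^*=P$ (so the expectation in Wald's identity is taken under $H_1$, which is $P$) and defines its stopping time as the first time the diameter drops below $w_0=\|\mu_Q-\mu_P\|_2$ rather than below your user-chosen $w$; neither change affects the argument, and your version is arguably slightly more direct since the stopping time does not depend on the alternative $Q$.
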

        The proof of this result is also a simple generalization of the corresponding univariate result~(\Cref{prop:lower-bound-2}), and we present the details in~\Cref{proof:multivariate-lower-bound-2}. 
    
    \subsection{Sampling without replacement}
    \label{subsec:without-replacement}
     Suppose $\Xsample_M$ denotes a collection of $M$ numbers, $\{x_1, \ldots, x_M\}$, each lying in the unit interval $[0,1]$. Define the mean and variance terms as 
    \begin{align}
        \mu \equiv \mu_M = \frac{1}{M} \sum_{i=1}^M x_i, \quad \text{and} \quad \sigma^2 \equiv \sigma_M^2 = \frac{1}{M} \sum_{i=1}^M (x_i- \mu)^2. 
    \end{align}
    \begin{sloppypar}
    Suppose $X_1, X_2, \ldots$ are drawn uniformly from this set without replacement~(\wor); that is $X_1 \sim \text{Uniform}(\Xsample_M)$, $X_2 \sim \text{Uniform}\lp \Xsample_M \setminus \{X_1\} \rp$, and so on. Our goal is to construct a high probability (i.e, with probability at least $1-\alpha$, for a given $\alpha$) estimate of the mean $\mu$, based on these observations drawn \wor. We being be recalling some existing results on this topic most relevant to us. 
    \end{sloppypar}

    As in the without replacement case, we can construct CIs based on standard exponential concentration inequalities. However, one key feature of the \wor setting is that the uncertainty about the mean $\mu$ rapidly decays to zero (exactly) as the sample-size $n$ approaches $M$. This fact was captured by~\citet{serfling1974probability}, who obtained an improved version of an earlier inequality by~\citet{hoeffding1963probability}, with the term $n$ replaced by $\frac{n}{1 - (n-1)/M}$. \citet{bardenet2015concentration} obtained a slight improvement of Serfling's result when $n \geq M/2$, and then used it to derive variants of Hoeffding-Serfling, Bernstein-Serfling, and Empirical-Bernstein-Serfling inequalities. Finally, \citet{waudby2023estimating} showed that their techniques also extend easily to the \wor  case, and in particular, they proposed  \wor variants of their \prpieb CI and betting CI. We discuss the details about the construction of these CIs in~\Cref{appendix:wor-sampling}.

    In this section, we introduce an analog of the first-order limiting width defined in~\eqref{eq:first-order} for the \wor case. First we state our main assumption. 
    \begin{assumption}
        \label{assump:eb-cs-limiting-width}
        Considering a sequence of problems $(\Xsample_M)_{M \geq 1}$, with $\mu_M \defined \frac{1}{M} \sum_{i=1}^M x_i$, assume that: 
        \begin{align}
            \lim_{M \to \infty} \frac{1}{M} \sum_{i=1}^M (x_i - \mu_M)^2 = \lim_{M \to \infty} \sigma_M^2 = \sigma^2 >0. 
        \end{align}
        A simpler version of the above condition is assuming that $\sigma_M^2=\sigma^2$ and $\mu_M=\mu$ for all $M \in \mathbb{N}$. 
    \end{assumption}
    To define the limiting width of the CI in the \wor case, we fix a $\rho \in (0,1]$, and consider the limiting value of the scaled width of the CI based on $n = \lceil \rho M \rceil$ observations, as $M \to \infty$. That is, 
    \begin{align}
        \gamma_1(\rho) \defined \limsup_{M \to \infty, n = \lceil \rho M \rceil} \, \sqrt{n} w_n. 
    \end{align}
    The main result of this section is a \wor analog of~\Cref{prop:betting-CI-vs-EB-CI}. In particular, it says that for the same choice of the bets, the limiting width of the \wor betting CI is never larger than that of the \wor \prpieb CI. Furthermore, both of these CIs capture the improvement in the width of the CI when $n$ approaches $M$, or equivalently when $\rho \to 1$. 
    \begin{proposition}
        \label{prop:wor-limiting-width}
        Suppose~\Cref{assump:eb-cs-limiting-width} is true. Then, with $n = \lfloor \rho M \rfloor$ for some $\rho>0$, we have 
        \begin{align}
             \gamma_1^{(bet)}(\rho) \leq \gamma_1^{(\prpieb)}(\rho) =  2\sigma \sqrt{ 2\log (2/\alpha)} \lp  \frac{\rho}{-\log\lp1-\rho\rp} \rp. 
        \end{align}
    \end{proposition}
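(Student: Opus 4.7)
The plan is to mirror the argument used to prove \Cref{prop:betting-CI-vs-EB-CI}, adapted to the finite-population (WoR) setting. First, I will write the \wor \prpieb CI explicitly: by construction, it takes the form $w_n^{(\prpieb)} = 2 \cdot \frac{\log(2/\alpha)+ 4\sum_{t=1}^n \psi_E(\lambda_{t,n})(X_t-\muhat_{t-1})^2}{\sum_{t=1}^n \lambda_{t,n}}$, with the \wor bets $\lambda_{t,n}$ scaled by a factor involving the remaining population size, of the order $1/\sqrt{M\, V_{t-1}(1-(t-1)/M)}$ (as given in the appendix on \wor sampling). The first step is then to compute $\lim_{M\to\infty, n=\lceil\rho M\rceil} \sqrt{n}\, w_n^{(\prpieb)}$ under \Cref{assump:eb-cs-limiting-width}.

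Second, to evaluate that limit I will combine (i) a finite-population LLN showing that $V_{t-1} \to \sigma^2$ almost surely, uniformly for $t$ in any interval of the form $[\lfloor \eps n \rfloor, n]$ for $\eps > 0$, and (ii) the Riemann-sum approximation
\begin{align}
    \frac{1}{M}\sum_{t=1}^{\lceil\rho M\rceil} \frac{1}{1-(t-1)/M} \;\longrightarrow\; \int_0^\rho \frac{ds}{1-s} \;=\; -\log(1-\rho).
\end{align}
Together these yield $\sum_{t=1}^n \lambda_{t,n} \sim \sqrt{2n\log(2/\alpha)/\sigma^2}\cdot (-\log(1-\rho))/\rho$ (with the numerator $\log(2/\alpha)+4\sum_t\psi_E(\lambda_{t,n})(X_t-\muhat_{t-1})^2$ contributing a matching lower-order term), and after collecting factors, $\sqrt{n}\, w_n^{(\prpieb)}\to 2\sigma\sqrt{2\log(2/\alpha)}\cdot \rho/(-\log(1-\rho))$, giving the claimed $\gamma_1^{(\prpieb)}(\rho)$.

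Third, to control the betting CI, I will replicate the envelope-plus-Taylor-expansion strategy of \Cref{prop:betting-CI-vs-EB-CI}. Namely, I will first identify a slightly enlarged interval $C_n^\dagger \supseteq C_n^{(bet)}$ whose half-width is $O(1/\sqrt{n})$ almost surely, so that for $m\in C_n^\dagger$ the bets $\lambda_{t,n}(X_t-m)$ are uniformly small as $n\to\infty$. On this narrow band I can replace $\log(1+\lambda_{t,n}(X_t-m))$ by its second-order Taylor expansion with an explicit remainder, uniformly in $m$; summing the expansion reproduces, up to $o(1/\sqrt{n})$, the log-wealth process implicit in the \prpieb construction. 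Inverting the wealth bound $W_n(m)\ge 1/\alpha$ in this regime then gives $w_n^{(bet)} \le w_n^{(\prpieb)} + o(1/\sqrt{n})$ almost surely, and taking $\limsup$ delivers $\gamma_1^{(bet)}(\rho) \le \gamma_1^{(\prpieb)}(\rho)$.

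The main obstacle I anticipate is not the \prpieb calculation itself (which is essentially a Riemann-sum plus LLN) but rather verifying that the auxiliary concentration and uniform approximation steps of \Cref{prop:betting-CI-vs-EB-CI} genuinely carry over in the \wor setting. In particular, the uniform-in-$m$ Taylor remainder bound and the LLN for $V_{t-1}$ need to be re-established because the observations are now exchangeable (rather than \iid) and exhibit reverse-martingale structure. This should be achievable using the \wor concentration results of~\citet{bardenet2015concentration} together with the reverse-martingale maximal inequality, but care is required to make sure the error terms remain $o(1/\sqrt{n})$ uniformly on $C_n^\dagger$ in the regime $n = \lceil \rho M\rceil$, and in particular that the constants remain well-behaved as $\rho\uparrow 1$, since the \prpieb limit itself vanishes there.
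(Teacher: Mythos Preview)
Your high-level strategy matches the paper's: compute $\gamma_1^{(\prpieb)}(\rho)$ by a Riemann-sum/LLN argument, then bound the betting CI via the envelope-plus-quadratic-expansion route from \Cref{prop:betting-CI-vs-EB-CI}. The Riemann sum $\frac{1}{M}\sum_{t\le\lceil\rho M\rceil}(1-(t-1)/M)^{-1}\to-\log(1-\rho)$ is exactly the computation the paper does (via harmonic numbers) for the denominator.

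However, there is a bookkeeping slip and one genuine gap.

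\textbf{Bookkeeping.} In the paper's \wor construction the bets are \emph{unchanged} from the with-replacement case, $\lambda_{t,n}=\sqrt{2\log(2/\alpha)/(nV_{t-1})}$; the \wor correction enters because the denominator of the \prpieb width is $\sum_{t}\lambda_{t,n}\bigl(1+\tfrac{t-1}{M-(t-1)}\bigr)=\sum_t\lambda_{t,n}\tfrac{M}{M-(t-1)}$, not just $\sum_t\lambda_{t,n}$. Your stated form of the bets, $\lambda_{t,n}\asymp 1/\sqrt{MV_{t-1}(1-(t-1)/M)}$, would give $\sum_t\lambda_{t,n}\asymp\sqrt{M}(1-\sqrt{1-\rho})$, not the $-\log(1-\rho)$ factor you claim. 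Your Riemann sum is the right one, but it computes the extra multiplicative factor in the denominator, not the bets themselves.

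\textbf{The real gap: time-varying centering.} The \wor wealth process is
\[
W_n(m)=\prod_{t=1}^n\bigl(1+\lambda_{t,n}(X_t-m_t)\bigr),\qquad m_t=\frac{Mm-\sum_{i<t}X_i}{M-t+1},
\]
not $\prod_t(1+\lambda_{t,n}(X_t-m))$ as you wrote. After applying Fan's inequality the linear piece $\sum_t\lambda_{t,n}(X_t-m_t)$ rearranges nicely into $(\mutilde_n-m)\sum_t\lambda_{t,n}\tfrac{M}{M-(t-1)}$ (this is where the \wor factor actually comes from), but the quadratic piece is $\sum_t\psi_E(\lambda_{t,n})(X_t-m_t)^2$, with $m_t$ data-dependent and time-varying. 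To match the \prpieb numerator you must show that, for $m\in\CItilde$,
\[
(X_t-m_t)^2\le (X_t-\muhat_{t-1})^2+o(1)\quad\text{in an appropriate summable sense.}
\]
The paper does this by decomposing through $\mu_t$ and $\muhat_{t-1}$ and using $|m_t-\mu_t|=\tfrac{M}{M-t+1}|m-\mu|\le\tfrac{1}{1-\rho}|m-\mu|$ together with the concentration of $\mutilde_n$ and $\muhat_{t-1}$. Your proposal's ``uniform-in-$m$ Taylor remainder'' framing and the obstacles you list (exchangeability, reverse martingales) miss this point entirely: the new difficulty is not the remainder in the Taylor/Fan bound, but the fact that the centering point itself moves with $t$ and with the data. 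Without this step the inequality $w_n^{(bet)}\le w_n^{(\prpieb)}+o(1/\sqrt{n})$ does not follow.
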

        The proof of this statement is in~\Cref{proof:wor-limiting-width}. 
        \begin{sloppypar}
            \begin{remark}
                Consider the case when $\rho=1/2$. Then, we have $\lim_{M \to \infty} \sqrt{n} w_n =  \frac{2\sigma \sqrt{2 \log(2/\alpha)}}{2\, \log 2} \approx 1.44 \sigma \sqrt{2 \log(2/\alpha)}$. In comparison,  the Bernstein CI derived  by~\citet{bardenet2015concentration} with the knowledge of the variance, has a limiting value of $2\sigma\sqrt{2 \log(2/\alpha)(1-1/2)} \approx 1.414 \sigma \sqrt{2 \log(2/\alpha)}$. While close, the Bernstein CI of \citet{bardenet2015concentration} has a better limiting width. Investigating whether the limiting widths of betting and \prpieb can be improved, as well as establishing the fundamental performance limits in the \wor case~(i.e., analogs of~\Cref{prop:lower-bound-1} and~\Cref{prop:lower-bound-2}) are interesting directions for future work.  
            \end{remark}
        \end{sloppypar}       

    \subsection{Second-order limiting width}
    \label{subsec:second-order}
        In~\eqref{eq:first-order}, we defined the notion of the first-order limiting width~(denoted by $\gamma_1$) to compare different CIs. Since the width of most practical CIs converges to zero at $\mc{O}(1/\sqrt{n})$ rate, their first-order limiting width is either a constant~(for Hoeffding, Bernstein, \mpeb, \prpieb), or upper bounded by a constant~(betting CI)  almost surely. In this section, we explore the notion of second-order limiting widths for CIs. 

        \begin{definition}
            \label{def:second-order-limiting-width} 
            Let $\mc{C}$ denote a method of constructing CIs: that is, for any $n \geq 1$, $\alpha \in (0,1)$, and $(X_1, \ldots, X_n) \simiid P^*$, the set $C_n = \mc{C}(X^n, \alpha)$ is a level-$(1-\alpha)$ CI for the mean $\mu$. Denote the width of $C_n$ with $2w_n$, and assume that its first-order limiting width, $\gamma_1$, is a constant almost surely. Then, we define the second-order limiting width as 
            \begin{align}
                \gamma_2 = \lim_{n \to \infty} \,n \times \lp w_n - \frac{\gamma_1}{\sqrt{n}} \rp.  
            \end{align}
        \end{definition}
        For  CIs with deterministic widths, such as Hoeffding or Bernstein, $\gamma_2$ is a constant. For example, $\gamma_2^{(H)} = 0$, and $\gamma_2^{(B)} = 4 \log(2/\alpha)/3$. For CIs with random widths, such as \mpeb, due to the fluctuations in the estimated parameter being inflated by a factor of $n$, characterizing $\gamma_2$ in an almost sure sense is ruled out. Instead, we need to explore weaker forms of convergence for the second-order limiting widths of such CIs. For example, we can prove that $\gamma_2^{(\mpeb)}$ converges in distribution to a Gaussian random variable with mean $14\log(4/\alpha)/3$, as we show in our next result. In the statement of this result, we use $\mu_4$ to denote the fourth central moment of $X$; that is, $\mu_4 = \mathbb{E}[(X-\mu)^4]$.  
        \begin{proposition}
            \label{prop:second-order-mpeb} With $w_n^{(\mpeb)}$ denoting the  width of the \mpeb CI, we have 
            \begin{align}
                n \lp w_n^{(\mpeb)} - \frac{\gamma_1^{(\mpeb)}}{\sqrt{n}} \rp = \frac{14 \log(4/\alpha)}{3} + T^{(\mpeb)}_n + o_P(1), 
            \end{align}
            where $o_P(1)$ is a term converging to $0$ in probability, and $T_n^{(\mpeb)}$ denotes a zero-mean statistic, that converges in distribution to $N(0, 2\log(4/\alpha)(\mu_4 - \sigma^4))$. Similarly, the \prpieb CI satisfies 
             \begin{align}
                 n \lp w_n^{(\prpieb)} - \frac{\gamma_1^{(\prpieb)}}{\sqrt{n}} \rp = \frac{4 \log(2/\alpha)}{3} + T^{(\prpieb)}_n + o_P(1), 
            \end{align}
            where $T_n^{(\prpieb)}$ is again a zero-mean random variable. 
        \end{proposition}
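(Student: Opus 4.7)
The plan is to handle the two cases separately since the MP-EB width is algebraically simple and succumbs directly to the delta method, while the PrPl-EB width is a ratio of data-dependent sums that requires a more delicate Taylor expansion.

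For the MP-EB case, I would start from the elementary identity
$$n\bigl(w_n^{(\mpeb)} - \gamma_1^{(\mpeb)}/\sqrt{n}\bigr) = 2\sqrt{2\log(4/\alpha)}\,\sqrt{n}(\sigmahat_n - \sigma) + \frac{14\,n\log(4/\alpha)}{3(n-1)},$$
where the second summand converges deterministically to $14\log(4/\alpha)/3$, supplying the claimed leading constant, and the first is the candidate for $T_n^{(\mpeb)}$. To characterize its law, I would first apply the CLT to the centered empirical second moment to obtain $\sqrt{n}(\sigmahat_n^2 - \sigma^2)\convdist N(0,\mu_4-\sigma^4)$, absorbing the $o_P(1)$ correction that arises from replacing $\mu$ by $\muhat_n$ in the variance estimator. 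Composing with the delta method for $g(x)=\sqrt{x}$ then yields a centered Gaussian limit for $\sqrt{n}(\sigmahat_n-\sigma)$, and hence for $T_n^{(\mpeb)}$, whose asymptotic variance is proportional to $\log(4/\alpha)(\mu_4-\sigma^4)$.

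For the PrPl-EB case, the plan is to write $w_n^{(\prpieb)} = (2L + 8 S_n)/L_n$ with $L=\log(2/\alpha)$, $S_n = \sum_{t=1}^n \psi_E(\lambda_{t,n})(X_t-\muhat_{t-1})^2$ and $L_n = \sum_{t=1}^n \lambda_{t,n}$, and then expand both numerator and denominator to second order around their limiting values. Since $V_{t-1}\to\sigma^2$ and $\muhat_{t-1}\to\mu$ almost surely, I would substitute the Taylor expansion $\psi_E(\lambda) = \lambda^2/8 + \lambda^3/12 + O(\lambda^4)$ and expand $\lambda_{t,n}$ around the deterministic bet $\bar\lambda = \sqrt{2L/(n\sigma^2)}$ using $1/\sqrt{v} = 1/\sigma - (v-\sigma^2)/(2\sigma^3) + O((v-\sigma^2)^2)$. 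The leading-order balance recovers $w_n \sim \gamma_1^{(\prpieb)}/\sqrt{n}$; a careful accounting of the $1/n$ terms, coming from the cubic part of $\psi_E$ together with the curvature correction in $L_n$, produces the claimed deterministic bias $4\log(2/\alpha)/3$. Subtracting this, the residual is a mean-zero linear functional of $\{(X_t-\muhat_{t-1})^2 - \sigma^2\}$ and the induced fluctuations of $\{V_{t-1}-\sigma^2\}$; averaging with the asymptotically stationary weights $\bar\lambda$ and invoking a martingale CLT delivers the zero-mean statistic $T_n^{(\prpieb)}$ with a Gaussian limit.

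The hard part will be the PrPl-EB analysis: the bets $\lambda_{t,n}$ are themselves data-dependent through $V_{t-1}$, so the Taylor expansions of numerator and denominator are entangled, and the target bias $4L/3$ emerges only after identifying a cancellation between the cubic contribution from $\psi_E$ and the second-order expansion of $1/L_n$. The bookkeeping of which cross terms survive at the $O(1/n)$ scale versus which are $o_P(1/n)$ will demand the most care; I would package all higher-order residuals into a single $o_P(1)$ term after multiplying by $n$, and use the uniform convergence $\sup_{t \le n} |V_{t-1}-\sigma^2| = o(1)$ a.s.\ (with a quantitative $O_P(1/\sqrt{t})$ rate from the martingale CLT applied to the predictable variance process) to justify the interchange of summation and Taylor remainders.
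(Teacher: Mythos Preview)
Your plan is essentially the paper's own argument: for MP-EB you use the same algebraic identity plus the delta method applied to $\sqrt{n}(\sigmahat_n^2-\sigma^2)\convdist N(0,\mu_4-\sigma^4)$, and for PrPl-EB you write the width as $A_n/(\sqrt{n}B_n)$, expand $1/B_n$ around $1/b$ with $b=\sqrt{2\log(2/\alpha)}/\sigma$, and then Taylor-expand $\psi_E(\lambda)=\lambda^2/8+\lambda^3/12+O(\lambda^4)$ inside the numerator.

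Two small corrections to your anticipated mechanism. First, the deterministic bias $4\log(2/\alpha)/3$ does \emph{not} arise from a cancellation between the cubic $\psi_E$ term and the curvature of $L_n$; in the paper's computation the denominator fluctuation $\sqrt{n}\,A_n(B_n-b)/b^2$ is purely a zero-mean contribution, and the entire constant comes from the cubic piece of $\psi_E$ in the numerator (after dividing by $b$ and accounting for the factor of $2$ in the definition of $A_n$). So the bookkeeping you flag as ``the hard part'' is simpler than you expect: no cancellation, just one source. Second, you overclaim at the end. The proposition only asserts that $T_n^{(\prpieb)}$ is zero-mean, and the paper explicitly states it could \emph{not} establish distributional convergence for this term. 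Your proposed martingale CLT would have to handle the entanglement between the $(X_t-\muhat_{t-1})^2-\sigma^2$ fluctuations in the numerator and the $V_{t-1}-\sigma^2$ fluctuations in both numerator and denominator, where the latter are themselves running averages of the former; the paper stops at exhibiting $T_n^{(\prpieb)}=2T_{n,1}+T_{n,2}$ as an explicit zero-mean statistic without pinning down its limit.
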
       
        The proof of this result is in~\Cref{proof:second-order-mpeb}. 
        Unlike \mpeb, the leading constant, $4\log(2/\alpha)/3$ in the above expression for the second-order width statistic of \prpieb CI matches the second-order term of the Bernstein CI. However, we have not managed to prove the  convergence of $T_n^{(\prpieb)}$ in any suitable sense. 
        It is an interesting question for future work whether we can modify the \prpieb CI~(or consider an alternative the definition of second-order limiting width, that may be more suited to CIs with random widths), to guarantee that it dominates $T_n^{(\mpeb)}$ in an appropriate sense.

\section{Conclusion}
\label{sec:conclusion}
    In this paper, we provided theoretical justification for the superior empirical performance of the betting confidence intervals~(CIs) and confidence sequences~(CSs) reported by~\citet{waudby2023estimating}. We first showed that the first-order limiting width of the betting CI (with a specific choice of bets) is strictly smaller than that of the empirical Bernstein CIs of~\citet{maurer2009empirical}, and is at least as small as the limiting width of a Bernstein CI constructed with the knowledge of the true variance. Then, we established the near-optimality of the betting CI (constructed using a mixture strategy) in the finite-$n$ regime, by showing that the width of the betting CI is of the same order as the fundamental, method-agnostic, limit. Finally, we showed that a similar near-optimality guarantee also holds for  betting CSs. Together these results provide strong theoretical arguments supporting the empirically observed superiority of betting CI/CS in both the asymptotic, and finite-sample regimes. 

    Our results open up several interesting directions for future work. For instance, our non-asymptotic analysis leaves a gap~(inflation by a constant factor, and a $\mc{O}\lp \log(n)/n\rp$ additive term in the argument of inverse information projection) between the upper bound achieved by the betting CI/CS, and the corresponding method-agnostic lower bounds. Investigating whether this gap can be closed by using more refined analysis techniques is an important question for future work. Another important direction is to extend our results to the without-replacement~(\wor) sampling setting. We presented a preliminary result on this topic in~\Cref{subsec:without-replacement}, but developing the analogs of our non-asymptotic results for the \wor case will require fundamentally new ideas. Finally, as we showed in~\Cref{subsec:multivariate}, our non-asymptotic lower bounds on the size of CIs can be easily extended to more general observation spaces, beyond the univariate case that was the main focus of this paper. Designing CIs/CSs that match these lower bounds for these general observation spaces using the betting framework is another interesting question for future work. 

\subsection*{Acknowledgement}
The authors acknowledge support from NSF grants IIS-2229881 and DMS-2310718. 
 
\bibliographystyle{abbrvnat}
\bibliography{ref}
 \newpage
\begin{appendix}

\section{Additional background}
\label{appendix:background}
    In this section, we collect some of the results from prior work that we use often in proving our theoretical results. We begin by recalling the first Borel-Cantelli lemma~\citep[Theorem~2.3.1]{durrett2019probability}. 
    \begin{fact}[Borel-Cantelli lemma]
        \label{fact:borel-cantelli}
        Consider a probability space $(\Omega, \mc{F}, \mathbb{P})$, and let  $\{E_n \in \mc{F}: n \geq 1\}$ denote a sequence of events, satisfying $\sum_{n =1}^{\infty} \mathbb{P}\lp E_n \rp < \infty$. Then, we have 
        \begin{align}
            \mathbb{P}\lp \limsup_{n \to \infty} E_n \rp = \mathbb{P}\lp  \cap_{N\geq 1} \cup_{n \geq N} E_n \rp = \mathbb{P}\lp E_n \, i.o.\rp =0. 
        \end{align}
        The $i.o.$ above is an abbreviation for ``infinitely often''. 
    \end{fact}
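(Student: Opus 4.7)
The plan is to prove this classical result by directly unpacking the definition of the $\limsup$ event and applying countable subadditivity together with the tail behavior of a convergent series. Even though this is a well-known textbook fact, I would present the argument in three clean steps to make the dependencies on the hypothesis $\sum_n \mathbb{P}(E_n) < \infty$ transparent.

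First, I would rewrite the event of interest in the standard ``$\limsup$'' form. By definition, $\omega \in \limsup_n E_n$ iff $\omega$ lies in $E_n$ for infinitely many $n$, which is exactly the statement $\omega \in \bigcap_{N \geq 1} \bigcup_{n \geq N} E_n$. This identity requires no hypothesis on the $E_n$ and sets up the correct measurable event inside $\mathcal{F}$ to which we apply the probability measure.

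Second, I would use continuity of measure from above. Define $F_N \defined \bigcup_{n \geq N} E_n$; these sets are decreasing in $N$ and each has finite measure (bounded by $1$), so $\mathbb{P}(\bigcap_N F_N) = \lim_{N \to \infty} \mathbb{P}(F_N)$. Then I would apply countable subadditivity to bound $\mathbb{P}(F_N) \leq \sum_{n \geq N} \mathbb{P}(E_n)$.

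The final step is the only place where the summability hypothesis enters: since $\sum_{n=1}^{\infty} \mathbb{P}(E_n) < \infty$, the tail sum $\sum_{n \geq N} \mathbb{P}(E_n)$ tends to $0$ as $N \to \infty$. Combining the three steps gives $\mathbb{P}(\limsup_n E_n) = 0$. There is no genuine obstacle here; the only subtlety worth flagging is that continuity from above requires the sequence $F_N$ to eventually have finite measure, which is automatic for a probability measure. If the paper wanted a slightly different packaging, one could skip continuity from above entirely and just observe $\mathbb{P}(\limsup_n E_n) \leq \mathbb{P}(F_N) \leq \sum_{n \geq N} \mathbb{P}(E_n)$ for every $N$, then send $N \to \infty$ on the right-hand side.
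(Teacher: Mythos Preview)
Your proof is correct and is the standard textbook argument. The paper does not actually prove this statement: it is recorded as a background \texttt{Fact} with a citation to \citet[Theorem~2.3.1]{durrett2019probability}, so there is no proof in the paper to compare against.
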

    We next recall Wald's equation~\citep[Theorem 4.8.6]{durrett2019probability},  a standard result about the expected value of a sum of a random number of \iid random variables. 
    \begin{fact}[Wald's equation]
        \label{fact:walds-equation}
        Suppose $Z_1, Z_2, \ldots$ are \iid with $\mathbb{E}[Z_i] = b$, and let $S_n$ denote the sum $\sum_{i=1}^n Z_i$ for all $n \geq 1$. Then, if $T$ is any stopping time with $\mathbb{E}[T]<\infty$, we have $\mathbb{E}[S_T] = b \mathbb{E}[T]$.  
    \end{fact}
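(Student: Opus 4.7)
The plan is to prove Wald's equation by representing the stopped sum as an infinite series of indicator-weighted terms, exploiting the stopping-time structure to decouple each $Z_n$ from $\{T \geq n\}$, and then swapping sum and expectation via a Fubini/Tonelli argument.

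More concretely, I would first write
\[
S_T = \sum_{n=1}^{T} Z_n = \sum_{n=1}^{\infty} Z_n \mathbbm{1}\{T \geq n\},
\]
which is an almost-sure identity since the tail of the sum is zero once $n > T$. The key structural observation is that $\{T \geq n\} = \{T \leq n-1\}^c$ is $\sigma(Z_1,\ldots,Z_{n-1})$-measurable because $T$ is a stopping time with respect to the natural filtration, and hence the event $\{T \geq n\}$ is independent of $Z_n$ (by the i.i.d.\ assumption). Consequently, for each $n$,
\[
\mathbb{E}\bigl[ Z_n \mathbbm{1}\{T \geq n\} \bigr] = \mathbb{E}[Z_n]\, \mathbb{P}(T \geq n) = b\, \mathbb{P}(T \geq n),
\]
and similarly $\mathbb{E}\bigl[ |Z_n| \mathbbm{1}\{T \geq n\} \bigr] = \mathbb{E}[|Z_1|]\, \mathbb{P}(T \geq n)$.

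To justify interchanging the sum and expectation, I would first carry out the argument with $|Z_n|$ in place of $Z_n$, invoking Tonelli (all terms nonnegative). This gives
\[
\mathbb{E}\!\left[ \sum_{n=1}^{\infty} |Z_n|\, \mathbbm{1}\{T \geq n\} \right] = \mathbb{E}[|Z_1|] \sum_{n=1}^{\infty} \mathbb{P}(T \geq n) = \mathbb{E}[|Z_1|]\, \mathbb{E}[T] < \infty,
\]
using the standard identity $\sum_{n \geq 1} \mathbb{P}(T \geq n) = \mathbb{E}[T]$ for nonnegative integer-valued $T$, together with the hypotheses $\mathbb{E}[T]<\infty$ and $\mathbb{E}[|Z_1|]<\infty$ (implicit in $\mathbb{E}[Z_1]=b$ being well-defined as a finite quantity). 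This integrability then licenses Fubini's theorem for the signed version, yielding
\[
\mathbb{E}[S_T] = \sum_{n=1}^{\infty} \mathbb{E}\bigl[ Z_n \mathbbm{1}\{T \geq n\} \bigr] = b \sum_{n=1}^{\infty} \mathbb{P}(T \geq n) = b\, \mathbb{E}[T],
\]
which is the desired identity.

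The only nontrivial point is the Fubini justification; the rest is essentially bookkeeping. In particular, absolute integrability of $\sum_n Z_n \mathbbm{1}\{T \geq n\}$ is where the hypothesis $\mathbb{E}[T] < \infty$ is genuinely used — without it one cannot interchange sum and expectation, and indeed the conclusion can fail for null-recurrent stopping times. I would flag this explicitly and then proceed with the two-line Tonelli/Fubini argument above.
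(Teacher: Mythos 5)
Your proof is correct: the decomposition $S_T=\sum_{n\ge 1} Z_n\mathbbm{1}\{T\ge n\}$, the observation that $\{T\ge n\}\in\sigma(Z_1,\dots,Z_{n-1})$ is independent of $Z_n$, and the Tonelli-then-Fubini interchange justified by $\mathbb{E}[|Z_1|]\,\mathbb{E}[T]<\infty$ is exactly the standard argument. Note that the paper does not prove this statement at all --- it is quoted as a known fact with a citation to Durrett --- and your write-up is essentially the textbook proof given there, so there is nothing to reconcile.
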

    Next, we recall a useful inequality derived by~\citet{fan2015exponential} that we use in obtaining lower bounds on the wealth process used in the betting CI/CS. 
    \begin{fact}[Fan's inequality]
    \label{def:fans-inequality}
    For any $x \geq -1$, and $\lambda \in (0,1)$, we have the following 
    \begin{align}
        \log(1+\lambda x) \geq \lambda x - 4 \psi_E(\lambda) x^2, \quad \text{where} \quad 
        \psi_E(\lambda) = \frac{-\lambda - \log(1-\lambda)}{4}.  \label{eq:fan-inequality-0}
    \end{align}
    This was first proved by~\citet{fan2015exponential}. 
    \end{fact}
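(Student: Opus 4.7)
The plan is to prove the inequality by converting it into a transparent integral comparison. Starting from the identity $\log(1+\lambda x) = \int_0^\lambda \frac{x}{1+tx}\, dt$ (valid whenever $1+tx > 0$ throughout $[0,\lambda]$), I would subtract $\lambda x = \int_0^\lambda x\, dt$ inside the integral to obtain
\[
\log(1+\lambda x) - \lambda x \;=\; \int_0^\lambda \Bigl(\frac{x}{1+tx} - x\Bigr)\, dt \;=\; -\,x^2 \int_0^\lambda \frac{t}{1+tx}\, dt.
\]
Dividing through by $x^2$ (the case $x=0$ being trivial), the target inequality reduces to the following $x$-free bound, which I then aim to establish:
\[
\int_0^\lambda \frac{t}{1+tx}\, dt \;\leq\; 4\psi_E(\lambda) \;=\; -\lambda - \log(1-\lambda).
\]

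The second step is a pointwise uniform bound on the integrand. For $t \in [0,\lambda] \subset [0,1)$ and any $x \geq -1$, the denominator satisfies $1+tx \geq 1-t > 0$, so $\frac{t}{1+tx} \leq \frac{t}{1-t}$. A direct antiderivative computation then yields
\[
\int_0^\lambda \frac{t}{1-t}\, dt \;=\; \int_0^\lambda \Bigl(\frac{1}{1-t} - 1\Bigr)\, dt \;=\; -\log(1-\lambda) - \lambda \;=\; 4\psi_E(\lambda),
\]
which closes the argument. Equality holds at $x=-1$, which is consistent with the sharpness of the bound.

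The proof is short and there is no real technical obstacle; the only piece of bookkeeping worth isolating is the validity of the integral representation, which requires $1+tx > 0$ for every $t \in [0,\lambda]$. Since $\lambda \in (0,1)$ and $x \geq -1$, the estimate $1+tx \geq 1-t \geq 1-\lambda > 0$ simultaneously justifies the representation and supplies the pointwise comparison $\frac{t}{1+tx} \leq \frac{t}{1-t}$ used above. Thus the whole argument collapses into a single one-line integral estimate once the integral representation is written down.
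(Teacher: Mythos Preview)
Your proof is correct and clean: the integral representation, the pointwise bound $\frac{t}{1+tx} \leq \frac{t}{1-t}$ for $x \geq -1$ and $t \in [0,\lambda)$, and the evaluation $\int_0^\lambda \frac{t}{1-t}\,dt = -\lambda - \log(1-\lambda)$ are all valid, and together they give exactly the claimed inequality with equality at $x=-1$.

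The paper itself does not prove this statement; it records it as a fact and cites \citet{fan2015exponential}. So there is no in-paper proof to compare against. Your integral-comparison argument is a self-contained derivation that the paper does not supply.
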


    Finally, we state some results derived by~\citet{waudby2023estimating} while analyzing their betting CIs/CSs, that will be used several times in our proofs. 
    \begin{fact}
        \label{fact:limiting-computations} Consider the bets $\{\lambda_{t,n}: 1 \leq t \leq n\}$ as used in the definition of \prpieb in~\eqref{eq:prpieb-def}. Then, we have the following: 
        \begin{align}
            \lim_{n \to \infty} \frac{1}{\sqrt{n}} \sum_{t=1}^n \lambda_{t,n} \eqas \sqrt{ \frac{2 \log(2/\alpha)}{\sigma^2}}, \quad \text{and} \quad 
            \lim_{n \to \infty} \sum_{t=1}^n 4\psi_E(\lambda_{t,n}) (X_t - \muhat_{t-1})^2 \eqas \sqrt{ \log(2/\alpha)}. 
        \end{align}
    \end{fact}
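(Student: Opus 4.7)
The plan is to prove both limits by first establishing that the running variance estimate $V_{t-1}$ converges to $\sigma^2$ almost surely, and then substituting this convergence into explicit asymptotic expansions of the quantities of interest.

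For the first step, recall that $tV_t = 1/4 + \sum_{i=1}^t (X_i - \muhat_{i-1})^2$. I would decompose $(X_i - \muhat_{i-1})^2 = (X_i - \mu)^2 + 2(X_i-\mu)(\mu - \muhat_{i-1}) + (\mu - \muhat_{i-1})^2$. The Cesaro average of the first term converges to $\sigma^2$ by the strong law of large numbers. The third term's Cesaro average tends to $0$ because $\muhat_{i-1} \to \mu$ a.s. For the cross term, a Cauchy-Schwarz bound gives $\bigl|\tfrac{1}{t}\sum (X_i-\mu)(\mu - \muhat_{i-1})\bigr| \leq \sqrt{\tfrac{1}{t}\sum (X_i-\mu)^2} \cdot \sqrt{\tfrac{1}{t}\sum(\mu - \muhat_{i-1})^2}$, which vanishes since the second factor is the Cesaro average of a deterministic-looking sequence going to $0$. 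Hence $V_t \to \sigma^2$ a.s., so $1/\sqrt{V_{t-1}} \to 1/\sigma$ a.s.\ (using $V_0 = 1/4 > 0$ to avoid division issues for small $t$).

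For the first limit, I would compute directly:
\begin{align}
\frac{1}{\sqrt{n}} \sum_{t=1}^n \lambda_{t,n} = \sqrt{2\log(2/\alpha)} \cdot \frac{1}{n}\sum_{t=1}^n \frac{1}{\sqrt{V_{t-1}}}.
\end{align}
Applying the Cesaro-mean theorem to the a.s.\ convergent sequence $1/\sqrt{V_{t-1}}\to 1/\sigma$ gives the claimed value $\sqrt{2\log(2/\alpha)/\sigma^2}$.

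For the second limit, I would expand $4\psi_E(\lambda) = -\lambda - \log(1-\lambda) = \lambda^2/2 + \lambda^3/3 + \cdots$ and split
\begin{align}
\sum_{t=1}^n 4\psi_E(\lambda_{t,n})(X_t - \muhat_{t-1})^2 = \underbrace{\sum_{t=1}^n \tfrac{\lambda_{t,n}^2}{2}(X_t-\muhat_{t-1})^2}_{\text{main}} + \underbrace{\sum_{t=1}^n R(\lambda_{t,n})(X_t-\muhat_{t-1})^2}_{\text{remainder}},
\end{align}
where $R(\lambda) = O(\lambda^3)$ near $0$. Since $\lambda_{t,n} = O(1/\sqrt{n})$ uniformly in $t$ (as $V_{t-1}$ is bounded away from $0$) and $(X_t - \muhat_{t-1})^2 \leq 1$, the remainder is $O(n \cdot n^{-3/2}) = o(1)$. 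The main term equals $\frac{\log(2/\alpha)}{n}\sum_{t=1}^n \frac{(X_t - \muhat_{t-1})^2}{V_{t-1}}$, and the average $\frac{1}{n}\sum_t (X_t-\muhat_{t-1})^2/V_{t-1}$ tends to $1$ a.s.\ by a similar Cesaro-type argument: write $1/V_{t-1} = 1/\sigma^2 + (1/V_{t-1} - 1/\sigma^2)$; the first piece gives Cesaro average $\to 1$ (using step one), while the second piece is a bounded sequence $(X_t - \muhat_{t-1})^2 \leq 1$ multiplied by a sequence vanishing a.s., whose Cesaro average thus vanishes by a standard $\epsilon$-$N$ splitting argument.

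The main technical obstacle is the Cesaro-type control of products of non-independent sequences (both $\muhat_{t-1}$ and $V_{t-1}$ depend on the full history $X_1,\ldots,X_{t-1}$), which rules out a direct appeal to the SLLN for the ratios themselves; handling this via the decomposition into a SLLN-controlled piece and a vanishing-coefficient piece, together with uniform boundedness from the $[0,1]$-support assumption, is the heart of the argument. A subsidiary technical point is that one should be mindful whether the claimed right-hand side in the second limit is $\log(2/\alpha)$ rather than $\sqrt{\log(2/\alpha)}$, since the expansion $4\psi_E(\lambda)\sim \lambda^2/2$ together with $\lambda_{t,n}^2 \sim 2\log(2/\alpha)/(n\sigma^2)$ and $\sum_t (X_t-\muhat_{t-1})^2 \sim n\sigma^2$ produces $\log(2/\alpha)$; this is also what is needed for the limiting width computation $\gamma_1^{(\prpieb)} = 2\sigma\sqrt{2\log(2/\alpha)}$ to work out cleanly from the \prpieb width formula.
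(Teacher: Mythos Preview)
Your approach is correct and is essentially the argument the paper uses when it proves the without-replacement analogue of this statement (\Cref{lemma:eb-cs-width-3} in \Cref{proof:eb-cs-width-3}): establish $V_{t-1}\to\sigma^2$ a.s., replace $4\psi_E(\lambda_{t,n})$ by its quadratic approximation $\lambda_{t,n}^2/2$ (controlling the cubic remainder uniformly since $\lambda_{t,n}=O(1/\sqrt{n})$), and then reduce everything to Ces\`aro averages. Note, however, that the paper does not actually prove \Cref{fact:limiting-computations} itself; it is quoted as a fact from \citet{waudby2023estimating}, so there is no ``paper's own proof'' to compare with beyond the WoR analogue.

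Your observation about the right-hand side of the second limit is well taken: the computation (and the paper's own subsequent use of this fact, e.g.\ in the proof of \Cref{lemma:eb-cs-width-3} and in the derivation of $\gamma_1^{(\prpieb)}=2\sigma\sqrt{2\log(2/\alpha)}$) requires the limit to be $\log(2/\alpha)$, not $\sqrt{\log(2/\alpha)}$; the square root in the statement is a typo.
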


\section{Proof of the limiting width of betting CI~(Theorem~\ref{prop:betting-CI-vs-EB-CI})}
    \label{proof:betting-CI-vs-EB-CI}
    
        Following the proof outline presented in~\Cref{sec:limiting-width-betting}, we break down the proof into several smaller components.
        First, we recall some notation: 
        \begin{align}
            \lambda_{t,n} = \sqrt{\frac{2 \log(2/\alpha)}{n V_{t-1}}},\; \text{ for } t\geq 2,  \quad \text{where} \quad 
            V_{t} =  \frac{1}{t} \lp \frac{1}{4} +  \sum_{i=1}^{t}(X_i - \muhat_{i-1})^2 \rp, \quad \text{and} \quad 
            \muhat_{t} = \frac{1}{t} \sum_{i=1}^{t} X_i, 
        \end{align}
        We begin by showing that the betting CI is contained in a larger CI whose width converges to $0$ at the order-optimal $\mc{O}(1/\sqrt{n})$ rate. 
        \begin{lemma}
            \label{lemma:betting-CI-1} Suppose we construct a betting CI based on $n$ \iid observations $X_1, \ldots, X_n$ drawn from $P^*$, with bets $\{\lambda_{t,n}: t \geq 1\}$ for all $m \in [0,1]$, where $\lambda_{1,n} = 0$, and $\lambda_{t,n}$ is as defined above for $t \geq 2$. 
            Then, the betting CI is contained inside a larger CI~(that we denote by $\CIfan$), defined as 
            \begin{align}
                \CIbet \subset \CIfan \defined \lb \frac{ \sum_{t=1}^n\lambda_{t,n} X_t}{\sum_{t=1}^n\lambda_{t,n}} \pm \frac{ \log(2/\alpha) + \sum_{t=1}^n \psi_E(\lambda_{t,n})}{\sum_{t=1}^n \lambda_{t,n}} \rb. 
            \end{align}
            Recall that $\psi_E(\lambda) = \frac{-\lambda - \log(1-\lambda)}{4}$ was defined in~\eqref{eq:fan-inequality-0}. As a result of the above, we have the following bound on the limiting width of the betting CI: 
            \begin{align}
                \limsup_{n \to \infty} \,\sqrt{n} \times |C_n^{\text{bet}}|   \; \stackrel{a.s.}{\leq} \; c_1 \defined \sqrt{ \frac{ \log(2/\alpha)}{2}}\lp \sigma + \frac{1}{\sigma}\rp < \infty. 
            \end{align}
        \end{lemma}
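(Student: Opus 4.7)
The proof naturally splits into two parts: first establishing the containment $\CIbet \subset \CIfan$, and then computing the first-order limit of $\sqrt{n}\,|\CIfan|$.

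For the containment, I would start from the trivial inequality $W_n(m) = \tfrac{1}{2}(W_n^+(m) + W_n^-(m)) \geq \tfrac{1}{2}\max\{W_n^+(m), W_n^-(m)\}$, so that $m \notin \CIbet$ forces $\max\{\log W_n^+(m), \log W_n^-(m)\} \geq \log(2/\alpha)$. Next, apply Fan's inequality (\Cref{def:fans-inequality}) to each factor in the log-wealth: since $X_t - m \in [-1,1]$ and $\lambda_{t,n} \in (0,1)$ for $n$ large enough,
\[
\log W_n^{\pm}(m) \;\geq\; \pm \sum_{t=1}^n \lambda_{t,n}(X_t - m) \;-\; \sum_{t=1}^n 4\psi_E(\lambda_{t,n})(X_t - m)^2.
\]
Bounding $(X_t-m)^2 \leq 1$ to strip out the $m$-dependence in the correction, and solving the resulting two linear inequalities in $m$, produces a closed-form interval centered at $\sum \lambda_{t,n} X_t / \sum \lambda_{t,n}$ with radius proportional to $\log(2/\alpha) + \sum_t \psi_E(\lambda_{t,n})$ (up to constants), which matches the form of $\CIfan$.

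For the limiting-width computation, I would invoke \Cref{fact:limiting-computations} directly for the denominator: $\sum_{t=1}^n \lambda_{t,n}/\sqrt{n} \to \sqrt{2\log(2/\alpha)/\sigma^2}$ almost surely. For the numerator, a parallel calculation using the Taylor expansion $\psi_E(\lambda) = \lambda^2/8 + O(\lambda^3)$, together with the SLLN-driven convergence $V_{t-1} \to \sigma^2$, yields $\sum_{t=1}^n 4\psi_E(\lambda_{t,n}) \to \log(2/\alpha)/\sigma^2$ almost surely. Combining these limits in the half-width expression and simplifying gives
\[
\sqrt{n}\,\tfrac{1}{2}|\CIfan| \;\xrightarrow{a.s.}\; \frac{\log(2/\alpha)\,(1 + 1/\sigma^2)}{\sqrt{2\log(2/\alpha)/\sigma^2}} \;=\; \sqrt{\tfrac{\log(2/\alpha)}{2}}\,\bigl(\sigma + 1/\sigma\bigr),
\]
which delivers the claimed bound after combining with the inclusion $\CIbet \subset \CIfan$.

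The main technical obstacle is the uniform-in-$m$ control of the quadratic correction $\sum_t \psi_E(\lambda_{t,n})(X_t - m)^2$. The naive bound $(X_t - m)^2 \leq 1$ is adequate to get an $\mathcal{O}(1/\sqrt{n})$ envelope and hence a finite $c_1$, but to get a tight constant match, one can instead note a posteriori that every $m$ in the crude envelope lies within $\mathcal{O}(1/\sqrt{n})$ of $\muhat_n$, and replace $(X_t - m)^2$ by $(X_t - \muhat_{t-1})^2$ plus a vanishing remainder; an SLLN argument then yields the sharper constant. A secondary point to verify is that $\lambda_{t,n} \in (0,1)$ so that Fan's inequality applies; this holds for all $t \leq n$ once $n$ is large, since $V_{t-1}$ is bounded below by $1/(4(t-1))$ from the initial seed, and $\lambda_{t,n}$ is of order $1/\sqrt{n}$ on the event that $V_{t-1}$ concentrates around $\sigma^2$. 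For the exceptional event where $V_{t-1}$ is atypically small, Borel--Cantelli (\Cref{fact:borel-cantelli}) can be used to rule out infinitely many violations, giving the almost-sure conclusion.
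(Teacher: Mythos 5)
Your strategy matches the paper's proof exactly: enclose $\CIbet$ in an explicit interval $\CIfan$ via Fan's inequality together with the crude bound $(X_t - m)^2 \leq 1$, then compute $\lim_n \sqrt{n}|\CIfan|$ using the almost-sure limits of $\sum_t \lambda_{t,n}$ and $\sum_t \psi_E(\lambda_{t,n})$; your numerical derivation of the constant is also correct. However, the opening step of the containment argument is stated backwards and with the wrong threshold. You write that ``$m \notin \CIbet$ forces $\max\{\log W_n^+(m), \log W_n^-(m)\} \geq \log(2/\alpha)$,'' but (i) this is the wrong direction of implication to conclude $\CIbet \subset \CIfan$ (as stated it would give $\CIfan \subset \CIbet$), and (ii) even the deduction itself fails: from $W_n(m) \geq 1/\alpha$ and $W_n = \tfrac12(W_n^+ + W_n^-)$ you can only conclude $\max\{W_n^+, W_n^-\} \geq 1/\alpha$, not $\geq 2/\alpha$ (and the bound $W_n \geq \tfrac12\max$ that you cite gives nothing useful here, since it goes the wrong way). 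The step the paper actually uses is: if $m \in \CIbet$, i.e.\ $W_n(m) < 1/\alpha$, then \emph{both} $W_n^+(m) < 2/\alpha$ and $W_n^-(m) < 2/\alpha$, since each of $W_n^{\pm}$ is at most twice their average $W_n$ (using nonnegativity of the other component). Each of these two constraints, combined with Fan's lower bound on $\log W_n^{\pm}$, yields one edge of $\CIfan$. This is a local logical slip, easy to repair, and the rest of your argument is the paper's.

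One smaller point: your final paragraph slightly overstates what is needed here. The crude bound $(X_t - m)^2 \leq 1$ already yields the lemma's claimed constant $c_1 = \sqrt{\log(2/\alpha)/2}\,(\sigma + 1/\sigma)$; the a posteriori replacement of $(X_t - m)^2$ by $(X_t - \muhat_{t-1})^2$ (restricting $m$ to an $\mc{O}(1/\sqrt n)$ band) is not needed for this lemma but is precisely what the paper then does, via \Cref{lemma:betting-CI-2} and \Cref{lemma:betting-CI-3}, to upgrade $c_1$ to the tighter $\prpieb$ limit $\sigma\sqrt{2\log(2/\alpha)}$ in \Cref{prop:betting-CI-vs-EB-CI}.
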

        The proof of this statement is in~\Cref{proof:betting-CI-1}. 
        While this result implies that the width of the betting CI converges to zero at the correct~(i.e., $1/\sqrt{n}$) rate, it does not explain the  fact that the width of the betting CI shrinks appropriately when the variance~($\sigma^2$) is small. Nevertheless, this result will play an important role in obtaining a more refined analysis of the betting CI width, as it allows us to concentrate our analysis in a narrow band of width $\approx 4c_1/\sqrt{n}$ for large enough $n$. First, we state a technical lemma about the concentration of the following mean and variance estimates: 
        \begin{align}
            \muhat_n \defined \frac{1}{n} \sum_{i=1}^n X_i, \quad \mutilde_n \defined  \big(  \sum_{t=1}^n \lambda_{t,n} X_t\big) / \big( \sum_{t=1}^n \lambda_{t,n}\big), \quad \text{and} \quad \sigmatilde_n^2 = \frac{1}{n} \sum_{i=1}^n (X_i - \mu)^2.  \label{eq:empirical-estimates-1}
        \end{align}
        \begin{lemma}
            \label{lemma:betting-CI-2}
            Let $E_t$ denote the intersection of the following three events: 
            \begin{align}
                &E_{t,1} = \lbr |\muhat_t - \mu| \leq \sqrt{\log(t)/t} \rbr, \quad 
                E_{t,2} = \lbr |\sigmatilde_t^2 - \sigma^2_X| \leq \sqrt{\log(t)/t} \rbr, \\ \text{and} \quad 
                &E_{t,3} = \lbr |\mutilde_t - \muhat_t| = \mc{O}\lp (1/\sigma^2)\sqrt{\log(t)/t} \rp\rbr. 
            \end{align}
            Then we have $\mathbb{P}\lp E_t^c \rp \leq 8/t^2$. 
        \end{lemma}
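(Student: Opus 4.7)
The plan is to establish concentration for each of the three events separately and then apply a union bound. The first two events are straightforward: for $E_{t,1}$, since $X_1, \ldots, X_t \in [0,1]$ are i.i.d.\ with mean $\mu$, Hoeffding's inequality gives $\mathbb{P}(|\muhat_t - \mu| > \sqrt{\log(t)/t}) \leq 2\exp(-2t \cdot \log(t)/t) = 2/t^2$. For $E_{t,2}$, observe that $\sigmatilde_t^2 = \frac{1}{t}\sum_{i=1}^t Y_i$ where $Y_i = (X_i - \mu)^2 \in [0,1]$ are i.i.d.\ with $\mathbb{E}[Y_i] = \sigma^2$, so a second application of Hoeffding yields $\mathbb{P}(E_{t,2}^c) \leq 2/t^2$. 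Note that both arguments exploit only the boundedness of the observations and are independent of the betting mechanism.

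The main obstacle is $E_{t,3}$, which concerns the discrepancy between the weighted mean $\mutilde_t$ and the unweighted mean $\muhat_t$. Our first step will be the algebraic identity
\[
    \mutilde_t - \muhat_t \;=\; \sum_{i=1}^t \bigl(w_i - 1/t\bigr)(X_i - \mu), \qquad w_i \;\defined\; \frac{\lambda_{i,t}}{\sum_{j=1}^t \lambda_{j,t}},
\]
obtained by using $\sum_i(w_i - 1/t) = 0$. Since $\lambda_{i,t} = \sqrt{2\log(2/\alpha)/(t V_{i-1})}$, the weights $w_i$ would be exactly $1/t$ if $V_{i-1}$ were constant equal to $\sigma^2$; the deviations therefore reflect how far $V_{i-1}$ drifts from $\sigma^2$. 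By a delta-method-type calculation, $|\lambda_{i,t} - c_t/\sigma|$ is controlled by $|V_{i-1} - \sigma^2|/\sigma^3$ (with $c_t = \sqrt{2\log(2/\alpha)/t}$), so the weight fluctuations are controlled by the sum $\sum_i (V_{i-1} - \sigma^2)^2$, whose expected value is of order $\log(t)/\sigma^0$ by a standard variance computation on the running second moment. The factor $1/\sigma^2$ in the bound arises from combining the sensitivity factor $1/\sigma^3$ with the scale $\sigma$ of the martingale increments.

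Concretely, the cleanest route is to apply Azuma-Hoeffding to the martingale $M_s = \sum_{i=1}^s \lambda_{i,t}(X_i - \mu)$ (predictable because $\lambda_{i,t}$ is $\mc{F}_{i-1}$-measurable), which controls $|\mutilde_t - \mu| = |M_t|/\sum_j \lambda_{j,t}$, and then combine with the concentration of $|\muhat_t - \mu|$ from Step~1 via triangle inequality. The key auxiliary estimates are a lower bound on $\sum_j \lambda_{j,t}$ of order $\sqrt{t}/\sigma$ (valid with high probability since $V_{j-1} \to \sigma^2$) and an upper bound on the predictable quadratic variation $\sum_i \lambda_{i,t}^2$, which is where the $1/\sigma^2$ factor enters. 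Tracking constants so that this step contributes at most $4/t^2$ to the failure probability and taking a union bound with $E_{t,1}^c$ and $E_{t,2}^c$ yields the desired $\mathbb{P}(E_t^c) \leq 8/t^2$.
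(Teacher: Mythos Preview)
Your treatment of $E_{t,1}$ and $E_{t,2}$ is correct and matches the paper's: two applications of Hoeffding's inequality to bounded i.i.d.\ averages, each contributing $2/t^2$ to the union bound.

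The gap is in $E_{t,3}$. Azuma--Hoeffding on $M_s=\sum_{i\le s}\lambda_{i,t}(X_i-\mu)$ requires almost-sure bounds on the increments $|\lambda_{i,t}(X_i-\mu)|\le \lambda_{i,t}$. The only deterministic lower bound available for $V_{i-1}$ is $V_{i-1}\ge 1/(4(i-1))$, which forces $\sum_{i\le t}\lambda_{i,t}^2=O(t)$ and hence $|M_t|=O(\sqrt{t\log t})$ with the desired failure probability; dividing by the deterministic lower bound $\sum_j\lambda_{j,t}\ge c\sqrt{t}$ yields only $|\mutilde_t-\mu|=O(\sqrt{\log t})$, not $O(\sqrt{\log t/t})$. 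You recognize this and appeal to a high-probability bound ``$V_{j-1}\to\sigma^2$'' to get $\sum_i\lambda_{i,t}^2=O(1/\sigma^2)$, but you never say how to obtain \emph{uniform-in-$i$} control of $V_{i-1}$ with total failure probability $4/t^2$. That uniform control is the crux, and once you have it the Azuma step is unnecessary: on that event the weight fluctuations $|w_i-1/t|$ are bounded deterministically and the identity $\mutilde_t-\muhat_t=\sum_i(w_i-1/t)(X_i-\mu)$ can be bounded directly.

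The paper supplies exactly this missing piece via a peeling argument: it partitions $\{1,\dots,t\}$ into $O(\log t)$ dyadic blocks, applies a time-uniform Hoeffding bound on each block with budget $\sim 1/(t^2\log t)$, and takes a union bound to obtain events $G_{t,1}=\{\forall i\le t:\ |\sigmatilde_i^2-\sigma^2|\le\sqrt{3\log t/i}\}$ and $G_{t,2}=\{\forall i\le t:\ |\muhat_i-\mu|\le\sqrt{3\log t/i}\}$, each holding with probability at least $1-2/t^2$. On $G_{t,1}\cap G_{t,2}$ one gets $|V_i-\sigma^2|\lesssim\sqrt{\log t/i}$ for every $i$, and then a direct (deterministic) expansion of $\mutilde_t$ with the resulting weight perturbations yields $|\mutilde_t-\muhat_t|=O((1/\sigma^2)\sqrt{\log t/t})$. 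This gives $\mathbb{P}(E_{t,3}^c)\le 4/t^2$ and the stated $8/t^2$ total. Your sketch would become a valid proof if you inserted this peeling step; without it, the required predictable-variation bound is unproved.
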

         We prove this result by showing that $\mathbb{P}\lp E_{t,i}^c \rp \leq 2/t^2$ for $i=1,2,$ and $\mathbb{P}(E_{t,3}^c) \leq 4/t^2$ by using some standard concentration techniques for bounded observations. Since our objective is to study the asymptotic behavior of the width, we have not attempted to optimize deviation bound in the definition of $E_{t,3}$. The details are in~\Cref{proof:betting-CI-2}.
         
        The two results above set the stage for our main approximation result that will lead to the improved first-order limiting width bound.  To state this result, we  introduce the following new  CI, based on~\Cref{lemma:betting-CI-1}: 
        \begin{align}
            \widetilde{C}_n = [\mutilde_n - 2c_1/\sqrt{n}, \,\mutilde_n + 2c_1/\sqrt{n}], 
        \end{align}
        where recall that $\mutilde_n$ is the empirical estimate introduced in~\eqref{eq:empirical-estimates-1}, and the constant $c_1$ was defined in the statement of~\Cref{lemma:betting-CI-1}. Furthermore, introduce the random time 
        \begin{align}
            N \defined \sup \lbr n \geq 1: |\CIfan| > \frac{4 c_1}{\sqrt{n}} \rbr,  \label{eq:def-N}
        \end{align}
        where $\CIfan$ was introduced in~\Cref{lemma:betting-CI-1}.  We can now state our approximation result. 
        \begin{lemma}
            \label{lemma:betting-CI-3}
            The random variable $N$ is finite almost surely, and for all $n \geq N$ we have $\CIbet \subset \CIfan \subset \CItilde$. 
            Furthermore, the following is true for all $m$ lying in $\CItilde$: 
            \begin{align}
                \boldsymbol{1}_{n \geq N} \times \lp \sum_{t=1}^n \psi_E(\lambda_{t,n})(X_t-m)^2 \boldsymbol{1}_{E_{t-1}} \rp \; \stackrel{a.s.}{\leq} \; \sum_{t=1}^n \psi_E(\lambda_{t,n}) (X_t-\muhat_{t-1})^2 + o(1), 
            \end{align}
            where $\boldsymbol{1}_{E_0} \eqas 1$, and  $E_t$ for $t \geq 1$ was introduced in~\Cref{lemma:betting-CI-2}. 
        \end{lemma}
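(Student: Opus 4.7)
The plan is to handle the three assertions of the lemma in sequence, leveraging~\Cref{lemma:betting-CI-1} and~\Cref{lemma:betting-CI-2}. For the first claim (finiteness of $N$), I would observe that the half-width of $\CIfan$ is $h_n := (\log(2/\alpha) + \sum_{t=1}^n \psi_E(\lambda_{t,n}))/\sum_{t=1}^n \lambda_{t,n}$. Using $\psi_E(\lambda) = \lambda^2/8 + O(\lambda^3)$ together with~\Cref{fact:limiting-computations} and the fact that $\lambda_{t,n}^2 = 2\log(2/\alpha)/(nV_{t-1})$ with $V_{t-1} \to \sigma^2$ almost surely, one obtains $\sum_{t=1}^n \psi_E(\lambda_{t,n}) \to \log(2/\alpha)/(4\sigma^2)$ and hence $\sqrt{n}\cdot 2h_n \to c' := 2\sqrt{\log(2/\alpha)/2}\,(\sigma + 1/(4\sigma))$ almost surely. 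Since $\sigma^2 \leq 1/4$, a short algebraic check gives $c' < c_1 < 4c_1$, so eventually $|\CIfan| < 4c_1/\sqrt{n}$, which shows $N < \infty$ almost surely. The containment claim is then immediate: $\CIbet \subset \CIfan$ by~\Cref{lemma:betting-CI-1}, and for $n \geq N$ the definition of $N$ forces $|\CIfan| \leq 4c_1/\sqrt n$; since $\CIfan$ is symmetric about $\mutilde_n$, its half-width is at most $2c_1/\sqrt n$, so $\CIfan \subset \CItilde$.

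The main approximation estimate is where the real work sits. I would expand
\[
    (X_t - m)^2 \;=\; (X_t - \muhat_{t-1})^2 \;+\; 2(X_t - \muhat_{t-1})(\muhat_{t-1} - m) \;+\; (\muhat_{t-1} - m)^2,
\]
multiply by $\psi_E(\lambda_{t,n})\boldsymbol{1}_{E_{t-1}}$, and sum over $t=1,\dots,n$. Since $(X_t-\muhat_{t-1})^2 \geq 0$, dropping the indicator from the leading term only enlarges it, so it suffices to show that the cross term $R_n^{(1)} := 2\sum_t \psi_E(\lambda_{t,n})(X_t-\muhat_{t-1})(\muhat_{t-1}-m)\boldsymbol{1}_{E_{t-1}}$ and the square term $R_n^{(2)} := \sum_t \psi_E(\lambda_{t,n})(\muhat_{t-1}-m)^2 \boldsymbol{1}_{E_{t-1}}$ are both $o(1)$ almost surely, uniformly over $m \in \CItilde$, on the event $\{n \geq N\}$.

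To this end, on $E_{t-1}$ (for $t \geq 2$) \Cref{lemma:betting-CI-2} gives $|\muhat_{t-1}-\mu| = O(\sqrt{(\log t)/t})$, and on $E_{n-1}$ combined with $m \in \CItilde$ and $n \geq N$ one has $|\mu - m| \leq |\mu - \muhat_n| + |\muhat_n - \mutilde_n| + |\mutilde_n - m| = O(1/\sqrt n)$. Hence $|\muhat_{t-1}-m| = O(\sqrt{(\log t)/t} + 1/\sqrt n)$ and $\psi_E(\lambda_{t,n}) = O(1/n)$, which yields $R_n^{(2)} \leq (C/n)\sum_{t=2}^n ((\log t)/t + 1/n) = O((\log^2 n)/n)$ and $|R_n^{(1)}| \leq (C/n)\sum_{t=2}^n (\sqrt{(\log t)/t} + 1/\sqrt n) = O(\sqrt{(\log n)/n})$, both $o(1)$. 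Since $\sum_t \mathbb{P}(E_t^c) \leq \sum_t 8/t^2 < \infty$, Borel--Cantelli implies $E_{n-1}$ holds for all $n$ beyond some random index almost surely, so the bounds are valid on a probability-one set.

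The hardest part of the argument will be the bookkeeping of dependencies, namely ensuring that the bounds on $R_n^{(1)}, R_n^{(2)}$ are genuinely uniform in $m \in \CItilde$ and hold almost surely rather than merely in probability. The uniformity in $m$ comes essentially for free because the diameter of $\CItilde$ is $O(1/\sqrt n)$ and every bound above depends on $m$ only through the factor $|\mu-m| = O(1/\sqrt n)$; the almost-sure conclusion rides entirely on the Borel--Cantelli application enabled by the summable tail bound $\mathbb{P}(E_t^c) \leq 8/t^2$ from~\Cref{lemma:betting-CI-2}.
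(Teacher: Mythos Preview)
Your proposal is correct and follows essentially the same route as the paper's proof: both arguments expand $(X_t-m)^2$ around $\muhat_{t-1}$, control $|\muhat_{t-1}-m|$ via the triangle inequality using the events $E_{t-1}$ from \Cref{lemma:betting-CI-2}, and conclude by Borel--Cantelli that the remainder is $o(1)$ almost surely. The only cosmetic differences are that the paper collapses the cross and square terms into the single bound $(X_t-m)^2 \le (X_t-\muhat_{t-1})^2 + 3|\muhat_{t-1}-m|$ (using $|x|^2\le |x|$ and $|X_t-\muhat_{t-1}|\le 1$) and then invokes $\sum_t \psi_E(\lambda_{t,n})\to \text{const}$, whereas you keep $R_n^{(1)},R_n^{(2)}$ separate and use the per-term estimate $\psi_E(\lambda_{t,n})=O(1/n)$; also, your ``on $E_{n-1}$'' should read ``on $E_n$'' (which Borel--Cantelli still supplies), and $|\mu-m|$ is $O(\sqrt{(\log n)/n})$ rather than $O(1/\sqrt n)$, neither of which affects the conclusion.
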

        The proof of this statement is in~\Cref{proof:betting-CI-3}. 

        We now have all the components to show that the first-order limiting width is upper bounded by $\gamma_1^{(\prpieb)} \defined \sigma\sqrt{2 \log(2/\alpha)}$.  According to the definition of the random time $N$, we have 
        \begin{align}
            \CIbet \subset \widetilde{C}_n, \quad \text{under the event} \{N \leq n\}. 
        \end{align}
        Next, recall the definition of the wealth process used in the statement of~\Cref{prop:betting-CI-vs-EB-CI}:  
        \begin{align}
            W_n(m) = \frac{1}{2} \lp W_n^+(m) + W_n^-(m) \rp, \quad \text{where} \quad 
            W_n^{\pm}(m) = \prod_{t=1}^n \lp 1 \pm \lambda_{t,n} \lp X_t - m \rp \rp. 
        \end{align}
        As a simple consequence of this choice of the wealth process, we note that  the betting CI  satisfies
        \begin{align}
            \CIbet \subset \cap_{a \in \{-, +\} } \lbr m \in [0,1]: \log(W_n^a(m)) < \log(2/\alpha) \rbr. 
        \end{align}
        This immediately implies the following  under $\{N \geq n\}$: 
        \begin{align}
            \CIbet & \subset \lbr m \in \CItilde: \log\lp W_n^+(m) \rp < \log(2/\alpha) \rbr \\
            &\subset \bigg\{ m \in \widetilde{C}_n: \sum_{t=1}^n \lambda_{t,n}(X_t-m) - 4 \sum_{t=1}^n \psi_E(\lambda_{t,n})(X_t-m)^2 < \log(2/\alpha) \bigg\}. \label{eq:betting-ci-proof-4}
        \end{align}
        This inclusion is due to a direct application of Fan's inequality~(\Cref{def:fans-inequality}) in the definition of $W_n^+(m)$. On rearranging~\eqref{eq:betting-ci-proof-4}, we obtain
        \begin{align}
            \CIbet \subset \bigg\{ m \in \widetilde{C}_n:  m \geq \mutilde_n  - \frac{ \log(2/\alpha) + 4\sum_{t=1}^n \psi_E(\lambda_{t,n})(X_t-m)^2}{\sum_{t=1}^n \lambda_{t,n}} \bigg\}, \label{eq:betting-ci-proof-5}
        \end{align}
        where recall that $\mutilde_n$ was introduced in~\eqref{eq:weighted-mean}. Repeating the above argument with $W_n^-(m)$, we get (again, under the event $\{n \geq N\}$) 
        \begin{align}
            \CIbet \subset \bigg\{ m \in \widetilde{C}_n:  m \leq \mutilde_n  + \frac{ \log(2/\alpha) + 4\sum_{t=1}^n \psi_E(\lambda_{t,n})(X_t-m)^2}{\sum_{t=1}^n \lambda_{t,n}} \bigg\}, \label{eq:betting-ci-proof-15}
        \end{align}
        Let $m_*$ and $m^*$ denote the (random) smallest and largest values respectively, that satisfy the condition in~\eqref{eq:betting-ci-proof-5} and~\eqref{eq:betting-ci-proof-15}.  Then, we have the following: 
        \begin{align}
            \boldsymbol{1}_{n \geq N} \times |\CIbet| \; \stackrel{a.s.}{\leq} \; m^* - m_* =  \frac{2 \log(2/\alpha) + 4 \sum_{t=1}^n \psi_E(\lambda_{t,n}) \lp (X_t-m^*)^2 + (X_t-m_*)^2 \rp }{\sum_{t=1}^n \lambda_{t,n}} 
        \end{align}
        Next, note  that since $N$ is known to be finite almost surely from~\Cref{lemma:betting-CI-3}, we have 
        \begin{align}
           \gammabet \defined \limsup_{n \to \infty} \sqrt{n} \times |\CIbet| \eqas \limsup_{n \to \infty} \sqrt{n} \times |\CIbet| \times \boldsymbol{1}_{n \geq N}. 
        \end{align}
        Thus,~\eqref{eq:betting-ci-proof-5} and~\eqref{eq:betting-ci-proof-15} imply the bound: 
        \begin{align}
            \frac{\gammabet}{2} \leq \limsup_{n \to \infty}\; \sqrt{n} \times \frac{ \log(2/\alpha) + 2 \sum_{t=1}^n \psi_E(\lambda_{t,n}) \lp (X_t-m^*)^2 + (X_t-m_*)^2 \rp }{\sum_{t=1}^n \lambda_{t,n}} \times \boldsymbol{1}_{n \geq N}. 
        \end{align}
        This limit appears similar to the width of the improved EB-CI of~\citet{waudby2023estimating}, with the only difference being the second term in the numerator. We first recall from~\Cref{fact:limiting-computations} that 
        \begin{align}
            \lim_{n \to \infty} \frac{1}{\sqrt{n}} \sum_{t=1}^n \lambda_{t,n}  \eqas \sqrt{2 \log(2/\alpha)/\sigma^2}. 
        \end{align}
        Thus, to complete the proof, we need to show the following, with $U_t \defined (X_t-m^*)^2 + (X_t-m_*)^2$:  
        \begin{align}
            \lim_{n \to \infty} \lp 2 \sum_{t=1}^n \psi_E(\lambda_{t,n}) \,U_t \rp \times \boldsymbol{1}_{n \geq N} \; \eqas \;  \sqrt{\log(2/\alpha)}. 
        \end{align}
        Since we already know that $\lim_{n \to \infty} 4 \sum_{t=1}^n \psi_E(\lambda_{t,n})(X_t-\muhat_{t-1})^2 \eqas \sqrt{\log(2/\alpha)}$, it suffices to prove 
         \begin{align}
            \limsup_{n \to \infty} \lp 2 \sum_{t=1}^n \psi_E(\lambda_{t,n}) \,U_t \rp \times \boldsymbol{1}_{n \geq N} \stackrel{a.s.}{\leq}
            \lim_{n \to \infty} 4 \sum_{t=1}^n \psi_E(\lambda_{t,n})(X_t-\muhat_{t-1})^2.  
        \end{align}       
        We recall from~\Cref{lemma:betting-CI-2} that $\mathbb{P}(E_t^c) \leq 8/t^2$, which means that $\sum_{t=1}^\infty \mathbb{P}(E_t^c) < \infty$. An application of Borel-Cantelli lemma then implies that 
        \begin{align}
            \mathbb{P}\lp E_t^c \text{ infinitely often} \rp = 0 \; \Longleftrightarrow \; 
            \mathbb{P}\lp E_t \text{ all but finitely many times} \rp = 1. 
        \end{align}
        As a consequence of this, we have 
        \begin{align}
            \limsup_{n \to \infty} \lp 2 \sum_{t=1}^n \psi_E(\lambda_{t,n}) \,U_t {\boldsymbol{1}_{E_{t-1}}} \rp \times \boldsymbol{1}_{n \geq N} \eqas \limsup_{n \to \infty} \lp 2 \sum_{t=1}^n \psi_E(\lambda_{t,n}) \,U_t \rp \times \boldsymbol{1}_{n \geq N}
        \end{align}
        The final step is provided by~\Cref{lemma:betting-CI-3}, which gives us 
        \begin{align}
            \boldsymbol{1}_{n \geq N} \lp 2\sum_{t=1}^n \psi_E(\lambda_{t,n}) U_t \boldsymbol{1}_{E_{t-1}}\rp \stackrel{a.s.}{\leq} 
            \lp 4 \sum_{t=1}^n \psi_E(\lambda_{t,n})(X_t - \muhat_{t-1})^2 \rp + o(1). 
        \end{align}
        Taking the limit, we obtain 
        \begin{align}
            \limsup_{n \to \infty} \lp 2 \sum_{t=1}^n \psi_E(\lambda_{t,n}) \,U_t {\boldsymbol{1}_{E_{t-1}}} \rp \times \boldsymbol{1}_{n \geq N} &\stackrel{a.s.}{\leq} \lim_{n \to \infty} 4 \sum_{t=1}^n \psi_E(\lambda_{t,n})\lp (X_t - \muhat_{t-1})^2 \rp + o(1) \\
            & = \sqrt{\log(2/\alpha)}. 
        \end{align}
        The last equality follows from the fact that $\psi_E(\lambda_{t,n}) \approx \lambda_{t,n}^2/8 = \log(2/\alpha)/(4n \sigmahat_t^2) \approx \log(2/\alpha)/(4n\sigma^2)$, as recalled in~\Cref{fact:limiting-computations}.   This completes the proof. \hfill \qedsymbol

        \subsection{Proof of~Lemma~\ref{lemma:betting-CI-1}}
        \label{proof:betting-CI-1}
            To prove this statement, we recall the definition of the betting CI used in~\Cref{prop:betting-CI-vs-EB-CI}: 
            \begin{align}
                &\CIbet = \{m \in [0,1]: W_n(m) < 1/\alpha\}, \\ \text{where} \quad 
                &W_n(m) =  \frac{1}{2} \lp W_n^+(m) + W_n^-(m) \rp, \quad 
                \text{and} \quad 
                 W_n^{\pm}(m) = \prod_{t=1}^n \lp 1 \pm \lambda_{t,n}(X_t - m) \rp. 
            \end{align}
            Since both $W_n^{+}(m)$ and $W_n^{-}(m)$ are nonnegative by construction, it implies that 
            \begin{align}
                \{m: W_n(m) < 1/\alpha \} \subset \{m: W_n^{+}(m) < 2/\alpha  \} \cap \{m: W_n^{-}(m) < 2/\alpha  \}. 
            \end{align}
            We now consider the two events in the right side of the above display, and use Fan's inequality~(\Cref{def:fans-inequality}) to obtain an interval containing them. In particular, Fan's inequality implies the following lower bound 
            \begin{align}
                \log(W_t^+(m)) = \sum_{t=1}^n \log(1 + \lambda_t(X_t-m)) \geq \sum_{t=1}^n \lambda_t (X_t-m) - 4\psi_E(\lambda_t) (X_t-m)^2. 
            \end{align}
            Now, since $(X_t-m)^2 \leq 1$ for all $t \geq 1$ and $m \in [0,1]$, we can further lower bound $W_t^+(m)$ as follows: 
            \begin{align}
                \log(W_t^+(m)) \geq \sum_{t=1}^n \lambda_t(X_t-m) - \psi_E(\lambda_t). 
            \end{align}
            This leads us to the following inclusion: 
            \begin{align}
                \{m: W_n^+(m) < 2/\alpha\} &\subset \lbr m \in [0,1]:  \sum_{t=1}^n \lambda_{t,n}(X_t-m) - \sum_{t=1}^n \psi_E(\lambda_{t,n}) < \log(2/\alpha)\rbr,  \label{eq:betting-fan-1}\\
                & \defined [L, 1], \quad \text{where} \quad 
                L = \frac{\sum_{t=1}^n \lambda_t X_t}{\sum_{t=1}^n\lambda_t}  - \frac{\log(2/\alpha) + \sum_{t=1}^n \psi_E(\lambda_t)}{\sum_{t=1}^n\lambda_t}. 
            \end{align}
            To simplify the notation, we will use 
            \begin{align}
                \mutilde_n = \frac{\sum_{t=1}^n \lambda_t X_t}{\sum_{t=1}^n\lambda_t}, \quad 
                \frac{w_n}{2} = \frac{\log(2/\alpha) + \sum_{t=1}^n \psi_E(\lambda_t)}{\sum_{t=1}^n\lambda_t}.  \label{eq:width-of-Fan-CI}
            \end{align}
            Next, we repeat the same argument for $W_n^-(m)$, to obtain 
            \begin{align}
                \{m: W_n^-(m) < 2/\alpha\} &\subset \lbr m \in [0,1]:  \sum_{t=1}^n \lambda_{t,n}(m-X_t) - \sum_{t=1}^n \psi_E(\lambda_{t,n}) < \log(2/\alpha)\rbr, \label{eq:betting-fan-2}  \\
                & \defined [0, U], \quad \text{where} \quad  
                U = \mutilde_n + \frac{w_n}{2}.
            \end{align}
            Combining~\eqref{eq:betting-fan-1} and~\eqref{eq:betting-fan-2}, we get 
            \begin{align}
                \CIbet &\subset \{m: W_n^+(m) < 2/\alpha\} \cap \{m: W_n^-(m) < 2/\alpha\} \\
                & \subset \CIfan \defined [L, U] = \lb \mutilde_n  \pm \frac{w_n}{2}\rb. \label{eq:CI-Fan-def}
            \end{align}
            To conclude the proof, we recall that~(\Cref{fact:limiting-computations}): 
            \begin{align}
                &\sum_{t=1}^n \lambda_t \eqas \sqrt{ \frac{ 2 \log(2/\alpha) n }{\sigma^2}}\lp 1 + o(1) \rp, \quad \text{and} \quad \sum_{t=1}^n \psi_E(\lambda_t) \eqas \frac{\log(2/\alpha)}{\sigma^2}\lp 1  + o(1) \rp. 
            \end{align}
            This implies that the width of the CI satisfies: 
            \begin{align}
                \limsup_{n \to \infty} \sqrt{n} \times |\CIbet| \leq   2\sqrt{\log(2/\alpha)/2}\lp \sigma + 1/\sigma \rp = 2c_1. 
            \end{align}
            This completes the proof.  \hfill \qedsymbol
            
        \subsection{Proof of~Lemma~\ref{lemma:betting-CI-2}}
            \label{proof:betting-CI-2}
            The proof of concentration of $\muhat_t$ and $\sigmatilde_t$ follows by a direct application of Hoeffding's inequality. For the concentration of~$\mutilde_t$, we need a slightly more involved peeling argument, because it relies on a uniform concentration of $\sigmatilde_i$ for all $1 \leq i \leq t$. 
            
            \paragraph{Concentration of sample mean and variance.} Recall the events 
            \begin{align}
                E_{t,1} = \lbr |\muhat_t - \mu| \leq \sqrt{\log(t)/t} \rbr, \quad 
                E_{t,2} = \lbr \left\lvert \frac{1}{t} \sigmatilde_t^2 - \sigma^2 \right\rvert \leq \sqrt{\log(t)/t} \rbr, 
            \end{align}
            where $\muhat_t = (1/t) \sum_{i=1}^t X_i$, and $\sigmatilde_t^2 = (1/t)\sum_{i=1}^t (X_i-\mu)^2$. 
            Since the observations are bounded, Hoeffding's inequality implies that
            \begin{align}
                \mathbb{P}\lp |\muhat_t - \mu| \leq \sqrt{ \log(2/\delta)/(2t)} \rp \leq \delta, \quad \text{and} \quad 
                \mathbb{P}\lp |\sigmatilde_t^2 - \sigma^2| \leq \sqrt{ \log(2/\delta)/(2t)} \rp \leq \delta. 
            \end{align}
            Setting $\delta \leftarrow 2/t^2$ in the two inequalities above leads to the conclusion $\mathbb{P}\lp E_{i,t}^c \rp \leq 2/t^2$, for $i=1,2$. 

            \paragraph{Concentration of the weighted mean $\boldsymbol{\mutilde_t}$.} We now look at the remaining event, $E_{t,3}$, defined as 
            \begin{align}
                E_{t,3} = \lbr |\mutilde_t - \mu| \leq \sqrt{\log(t)/t} \rbr, \quad \text{where} \quad  \mutilde_t  \defined \big(  \sum_{t=1}^n \lambda_{t,n} X_t\big) / \big( \sum_{t=1}^n \lambda_{t,n}\big). 
            \end{align}
            Since $\lambda_{t,n} = \sqrt{2 \log(2/\alpha) / (V_{t-1} n)}$, the estimate $\mutilde_t$ can we re-written as 
            \begin{align}
                \mutilde_t = \frac{ \sum_{i=1}^t X_i/\sqrt{V_{i-1}} }{\sum_{i=1}^t 1/\sqrt{V_{i-1}}}, \quad \text{with } V_0 = 1/4, \text{ and } V_i = \frac{1}{i}\lp \frac{1}{4} +  \sum_{j=1}^{i}(X_j-\muhat_{j-1})^2 \rp, \quad \text{for } i \geq 2. 
            \end{align}
            The above definition implies that to get a concentration result for $\mutilde_t$, we need to control the deviation of all $V_i$, for $i$ smaller than $t$. 
            By adding and subtracting $\mu$ in the definition of $V_i$, we can decompose $V_i$  into the following terms, for $i \geq 2$: 
            \begin{align}
                V_i = \frac{1}{i} \sum_{j=1}^i (X_j-\mu)^2 + \frac{1}{i} \sum_{j=1}^i(\mu - \muhat_{j-1})^2 + \frac{2}{i} \sum_{j=1}^i(X_j-\mu)(\mu - \muhat_{j-1}) + \frac{1}{4i}.  \label{eq:mutilde-conc-1}
            \end{align}
            In the above display, we use the convention that $\muhat_{0}=0$. The expression stated in~\eqref{eq:mutilde-conc-1} implies that in order to show the concentration of all $V_i$ around $\sigma^2$, it suffices to (i) show the uniform concentration of $\frac{1}{i}\sum_{j=1}^i (X_j-\mu)^2$ around $\sigma^2$, and (ii) a uniform concentration of $\muhat_i$ around $\mu$, for all $1 \leq i \leq t$. The concentration of the third term around $0$, then follows as a consequence of (ii). Thus, we will show that for the events, $G_{t,1} \defined \{\forall 1 \leq i \leq t: |\sigmatilde_i^2 - \sigma^2| \leq u_{i,t} \}$, and  $G_{t,2} \defined \{\forall 1 \leq i \leq t: |\muhat_i - \mu| \leq u_{i,t} \}$ with $u_{i,t} \defined \sqrt{3 \log(t)/i}$, the probabilities $\mathbb{P}\lp G_{t,1}^c \rp$ and $\mathbb{P}(G_{t, 2}^c)$ are both upper bounded by $2/t^2$. 
            Since the arguments are identical, we present the details only for $G_{t,1}$ below. 

            We begin by partitioning the set $[t] = \{1, 2, \ldots,  t\}$ into intervals of exponentially increasing lengths, $I_0, I_1, \ldots, I_{k_t}$, with $|I_j| = 2^j$. That is $I_0 = \{1\}$,  $I_1 = \{2, 3\}$, and $I_j = \{2^j, \ldots, 2^{j+1}-1\}$. The number of such intervals, is equal to $k_t= \lfloor \log_2(t) + 1\rfloor \leq \log_2t + 1$.  Since we want to control the probability of $G_{t,1}^c$ by $2/t^2$, we divide this `budget' equally among the $k_t$ intervals.  Let the elements of the $j^{th}$ interval be denoted by $\{i_j, i_{j+1}, \ldots, i_{j+1} -1 \}$, where we use the notation $i_j$ to denote the smallest element of the interval $I_j$, for all $1 \leq j \leq k_t$.  
            By using the time-uniform Hoeffding inequality in each interval, we get 
            \begin{align}
                \mathbb{P}\lp \forall i \in I_j: |\sigmatilde_i^2 - \sigma^2| \leq \sqrt{\log(2/\delta_t)/(2 i_j)} \rp \leq \delta_t, \quad \text{where} \quad 
                \delta_t \defined \frac{2}{t^2 k_t} \geq \frac{2}{t^2 \lp \log_2 t + 1 \rp} \geq \frac{2}{t^3} . 
            \end{align}
            Then, by taking a  union bound over the $k_t$ intervals, and using the fact that $2i_j \geq i$ for all $i \in I_j$, we get 
            \begin{align}
                \mathbb{P}\lp G_{t,1} \rp = \mathbb{P}\lp \forall i \in [t]: |\sigmatilde_i^2-\sigma^2| \leq \sqrt{3\log(t)/i} \rp \geq 1-\frac{2}{t^2}.  
            \end{align}
            An exactly analogous argument also implies that $\mathbb{P}(G_{t,2}) \geq 1-2/t^2$. 

            Going back to~\eqref{eq:mutilde-conc-1}, we see that under $E_{t,3} \defined G_{t,1} \cap G_{t,2}$, we have  for all $i \in \{2, \ldots, t\}$
            \begin{align}
                |V_i - \sigma^2| &\leq |\sigmatilde_i^2-\sigma|^2 + \frac{1}{i} \sum_{j=0}^{i-1}|\muhat_j-\mu|^2 + \frac{2}{i} \sum_{j=0}^i |\muhat_j - \mu|  + \frac{1}{4i} \\
                & \leq u_{i,t}^2 + \frac{1}{i} \sum_{j=1}^{i-1} u_{j,t}^2 + \frac{2}{i} \sum_{j=1}^{i-1} u_{j,t} + \frac{1}{4i} \\
                &\leq \frac{3 \log t}{i} + \frac{3 \log t}{i} \lp 1 + \sum_{j=1}^{i-1} \frac{1}{j} \rp + \frac{2\sqrt{3 \log t}}{i} \lp 1+ \sum_{j=1}^{i-1} \frac{1}{\sqrt{j}} \rp  + \frac{1}{4i} \\
                & \leq \frac{3 \log t \log ei }{i} + \sqrt{\frac{12 \log t}{i}} \defined v_{i,t}. \label{eq:vit-def}
            \end{align}
            Since we need to show that $|\mutilde_t - \muhat_t| = \mc{O}\lp \frac{1}{\sigma^2} \sqrt{\log t /t} \rp$, it is sufficient to prove that this bound holds for all $t \geq t_0$, for some finite $t_0$. This is because, we can choose a sufficiently large  leading constant in $\mc{O}(\cdot)$ to ensure that the condition holds trivially for all $t < t_0$. To identify such a $t_0$, we first fix a positive constant $a <1/3$~(for example $a=0.32$), and introduce the term $i_t = t^a\log^2(t)$. We want to select $t_0$ to ensure that for all $t \geq t_0$, and $i \geq i_t$, we have $v_{i,t} < \sigma^2/2$, where $v_{i,t}$ refers to the expression in~\eqref{eq:vit-def}. It is easy to verify that the following is true: 
            \begin{align}
                t > t_0 \defined \lp \frac{192}{\sigma^4} \rp^{1/a} \quad \Longrightarrow \quad v_{i,t} < \frac{\sigma^2}{2}, \;\; \text{for all } i \geq i_{t} \defined  t^a\log^2(t). 
            \end{align}
            To complete the proof we will show that $\mutilde_t \leq \muhat_t + \mc{O}\lp \frac{1}{\sigma^2}\sqrt{\log t/t} \rp$ for all $t > t_0$. The lower bound can be obtained by following a similar argument, and we omit the details to avoid repetition. 
            Observe that $\mutilde_t$ can be written as 
            \begin{align}
                \mutilde_t = \frac{ \sum_{i=1}^t X_i/\sqrt{V_{i-1}}}{\sum_{i=1}^t 1/ \sqrt{V_{i-1}}} = \frac{ \sum_{i=1}^t X_i/\sqrt{1 + \frac{V_{i-1} - \sigma^2}{\sigma^2} }}{\sum_{i=1}^t 1/ \sqrt{1 + \frac{V_{i-1}- \sigma^2}{\sigma^2}}} = \frac{A}{B}. 
            \end{align}
            We now use the convexity of the mapping $x \mapsto 1/\sqrt{1 -x}$ over the domain $[-1/2, 1/2]$, to get an upper bound (resp. lower bound) on the numerator $A$~(resp. denominator $B$). In particular, we have 
            \begin{align}
                A \leq \sum_{i=1}^{i_t} \frac{X_i}{\sqrt{1/4i}} + \sum_{i>i_t} \frac{X_i}{\sqrt{1 - \frac{v_{i,t}}{\sigma^2} }} \leq 2 i_t^{3/2} + \sum_{i>i_t} \frac{X_i}{\sqrt{1 - \frac{v_{i,t}}{\sigma^2} }} \leq 2 i_t^{3/2} + \sum_{i>i_t} X_i \lp 1 + \frac{v_{i,t}}{\sigma^2} \rp.  \label{eq:temp-upper-bound-1}
            \end{align}
            In the last inequality above, we used the fact that $\frac{1}{\sqrt{1-x}} \leq 1 + 2(\sqrt{2}-1)x$ for $x \in [0, 0.5)$, and that $2(\sqrt{2}-1) \leq 1$. Similarly, we obtain 
            \begin{align}
                B \geq \sum_{i=1}^{i_t} 0 + \sum_{i > i_t} \frac{1}{ \sqrt{1 + \frac{v_{i,t}}{\sigma^2}}} \geq (t - i_t) \lp 1 -  \frac{1}{2(t-i_t)}\sum_{i>i_t} \frac{v_{i,t}}{\sigma^2} \rp,  
            \end{align}
            where we used the fact that $1/\sqrt{1-x} \geq 1 + x/2$  for $x \in (-0.5, 0]$. 
            This implies that 
            \begin{align}
                \frac{1}{B} \leq \frac{1}{t- i_t} \lp 1 + \frac{1}{2(t - i_t)}\sum_{i>i_t} \frac{v_{i,t}}{\sigma^2} \rp. \label{eq:temp-lower-bound-1} 
            \end{align}
            Combining~\eqref{eq:temp-upper-bound-1} and~\eqref{eq:temp-lower-bound-1}, and on some further simplification, we get 
            \begin{align}
                \mutilde_t = \frac{A}{B} &\leq \frac{2 i_t^{3/2}}{t - i_t} \lp 1 + o(1) \rp +  \frac{1}{t-i_t} \sum_{i>i_t}X_i \lp 1 +  \frac{v_{i,t}}{\sigma^2} \rp \lp 1 - \frac{1}{2(t-i_t)} \sum_{j > i_t} \frac{v_{j,t}}{\sigma^2} \rp \\ 
                & = o\lp \frac{1}{\sqrt{t}} \rp + \frac{1}{t-i_t} \sum_{i=i_t+1}^{t} X_i + \mc{O}\lp \frac{1}{t - i_t} \sum_{i= i_t + 1}^t \frac{v_{i,t}}{\sigma^2} \rp \\
                & =  \muhat_t + \mc{O}\lp \frac{1}{t} \sum_{i= i_t + 1}^t \frac{v_{i,t}}{\sigma^2} \rp = \muhat_t + \mc{O}\lp \frac{1}{t} \sum_{i= i_t + 1}^t \frac{1}{\sigma^2} \sqrt{\frac{\log t}{i}} \rp \\
               &  =  \muhat_t + \mc{O}\lp \sqrt{ \frac{\log t}{t}} \rp. 
            \end{align}
           Using same approximations, we can also obtain a similar lower bound on $\mutilde_t$ under $E_{t,3}$. 
        \subsection{Proof of~Lemma~\ref{lemma:betting-CI-3}}
        \label{proof:betting-CI-3}
            \paragraph{$\boldsymbol{N}$ is finite almost surely.} This is an easy consequence of~\Cref{lemma:betting-CI-1}. In particular, recall that~\Cref{lemma:betting-CI-1} shows that the width of~$\CIfan$, denoted by $w_n$, satisfies 
            \begin{align}
                \lim_{n \to \infty} \sqrt{n} \times |\CIfan| = \lim_{n \to \infty} \sqrt{n} \times  w_n \eqas  2c_1. 
            \end{align}
            The exact expression of $w_n$ is stated in~\eqref{eq:width-of-Fan-CI}. 
            Now, let $\Omega$ denote the probability $1$ event in which the limiting width of $\CIfan$ is $c_1$. Then, note that 
            \begin{align}
                \Omega & \subset \cap_{a \in \mathbb{N}} \lbr w_n \leq \frac{2c_1}{\sqrt{n}}\lp 1 + \frac{1}{a} \rp \text{ all but finitely many times} \rbr \\
                &= \cap_{a \in \mathbb{N}}\lp  \cup_{m \in \mathbb{N}} \cap_{n \geq m} \lbr w_n \leq \frac{2c_1}{\sqrt{n}}\lp 1 + \frac{1}{a} \rp \rbr \rp  \\ 
                & \subset \cup_{m \in \mathbb{N}} \cap_{n \geq m} \lbr w_n \leq \frac{{4}c_1}{\sqrt{n}} \rbr = \{N < \infty \}, \label{eq:N-finite-1}
            \end{align}
            where the second inclusion follows by choosing $a=1$.  Since $\mathbb{P}(\Omega)=1$,~\eqref{eq:N-finite-1} implies that $N$ is finite almost surely. Also for all $n \geq N$, $w_n$ is smaller than $4c_1/\sqrt{n}$, which implies the required inclusion: 
            \begin{align}
                \CIbet \subset \CIfan \subset \CItilde  = \lb \mutilde_n \pm \frac{2c_1}{\sqrt{n}} \rb\quad \text{under } \{n \geq N\}. 
            \end{align}

            \paragraph{The approximation result.} For any $m \in [0,1]$, observe that 
            \begin{align}
                (X_t - m)^2 &=  (X_t-\muhat_{t-1})^2 + (\muhat_{t-1} - m)^2 + 2 (\muhat_{t-1} - m)(X_t - \muhat_{t-1})  \\
                & \stackrel{(i)}{\leq} (X_t-\muhat_{t-1})^2 + |\muhat_{t-1}-m| + 2|\muhat_{t-1}-m| \\
                & = (X_t-\muhat_{t-1})^2 + 3|\muhat_{t-1} - \mu + \mu - \mutilde_n + \mutilde_n - m| \\
                & \stackrel{(ii)}{\leq} (X_t-\muhat_{t-1})^2 + 3 \lp |\muhat_{t-1}-\mu| +  |\mu - \mutilde_n| + |\mutilde_n - m|\rp. 
            \end{align}
            The inequality (i) uses the fact that $x^2 \leq |x|$ for $x \in (-1,1)$, and that $|X_t - \muhat_{t-1}|\leq 1$, while (ii) follows from two applications of the triangle inequality after adding and subtracting $\mu$, and $\mutilde_n$.

            Since under the event $E_{t-1}$,  $|\muhat_{t-1} -  \mu|$ is $\mc{O}(\sqrt{\log t/t})$, we obtain
            \begin{align}
                (X_t - m)^2 \boldsymbol{1}_{E_{t-1}} \leq (X_t-\muhat_{t-1})^2 + 3 \lp  | \mutilde_n - m| + |\mutilde_n - \mu| \rp + \mc{O}(\sqrt{\log t/ t}). 
            \end{align}

            Next, we will restrict our attention to a smaller subset of values of $m$ lying in $\CItilde$. 
            Under the event $\{n \geq N\}$, we know that $w_n \leq 2c_1/\sqrt{n}$, which implies that for all $m \in \widetilde{C}_n$, we have $|\mutilde_n - m|\boldsymbol{1}_{n \geq N} \leq 4c_1/\sqrt{n}$. 

            Thus, with $I_{t,n} \defined \boldsymbol{1}_{n \geq N} \times \boldsymbol{1}_{E_{t-1}}$, we have 
            \begin{align}
                \sum_{t=1}^n\psi_{E}(\lambda_{t,n}) (X_t-m)^2\, I_{t,n} &\leq  \sum_{t=1}^n\psi_{E}(\lambda_{t,n}) (X_t-\muhat_{t-1})^2 +  \lp \frac{4c_1}{\sqrt{n}}+ |\mutilde_n - \mu|  \rp \sum_{t=1}^n\psi_{E}(\lambda_{t,n}) I_{t,n}  \\
                & \quad + \mc{O}\lp \sum_{t=1}^n\psi_{E}(\lambda_{t,n})  \sqrt{ \frac{\log t}{t}} I_{t,n} \rp. 
            \end{align}
            We will show that the last two terms converge to $0$ almost surely, to complete the proof. We do this, by observing the following three facts: 
            \begin{itemize}
                \item $|\mutilde_n - \mu| \convas 0$. This is true because from the proof of~\Cref{lemma:betting-CI-2}, we know that the probability  $\{ |\mutilde_n - \mu| > \mc{O}(\sqrt{\log n / n} ) \text{ infinitely often} \}$ is equal to $0$, which implies that the probability that $\{\mutilde_n \to \mu\}$ equals one, as required. 
                \item The term $\sum_{t=1}^n \psi_E(\lambda_{t,n})$ converges almost surely to $\log(2/\alpha)/4 \sigma^2$. This was proved by~\citet{waudby2023estimating}, and it implies that $\lp 4c_1/\sqrt{n} + |\mutilde_n - \mu| \rp \sum_{t=1}^n \psi_E(\lambda_{t,n}) I_{t,n} \convas 0$.
                \item Finally, the term $\sum_{t=1}^n \psi_E(\lambda_{t,n}) \sqrt{\log t / t}$ converges almost surely to $0$. This is a simple consequence of the previous statement that $\sum_{t=1}^n \psi_E(\lambda_{t,n}) \convas \log(2/\alpha)/4\sigma^2$. 
            \end{itemize}

            Together, these three facts imply the required result. \hfill \qedsymbol

\section{Details of the non-asymptotic analysis of betting CI~(Section~\ref{sec:nonasymp-CI})}
\label{appendix:impossibility}

    \subsection{Proof of CI lower bound~(Proposition~\ref{prop:lower-bound-1})}
    \label{proof:lower-bound-1}
        The proof of this result proceeds in the following steps, as we outlined in~\Cref{subsec:lower-bound-ci}: 
        \begin{itemize}
            \item First, using the method $\mc{C}$ for constructing valid CIs, we design a hypothesis test for distinguishing between two distributions with means separated by $w(n, P, \alpha)$~(the width of the CI constructed by  the method $\mc{C}$ using $n$ \iid observations from the distribution $P$). We show that this test controls both type-I and type-II errors at the level $\alpha$. 
            \item Next, we show that the tight control over the type-I and type-II errors simultaneously implies a fundamental lower bound on the KL divergence between the two distributions. This follows essentially by an application of the data processing inequality for relative entropy. 
            
            \item Finally, we translate this into a statement about the information projections, and obtain the required result by inverting this bound on $\klinf$ and $\klinfminus$. 
        \end{itemize}

        \paragraph{Step 1: Design a hypothesis test using $\boldsymbol{\mc{C}}$.}
        Given observations $X_1, X_2, \ldots \simiid P^*$, we consider the hypothesis testing problem with $H_0: P^* = P$ and $H_1: P^* = Q$, with $\mu_Q > \mu_P + w(n, P, \alpha)$. For this problem, we can define a test $\Psi:\mc{X}^n \to \{0,1\}$, as follows: 
        \begin{align}
            \Psi(X^n) = \begin{cases}
                1, & \text{ if } \mu_Q \in C_n = \mc{C}(X^n, \alpha), \\
                0, & \text{ otherwise}. 
            \end{cases}
        \end{align}
        Here $\Psi(X^n)=1$ indicates that the test rejects the null. 
        We now claim that under the stated assumption that $\mu_Q - \mu_P >  w(n, P, \alpha)$, the test $\Psi$ controls both, its type-I error and type-II error, at level $\alpha$. This is because, if $H_0$ is true, then $\mathbb{P}(\Psi = 1) = \mathbb{P}(\mu_Q \in C_n) \leq  \mathbb{P}(\mu_P \not \in C_n) \leq \alpha$.  Note that the inequality uses the assumption that $\mu_Q - \mu_P >  w_n(n, P, \alpha)$, to infer that both $\mu_P$ and $\mu_Q$ cannot simultaneously lie in $C_n$. 
        Similarly, if $H_1$ is true, then $\mathbb{P}(\Psi=0) = \mathbb{P}\lp \mu_Q \not \in C_n \rp  \leq \alpha$, by definition of a level-$(1-\alpha)$ CI, and the separation assumption of $\mu_P$ and $\mu_Q$. 
    
        \paragraph{Step 2: Lower bound on $\boldsymbol{\dkl(P, Q)}$.} Now, we observe the following: 
        \begin{align}
            n \dkl\lp P, Q \rp = \dkl\lp P^{\otimes n}, Q^{\otimes n}  \rp \geq \dkl \lp \mathbb{E}_{P^{\otimes n}}[1-\Psi],\; \mathbb{E}_{Q^{\otimes n}}[1-\Psi] \rp. \label{eq:lower-bound-1-1}
        \end{align}
        The equality above is due to the standard chain rule for KL divergence, while the inequality follows from an application of the data processing inequality for KL divergence. Note that the last term in the display above denotes the KL divergence between two Bernoulli random variables. 
        Now, for any $p, q \in [0,1]$, we have 
        \begin{align}
            \dkl(p, q) &= p \log(1/q) + (1-p)\log(1/(1-q)) + \lp p \log p + (1-p) \log (1-p) \rp \\
            & \geq p \log(1/q) + 0 - h(p). 
        \end{align}
        Here, $h(p)$ denotes the entropy (in nats) of the Bernoulli distribution with parameter $p$. Using the above inequality in~\eqref{eq:lower-bound-1-1}, and noting that $p \equiv \mathbb{E}_{Q^{\otimes n}}[\Psi] \geq 1-\alpha$, and $q \equiv \mathbb{E}_{P^{\otimes n}}[\Psi] \leq \alpha$, we obtain the following:  
        \begin{align}
             \dkl(P, Q) \geq a(n, \alpha) \defined \frac{(1-\alpha) \log(1/\alpha) + \alpha \log(\alpha) + (1-\alpha) \log(1-\alpha)}{n} = \frac{\log\lp (1-\alpha)^{1-\alpha} \alpha^{2\alpha-1} \rp}{n}.  \label{eq:lower-bound-1-2}
        \end{align}
        To get the above inequality, we used the fact that for $\alpha < 0.5$, the binary entropy $h(p)$ is no larger than $ h(1-\alpha)$, since we have  $p \geq 1-\alpha$. 
        
        \Cref{eq:lower-bound-1-2} says that the fact that we could construct a test to distinguish between $P$ and $Q$, with simultaneous control over the type-I and type-II errors, implies that the distributions $P$ and $Q$ must be sufficiently well-separated in terms of KL divergence. 
        
        \paragraph{Step 3: Connections to $\boldsymbol{\klinf}$.}
        Next, recall that for any $w \geq 0$, we can write
        \begin{align}
            \klinf(P, \mu_P+w, \mc{P}_0) \defined \inf_{Q' \in \mc{P}^+_{0, \mu_P+w}}\, \dkl(P, Q'),
        \end{align} 
        where $\mc{P}^+_{0, \mu_P + w}$  represents the class of distributions $\{Q' \in \mc{P}_0: \mu_Q \geq \mu_P+w\}$. Since it is known that $\klinf(P, \mu_P+w, \mc{P}_0)$ is continuous in $w$~\citep[Appendix F, Theorem 4]{jourdan2022top},  we can also take the infimum over the set of all distributions $Q'$, whose mean is strictly greater than $\mu_P+w$. 
        This fact, along with the inequality in~\eqref{eq:lower-bound-1-2} leads us to the conclusion: 
        \begin{align}
            \klinf\lp P, \mu_P + w(n, P, \alpha), \mc{P} \rp \geq a(n, \alpha). \label{eq:lower-bound-1-3}
        \end{align}
        
        To complete the proof,  we introduce the term 
        \begin{align}
            w^{*, +} \equiv w^{*, +}(n, P, \alpha) & = \klinf(P, \cdot, \mc{P}_0)^{-1} \lp a(n, \alpha) \rp - \mu_P. \label{eq:lower-bound-1-4}
        \end{align}
        Recall, that the inverse information projection is defined as 
        \begin{align}
            \klinf(P, \cdot, \mc{P}_0)^{-1}(x) &\defined \inf \lbr m \geq \mu_P: \klinf(P, m, \mc{P}_0) \geq x \rbr.  
        \end{align}
        
        Because of the continuity of $\klinf$, we note that the above infimum is achieved at some some $w^{*, +}$, which combined with~\eqref{eq:lower-bound-1-3}, gives us 
        \begin{align}
            \klinf\big( P, \mu_P +  w(n, P, \alpha), \mc{P}_0 \big) \geq \klinf \lp P, \mu_P + w^{*, +}, \mc{P}_0 \rp. 
        \end{align}
        Finally, the fact that $\klinf$ is monotonically increasing in the second argument implies the required $w(n, P, \alpha) \geq w^{*, +}(n, P, \alpha)$ completing the first part of the proof.  

        Repeating the exact same argument, now with $Q$ such that $\mu_Q < \mu_P - w(n, P, \alpha)$, we get the conclusion that $w(n, P, \alpha) \geq w^{*, -} \equiv w^{*, -}(n, P, \alpha)$, defined as 
        \begin{align}
            w^{*, -} \equiv w^{*, -}(n, P, \alpha) & = \mu_P -  \klinfminus(P, \cdot, \mc{P}_0)^{-1} \lp a(n, \alpha) \rp. \label{eq:lower-bound-1-5}
        \end{align}
        Combining these two results, we get that $w(n, P, \alpha) \geq \max\{w^{*, +}, w^{*, -} \}$, which completes the proof. 
    
            \subsection{Proof of CI upper bound~(Theorem~\ref{prop:betting-ci-width})}
        \label{proof:betting-ci-width}

            We proceed in the following steps: 
            \begin{itemize}
                \item First, we note that the wealth $W_n(m)$ after $n$ rounds for a betting strategy with logarithmic regret~(such as the mixture method of \Cref{def:mixture-method}) grows exponentially, with the exponent approximately equal to the larger of the two terms: $\hatKplus(P^*, m)$, and $\hatKminus(P^*, m)$ introduced below in~\eqref{eq:empirical-info-projection}. 
                
                \item Next, we obtain a lower bound on the wealth values in terms of the population analogs of the two information projection terms. This step is broken down into several smaller steps. In particular, we first obtain a more coarse CI with deterministic width that contains the betting CI~(\Cref{lemma:coarse-betting-CI}). Then, in the next two lemmas~(\Cref{lemma:small-lambda-star} and~\Cref{lemma:klinf-concentration}), we obtain the required result about the concentration of the empirical information projection about its population value. 
                
                \item The final expression for the deterministic envelope for the betting CI is obtained by inverting  the wealth process bound so obtained. 
            \end{itemize}
    
            \paragraph{Step 1: Exponential growth of wealth.} First we need to introduce the empirical versions of the two information projection terms, defined in~\eqref{def:klinf}: 
            \begin{align}
                \hatKplus(P^*, m) = \frac{1}{n} \sum_{t=1}^n \log \lp 1 + \lambda^*_+(m)(m-X_t) \rp, \quad \text{and}  \quad 
                \hatKminus(P^*, m) = \frac{1}{n} \sum_{t=1}^n \log \lp 1 + \lambda^*_-(m)(m-X_t) \rp. \label{eq:empirical-info-projection}
            \end{align}
            Here $\lambda_+^*$ and $\lambda_-^{*}$ denote the optimal betting fractions that define $\klinf$ and $\klinfminus$ respectively. Observe that for $X_1, \ldots, X_n$ drawn \iid from $P^*$, the term $\widehat{K}_n^a(P^*, m)$ is an unbiased empirical estimate of $\text{KL}_{\inf}^a(P^*, m)$ for $a \in \{+, -\}$. Furthermore, we note here, that the two terms in the display above are different from~$\klinf(\Phat_n, m)$ and $\klinfminus(\Phat_n, m)$. 
            
            Next, recall that the  regret incurred by the betting strategy (for all $m \in [0,1]$) is upper bounded by $c \log(n)$ for some fixed $c<2$. Then, we have the following for any $m \in [0,1]$:
            \begin{align}
                W_n(m) &= \prod_{t=1}^n \lp 1 + \lambda_t(m)(X_t-m) \rp = \exp \lp \sum_{t=1}^n \log\lp 1 + \lambda_t(m)(X_t-m) \rp \rp \\
                & \geq \exp \lp \sup_{\lambda \in \lb \frac{-1}{1-m}, \frac{1}{m} \rb} \sum_{t=1}^n \log\lp 1 + \lambda(X_t-m) \rp - c\log(n) \rp \\ 
                & \geq \exp \lp n \times \max \lbr \klinf(\Phat_n, m),\; \klinfminus(\Phat_n, m) \rbr - c \log(n) \rp \label{eq:ci-upper-bound-proof-0}\\
                & \geq \exp \lp n \times \max \lbr \hatKplus(P^*, m),\; \hatKminus(P^*, m) \rbr - c \log(n) \rp. 
            \end{align}
            The first inequality in the display above follows due to the logarithmic regret incurred by the mixture betting strategy, while the second inequality uses the definition of $\klinf$ and $\klinfminus$ for the empirical distribution $\Phat_n = (1/n) \sum_{i=1}^n \delta_{X_i}$~(with $\delta_x$ denoting the atomic distribution placing all the mass at $x$). 
            The third inequality uses the fact that the supremum is lower bounded by the value of the first term in the exponent at the  specific values $\lambda^*_+(m)$ and $\lambda^*_-(m)$. Thus, we have proved that the wealth process at any $m \in [0,1]$ grows exponentially, with the growth rate determined by the maximum of the two empirical information projection terms. In particular, we get~\eqref{eq:ci-random-upper-bound} by inverting the bound stated above in~\eqref{eq:ci-upper-bound-proof-0}.

            \paragraph{Step 2: A wider CI.}  As in the proof of~\Cref{prop:betting-CI-vs-EB-CI}, we will show that the betting CI at any time $n$ is also contained in a larger CI, that we denote by~$\CItilde$, whose width is of the order $\mc{O}\lp \sqrt{\log n / n}\rp$. While this bound, by itself, is quite loose, it plays a crucial role in proving the stronger $\klinf$-dependent bound, by allowing us to focus our analysis on a small band of $m$ values. 
            \begin{lemma}
                \label{lemma:coarse-betting-CI}
                The betting CI based on $n$ observations is contained inside a larger CI, denoted by~$\CItilde$, whose width satisfies 
                \begin{align}
                    \frac{|\CItilde|}{2} = \sigmahat_n \sqrt{\frac{2\log(3n^c/\alpha)}{n}  }  \leq  \sqrt{\frac{2\log(3n^c/\alpha)}{n}}. 
                \end{align}
                In the above expression,  $c<2$ is from the regret of the mixture betting scheme, and $\sigmahat_n^2 = (1/n) \sum_{i=1}^n (X_i - \muhat_n)^2$ is the empirical variance based on the  $n$ \iid observations.   In other words, the event $E_1 = \{\mu \in [\muhat_n \pm b_n]\}$, where $b_n = 2\sqrt{2\log(3n^c/\alpha)/n}$, occurs with probability at least $1-\alpha/3$. 
            \end{lemma}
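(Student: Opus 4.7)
The proof will combine the exponential wealth lower bound established in Step~1 of the main theorem with a quadratic lower bound on the empirical information projection. Concretely, since $\CIbet$ is a level-$(1-\alpha/3)$ CI, membership $m \in \CIbet$ is equivalent to $W_n(m) < 3/\alpha$, and the bound~\eqref{eq:ci-upper-bound-proof-0} then forces
$$\max\{\klinf(\Phat_n, m),\, \klinfminus(\Phat_n, m)\} \;<\; \frac{\log(3/\alpha) + c\log n}{n} \;=\; \frac{\log(3n^c/\alpha)}{n}.$$
It therefore suffices to show that any $m$ with $|m - \muhat_n| > \sigmahat_n\sqrt{2\log(3n^c/\alpha)/n}$ satisfies $\klinf(\Phat_n, m) \vee \klinfminus(\Phat_n, m) \geq \log(3n^c/\alpha)/n$, since this would contradict $m \in \CIbet$.

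For $m > \muhat_n$ we invoke the dual representation \eqref{eq:klinf-dual}: for any feasible $\eta \geq 0$,
$$\klinf(\Phat_n, m) \;\geq\; \frac{1}{n}\sum_{t=1}^n \log(1 + \eta(m - X_t)).$$
Because $m - X_t \geq -1$, Fan's inequality (\Cref{def:fans-inequality}) produces the quadratic lower bound
$$\klinf(\Phat_n, m) \;\geq\; \eta(m - \muhat_n) - 4\psi_E(\eta)\bigl[\sigmahat_n^2 + (m - \muhat_n)^2\bigr],$$
after noting that $(1/n)\sum_t (m - X_t)^2 = \sigmahat_n^2 + (m - \muhat_n)^2$. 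Using $4\psi_E(\eta) \leq \eta^2/[2(1-\eta)]$ and choosing $\eta \approx (m - \muhat_n)/\sigmahat_n^2$ (which is $O(1/\sqrt{n})$, hence well inside $[0, 1/(1-m))$ in the relevant regime) yields the leading-order bound $\klinf(\Phat_n, m) \geq (m - \muhat_n)^2/(2\sigmahat_n^2) \cdot (1 - o(1))$. A symmetric argument with $\eta \leq 0$ and $\klinfminus$ handles the case $m < \muhat_n$. Together these bounds give $\CIbet \subset \CItilde$.

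The coverage statement $\mathbb{P}(\mu \in [\muhat_n \pm b_n]) \geq 1 - \alpha/3$ with $b_n = 2\sqrt{2\log(3n^c/\alpha)/n}$ then follows from the inclusion $\CIbet \subset \CItilde$, the trivial variance bound $\sigmahat_n \leq 1/2$ (which in fact gives a half-width at most $(1/2)\sqrt{2\log(3n^c/\alpha)/n}$, comfortably inside $b_n$), and the level-$(1-\alpha/3)$ coverage guarantee $\mathbb{P}(\mu \in \CIbet) \geq 1-\alpha/3$ of the betting CI. The main technical obstacle will be controlling the $o(1)$ slack from Fan's inequality uniformly over the relevant range of $m$: one must verify that the near-optimal $\eta$ stays bounded away from $1/(1-m)$ so that $4\psi_E(\eta) \approx \eta^2/2$, and that the higher-order corrections do not erode the leading $\sqrt{2}$ constant. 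Fortunately, the extra factor of $2$ built into the looser bound $b_n$ provides ample slack to absorb these corrections, so no sharp control is actually required for the stated conclusion.
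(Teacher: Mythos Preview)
Your proposal is correct and follows essentially the same route as the paper: pass from $W_n(m)$ to the best-in-hindsight wealth via the regret bound, lower-bound $\log(1+\cdot)$ quadratically, and optimize over the betting fraction. The paper's execution is simpler in two respects worth noting. First, rather than splitting into the cases $m>\muhat_n$ and $m<\muhat_n$ and routing through $\klinf(\Phat_n,m)$ and $\klinfminus(\Phat_n,m)$ separately, the paper just bounds the full supremum $\sup_{\lambda}\frac{1}{n}\sum_t\log(1+\lambda(X_t-m))$ directly and optimizes over $\lambda$ unconstrained; the optimizer automatically picks up the correct sign. Second, instead of Fan's inequality and the bookkeeping $4\psi_E(\eta)\le \eta^2/[2(1-\eta)]$, the paper uses the cruder bound $\log(1+x)\ge x-x^2/2$ together with $(1/n)\sum_t(X_t-m)^2\le 1$, which yields the clean closed form $W_n(m)\ge\exp\bigl(n\Deltahat_m^2/2 - c\log n\bigr)$ with no $o(1)$ corrections to absorb. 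Your route does recover the sharper $\sigmahat_n$-dependent half-width at the price of that extra bookkeeping, whereas the paper's displayed calculation (after the crude step) really only yields the $\sigmahat_n$-free bound---more than enough for the coarse envelope $b_n$ that is all the downstream argument actually needs.
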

            The proof of this statement is in~\Cref{proof:coarse-betting-CI}.

            \paragraph{Step 2: Concentration of information projection.} In this step, we now replace the empirical information projections with their appropriate population counterparts. To do this we need a high probability concentration bound on the deviation of $\hatKplus$ and $\hatKminus$ from their expectations. We will show this in two steps, stated as~\Cref{lemma:small-lambda-star} and~\Cref{lemma:klinf-concentration}. 
            \begin{lemma}
                \label{lemma:small-lambda-star}
                Fix a $\delta \in (0,0.1]$, and let $m>\mu$ be small enough to ensure that $|\lambda_+^*(m)| \leq \delta$. Then, we have 
                \begin{align}
                    \frac{ |\Delta_m|(1-\delta)^2}{\sigma^2 + \Delta_m^2} \leq |\lambda^*_+(m)| \leq \frac{ |\Delta_m|(1+\delta)^2}{\sigma^2 + \Delta_m^2}, \quad \text{where} \quad \Delta_m \defined \mu - m. 
                \end{align}
                A exactly analogous statement is also true for $\lambda^*_{-}(m)$ with $m<\mu$. 
            \end{lemma}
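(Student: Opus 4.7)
\textbf{Proof plan for Lemma~\ref{lemma:small-lambda-star}.}

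My plan is to proceed via a Taylor expansion of the first derivative of the objective function $g(\lambda) \defined \mathbb{E}[\log(1+\lambda(X-m))]$ around the origin, and then leverage the first-order optimality condition at $\lambda_+^*(m)$. Define $\lambda^* \defined \lambda_+^*(m)$ for brevity and $\Delta_m \defined \mu - m$; note that $m>\mu$ implies $\Delta_m<0$ and $\lambda^*<0$. The first step is to verify that the first-order condition actually holds at $\lambda^*$, i.e., that the optimum lies strictly in the interior of the feasible set $[-1/(1-m),\,0]$. This is immediate from the hypothesis $|\lambda^*|\le\delta\le 0.1$ together with the trivial bound $1/(1-m)\ge 1>\delta$, so $\lambda^*\in (-1/(1-m),0)$ and $g'(\lambda^*)=0$.

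Next, I would compute the first two derivatives of $g$ explicitly:
\begin{align}
g'(\lambda) = \mathbb{E}\!\left[\tfrac{X-m}{1+\lambda(X-m)}\right], \qquad g''(\lambda) = -\mathbb{E}\!\left[\tfrac{(X-m)^2}{(1+\lambda(X-m))^2}\right].
\end{align}
In particular, $g'(0)=\mathbb{E}[X-m]=\Delta_m$, and $\mathbb{E}[(X-m)^2]=\sigma^2+\Delta_m^2$. By the mean value theorem applied to $g'$ on the interval between $0$ and $\lambda^*$, there exists $\xi$ with $|\xi|\le|\lambda^*|\le\delta$ such that $0=g'(\lambda^*) = g'(0) + \lambda^*\, g''(\xi)$, which rearranges to
\begin{align}
\lambda^* \;=\; \frac{\Delta_m}{\mathbb{E}\!\left[(X-m)^2/(1+\xi(X-m))^2\right]}.
\end{align}

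The third step is the key bound on the denominator. Since $|X-m|\le 1$ for $X\in[0,1]$ and $|\xi|\le\delta$, we have $|\xi(X-m)|\le\delta$, so $1-\delta\le 1+\xi(X-m)\le 1+\delta$ pointwise, hence $(1-\delta)^2\le(1+\xi(X-m))^2\le(1+\delta)^2$. Dividing and taking expectations yields
\begin{align}
\frac{\sigma^2+\Delta_m^2}{(1+\delta)^2}\;\le\;\mathbb{E}\!\left[\tfrac{(X-m)^2}{(1+\xi(X-m))^2}\right]\;\le\;\frac{\sigma^2+\Delta_m^2}{(1-\delta)^2}.
\end{align}
Plugging these bounds on the (positive) denominator into the expression for $\lambda^*$ (and using that $\Delta_m<0$ flips the direction when taking absolute values) produces exactly
\begin{align}
\frac{|\Delta_m|(1-\delta)^2}{\sigma^2+\Delta_m^2} \;\le\; |\lambda^*|\;\le\; \frac{|\Delta_m|(1+\delta)^2}{\sigma^2+\Delta_m^2},
\end{align}
which is the desired claim. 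The analogous statement for $\lambda_-^*(m)$ with $m<\mu$ follows by the symmetric argument, with $S_-=[0,1/m]$ replacing $S_+$ and the roles of the signs reversed.

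The only subtle point—and really the entire content of the proof—is choosing to Taylor-expand $g'$ (which carries a squared denominator through $g''$) rather than manipulating the raw first-order condition $\lambda^* \mathbb{E}[(X-m)^2/(1+\lambda^*(X-m))]=\Delta_m$; the latter would give only linear $(1\pm\delta)$ factors. I do not anticipate any genuine obstacle: the remaining concerns (interiority of the optimum, positivity of the denominator, sign tracking for $\Delta_m<0$) are all routine once the MVT representation is in hand.
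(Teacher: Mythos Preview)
Your proof is correct and follows essentially the same approach as the paper: both start from the first-order optimality condition for the strictly-interior maximizer and control the quadratic correction via the bound $|\lambda^*(X-m)|\le\delta$. The only cosmetic difference is that the paper Taylor-expands the equivalent form $\mathbb{E}[1/(1+\lambda^*(X-m))]=1$ pointwise in $X$, whereas you apply the mean value theorem directly to the deterministic function $g'$; your route is arguably cleaner since $g''$ carries the squared denominator that produces exactly the $(1\pm\delta)^2$ factors stated in the lemma.
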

            In other words, the above lemma~(proved in~\Cref{proof:small-lambda-star}) tells us that in the neighborhood of $\mu$, the optimal betting fraction depends almost linearly on $\Delta_m$. This result plays a vital role in our next result, about the concentration of the empirical information projection around its population values. 
            \begin{lemma}
                \label{lemma:klinf-concentration}
                Recall the term $b_n \defined 2 \sqrt{2 \log(3n^c/\alpha)/n}$ from~\Cref{lemma:coarse-betting-CI}, and suppose $n$ is large enough to ensure that for all $m \leq \mu + b_n$, we have $|\lambda^*(m)| \leq \delta$. Then, for any $\beta \in (0,1)$ the following inequality holds for $d(n, \alpha, \beta) \defined \frac{6\sqrt{2\log(3n^c/\alpha) \log(4/\beta)}}{n\sigma^2}$: 
                \begin{align}
                    &\mathbb{P}\lp \hatKplus(P^*, m) \geq \klinf - d(n, \alpha, \beta)   \rp \geq \beta,  \text{ for any } m \in [\mu, \mu + b_n]. 
                \end{align}
                Similarly, we also have 
                \begin{align}
                    &\mathbb{P}\lp \hatKminus(P^*, m) \geq \klinfminus - d(n, \alpha, \beta)   \rp \geq \beta,  \text{ for any } m \in [\mu - b_n, \mu]. 
                \end{align}
            \end{lemma}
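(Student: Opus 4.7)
The plan is to exploit the smallness of $\lambda^*_+(m)$ guaranteed by Lemma~\ref{lemma:small-lambda-star} in order to reduce the concentration of $\hatKplus(P^*, m)$ around its population mean $\klinf(P^*, m)$ to concentration of ordinary empirical moments. Setting $u_t \defined \lambda^*_+(m)(X_t - m)$, the hypothesis that $n$ is large enough to make $|\lambda^*_+(m)| \leq \delta$ uniformly over $m \in [\mu, \mu + b_n]$ ensures $|u_t| \leq \delta$, so I would use the third-order Taylor identity $\log(1+u) = u - u^2/2 + g(u)$ with $|g(u)| \leq C|u|^3$ for a universal constant $C$. Averaging this identity in $t$ and subtracting the corresponding expectation (which equals $\klinf(P^*, m)$ by definition of $\lambda^*_+(m)$) would yield the decomposition
\begin{align}
\hatKplus(P^*, m) - \klinf(P^*, m) = \lambda^*_+(m)\big(\muhat_n - \mu\big) - \frac{\lambda^*_+(m)^2}{2}\big(\widehat{M}_2(m) - M_2(m)\big) + R_n,
\end{align}
where $\widehat{M}_2(m) \defined \frac{1}{n}\sum_t (X_t - m)^2$, $M_2(m) = \sigma^2 + (\mu-m)^2$, and the remainder satisfies $|R_n| \leq 2C|\lambda^*_+(m)|^3$ since $|X_t - m| \leq 1$.

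Next I would apply Hoeffding's inequality separately to the two empirical means of $[0,1]$-valued random variables and take a union bound at level $\beta/2$ each, giving with probability at least $1-\beta$ that both $|\muhat_n - \mu| \leq \sqrt{\log(4/\beta)/(2n)}$ and $|\widehat{M}_2(m) - M_2(m)| \leq \sqrt{\log(4/\beta)/(2n)}$. To translate these into the stated rate, I would invoke Lemma~\ref{lemma:small-lambda-star} with $|\Delta_m| \leq b_n = 2\sqrt{2\log(3n^c/\alpha)/n}$ to obtain $|\lambda^*_+(m)| \leq (1+\delta)^2 b_n/\sigma^2$. The linear term is then bounded by $(1+\delta)^2 b_n \sqrt{\log(4/\beta)/(2n)}/\sigma^2 = \Theta\big(\sqrt{\log(3n^c/\alpha)\log(4/\beta)}/(n\sigma^2)\big)$, which already has the functional form of $d(n,\alpha,\beta)$. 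The quadratic term is of order $b_n^2 \sqrt{\log(4/\beta)/n}/\sigma^4$ and the cubic remainder of order $b_n^3/\sigma^6$; both are smaller than the linear contribution by a factor $\mc{O}(\sqrt{\log(3n^c/\alpha)/n}/\sigma^2)$ and can therefore be absorbed into a slightly larger leading constant, producing the quoted prefactor $6\sqrt{2}$ once $n$ is taken sufficiently large (as assumed in the lemma and quantified in Appendix~\ref{appendix:n-large-enough}). The statement for $\hatKminus$ on $m \in [\mu - b_n, \mu]$ follows by the symmetric argument, using the analogous bound on $|\lambda^*_-(m)|$.

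The main obstacle is bookkeeping: the Taylor remainder, Hoeffding's slack, the $(1+\delta)^2$ factor from Lemma~\ref{lemma:small-lambda-star}, and the coarse bound $b_n = 2\sqrt{2\log(3n^c/\alpha)/n}$ all introduce multiplicative constants that must interact cleanly to yield exactly the stated $d(n,\alpha,\beta)$. In particular, the ``$n$ large enough'' hypothesis must be simultaneously strong enough (i) to force $\delta$ small enough for the cubic Taylor remainder to be valid with the universal constant $C$, (ii) to make the quadratic term strictly dominated by the linear one, and (iii) to ensure the uniform bound $|\lambda^*_+(m)| \leq \delta$ over the whole interval $[\mu, \mu+b_n]$. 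No new conceptual ingredient is needed beyond the Taylor-plus-Hoeffding scheme above; the work lies in extracting a concrete threshold for $n$.
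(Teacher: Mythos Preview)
Your proposal is correct and matches the paper's own proof essentially step for step: Taylor-expand $\log(1+\lambda^*_+(m)(X-m))$ to third order, subtract the population identity to isolate a linear term in $\muhat_n-\mu$, a quadratic term in the empirical second moment, and a cubic remainder, then bound the first two via two Hoeffding inequalities at level $\beta/2$ each and control $|\lambda^*_+(m)|$ through Lemma~\ref{lemma:small-lambda-star} together with $|\Delta_m|\leq b_n$. The paper likewise absorbs the quadratic and cubic contributions into the linear one under the ``$n$ large enough'' hypothesis (made explicit in Appendix~\ref{appendix:n-large-enough}) to obtain the stated $d(n,\alpha,\beta)$, so your bookkeeping concerns are exactly the content that remains and require no new ideas.
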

           The proof of this statement  uses approximation arguments to show that both $\klinf$ and $\hatKplus$ can be written as their first two Taylor's series terms, plus a negligible remainder term. Hence, to show the concentration of the empirical information projection terms, it suffices to show the concentration of the empirical mean and second moment around their population values. This can be achieved via two applications of Hoeffding's inequality, since all the observations are bounded. The details are in~\Cref{proof:klinf-concentration}. 

            Introduce the term $B = \lceil 2b_n/(1/n^2)\rceil$, and consider the following two girds of equally spaced points: 
            \begin{align}
                \mc{M}_n^+ = \{\mu, \mu + 1/n^2, \ldots, \mu + B/n^2\}, \quad \text{and} \quad 
                \mc{M}_n^- = \{\mu, \mu - 1/n^2, \ldots, \mu - B/n^2\}. 
            \end{align}
            The cardinality of both $\mc{M}_n^+$ and $\mc{M}_n^-$ is equal to $B+1 = \Theta\lp n^{3/2} \sqrt{\log(n/\alpha)}\rp$, which we loosely upper bound by $n^2/8$. 
            Then, using~\Cref{lemma:klinf-concentration},  we can define a probability $1-2\alpha/3$ event $E_2$ as follows: 
            \begin{align}
                &E_2 =  \bigcap_{a \in \{+, -\}}\;  \bigcap_{m_i \in \mc{M}_n} \;\lbr \widehat{K}_n^a(P^*, m_i) \geq  \text{KL}_{\inf}^a\lp P^*, m_i \rp - d(n, \alpha) \rbr, \label{eq:event-E2}\\ 
                \text{where} \quad &d(n,\alpha) \equiv d\lp n, \alpha,\, \frac{2\alpha}{3}\times \frac{8}{2n^2}\rp \defined \frac{6 \sqrt{2 \log(3n^c/\alpha)\log(3n^2/\alpha)}}{n \sigma^2} \leq \frac{9\log(3n^2/\alpha)}{n\sigma^2}. \label{eq:d-n-alpha-def}
            \end{align}

            \paragraph{Step 3: Invert the wealth process.}   Let $E$ denote the event $E_1 \cap E_2$, where $E_1$ was defined in the statement of~\Cref{lemma:coarse-betting-CI}, and $E_2$ is defined in~\eqref{eq:event-E2} above.  Introduce the notation 
            \begin{align}
                K^{\max}(P^*, m) = \max \lbr \klinf(P^*, m),\; \klinfminus(P^*, m) \rbr. 
            \end{align}
            Then, under the event $E$, we can lower bound the wealth at any $m_i \in \mc{M}_n \defined \mc{M}_n^+ \cup \mc{M}_n^-$ as 
            \begin{align}
                W_n(m_i) \geq \exp \lp n K^{\max}(P^*, m_i) - c \log(n) - 9\log(3n^2/\alpha)/\sigma^2 \rp. \label{eq:wealth-lower-bound-1}
            \end{align}
            Under the event $E$, the betting CI at time $n$ can be stated as 
            \begin{align}
                \CIbet = \{m \in [\mu \pm 2b_n]: W_n(m) < 3/\alpha \}. 
            \end{align} 
            We have used $3/\alpha$ here, because we used the remaining $2\alpha/3$ on the event $E_2$. 
            Let $U$ and $L$ denote the end points in the intersection of this CI and the grid $\mc{M}_n $. That is, 
            \begin{align}
              U = \max\{m_i: m_i \in \mc{C}_n \cap \mc{M}_n\}, \quad \text{and} \quad 
             L = \min\{m_i: m_i \in \mc{C}_n \cap \mc{M}_n\}. 
            \end{align}
            Then, since the grid points are uniformly spaced at intervals of length $1/n^2$, it follows that the width of the CI, $2w(n, P^*, \alpha)$ is upper bounded by $U-L+2/n^2$. Using the lower bound on the wealth process stated in~\eqref{eq:wealth-lower-bound-1}, we obtain the following, with $b(n, \alpha) \defined \log(3n^2/\alpha)/n + d(n, \alpha)$~(recall that $d(n, \alpha)$ was defined in~\eqref{eq:d-n-alpha-def}): 
            \begin{align}
                U &= \max \{ m_i \in \mc{M}_n:  W_n(m_i) < \log(3/\alpha)\} \\
                &\leq \max \lbr m_i \in \mc{M}_n: \exp \lp n K^{\max}(P^*, m_i) - 2 \log(n) - nd(n, \alpha) \rp < \log(3/\alpha)\rbr \\
                & = \max \{m_i \in \mc{M}_n: \Kmax(P^*, m_i) < b(n, \alpha)\} \\
                & \leq \sup \{m \in [0,1]: \Kmax(P^*, m) < b(n, \alpha)\} \\
                & = \klinf(P^*, \cdot)^{-1}\lp b(n, \alpha)\rp. 
            \end{align}
            The first inequality in the above display uses the lower bound on $W_n(m_i)$, as stated in~\eqref{eq:wealth-lower-bound-1}, while the second inequality uses the fact that the value of supremum increases by increasing the domain. 
            An analogous argument gives us the lower bound $L \geq \klinfminus(P^*, \cdot)^{-1}\lp b(n, \alpha)\rp$, and together, these two results imply the statement: 
            \begin{align}
                 w(n, P^*, \alpha) &\leq U - L + \frac{2}{n^2} 
                \leq \klinf(P^*, \cdot)^{-1}\lp b(n, \alpha) \rp - \mu + \mu - \klinfminus(P^*, \cdot)^{-1}\lp b(n, \alpha) \rp + \frac{2}{n^2},  
            \end{align}
            which implies the required conclusion 
            \begin{align}
                w(n, P^*, \alpha) \leq 2\max \lbr \klinf(P^*, \cdot)^{-1}\lp b(n, \alpha) \rp - \mu, \;  \mu - \klinfminus(P^*, \cdot)^{-1}\lp b(n, \alpha) \rp \rbr + \frac{2}{n^2}.  
            \end{align}

            \subsubsection{Proof of~\Cref{lemma:coarse-betting-CI}}
            \label{proof:coarse-betting-CI}           
                Consider any $m \in [0,1]$, and observe that the wealth, $W_n(m)$, satisfies the following, due to the $c\log n$ regret of the betting scheme: 
                \begin{align}
                    W_n(m) \geq \exp \lp n \sup_{\lambda \in [-1/(1-m), 1/m]} \frac{1}{n} \sum_{t=1}^n \log(1+\lambda(X_t-m))  \,- c\log n\rp. 
                \end{align}
                Since we can lower bound $\log(1+x)$ with $x-x^2/2$ for all $x \geq 0$, we have 
                \begin{align}
                    W_n(m) &\geq \exp \lp n \sup_{\lambda \in [-1/(1-m), 1/m]}  \lambda \Deltahat_m - \frac{\lambda^2}{2} \lp \sigmahat_n^2 + \Deltahat_m^2\rp  \,- c\log n\rp \label{eq:coarse-betting-CI-proof-1} \\
                    &\geq \exp \lp n \sup_{\lambda \in [-1/(1-m), 1/m]}  \lambda \Deltahat_m - \frac{\lambda^2}{2}  \,- c\log n\rp, \label{eq:coarse-betting-CI-proof-2}
                \end{align}
                where we have used the notation 
                \begin{align}
                    \Deltahat_m \defined \frac{1}{n}\sum_{i=1}^n X_i - m \;=\; \muhat_n - m, \quad \text{and} \quad 
                    \sigmahat_n^2 = \frac{1}{n} \sum_{i=1}^n (X_i - \muhat_n)^2. 
                \end{align}
                On optimizing the RHS of~\eqref{eq:coarse-betting-CI-proof-2}  for $\lambda$, we get that 
                \begin{align}
                    W_n(m) \geq \exp \lp  \frac{n \Deltahat_m^2}{2} - c \log(n) \rp. 
                \end{align}
                
                Using this lower bound, we can construct a larger CI that contains the usual betting CI, as follows: 
                \begin{align}
                    \{m \in [0,1]: \log(W_n(m)) < \log(3/\alpha)\} = \CIbet  \subset \CItilde \defined \lbr m \in [0,1]: \frac{n\Deltahat_m^2}{2} - c \log(n) < \log(3/\alpha) \rbr.  
                \end{align}
                On simplification, we get 
                \begin{align}
                    \CItilde = \lbr m: |m - \muhat_n| \leq \sigmahat_n\sqrt{ \frac{2\log(3n^c/\alpha)}{n} } \rbr.  
                \end{align}

            \subsubsection{Proof of~\Cref{lemma:small-lambda-star}}
            \label{proof:small-lambda-star}
                First, we recall that the mapping $m \mapsto \lambda_+^*(m)$ is continuous. Hence, we can define 
                \begin{align}
                    \epsilon \equiv \epsilon(\delta, P^*) = \sup \{ \epsilon' > 0: \lambda_+^*(m)\leq \delta \text{ for all } m \in [\mu, \mu + \epsilon'] \}. \label{eq:epsilon-def}
                \end{align}
                Thus for all $m \in [\mu, \mu + \epsilon] \cap [0,1]$, the optimal bet satisfies $|\lambda_+^*(m)| \leq \delta$, and  in this proof, we assume $m$ lies in $[\mu, \mu + \epsilon] \cap [0,1]$. 

                Next, we know from~\citet[Theorem 5]{honda2010asymptotically} that for $P^*$ with $\sigma>0$, the optimal betting fraction $\lambda_+^*(m)$ satisfies the following:
                \begin{align}
                    \mathbb{E}\lb \frac{1}{1 + \lambda_+^*(m)(X-m)} \rb = 1, \quad \text{for all } \mu \leq m \leq 1- \frac{1}{\mathbb{E}\lb 1/(1-X) \rb}.  
                \end{align}
                The above condition is obtained by simply calculating the derivative of the objective function in the dual formulation of $\klinf$ by differentiating inside the expectation, and setting it to zero $\lambda = \lambda_+^*(m)$. 

                Since $|X-m|\leq 1$ almost surely, and $|\lambda_+^*(m)| \leq \delta$ by assumption, we will now approximate the above expectation by using a Taylor's expansion of the function $f(x) = 1/(1+x)$ around $0$. In particular, we obtain 
                \begin{align}
                    \frac{1}{1 + \lambda_+^*(m)(X-m)} = 1 - \lambda_+^*(m)(X-m) + \lambda_+^*(m)^2 (X-m)^2 \frac{1}{\big(1 + \delta U \big)^2}, 
                \end{align}
                for some $[-1,1]$ valued random variable $U$. Considering the fact that $1 + \delta U \geq 1- \delta$, we obtain 
                \begin{align}
                     1 - \lambda_+^*(m)\Delta_m + \lambda_+^*(m)^2(\sigma^2 + \Delta_m^2) \frac{1}{(1-\delta)^2} \geq 1, 
                \end{align}
                which on simplification implies 
                \begin{align}
                    |\lambda_+^*(m)| \geq \frac{(1-\delta)^2 |\Delta_m|}{\Delta_m^2 + \sigma^2}. 
                \end{align}
                A similar argument with the other extreme value of $1 + \delta U=1+\delta$, gives us the upper bound 
                \begin{align}
                    |\lambda_+^*(m)| \leq \frac{(1+\delta)^2 |\Delta_m|}{\Delta_m^2 + \sigma^2}. 
                \end{align}
                This completes the proof.

            \subsubsection{Proof of~\Cref{lemma:klinf-concentration}}
            \label{proof:klinf-concentration}
                Recall that, by the dual form of the information projection, we know that 
                \begin{align}
                    \klinf(P^*, m) = \mathbb{E}\lb \log \lp 1 + \lambda^*_+(m)(X-m) \rp \rb. 
                \end{align}
                We approximate the integrand above by its Taylor's expansion around $0$, and obtain 
                \begin{align}
                    \klinf(P^*, m) &= \mathbb{E}\lb \lambda^*_+(m)(X-m) - \frac{\lambda^*_+(m)^2(X-m)^2}{2} + \frac{\lambda^*_+(m)^3 U}{3} \rb, 
                \end{align}
                for some random variable $U$ taking values in $[-1,1]$. On taking the expectation, we get 
                \begin{align}
                    \klinf(P^*, m) = \lambda^*_+(m) \Delta_m - \frac{\lambda^*_+(m)^2}{2} \lp \sigma^2 + \Delta_m^2 \rp + \frac{\lambda^*_+(m)^3}{3} \mathbb{E}[U], \quad \text{where} \; \Delta_m = m - \mu. \label{eq:klinf-conc-proof-1}
                \end{align}

                Now recall that the term $\hatKplus$ is simply the empirical version of $\klinf$, that is, 
                \begin{align}
                    \hatKplus(P^*, m) = \frac{1}{n} \sum_{i=1}^n \log \lp 1 + \lambda^*_+(m)(X_i - m) \rp. 
                \end{align}
                Hence, by the same approximation argument used above for analyzing $\klinf(P^*, m)$, we obtain 
                \begin{align}
                    \hatKplus(P^*, m) = \lambda^*_+(m) \Deltahat_m - \frac{\lambda^*_+(m)^2}{2} \lp \sigmahat_n^2 + \Deltahat_m^2 \rp + \frac{\lambda^*_+(m)^3}{3} \lp \frac{1}{n} \sum_{i=1}^n U_i \rp, \label{eq:klinf-conc-proof-2}
                \end{align}
                where the terms $\Deltahat_m$ and $\sigmahat_n^2$ are defined as
                \begin{align}
                    \Deltahat_m = \muhat_n - m, \quad \text{and} \quad \sigmahat_n^2 = \frac{1}{n} \sum_{i=1}^n (X_i - \muhat_n)^2. 
                \end{align}
                Subtracting~\eqref{eq:klinf-conc-proof-1} from~\eqref{eq:klinf-conc-proof-2}, we get 
                \begin{align}
                    \hatKplus(P^*, m) - \klinf(P^*, m) \geq \lambda^*_+(m) \lp \muhat_n - \mu\rp - \frac{\lambda^*_+(m)^2}{2}\lp \frac{1}{n} \sum_{i=1}^n (X_i-m)^2 - \sigma^2 - \Delta_m^2\rp - \frac{2\lambda^*_+(m)^3}{3}. 
                \end{align}
                Note that to get the above inequality, we have used the fact that $(1/n)\sum_{i=1}^n U_i - \mathbb{E}[U] \geq -2$. 
                Thus, the above inequality implies that to obtain a concentration result for the information projection, it suffices to show the concentration of the mean and second moment of the observations. In particular, for any $\beta \in (0,1)$, we have the following two results by Hoeffding's inequality: 
                \begin{align}
                    &|\muhat_n - \mu| \leq \sqrt{\log(4/\beta)/2n}, \quad \text{w.p. } \geq 1-\beta/2,  \\
                   \text{and} \quad 
                   &\left\lvert \frac{1}{n}\sum_{i=1}^n (X_i-m)^2 - \sigma^2 - \Delta_m^2 \right\rvert \leq \sqrt{\log(4/\beta)/2n}, \quad \text{w.p. } \geq 1-\beta/2. 
                \end{align}
                This implies the statement: 
                \begin{align}
                    &\mathbb{P}\lp \hatKplus(P^*, m) \geq \klinf(P^*, m) - d(n, m, \beta) \rp \geq 1- \beta, \\
                    \text{where} \quad 
                    &d(n, m, \beta) \defined |\lambda^*_+(m)| \sqrt{ \lp \log(4/\beta)\rp / 2n} \,+\, \frac{|\lambda^*_+(m)|^2}{2} \frac{\log(4/\beta)}{2n} + \frac{2 |\lambda^*_+(m)|^3}{3}. 
                \end{align}
                Next, recall that~\Cref{lemma:small-lambda-star} implies that$|\lambda^*_+(m)|  \leq \lp |\Delta_m|(1+\delta)^2\rp/\lp \Delta_m^2 + \sigma^2\rp$. If $\delta \leq 0.1$, then it is easy to check that $(1+\delta)^2 \leq (1+\delta)^4\leq (1+\delta)^6 \leq 2$. This implies that 
                \begin{align}
                    d(n, m, \beta) \leq \frac{2 |\Delta_m| \sqrt{\log(4/\beta)}}{\sigma^2\sqrt{2n}} + \frac{\Delta_m^2 \log(4/\beta)}{n \sigma^4} + \frac{8|\Delta_m|^3}{3 \sigma^6}.  \label{eq:klinf-conc-proof-3}
                \end{align}
                Finally, note that $|\Delta_m|\leq 2\sqrt{2 \log(3n^c/\alpha)/n}$, which implies that $d(n, m, \beta) = \mc{O}\lp \sqrt{\log(n/\alpha)\log(4/\beta)}/n \rp$. In particular, if $n$ is large enough to ensure that 
                the second and third terms in the RHS of~\eqref{eq:klinf-conc-proof-3} are smaller than the first~(see precise conditions in~\Cref{appendix:n-large-enough}), we can upper bound the RHS of~\eqref{eq:klinf-conc-proof-3} with three times the first term: 
                \begin{align}
                    d(n, m, \beta) \leq \frac{6\sqrt{2\log(3n^c/\alpha) \log(4/\beta)}}{n\sigma^2}. 
                \end{align}
                The proof for the concentration of $\hatKminus$ is obtained under the same, $1-\beta$ probability event, following an exactly analogous approximation argument as above. We omit the details to reduce repetition.       

            \subsubsection{Meaning of ``$\boldsymbol{n}$ large enough'' in~\Cref{prop:betting-ci-width}}
            \label{appendix:n-large-enough}
                To simplify the final statement of~\Cref{prop:betting-ci-width}, we have assumed that $n$ is large enough at several points in the proof. We collect all those assumptions here. 
                \begin{itemize}
                    \item In~\Cref{lemma:klinf-concentration}, we assumed that $n$ was large enough to ensure the following for a fixed $\delta \in (0,0.1]$:  
                    \begin{align}
                        b_n = 2\sqrt{\frac{2 \log(3n^c/\alpha)}{n}} < \epsilon(\delta, P^*) \defined \sup \{ \epsilon'>0: |\lambda^*_+(m)| \leq \delta, \text{ for all } m \in [\mu, \epsilon']\}. 
                    \end{align}
                    Thus, for a fixed $\delta$~(say $0.1$), we know that there exists a finite $\epsilon \equiv \epsilon(\delta, P^*)>0$, and the above condition requires $n$ to be $\Omega \lp \frac{\log(1/\epsilon)}{\epsilon^2} \rp$. 
                    \item Next, in~\Cref{lemma:klinf-concentration}, we assumed that $n$ is large enough to satisfy 
                    \begin{align}
                        \min \lbr \frac{n\sigma^2}{\sqrt{\log(3n^c/\alpha) \log(3n^2/4\alpha)}}, \;
                        \frac{3\sigma^4\sqrt{n \log(3n^2/4\alpha)}}{32\sqrt{2}\log(3n^c/\alpha)}  
                        \rbr \geq 1, 
                    \end{align}
                    The above assumption on $n$ is made purely to absorb the two higher order terms into the first  term in~\eqref{eq:klinf-conc-proof-3}, for the value of $\beta = 8\alpha/3n^2$. 
                    \item Then, we assumed that $n$ is large enough to ensure 
                        $4b_nn^2 +2 = 8n^{3/2} \sqrt{2 \log(3n^c/\alpha)}  + 2\leq n^2/8$. 

                 \end{itemize}
            We emphasize that, other than the first bullet point, the other assumptions  on $n$ are made to simplify the expression of the final statement of~\Cref{prop:betting-ci-width}, by absorbing the higher order terms. Also note that the random upper bound on the width of the betting CI, in terms of the empirical information projections, is valid for all $n \geq 1$.

\section{Details of the non-asymptotic analysis of betting CS~(Section~\ref{sec:nonasymp-CS})}
    \subsection{Proof of the CS lower bound~(Proposition~\ref{prop:lower-bound-2})}
    \label{proof:lower-bound-2}
        The starting point of the proof of this result is to use the duality between confidence sequences and sequential tests. In particular, given observations $X_1, X_2, \ldots \simiid P^*$, we consider the hypothesis testing problem with 
        \begin{align}
            H_0: P^*=Q, \quad \text{versus} \quad H_1: P^*=P.
        \end{align}
         We assume that the means of the two distributions~($\mu_P$ and $\mu_Q$) satisfy the constraint $\mu_Q-\mu_P = w_0 > w_e(n, P, \alpha)$.  We now proceed to the main steps of the proof. 
        
        \paragraph{Step 1: Sequential test.} Unlike the proof of~\Cref{prop:lower-bound-1}, where the sample-size $n$ was fixed beforehand, for the above testing problem, we propose a sequential test that stops and makes it decision at a random time. In particular,  this, we define a sequential test that stops at a random time $T$, and makes a decision $\Psi \in \{0, 1\}$, as follows: 
        \begin{align}
            T = \inf \{n \geq 1: |C_n| < \mu_Q-\mu_P\}, \quad \text{and} \quad 
            \Psi = \boldsymbol{1}_{Q \not \in C_T}. \label{eq:T-psi-def}
        \end{align}
        We now show that this sequential test controls both the type-I and type-II errors at level $\alpha$. 
        \begin{lemma}
            \label{lemma:lower-bound-2-1}
            Under the assumptions of~\Cref{prop:lower-bound-2}, the stopping time $T$ is finite almost surely, and the test $\Psi$ introduced above satisfies the following:
            \begin{align}
                \mathbb{E}_{H_0}[\Psi] \leq \alpha, \quad \text{and} \quad 
                \mathbb{E}_{H_1}[\Psi] \geq 1-\alpha. \label{eq:lower-bound-2-0}
            \end{align}
        \end{lemma}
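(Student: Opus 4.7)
The plan is to handle the two claims in \eqref{eq:lower-bound-2-0} using only the two properties of the method $\mc{C}$: (i) time-uniform $(1-\alpha)$-coverage of the mean of the true distribution, and (ii) the integrability hypothesis $\mathbb{E}[T_w(R,\alpha)]<\infty$ for every $w>0$ and every $R\in\mc{P}_0$. The core observation that makes the reduction work is the separation $\mu_Q-\mu_P=w_0>w_e(n,P,\alpha)$: any set of width strictly less than $w_0$ cannot contain both $\mu_P$ and $\mu_Q$, which is precisely the geometric fact that converts coverage into test validity.

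I would first show $T<\infty$ almost surely under both hypotheses, since $T$ depends on the realized path $X_1,X_2,\ldots$ drawn from whichever distribution is in force. Without loss of generality assume the sets $\{C_n\}$ are the running intersections (as noted just after Definition~\ref{def:betting-CS-def}), so that $|C_n|$ is non-increasing. Under $H_1$, $w_0>w_e(n,P,\alpha)$ together with the definition of $w_e$ as an infimum yields a $w\in(w_e,w_0)$ with $\mathbb{E}_P[T_w(P,\alpha)]\leq n$; monotonicity of $|C_n|$ then gives $T\leq T_w<\infty$ $P$-a.s., and in fact $\mathbb{E}_P[T]\leq n$. Under $H_0$, the hypothesis on $\mc{C}$ applied at $R=Q$ gives $\mathbb{E}_Q[T_{w_0/2}(Q,\alpha)]<\infty$, so $|C_n|\leq w_0/2<w_0$ eventually, hence $T<\infty$ $Q$-a.s.

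Next I handle the type-I error. Under $H_0$ the CS guarantee reads $\mathbb{P}_Q(\forall n\geq 1:\mu_Q\in C_n)\geq 1-\alpha$, so on this probability-$(1-\alpha)$ event we have $\mu_Q\in C_T$, giving $\mathbb{E}_{H_0}[\Psi]=\mathbb{P}_Q(\mu_Q\notin C_T)\leq\alpha$. For the type-II error, under $H_1$ the same CS guarantee applied to $P$ gives $\mathbb{P}_P(\mu_P\in C_T)\geq 1-\alpha$. On that event, the stopping rule enforces $|C_T|<w_0=\mu_Q-\mu_P$, so $C_T$ cannot contain $\mu_Q$ (otherwise its length would be at least $\mu_Q-\mu_P$); therefore $\{\mu_P\in C_T\}\subset\{\mu_Q\notin C_T\}$ and $\mathbb{E}_{H_1}[\Psi]\geq 1-\alpha$.

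The only nontrivial step is the almost-sure finiteness of $T$ under $H_0$, since the definition of $w_e$ is phrased only in terms of $P$; I expect to handle it cleanly by invoking the integrability assumption at $R=Q$ and using nestedness of the CS to pass from the non-strict inequality in the definition of $T_w$ to the strict one in the definition of $T$ (via picking a slightly smaller width $w_0/2$). Everything else is a direct consequence of the geometric incompatibility of $\mu_P$ and $\mu_Q$ in an interval shorter than $w_0$ combined with the time-uniform coverage guarantee.
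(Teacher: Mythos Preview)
Your proposal is correct and follows essentially the same route as the paper: both arguments reduce the two error bounds to the time-uniform coverage guarantee together with the geometric fact that a set of width less than $w_0=\mu_Q-\mu_P$ cannot contain both means, and both invoke the integrability assumption $\mathbb{E}[T_w(R,\alpha)]<\infty$ (for $R\in\{P,Q\}$) from \Cref{prop:lower-bound-2} to get $T<\infty$ a.s. If anything, you are more explicit than the paper about finiteness under $H_0$ (handling the strict-versus-nonstrict inequality by passing to $T_{w_0/2}$), whereas the paper simply asserts finiteness ``under the conditions of~\Cref{prop:lower-bound-2}'' and then carries out the same decomposition $\mathbb{P}(\mu_Q\notin C_T)=\sum_n\mathbb{P}(T=n,\,\mu_Q\notin C_n)\leq\mathbb{P}(\exists n:\mu_Q\notin C_n)$ that your argument implicitly uses.
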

        The proof of this lemma is in~\Cref{proof:lemma-lower-bound-2-1}.

        \paragraph{Step 2: Lower bound on KL divergence.} In the next step, using the properties of the stopping time $T$ and the test $\Psi$, we show that the KL divergence between $P$ and $Q$ must be at least as large as a term depending on $E_{H_1}[T]$ and the binary KL divergence between the test outcome $\Psi$ under the $H_1$ and $H_0$. 
        \begin{lemma}
            \label{lemma:lower-bound-2-2}
            The KL divergence between $P$ and $Q$, with $Q$ such that $\mu_Q > \mu_P + 2w_e(n, P, \alpha)$,  must satisfy 
            \begin{align}
                \dkl(P, Q) \geq \frac{ \log\lp (1-\alpha)^{1-\alpha} \alpha^{2\alpha-1} \rp }{\mathbb{E}_{H_1}[T]}, \label{eq:lower-bound-2-2}
            \end{align}           
            where $T$ was defined in~\eqref{eq:T-psi-def}.
        \end{lemma}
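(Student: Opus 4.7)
\textbf{Proof proposal for Lemma~\ref{lemma:lower-bound-2-2}.}
The plan mirrors the strategy used for the fixed-sample CI lower bound in~\Cref{prop:lower-bound-1}, but with Wald's identity for KL divergence replacing the usual chain rule for product measures, in order to accommodate the data-dependent stopping time $T$ defined in~\eqref{eq:T-psi-def}. The three ingredients will be: (i) a lower bound on $\mathbb{E}_{H_1}[T]$, together with finiteness of this expectation; (ii) Wald's identity relating $\dkl(P|_{\mathcal{F}_T}, Q|_{\mathcal{F}_T})$ to $\mathbb{E}_{H_1}[T]\,\dkl(P,Q)$; and (iii) the data processing inequality applied to the test outcome $\Psi$, combined with the type-I and type-II error bounds from~\Cref{lemma:lower-bound-2-1}.

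First I would verify that $\mathbb{E}_{H_1}[T]<\infty$: since $w_0 = \mu_Q - \mu_P > 2 w_e(n,P,\alpha) > w_e(n,P,\alpha)$, the definition of the effective width~(\Cref{def:effective-width}) gives some $w \in (w_e, w_0)$ with $\mathbb{E}_P[T_w]\leq n$, and the trivial comparison $T \leq T_w$ under $H_1$ yields $\mathbb{E}_{H_1}[T]\leq n<\infty$. Next, I would invoke Wald's identity for KL divergence, which states that for any stopping time $T$ with $\mathbb{E}_P[T]<\infty$,
\begin{align}
\dkl\!\left(P|_{\mathcal{F}_T},\, Q|_{\mathcal{F}_T}\right) \;=\; \mathbb{E}_P[T]\cdot \dkl(P,Q).
\end{align}
Then, since $\Psi$ is $\mathcal{F}_T$-measurable, the data processing inequality implies
\begin{align}
\mathbb{E}_{H_1}[T]\cdot \dkl(P,Q) \;\geq\; \dkl\!\big(\mathbb{P}_{H_1}(\Psi=1),\, \mathbb{P}_{H_0}(\Psi=1)\big).
\end{align}

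The final step is to lower bound the resulting binary KL divergence. Writing $p=\mathbb{P}_{H_1}(\Psi=1)\geq 1-\alpha$ and $q=\mathbb{P}_{H_0}(\Psi=1)\leq \alpha$ from~\Cref{lemma:lower-bound-2-1}, and using the monotonicity of binary KL in each argument when $p>q$, I would argue $\dkl(p,q)\geq \dkl(1-\alpha,\alpha)$. Then the same elementary manipulation used in Step~2 of the proof of~\Cref{prop:lower-bound-1}, namely dropping the nonnegative term $(1-p)\log(1/(1-q))$ and bounding the entropy, gives
\begin{align}
\dkl(1-\alpha,\alpha) \;\geq\; (1-\alpha)\log(1/\alpha) - h(1-\alpha) \;=\; \log\!\big((1-\alpha)^{1-\alpha}\alpha^{2\alpha-1}\big),
\end{align}
which, after dividing through by $\mathbb{E}_{H_1}[T]$, yields exactly~\eqref{eq:lower-bound-2-2}.

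The only nontrivial step is the use of Wald's identity for KL divergence on a data-dependent horizon; the essential hypothesis $\mathbb{E}_{H_1}[T]<\infty$ is secured by the effective-width assumption, and from there the argument parallels the fixed-$n$ case almost verbatim. I do not foresee any serious obstacles beyond being careful that the coverage-based bounds on $\Psi$ in~\Cref{lemma:lower-bound-2-1} and the monotonicity of binary KL are invoked in the right direction.
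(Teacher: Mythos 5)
Your proposal is correct and follows essentially the same route as the paper's proof: Wald's identity to write $\dkl$ of the stopped processes as $\mathbb{E}_{H_1}[T]\,\dkl(P,Q)$, the data-processing inequality for relative entropy with random stopping times (the paper cites Kaufmann et al., Lemma 19, for precisely the step you apply), and then the elementary lower bound on the resulting binary KL using the error bounds from Lemma~\ref{lemma:lower-bound-2-1}. The only cosmetic difference is that you pass through $\dkl(1-\alpha,\alpha)$ as an intermediate quantity via monotonicity of the binary KL, whereas the paper bounds $\dkl(p,q)$ directly by $p\log(1/q)-h(p)$; both yield the same constant.
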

        The above result is reminiscent of the corresponding fixed-sample size inequality~\eqref{eq:lower-bound-1-2}, used in proving~\Cref{prop:lower-bound-1} in~\Cref{proof:lower-bound-1}.  
       The main difference here is that the expected stopping time under the alternative plays the role of the non-random sample-size $n$  in~\eqref{eq:lower-bound-1-2}.

        \paragraph{Step 3: Lower bound on width.} Now, recall that $\mu_Q - \mu_P = w_0$, where $w_0>w_e(n, P, \alpha)$ by construction. Then, we have for any $w \in (w_e(n, P, \alpha), w_0]$: 
        \begin{align}
            \mathbb{E}_{H_1}[T] \;\leq\; \mathbb{E}_{H_1}[T_{w}(P, \alpha)] \;\leq\;   n.  \label{eq:w-eff-lower-bound-proof-temp}
        \end{align}
        The two inequalities above use the following facts:
        \begin{itemize}
            \item Since $T$ is the first time the width of the CS falls below $w_0 = \mu_Q-\mu_P$, it follows that we have $T \leq T_w$ almost surely for all $w \leq w_0$, immediately implying the first inequality.  
            
            \item Since $w>w_e(n, P, \alpha)$ by assumption, we can always choose a $w'$, such that $w_e(n, P, \alpha) < w' \leq w$.  This means that, by the same argument as in the first step, we also have $\mathbb{E}_{H_1}[T_w(P, \alpha)] \leq \mathbb{E}_{H_1}[T_{w'}] \leq n$.  

            \item Finally,  recall that $w_e(n, P, \alpha)$ is defined as $\inf \{w' \geq 0: \mathbb{E}_{H_1}[T_{w'}] \leq n\}$. Hence, the previous step implies that $\mathbb{E}_{H_1}[T_{w'}] \leq n$ for that choice of $w'$, which implies the second inequality in~\eqref{eq:w-eff-lower-bound-proof-temp}. 
        \end{itemize}
        Plugging the inequality from the previous display into~\eqref{eq:lower-bound-2-2}, and taking the infimum over all $Q \in \mc{P}_0^+\lp \mu_P + w_0\rp = \{Q' \in \mc{P}_0: \mu_{Q'} \geq \mu_P + w_0\}$, we get 
        \begin{align}
            \klinf(P, \mu_P + w_0) = \inf_{Q \in \mc{P}_0^+\lp \mu_P + w_0\rp} \; \dkl(P, Q) \geq \frac{\log\lp (1-\alpha)^{1-\alpha} \alpha^{2\alpha-1} \rp}{n} \defined a(n, \alpha). 
        \end{align}
        Next, we introduce the term $w^*(n, P, \alpha) = \inf \{w \geq 0: \klinf(P, \mu_P + 2w ) \geq a(n,\alpha) \}$, and use the above inequality, and the monotonicity of the information projection for a fixed $P$, to conclude that 
        \begin{align}
            w_0 \geq w^*(n, P, \alpha). 
        \end{align}
        The final step is to note that $w_0$ is an arbitrary value larger than $w_e(n, P, \alpha)$, and thus we have 
        \begin{align}
            w_e(n, P, \alpha) = \inf \{w_0: w_0> w_e(n, P, \alpha)\}     \geq w^*(n, P, \alpha). 
        \end{align}
        This concludes the first part of the proof that gets the lower bound on $w_e(n, P, \alpha)$ in terms of $\klinf$. By repeating the exact same steps, but with the distribution $Q$ such that $\mu_Q = \mu_P - w_0$, for any $w_0> w_e(n, P, \alpha)$, we get the other term in terms of $\klinfminus$. We omit the details to avoid repetition. 

        \subsubsection{Proof of~\Cref{lemma:lower-bound-2-1}}
        \label{proof:lemma-lower-bound-2-1}
            This result is proved by following the definitions of the terms. 
           
            \paragraph{Type-I error probability.} We proceed as follows: 
            \begin{align}
                \mathbb{E}_{H_0}[\Psi] &= \mathbb{P}_{H_0} \lp \mu_Q \not \in C_T \rp  
                = \sum_{n \in \mathbb{N}} \mathbb{P}_{H_0}\lp T=n, \mu_Q \not \in C_n \rp
            \end{align}
            In the last equality, we used the fact that the sets $\{E_n: n \geq 1\}$ with $E_n \defined \{T=n\}$ form a disjoint partition of the sample space (since $T<\infty$ almost surely under the conditions of~\Cref{prop:lower-bound-2}). Furthermore, for any $n \geq 1$, we have $\{\mu_Q \not \in C_n\} \subset \mc{E} \defined \{ \exists i \in \mathbb{N}: \mu_Q \not \in C_i\}$. Hence, we have $\mathbb{P}_{H_0}\lp T=n, \mu_Q \not \in C_n\rp \leq \mathbb{P}_{H_0}\lp E_n \cap \mc{E}\rp$, which implies 
            \begin{align}
                \mathbb{E}_{H_0}[\Psi] & \leq \sum_{n\in \mathbb{N}} \mathbb{P}_{H_0}\lp E_n \cap \mc{E} \rp = \mathbb{P}\lp \mc{E} \rp 
                 = \mathbb{P}\lp \exists n \in \mathbb{N}: \mu_Q \not \in C_n \rp \leq \alpha. 
            \end{align}
       
            \paragraph{True detection probability.} To obtain the lower bound on the true detection rate, we proceed as follows: 
            \begin{align}
                \mathbb{E}_{H_1}[\Psi] &= \mathbb{P}_{H_1}\lp \mu_Q \not\in C_T \rp  = \mathbb{P}_{H_1}\lp \mu_Q \not \in C_T, \mu_P \in C_T \rp + \mathbb{P}_{H_1}\lp \mu_Q \not \in C_T, \mu_P \not \in C_T \rp \\
                &\geq 0 +  \mathbb{P}_{H_1}\lp \mu_Q\not \in C_T, \mu_P \in C_T \rp \\
                & \stackrel{(i)}{=} \mathbb{P}_{H_1}\lp \mu_P \in C_T \rp   = 1 -\mathbb{P}_{H_1}\lp \mu_P \not \in C_T \rp   
            \end{align}
            The equality (i) above simply uses the fact that since the width of the CS at $T$ is smaller than $\mu_Q - \mu_P = w_0$, if $\mu_P \in C_T$, it automatically means that $\mu_Q \not \in C_T$. Finally, we note that 
            \begin{align}
                \mathbb{P}_{H_1}\lp \mu_P \not \in C_T \rp &= \sum_{n \in \mathbb{N}} \mathbb{P}_{H_1}\lp T=n, \mu_P \not \in C_n \rp \\
                &\leq \mathbb{P}_{H_1}\lp \exists n \in \mathbb{N}: \mu_P \not \in C_n \rp \leq \alpha. 
            \end{align}
            This completes the proof. 
        
        \subsubsection{Proof of~\Cref{lemma:lower-bound-2-2}}
        \label{proof:lemma-lower-bound-2-2}
            We start by noting that the KL divergence between the $T$-fold product of $P$ and $Q$ can be written as the product the $\mathbb{E}_{H_1}[T]$ and $\dkl(P, Q)$. In particular, note that 
            \begin{align}
                \dkl\lp  P^{\otimes T}, Q^{\otimes T} \rp  = \mathbb{E}_{H_1} \lb \sum_{t=1}^T \log \lp \frac{dP}{dQ} \rp\rb = 
                \mathbb{E}_{H_1}[\Psi] \mathbb{E}_P\lb \lp \frac{dP}{dQ} \rp \rb 
                = \mathbb{E}_{H_1}[\Psi]\, \dkl(P, Q). 
            \end{align}
            The second equality above follows from an application of Wald's identity~(\Cref{fact:walds-equation}), using the assumption that both $\mathbb{E}_{H_1}[T]$ and $\dkl(P, Q)$ are finite. Next, we use the above inequality to obtain the following: 
            \begin{align}
                \mathbb{E}_{H_1}[T] \dkl(P, Q) =  \dkl\lp P^{\otimes T}, Q^{\otimes T} \rp  \geq \dkl\lp \mathbb{E}_{H_1}[\Psi], \mathbb{E}_{H_0}[\Psi] \rp \geq \log\lp (1-\alpha)^{1-\alpha} \alpha^{2\alpha-1} \rp. \label{eq:lower-bound-2-1}
            \end{align}
            The first inequality above uses the  data-processing inequality for relative entropy with random stopping times, proved by~\citet[Lemma 19]{JMLRkaufman16a}. The second inequality simply uses the bounds on the two probabilities stated in~\Cref{lemma:lower-bound-2-1}, to further lower bound the binary KL divergence. 
            On re-arranging~\eqref{eq:lower-bound-2-1}, we immediately obtain the required lower bound on the KL divergence between $P$ and $Q$: 
            \begin{align}
                \dkl(P, Q) \geq \frac{\log\lp (1-\alpha)^{1-\alpha} \alpha^{2\alpha-1} \rp}{\mathbb{E}_{H_1}[T]}. 
            \end{align}

        \subsection{Proof of the CS upper bound~(Theorem~\ref{prop:betting-cs-width})}
        \label{proof:betting-cs-width}
            \paragraph{Step 1: Connection to optimal wealth process.} By assumption, we know that the regret incurred by the mixture betting strategy satisfies 
            \begin{align}
                \sup_{m \in \text{supp}(P^*)} \mc{R}_n(m) \leq  c \log(n), 
            \end{align}
            for some $c<2$. This implies that for any $m$, 
            \begin{align}
               \log\lp W_n(m) \rp   &\geq \sup_{\lambda \in \lb \frac{-1}{1-m}, \frac{1}{m} \rb}  \sum_{i=1}^n \log \lp 1 + \lambda \lp  X_i -m \rp \rp - c \log n  \\ 
               & \geq \max_{\lambda \in \{\lambda_+^*, \lambda_-^*\}} \sum_{i=1}^n \log \lp 1 + \lambda \lp X_i - m \rp \rp - c \log n, 
            \end{align}
            where $\lambda_a^*$ for $a \in \{+, -\}$ were introduced in~\eqref{eq:lambda-def-dual}, and denote the optimal betting fractions used to define $\klinf$ and $\klinfminus$ respectively. For any $m  \in [0, 1]$ and $a \in \{-, +\}$, introduce the notation $W_n^{*, a}$, to denote the oracle wealth process corresponding to a betting strategy that always plays the bet $\lambda_a^*$. 
            Next, we introduce the following stopping times, based on the above lower bound on the wealth process: 
            \begin{align}
                &\tau_w^+ = \inf \{n \geq 1:  \log \lp W_n^{*,+}(\mu+w/2) \rp - c \log n \geq \log(1/\alpha)\},  \label{eq:tau-w-plus-def} \\
                \text{and} \quad 
                &\tau_w^- = \inf \{n \geq 1:  \log \lp W_n^{*, -}(\mu-w/2) \rp - c \log n \geq \log(1/\alpha)\}. 
            \end{align}
            Our first key observation is that $T_w$ can be bounded by the two `oracle' stopping times introduced above. 
            \begin{lemma}
                \label{lemma:oracle-stopping-time}
                For any $w>0$, $T_w = \inf \{n \geq 1: |C_n| \leq w \}$,  used to define the effective width, satisfies  
                \begin{align}
                    T_w \leq \tau_w^+ \vee \tau_w^- \defined \max \{ \tau_w^+, \tau_w^-\}. 
                \end{align}
                Hence, the expected value of $T_w$ satisfies 
                \begin{align}
                    \mathbb{E}[T_w] \leq \mathbb{E}[\tau_w^+ \vee \tau_w^-] \leq \mathbb{E}[\tau_w^+  + \tau_w^-] \leq 2 \max_{a \in \{+, -\} } \mathbb{E}[\tau_w^a].  \label{eq:Tw-upper-bound} 
                \end{align}
            \end{lemma}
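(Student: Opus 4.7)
The plan is to prove the sample-path bound $T_w \le \tau_w^+ \vee \tau_w^-$; the inequalities for $\mathbb{E}[T_w]$ in~\eqref{eq:Tw-upper-bound} then follow immediately from $\max\{a,b\} \le a+b$ and $a+b \le 2\max\{a,b\}$ for nonnegative reals. To establish the sample-path bound, I will show that at any sample path, by time $n = \tau_w^+ \vee \tau_w^-$, the set $C_n$ defined in~\eqref{eq:betting-cs-def} is contained in the open interval $(\mu - w/2,\; \mu + w/2)$. Since the confidence sequence can be taken to be nested, it suffices to show two separate exclusion statements: that $W_{\tau_w^+}(m) \ge 1/\alpha$ for every $m \ge \mu + w/2$, and that $W_{\tau_w^-}(m) \ge 1/\alpha$ for every $m \le \mu - w/2$. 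By symmetry, I focus on the first.

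The key step is a pathwise monotonicity argument that upgrades exclusion at the single point $\mu + w/2$ to exclusion at all points to its right. By the logarithmic regret of the mixture strategy (\Cref{def:mixture-method}), for every $n$ and every feasible $\lambda$,
\begin{align}
\log W_n(m) \;\ge\; \sup_{\lambda' \in [-1/(1-m),1/m]} \log W_n^{\lambda'}(m) - c \log n \;\ge\; \log W_n^{\lambda}(m) - c \log n,
\end{align}
provided $\lambda$ is feasible at $m$. I will apply this with the \emph{fixed} bet $\lambda_0 \defined \lambda_+^*(\mu + w/2)$, which lies in $[-1/(1-(\mu+w/2)),0]$ by~\eqref{eq:lambda-def-dual}, so is feasible at every $m \ge \mu+w/2$. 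Because $\lambda_0 \le 0$, each factor of the wealth product satisfies $1 + \lambda_0(X_i - m) \ge 1 + \lambda_0(X_i - (\mu + w/2))$ whenever $m \ge \mu + w/2$, yielding pathwise monotonicity $W_n^{\lambda_0}(m) \ge W_n^{\lambda_0}(\mu + w/2) = W_n^{*,+}(\mu + w/2)$. Evaluating at $n = \tau_w^+$ and invoking the defining inequality~\eqref{eq:tau-w-plus-def} gives $\log W_{\tau_w^+}(m) \ge \log W_{\tau_w^+}^{*,+}(\mu+w/2) - c\log \tau_w^+ \ge \log(1/\alpha)$, as desired.

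The main technical subtlety is precisely this monotonicity: the oracle wealth $W_n^{*,+}(m)$ itself need not be pathwise monotone in $m$ because the oracle bet $\lambda_+^*(m)$ is optimized for the \emph{population} log-wealth and so varies with $m$. Naively differentiating $W_n^{*,+}(m)$ in $m$ would entangle the dependence of $\lambda_+^*(m)$ on $m$, which is not controllable pathwise. The resolution, as above, is to freeze the bet at $\lambda_0 = \lambda_+^*(\mu + w/2)$ and bound $W_n(m)$ below by $W_n^{\lambda_0}(m)$ (up to the regret factor $n^{-c}$), since $W_n^{\lambda_0}(m)$ is manifestly monotone in $m$ by the sign of $\lambda_0$. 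Once these two exclusion statements are in hand, nesting of the CS yields $C_{\tau_w^+ \vee \tau_w^-} \subset (\mu - w/2,\; \mu + w/2)$, hence $|C_{\tau_w^+ \vee \tau_w^-}| \le w$, so $T_w \le \tau_w^+ \vee \tau_w^-$ as claimed, and~\eqref{eq:Tw-upper-bound} follows by taking expectations.
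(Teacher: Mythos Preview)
Your proposal is correct and follows essentially the same approach as the paper: both arguments combine the regret bound of the mixture strategy with the pathwise monotonicity in $m$ of a constant-$\lambda$ wealth process (exploiting the sign $\lambda_+^*(\mu+w/2)\le 0$) to conclude that every $m\ge \mu+w/2$ is excluded by time $\tau_w^+$. Your presentation is in fact slightly more streamlined than the paper's, which introduces intermediate stopping times $N_w^+$, $M_{w'}^+$, and $\widetilde{\tau}_{w'}^+$ and routes the argument through the sup-wealth $\widetilde{W}_n^+$ before specializing to the oracle bet; by freezing $\lambda$ at $\lambda_0=\lambda_+^*(\mu+w/2)$ from the outset you collapse the paper's chain of inequalities into a single step.
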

            The proof of this statement is in~\Cref{proof:oracle-stopping-time}. 
            
            In the rest of this section, we  will focus on deriving an upper bound on the expected value of $\tau_w^+$, for any $w>0$. An exactly analogous argument also gives us an upper bound on the expected value of $\tau_w^-$. Together, these results give us an upper bound on $\mathbb{E}[T_w]$, that will eventually lead to a bound on the effective width.  

            \paragraph{Step 2: Upper bound on the oracle stopping time.} Let $m = \mu + w/2$, and introduce  $Z_i = \log(1 + \lambda^*_+(m)(m-X_i))$, and $S_n = \sum_{i=1}^n Z_i$, with $Z_0 = 0$. Note that $\mathbb{E}[Z_i] = \klinf(P, \mu+w/2)$ for $i \geq 1$, by the definition of $\lambda^*_+(\cdot)$. We now proceed as follows, for an arbitrary $N<\infty$: 
            \begin{align}
                \mathbb{E}\lb S_{\tau_w^+ \wedge N} \rb &= \mathbb{E}\lb \sum_{i=1}^{\tau_w^+ \wedge N} Z_i \rb = 0+ \sum_{i=1}^N \mathbb{E}\lb (S_i - S_{i-1}) \boldsymbol{1}_{\tau_w^+ \geq i} \rb  = 
                \sum_{i=1}^N \mathbb{E}\lb (S_i - S_{i-1}) \boldsymbol{1}_{\tau_w^+ \geq i} \rb \\
                & \stackrel{(i)}{=} \sum_{i=1}^N \mathbb{E}\lb \boldsymbol{1}_{\tau_w^+ \geq i} \mathbb{E}[S_i - S_{i-1}|\mc{F}_{i-1}] \rb  \stackrel{(ii)}{=}\mathbb{E}[\tau_{w}^+ \wedge N] \klinf(P, \mu+w/2), 
            \end{align}
            where $(i)$ follows from  the fact that $\{\tau_w^+ \geq i\} = \{\tau_w^+ \leq i-1\}^c$ is $\mc{F}_{i-1}$-measurable, since $\tau_w^+$ is a stopping time adapted to $(\mc{F}_i)_{i \geq 0}$. The equality $(ii)$ uses the optional stopping theorem for bounded stopping times. 
            Next, by~\Cref{assump:support}, $Z_i \leq C$ almost surely for some constant $C<\infty$~(depending upon the support of $P^*$ in~\Cref{assump:support}),  and thus we can conclude that 
            \begin{align}
                \mathbb{E}[\tau_w^+ \wedge N] &= \frac{\mathbb{E}\lb S_{\tau_w^+ \wedge N} \rb}{\klinf(P^*, \mu+w/2)}  \leq \frac{\mathbb{E}[S_{\tau_w^+ \wedge N -1} + C ]}{\klinf(P^*, \mu+w/2)}  \label{eq:overshoot}\\
                & \leq \frac{\mathbb{E}[\log(1/\alpha) + C + c \log(\tau_w^+ \wedge N)]}{\klinf(P^*, \mu+w/2)}   \\
                & \leq \frac{\log(1/\alpha) + C + c \log( \mathbb{E}[\tau_w^+ \wedge N])}{\klinf(P^*, \mu+w/2)},  
            \end{align}
            where the second inequality uses the fact that, by definition of $\tau_w^+$, the value $S_{\tau_w^+ \wedge N - 1}$ must be smaller than $\log(1/\alpha) + c \log(\tau_w^+)$, and the third inequality uses the concavity of $\log(\cdot)$, along with Jensen's inequality. 

            Finally, note that $N$ is arbitrary, and $\tau_w^+ \wedge N \convas \tau_w^+$ as $N \to \infty$. Hence, we have $\lim_{N \to \infty} \mathbb{E}[\tau_w^+ \wedge N] = \mathbb{E}[\tau_w^+]$ due to the monotone convergence theorem~(MCT), which implies the following: 
            \begin{align}
                \mathbb{E}[\tau_w^+] \leq \frac{\log(1/\alpha) + C + c \log( \mathbb{E}[\tau_w^+ ])}{\klinf(P^*, \mu+w/2)}.  \label{eq:tau-w-plus-1}
            \end{align}

            \paragraph{Step 3: Bound the oracle effective width.} Now, for any fixed $n \in \mathbb{N}$, let $w_n^+$ denote the smallest value of $w$ for which $\mathbb{E}[\tau_w^+]$ is no larger than $n/2$.  By~\eqref{eq:tau-w-plus-1}, it follows that 
            \begin{align}
                \frac{w_n^+}{2} \leq \inf \lbr w \geq 0: \frac{ \log(1/\alpha) + C + c\log(n)}{\klinf(P^*, \mu+w/2)} \leq \frac{n}{2} \rbr = \klinf(P^*, \cdot)^{-1}\lp \frac{ 2\log (n^c/\alpha) + 2C}{n} \rp - \mu. 
            \end{align}
            By repeating this argument of $\tau_w^-$, we get an upper bound on $w_n^-$ 
            \begin{align}
                 \frac{w_n^-}{2} \leq \inf \lbr w \geq 0: \frac{ \log(1/\alpha) + C + c\log(n)}{\klinfminus(P^*, \mu-w/2)} \leq \frac{n}{2} \rbr = \mu -  \klinfminus(P^*, \cdot)^{-1}\lp \frac{ 2\log (n^c/\alpha) + 2C}{n} \rp.                
            \end{align}
            Combining these two results, we get the required upper bound on the effective width, $w_e(n, P^*, \alpha) \leq \max \{w_n^+, w_n-\}$. That is, 
            \begin{align}
                &w_n(n, P^*, \alpha) \leq 2\max \lbr \klinf(P^*, \cdot)^{-1}\lp b(n,\alpha) \rp - \mu, \, \mu - \klinfminus(P^*, \cdot)^{-1}\lp b(n,\alpha) \rp \rbr,  \label{eq:eff-w-betting-cs-1} \\
                \text{where } &b(n,\alpha) = \frac{ 2\log (n^c/\alpha) + 2C}{n},
            \end{align}
            and $C$  is a  constant that depends on the support of $P^*$~(\Cref{assump:support}).

            \subsubsection{Proof of~Lemma~\ref{lemma:oracle-stopping-time}}
            \label{proof:oracle-stopping-time}
                
                To prove this statement, we first introduce the following two stopping times, defined for any $w > 0$: 
                \begin{align}
                    N_w^+ = \inf \{n \geq 1: C_n \cap (\mu+w/2, 1] = \emptyset \}, \quad \text{and} \quad N_w^- = \inf \{n \geq 1: C_n \cap [0, \mu-w/2) = \emptyset \}. 
                \end{align}
                In words, $N_w^+$~(resp. $N_w^-$) denotes the first time at which the betting CS has rejected all $m$ values larger than $\mu+w/2$~(resp. smaller than $\mu-w/2$). Due to the nested nature of CSs, if a point $m$ has been discarded by the CS at some time $N$, then it remains discarded for all $n \geq N$. This means, that at $n=\max\{N_w^+, N_w^-\}$, all values of $m$ outside the band $\mu \pm w/2$ have been rejected by the CS; or equivalently, the width of the CS is smaller than $w$. By definition, this means that $T_w \leq \max\{N_w^+, N_w^-\}$.

                We will prove the required statement, $N_w^+ \leq \tau_w^+$, through a series of intermediate inequalities . First, we need to introduce some new terms. For any $w>0$, we define $M_w^+$ to be the first time at which $\mu+w/2$ is discarded by the CS; that is, 
                \begin{align}
                    M_w^+ = \inf\{n \geq 1: \mu+w/2 \not \in C_n \}. 
                \end{align}
                It immediately follows from this definition that $N_w^+ = \sup_{w'>w} M_{w'}^+$. Next, we define another stopping time, $\widetilde{\tau}_w^+$, that marks the first time at which a lower bound on the wealth $(W_n(\mu+w/2))_{n \geq 1}$ exceeds the rejection threshold $1/\alpha$. More formally, we have 
                \begin{align}
                    &\widetilde{\tau}_w^+ = \inf \{ n \geq 1: \log(\widetilde{W}^+_n(\mu+w/2)) \geq \log(1/\alpha) + 2\log n \}, \quad \text{where} \\
                    & \log(\widetilde{W}^+_n(\mu+w/2)) \defined \sup_{\lambda \in \lb 0,  \frac{1}{1-\mu-w/2}\rb} \sum_{t=1}^n \log (1 + \lambda (\mu+w/2 - X_t)). 
                \end{align}
                Since $\log(\widetilde{W}^+_n(\mu+w/2))-2\log n$ is a lower bound on the wealth $W_n(\mu+w/2)$, it follows that $M_w^+$~(which denotes the first $1/\alpha$ crossing of the wealth process at $\mu+w/2$) upper bounded by $\widetilde{\tau}_w^+$ for all $w>0$. 
                To complete the proof, we will show the following chain of inequalities hold: 
                \begin{align}
                    N_w^+ \;=\; \sup_{w' > w} M_{w'}^+ \;\leq\; \sup_{w' > w} \widetilde{\tau}_{w'}^+ \; \stackrel{(i)}{\leq}\; \widetilde{\tau}_w^+ \;\stackrel{(ii)}{\leq}\; \tau_w^+. \label{eq:stopping-times-chain}
                \end{align}
                The first equality is simply the definition of $N_w^+$, while the second inequality follows from the fact that $\widetilde{\tau}_{w'}^+ \geq M_{w'}^+$ for all $w'>0$.  We prove the correctness of the two inequalities  $(i)$ and $(ii)$ below, which with conclude the proof.

                \paragraph{Proof of $\boldsymbol{(i)}$ in~\eqref{eq:stopping-times-chain}.} For $w' \geq w$, observe the following~(all the inequalities and equalities for the rest of this proof hold a.s.): 
                \begin{align}
                    \mu + w'/2 - X_t \geq  \mu + w/2 - X_t, \quad \text{for all } t \geq 1.
                \end{align}
                This implies that for all fixed values of $\lambda \in [0, 1/(1-\mu-w/2)]$, we have 
                \begin{align}
                    \sum_{t=1}^n \log \lp 1 + \lambda \lp \mu + w'/2 - X_t \rp \rp    \geq \sum_{t=1}^n \log \lp 1 + \lambda \lp \mu + w/2 - X_t \rp \rp.  \label{eq:tau-proof-1}
                \end{align}
                Now, taking the supremum over appropriate ranges of $\lambda$ leads to the following: 
                \begin{align}
                    \log \lp \widetilde{W}^+_n (\mu + w'/2)  \rp &= \sup_{\lambda \in \lb 0, \frac{1}{1 - \mu - w'/2} \rb} \sum_{t=1}^n \log \lp 1 + \lambda \lp \mu + w'/2 - X_t \rp \rp \\
                    & \geq \sup_{\lambda \in \lb 0, \frac{1}{1 - \mu - w/2} \rb} \sum_{t=1}^n \log \lp 1 + \lambda \lp \mu + w'/2 - X_t \rp \rp \\
                    & \geq \sup_{\lambda \in \lb 0, \frac{1}{1 - \mu - w/2} \rb} \sum_{t=1}^n \log \lp 1 + \lambda \lp \mu + w/2 - X_t \rp \rp \\
                    & = \log \lp \widetilde{W}_n^+(\mu + w/2) \rp. 
                \end{align}
                The first inequality above is due to the smaller domain of the supremum, while the second inequality is simply due to~\eqref{eq:tau-proof-1}. Since $\widetilde{\tau}_w^+$ is the first time $\log \widetilde{W}_n^{+}(\mu + w/2)$ crosses the boundary $\log(n^2/\alpha)$, this implies that $\widetilde{\tau}_{w'} \leq \widetilde{\tau}_{w}$.  Thus, we have proved the inequality $(i)$ in~\eqref{eq:stopping-times-chain}:
                \begin{align}
                    \sup_{w' \geq w} \widetilde{\tau}_{w'}^+ \leq \sup_{w' \geq w} \widetilde{\tau}_w^+ = \widetilde{\tau}_w^+. 
                \end{align}

                \paragraph{Proof of $\boldsymbol{(ii)}$ in~\eqref{eq:stopping-times-chain}.}  The final step is to bound $\widetilde{\tau}_w^+$ with the stopping time, $\tau_w^+$, defined in~\eqref{eq:tau-w-plus-def}. 
                \begin{align}
                    \tau_w^+ &= \inf\{ n \geq 1: \log (W_n^{*,+}(\mu+w/2)) \geq \log(n^2/\alpha)\} \\
                    & = \inf \lbr \{ n \geq 1: \sum_{t=1}^n \log \lp 1 + \lambda^*_+(X_t - m) \rp  \geq \log(n^2/\alpha) \rbr \} \\
                    & \geq \inf \lbr n \geq 1: \sup_{\lambda \in [-1/(1-\mu-w/2), 0]}\, \sum_{t=1}^n \log \lp 1 + \lambda^*_+(X_t - m) \rp  \geq \log(n^2/\alpha) \rbr \\ 
                    &= \inf\{ n \geq 1: \log (\widetilde{W}_n^{+}(\mu+w/2)) \geq \log(n^2/\alpha)\} \\
                    & = \widetilde{\tau}_w^+. 
                \end{align}
                Thus we have established the chain of inequalities stated in~\eqref{eq:stopping-times-chain}, required to show that $N_w^+ \leq \tau_w^+$. An exactly analogous argument implies that $N_w^- \leq \tau_w^-$, which completes the proof of~\Cref{lemma:oracle-stopping-time}.

\section{Proof of lower bounds for multivariate observations}
\label{proof:multivariate-lower-bound}

    \subsection{Proof of~Proposition~\ref{prop:multivariate-lower-bound-1}}
    \label{proof:multivariate-lower-bound-1}
        The proof of this statement is a simple generalization of the steps involved in proving~\Cref{prop:lower-bound-1} to deal with multivariate observations. 
        
        \paragraph{Step 1: Design a hypothesis test using $\boldsymbol{\mc{C}}$.}
        Given observations $X_1, X_2, \ldots \simiid P^*$ for $\mc{X} = \reals^d$, we consider the hypothesis testing problem with $H_0: P^* = P$ and $H_1: P^* = Q$, with $\|\mu_Q - \mu_P\| > w(n, P, \alpha)$. For this problem, we can define a test $\Psi:\mc{X}^n \to \{0,1\}$, as follows: 
        \begin{align}
            \Psi(X^n) = \begin{cases}
                1, & \text{ if } \mu_Q \in C_n = \mc{C}(X^n, \alpha), \\
                0, & \text{ otherwise}. 
            \end{cases}
        \end{align}
        As the method $\mc{C}$ constructs level-$(1-\alpha)$ CIs, the test $\Psi$ controls both type-I and type-II errors at level $\alpha$. 
    
        \paragraph{Step 2: Lower bound on $\boldsymbol{\dkl(P, Q)}$.} Now, by the chain rule for KL divergence along with the data processing inequality, we obtain the following, with $a(n, \alpha) = \log\lp (1-\alpha)^{1-\alpha} \alpha^{2\alpha-1} \rp/n$: 
        \begin{align}
             \dkl\lp P, Q \rp = \frac{1}{n} \dkl\lp P^{\otimes n}, Q^{\otimes n}  \rp \geq \frac{1}{n} \dkl \lp \mathbb{E}_{P^{\otimes n}}[1-\Psi],\; \mathbb{E}_{Q^{\otimes n}}[1-\Psi] \rp \geq a(n, \alpha). 
             \label{eq:multi-lower-bound-1-1}
        \end{align}
        
        \paragraph{Step 3: Connections to $\boldsymbol{\klinf}$.} Since the right-most term in~\eqref{eq:multi-lower-bound-1-1} is independent of $Q$, we can take an infimum over all $Q: \|\mu_Q - \mu_P\|_2 > w(n, P, \alpha)$, to get 
        \begin{align}
            \klinfmulti\lp P, w(n, P, \alpha), \mc{P} \rp \geq   \log\lp (1-\alpha)^{1-\alpha} \alpha^{2\alpha-1} \rp/n. \label{eq:multi-lower-bound-1-2}
        \end{align}
        The monotonicity of $\klinf$ then implies the required 
        \begin{align}
            w(n, P, \alpha) \geq \klinfmulti(P, \cdot, \mc{P}_0)^{-1}\lp a(n, \alpha)\rp. 
        \end{align}

    \subsection{Proof of~Proposition~\ref{prop:multivariate-lower-bound-2}}
    \label{proof:multivariate-lower-bound-2}
        The proof of this result follows by appropriately modifying the steps used in proving the analogous univariate result in~\Cref{prop:lower-bound-2}. We omit some of the details involved in proving this result, as they closely follow the proof of~\Cref{prop:lower-bound-2}.

        Given a stream of observations $X_1, X_2, \ldots \simiid P^*$,   we  set up  the following  hypothesis testing problem: 
        \begin{align}
            H_0: P^*=Q, \quad \text{versus} \quad H_1: P^*=P.
        \end{align}
         We assume that the means of the two distributions~($\mu_P$ and $\mu_Q$) satisfy the constraint $\|\mu_Q-\mu_P\|_2 = w_0 > w_e(n, P, \alpha)$. 
        
        \paragraph{Step 1: Sequential test.} We define a sequential test for this problem, that stops at a random time $T$, and makes a decision $\Psi \in \{0, 1\}$, as follows: 
        \begin{align}
            T = \inf \{n \geq 1: \sup_{x, x' \in C_n} \|x-x'\|_2 < w_0\}, \quad \text{and} \quad 
            \Psi = \boldsymbol{1}_{\mu_Q \not \in C_T}. 
        \end{align}
        Following the exact argument used in~\Cref{lemma:lower-bound-2-1}, we can show that this test controls both the type-I and type-II errors at level $\alpha$. 
        
        \paragraph{Step 2: Lower bound on KL divergence.} Next, by an application of the optional stopping theorem, and the data-processing inequality, we  show that the KL divergence between $P$ and $Q$ must satisfy: 
            \begin{align}
                \dkl(P, Q) \geq \frac{ \log \lp (1-\alpha)^{1-\alpha} \alpha^{2\alpha-1} \rp}{\mathbb{E}_{H_1}[T]}. \label{eq:multi-lower-bound-2-1}
            \end{align}           
        The steps involved to obtain this follow the same argument as in the proof of~\Cref{lemma:lower-bound-2-2}.         
        \paragraph{Step 3: Lower bound on width.} Now, recall that $\|\mu_Q - \mu_P\|_2 = 2w_0$, where $w_0>w_e(n, P, \alpha)$ by construction. Then, we have for any $w \in (w_e(n, P, \alpha), w_0)$: 
        \begin{align}
            \mathbb{E}_{H_1}[T] \;\leq\; \mathbb{E}[T_{w}(P, \alpha)] \;\leq\;   n,  
        \end{align}
        where $T_w = \inf \{n \geq 1: \sup_{x, x' \in \mc{C}_n} \|x-x'\|_2  \leq w\}$. The first inequality then uses the fact that $w < w_0$, and the second inequality follows from the definition of $w_e(n, P, \alpha)$ and the fact that $w>w_e(n, P, \alpha)$ 
        Plugging the inequality into~\eqref{eq:multi-lower-bound-2-1}, and taking the infimum over all $Q \in \mc{P}_0: \|\mu_Q - \mu_P\|_2 \geq w_0$, we get 
        \begin{align}
            \klinfminus(P, w_0, \mc{P}_0) \geq a(n, \alpha). 
        \end{align}
        The required result then follows by taking the inverse information projection. 

\section{Details of the \wor case}
\label{appendix:wor-sampling}
    We recall some existing results about constructing CIs in the framework of uniform sampling without replacement in~\Cref{appendix:wor-ci-background}, and then present the proof of~\Cref{prop:wor-limiting-width} in~\Cref{proof:wor-limiting-width}. 
    \subsection{Background on \wor CIs}
    \label{appendix:wor-ci-background}

        \citet{hoeffding1963probability} showed the following relation between the expected value of a convex function of random variables drawn with, and without replacement. 
        \begin{fact}
            \label{fact:hoeffding-relation}
            Let $Y_1, \ldots, Y_n$ denote \iid $\text{Uniform}(\Xsample_M)$ random variables drawn with replacement, and $X_1,  \ldots, X_n$ denote their \wor analogs. 
            Then, for any continuous and convex function $f: \reals \to \reals$, we have 
            \begin{align}
                \mathbb{E}\lb f \lp \sum_{i=1}^n X_i \rp \rb  \leq \mathbb{E}\lb f \lp \sum_{i=1}^n Y_i \rp \rb. 
            \end{align}
        \end{fact}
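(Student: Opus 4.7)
Plan: The statement is Hoeffding's classical convex-order inequality between sums drawn with and without replacement from a finite population (Theorem~4 of Hoeffding, 1963). My plan is to establish it via a martingale coupling between the two sums, followed by the conditional Jensen inequality.

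The first step is to rewrite both sums in a common form as weighted counts over $\Xsample_M$. Setting $\bar a = (1/M)\sum_{j=1}^M x_j$, we have
\begin{equation*}
\sum_{i=1}^n X_i \;=\; \sum_{j=1}^M x_j B_j, \qquad \sum_{i=1}^n Y_i \;=\; \sum_{j=1}^M x_j N_j,
\end{equation*}
where $B = (B_1,\ldots,B_M)$ is the indicator vector of the uniformly chosen $n$-subset of $[M]$, and $N = (N_1,\ldots,N_M) \sim \mathrm{Multinomial}(n, (1/M,\ldots,1/M))$. Both random linear combinations share the common mean $n\bar a$. A direct variance computation yields $\mathrm{Var}(\sum_j x_j B_j) = (1 - (n-1)/(M-1))\,\mathrm{Var}(\sum_j x_j N_j)$, which is a first indication that the WoR sum is ``less dispersed'' than its WR counterpart.

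The core step is to construct a joint law of $(B,N)$ on a common probability space satisfying the mean-preserving identity
\begin{equation*}
E\Big[\textstyle\sum_{j=1}^M x_j N_j \,\Big|\, \textstyle\sum_{j=1}^M x_j B_j\Big] \;=\; \textstyle\sum_{j=1}^M x_j B_j.
\end{equation*}
Concretely, one can draw the WoR sample first (fixing $B$) and then generate $N$ via a permutation-symmetric randomization (e.g., an independent uniformly random permutation of $[M]$) that treats chosen and unchosen indices alike; symmetry forces the required martingale identity. Once the coupling is in place, the conditional Jensen inequality applied to the convex function $f$ yields
\begin{equation*}
f\Big(\textstyle\sum_j x_j B_j\Big) \;=\; f\Big(E\big[\textstyle\sum_j x_j N_j \,\big|\, \textstyle\sum_j x_j B_j\big]\Big) \;\leq\; E\Big[f\Big(\textstyle\sum_j x_j N_j\Big) \,\Big|\, \textstyle\sum_j x_j B_j\Big],
\end{equation*}
and taking unconditional expectations completes the proof.

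The main obstacle is the coupling itself: one has to match the ``choose $n$ distinct slots'' structure of WoR against the ``throw $n$ balls into $M$ bins'' structure of WR in such a way that the conditional-mean identity holds exactly. If an explicit construction proves cumbersome, a cleaner alternative route is to invoke Strassen's theorem, which characterizes the convex order by martingale couplings: it suffices to verify the stop-loss inequality $E[(S^{\mathrm{WoR}} - c)_+] \leq E[(S^{\mathrm{WR}} - c)_+]$ for every $c \in \mathbb{R}$, after which the existence of a martingale coupling, and hence the inequality for every convex $f$, follows automatically. The stop-loss comparison in turn reduces to a finite combinatorial rearrangement, obtained by grouping WR tuples by their ``repeat pattern'' and comparing each group against the WoR tuples produced by breaking ties via a uniform shuffle; this rearrangement is the essence of Hoeffding's original argument.
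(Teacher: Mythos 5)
The paper itself gives no proof of this Fact---it is quoted directly from Hoeffding (1963)---so your proposal stands on its own, and its overall strategy (a mean-preserving, i.e.\ martingale, coupling of the two sums followed by conditional Jensen) is indeed the right one: that is essentially how Hoeffding's Theorem~4 is proved. The genuine gap is that the coupling, which is the entire content of the result, is never constructed. The assertion that one can ``draw the WoR sample first and then generate $N$ via a permutation-symmetric randomization'' so that ``symmetry forces the required martingale identity'' is not a construction, and symmetry alone does not force anything: the most natural symmetric choice---resampling $n$ times with replacement from the $n$ items selected by $B$---does not even have the with-replacement law, since for $a\neq b$ it gives the ordered pair $(x_a,x_b)$ probability $\frac{n-1}{n}\cdot\frac{1}{M(M-1)}$, which differs from $1/M^2$ unless $n=M$. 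Your fallback via Strassen's theorem does not close the gap either: Strassen converts the convex order into the existence of a martingale coupling, so verifying the stop-loss inequalities $\mathbb{E}[(S^{\mathrm{WoR}}-c)_+]\le \mathbb{E}[(S^{\mathrm{WR}}-c)_+]$ for all $c$ is exactly equivalent to the statement being proved, and the ``finite combinatorial rearrangement'' you invoke is left undone.

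The repair is to build the coupling in the opposite direction, which is Hoeffding's own argument. Generate the with-replacement draws $Y_1,\dots,Y_n$ first; let $D$ be the set of distinct population items they hit, with $|D|=K$; independently draw $n-K$ further items uniformly without replacement from the complement of $D$, and let $S$ be the union. Two symmetry checks finish the proof: (i) conditionally on $K=k$, $D$ is a uniform $k$-subset and the added items are a uniform $(n-k)$-subset of its complement, so $S$ is a uniform $n$-subset of $\Xsample_M$ and $\sum_{x\in S}x$ has the law of $\sum_{i=1}^n X_i$; (ii) the construction is invariant under permutations of $[M]$ fixing $S$ setwise, so given $S$ each element of $S$ is equally likely to be the value of $Y_i$, whence $\mathbb{E}[Y_i\mid S]$ is the average of $S$ and $\mathbb{E}\big[\sum_{i=1}^n Y_i \mid S\big]=\sum_{x\in S}x$. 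Conditional Jensen applied to the convex $f$ then gives $f\big(\sum_{x\in S}x\big)\le \mathbb{E}\big[f\big(\sum_{i=1}^n Y_i\big)\mid S\big]$, and taking expectations yields the Fact. Without this (or an equivalent explicit construction), your write-up is a plan rather than a proof.
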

        The above relation implies an extension of the usual (with replacement) Hoeffding's inequality to  the \wor case. 
        \begin{fact}
            \label{fact:wor-hoeffding-1}
            For a fixed sample size $n$, we have the following for any $\epsilon>0$. 
            \begin{align}
                \mathbb{P}\lp \lv \frac{1}{n} \sum_{i=1}^n X_i - \mu\rv \geq \epsilon \rp \leq 2 \exp \lp -2n\epsilon^2 \rp. 
            \end{align}
        \end{fact}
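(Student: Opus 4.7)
The plan is to obtain this \wor Hoeffding inequality by combining the standard Cramer--Chernoff method with the convex comparison stated in Fact~\ref{fact:hoeffding-relation}. The key observation is that Fact~\ref{fact:hoeffding-relation} lets us transfer moment generating function bounds from the \iid with-replacement setting to the without-replacement setting, and the standard Hoeffding lemma already provides a sharp MGF bound in the former case.

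First, I would prove the one-sided tail bound $\mathbb{P}\!\left(\tfrac{1}{n}\sum_{i=1}^n X_i - \mu \geq \epsilon\right) \leq \exp(-2n\epsilon^2)$. Fix any $\lambda > 0$ and apply Markov's inequality to the nonnegative random variable $\exp\!\bigl(\lambda \sum_{i=1}^n (X_i - \mu)\bigr)$, obtaining an upper bound of the form $e^{-\lambda n \epsilon}\, \mathbb{E}\!\bigl[\exp(\lambda \sum_i (X_i - \mu))\bigr]$. The function $s \mapsto \exp(\lambda(s - n\mu))$ is continuous and convex in $s$, so Fact~\ref{fact:hoeffding-relation} applied to this function (with $X_i$ the \wor draws and $Y_i$ the \iid with-replacement draws from the same finite population) yields
\begin{align}
\mathbb{E}\!\left[\exp\!\Bigl(\lambda\sum_{i=1}^n (X_i - \mu)\Bigr)\right] \;\leq\; \mathbb{E}\!\left[\exp\!\Bigl(\lambda\sum_{i=1}^n (Y_i - \mu)\Bigr)\right] \;=\; \prod_{i=1}^n \mathbb{E}\!\left[e^{\lambda (Y_i - \mu)}\right],
\end{align}
where the equality uses independence of the $Y_i$'s.

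Next I would invoke the classical Hoeffding lemma: since each $Y_i$ is supported on $[0,1]$ with mean $\mu$, we have $\mathbb{E}[e^{\lambda(Y_i - \mu)}] \leq \exp(\lambda^2/8)$. Substituting back gives the sub-Gaussian bound $\mathbb{P}\!\left(\tfrac{1}{n}\sum X_i - \mu \geq \epsilon\right) \leq \exp(-\lambda n \epsilon + n\lambda^2/8)$, which is minimized at $\lambda = 4\epsilon$, yielding the desired $\exp(-2n\epsilon^2)$ bound on the upper tail.

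The symmetric lower tail bound follows by applying exactly the same argument to the random variables $1 - X_i$ (which also lie in $[0,1]$ and have mean $1-\mu$), or equivalently by repeating the Chernoff argument with $\lambda < 0$. A single union bound over the two tails then produces the factor of $2$ in the final inequality. There is no real obstacle here: the only nontrivial input is Fact~\ref{fact:hoeffding-relation}, which is already quoted, and the remainder is a routine Cramer--Chernoff computation identical to the standard \iid Hoeffding proof.
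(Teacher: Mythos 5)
Your proof is correct and follows exactly the route the paper intends: the paper states this Fact without a detailed proof, remarking only that Fact~\ref{fact:hoeffding-relation} implies it, and your argument (Chernoff bound, transferring the MGF via the convex-ordering relation, Hoeffding's lemma, optimizing $\lambda=4\epsilon$, and a union bound over the two tails) is precisely the standard derivation being invoked.
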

        However, the above result does not capture the fact that the uncertainty about the mean, $\mu = \frac{1}{M} \sum_{i=1}^M x_i$, rapidly decays as the sample size $n$ approaches $M$, the total number of items. This was addressed by~\citet{serfling1974probability}, who obtained an improved version of Hoeffding's inequality, with the term $n$ replaced by $\frac{n}{1 - (n-1)/M}$. 
        \citet{bardenet2015concentration} obtained a slight improvement of Serfling's result, replacing the $(n-1)$ with $n$, and then used it to obtain variants of Hoeffding-Serfling, Bernstein-Serfling, and Empirical-Bernstein-Serfling inequalities. 
        
        We now recall the Bernstein-Serfling confidence interval~(CI) obtained by~\citet[Corollary~3.6]{bardenet2015concentration} for uniform sampling without replacement.
        \begin{fact}[\citet{bardenet2015concentration}]
        \label{fact:wor-bernstein}
            Let $\Xsample_M$ denote numbers $\{x_1, \ldots, x_M\}$ lying in $[0,1]$. Suppose  $X_1, X_2, \ldots$ are drawn uniformly from $\Xsample_M$ without replacement. Introduce the terms $\mu = \frac{1}{M} \sum_{i=1}^n x_i$, and $\sigma^2 = \frac{1}{M} \sum_{i=1}^M (x_i-\mu)^2$. Then, we have the following with probability at least $1-\alpha$ for some $\alpha \in (0,1]$, for all $t \geq n$: 
            \begin{align}
                &\mu \in \lb \frac{1}{n} \sum_{i=1}^n X_i - \frac{w_n^{(BM)}}{2},\;  \frac{1}{n} \sum_{i=1}^n X_i + \frac{w_n^{(BM)}}{2} \rb, \\
                \quad \text{where} \quad &\frac{w_n^{(BM)}}{2} = \sigma \sqrt{ \frac{2(1-n/M)(1+1/n) \log(2/\alpha)}{n} } + \lp \frac{4}{3} + \sqrt{\lp \frac{M}{n+1}-1 \rp \lp 1- \frac{n}{M} \rp} \rp\frac{  \log(2/\alpha)}{n}. 
            \end{align}
        \end{fact}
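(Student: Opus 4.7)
The plan is to prove Fact~\ref{fact:wor-bernstein}, which is the Bernstein--Serfling concentration inequality for uniform sampling without replacement. Since this is a restatement of Corollary~3.6 of \citet{bardenet2015concentration}, the proof will follow their established strategy: build a martingale tailored to WoR sampling, then apply a Bennett/Freedman-type martingale concentration inequality whose variance proxy can be tied back to the finite-population variance $\sigma^2$.

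First, I would introduce the Serfling-type martingale. Let $\mc{F}_k = \sigma(X_1,\ldots,X_k)$, and for $k = 0,1,\ldots,n$ define the partial-sum deviation $D_k = \sum_{i=1}^k (X_i - \mu)$. Because conditional on $\mc{F}_k$, the remaining $M-k$ items have mean $(M\mu - \sum_{i=1}^k X_i)/(M-k)$, one checks by direct computation that the rescaled process
\begin{equation*}
Z_k \;\defined\; \frac{D_k}{M-k}, \qquad k=0,\ldots,n-1,
\end{equation*}
is an $(\mc{F}_k)$-martingale (this is Serfling's key observation; note $Z_0 = 0$ and $\bar X_n - \mu = (1 - n/M)\,Z_n$ up to a harmless boundary adjustment, so concentration of $Z_n$ transfers to concentration of $\bar X_n$). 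The increments satisfy $|Z_{k+1} - Z_k| \leq 1/(M-k-1)$ almost surely since $X_{k+1} \in [0,1]$ and one rearranges $Z_{k+1} - Z_k$ using the balance identity $(M-k)Z_k + (X_{k+1} - \mu) = (M-k-1)Z_{k+1} + (X_{k+1} - \mu)$ appropriately.

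Second, I would bound the predictable quadratic variation $\langle Z\rangle_n = \sum_{k=0}^{n-1}\E[(Z_{k+1}-Z_k)^2\mid \mc{F}_k]$. The conditional variance of $X_{k+1}$ given $\mc{F}_k$ is the sample variance $\sigma_k^2$ of the $M-k$ unsampled items, and a classical WoR identity bounds the sum $\sum_{k=0}^{n-1}\sigma_k^2/(M-k)$ in terms of the full-population variance $\sigma^2$ with a $(1 - n/M)(1 + 1/n)$-type finite-population correction. Plugging this into Freedman's inequality for martingales with bounded increments yields, with probability at least $1-\alpha$,
\begin{equation*}
|Z_n| \;\leq\; \sqrt{\frac{2\,\widehat V_n \log(2/\alpha)}{(M-n)^2}} \;+\; \frac{C \log(2/\alpha)}{M-n},
\end{equation*}
where $\widehat V_n$ is the predictable quadratic variation. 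After translating back to $\bar X_n - \mu$ and substituting the variance bound in terms of $\sigma^2$, the two terms produce exactly the $\sigma\sqrt{2(1-n/M)(1+1/n)\log(2/\alpha)/n}$ leading term and the additive $\log(2/\alpha)/n$ correction appearing in $w_n^{(BM)}/2$.

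The main obstacle, and the source of the delicate constants $4/3$ and $\sqrt{(M/(n+1)-1)(1-n/M)}$ in the additive term, is the precise control of $\sigma_k^2$ and its weighted sum. A naive bound gives Serfling's original $(1-(n-1)/M)$ factor, but the sharper $(1-n/M)(1+1/n)$ factor requires a two-step argument: first apply a Bennett-type inequality (not just Freedman) so that the variance term enters through $h(u) = (1+u)\log(1+u) - u$, and second control the \emph{random} predictable variance $\widehat V_n$ by $\sigma^2$ via a separate concentration argument -- itself a Serfling-type Hoeffding bound applied to the squared deviations $(x_i - \mu)^2$. Spending a small fraction of the $\alpha$-budget on this auxiliary concentration produces the explicit $\sqrt{(M/(n+1)-1)(1-n/M)}\,\log(2/\alpha)/n$ term. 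Carefully tracking all constants through these two concentration steps, and verifying the claimed uniform-in-$n$ version asserted by the ``for all $t \geq n$'' clause (which presumably should read ``for each fixed $n$''), are the book-keeping parts of the argument; no fundamentally new ideas are required beyond those in \citet{serfling1974probability} and \citet{bardenet2015concentration}.
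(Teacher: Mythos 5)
The first thing to note is that the paper does not prove this statement at all: it is imported as a Fact from \citet{bardenet2015concentration} (their Corollary~3.6), and the remark immediately following it explains that only the ``second half'' of that result, valid for $n \geq M/2$, is being quoted --- which is exactly why a single $\log(2/\alpha)$ appears instead of the $\log(4/\alpha)$ of the original. So there is no internal proof to compare your write-up against; what can be judged is whether your sketch would actually reproduce the cited inequality.

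At the level of ingredients you are on the right track: Serfling's forward martingale $Z_k = \sum_{i\leq k}(X_i-\mu)/(M-k)$, increments bounded by $1/(M-k-1)$ once centered at the conditional mean of the remaining items, conditional variances given by the variance of the unsampled population, and a Freedman/Bennett-type martingale bound. But two specific claims would fail. First, the improved finite-population factor $(1-n/M)(1+1/n)$ is not obtained by ``applying Bennett rather than Freedman''; in \citet{bardenet2015concentration} it comes from running the argument on the complementary (reverse) process attached to the $M-n$ \emph{unsampled} items, which is precisely why the quoted statement is the $n\geq M/2$ regime, why the width vanishes as $n \to M$, and why the bound is naturally time-uniform (the dangling ``for all $t\geq n$'' clause is a garbled remnant of that maximal form, not something to be repaired by reading it as fixed $n$). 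A forward-only argument of the kind you describe lands on Serfling's $1-(n-1)/M$ factor, which does not vanish as $n\to M$. Second, your proposed handling of the random variance proxy --- spending a fraction of the $\alpha$-budget on an auxiliary Serfling--Hoeffding bound for the squared deviations $(x_i-\mu)^2$ --- is the mechanism of the \emph{empirical} Bernstein--Serfling inequality; here $\sigma$ is known, and any union bound over an auxiliary high-probability event is incompatible with the stated constants, since the two-sided form already spends the factor $2$ inside $\log(2/\alpha)$ and leaves no slack for an extra event. The control of the remaining-population variances must therefore be achieved within the single deviation bound (this is where the constants $4/3$ and $\sqrt{(M/(n+1)-1)(1-n/M)}$ actually originate), and your outline does not supply that step. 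There is also a small but consequential algebra slip: $\bar X_n - \mu = \frac{M-n}{n}Z_n$, not $(1-n/M)Z_n$; the missing factor $M/n$ is not a boundary adjustment, and it is exactly what converts the deviation scale of $Z_n$ into the $\sqrt{\log(2/\alpha)/n}$ width in $w_n^{(BM)}$.
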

        \begin{remark}
            Note that the above expression involves the $\log(2/\alpha)$ term, instead of $\log(4/\alpha)$, because we have stated only the `second half' of the result, valid for $n \geq M/2$. This will be used in a direct comparison with the confidence sequences constructed by~\citet{waudby2020confidence}.  
        \end{remark}

    \subsubsection{\wor CIs proposed by \citet{waudby2023estimating}}
    \label{sub2sec:wor-ci-WSR}
        \paragraph{\wor \prpieb CI.} To describe this CI, we need to introduce the following terms: 
        \begin{align}
            \lambda_{t,n} = \sqrt{ \frac{2 \log(2/\alpha)}{n V_{t-1}}}, 
            \quad V_t  = \frac{\frac{1}{4} + \sum_{i=1}^t (X_i - \muhat_{i-1})^2}{t},
            \quad \text{and} \quad 
            \muhat_t = \frac{1}{t} \sum_{i=1}^t X_i. \label{eq:wor-lambda-t-n}
        \end{align}
        For this predictable sequence $\{\lambda_t: t \in [M]\}$,  define the  mean estimate, $\mutilde_t$,  as 
        \begin{align}
            \label{eq:weighted-mean}
            \mutilde_t \defined \lp { \sum_{i=1}^t \lambda_{i,n}\lp X_i + \frac{1}{M-i+1}\sum_{j=1}^{i-1} X_j \rp}\rp\bigg/\lp { \sum_{i=1}^t \lambda_{i,n} \lp 1 + \frac{1}{M-i+1} \rp }\rp. 
        \end{align}
        Recall that we have assumed that all the values $x_i \in [0,1]$, and that $X_i$ are drawn uniformly without replacement from $\Xsample_M = \{x_1, \ldots, x_M\}$. With $\mu_M \defined \frac{1}{M} \sum_{i=1}^M x_i$, the \wor \prpieb CI is defined as: 
        \begin{align}
            &C_n^{\prpieb} = \lb \mutilde_n \pm \frac{w_n^{\prpieb)}}{2} \rb    
            \quad \text{where} \quad  
            \frac{w_n^{(\prpieb)}}{2} = \frac{\log(2/\alpha)+ 4 \sum_{i=1}^t (X_i - \muhat_{i-1})^2\psi_E(\lambda_{i,n})}{\sum_{i=1}^t \lambda_{i,n} \lp 1 + \frac{i-1}{M-(i-1)} \rp}. 
        \end{align}

        \paragraph{\wor betting CI.} The \wor betting CI is constructed similar to the with-replacement case. The main difference is in the definition of the wealth process $\{W_t(m): t \geq 1\}$, which is centered with a time-varying conditional expectation. In particular, let $X_1, X_2, \ldots$ denote the sequence of observations, drawn uniformly \wor from a set of numbers $\Xsample_M = \{x_1, \ldots, x_M\}$, and introduce the terms 
            \begin{align}
                \mu_t = \frac{M\mu - \sum_{i=1}^{t-1} X_i}{M - t+1}, \quad \text{and} \quad 
                m_t = \frac{Mm - \sum_{i=1}^{t-1} X_i}{M - t+1}. 
            \end{align}
            Define the wealth at time $n$, 
            \begin{align}
                W_n(m) = \prod_{t=1}^n \big( 1 + \lambda_t(m) \lp X_t - m_t \rp \big), 
            \end{align}
            with the bets $\{\lambda_t(m): t \geq 1\}$ selected according to any predictable betting strategy, such as the mixture method of~\Cref{def:mixture-method}. The version of the betting CI analyzed in~\Cref{prop:wor-limiting-width} is constructed by setting $\lambda_t(m) = \lambda_{t,n}$ for all $m \in [0,1]$, with $\lambda_{t,n}$ as defined in~\eqref{eq:wor-lambda-t-n}. 

    \subsection{Proof of \wor betting CI limiting width~(Proposition~\ref{prop:wor-limiting-width})}
    \label{proof:wor-limiting-width}
        We prove this result in the following steps: 
            \begin{itemize}
                \item We begin by obtaining a high probability concentration result for the empirical mean and variance about their true values in~\Cref{lemma:eb-cs-width-1}, for a fixed value of $n$ and $M$. 
                As a consequence of the above, we show in~\Cref{lemma:eb-cs-width-2} that $|V_n-\sigma_M^2| \convas 0$, and  $|\muhat_n-\mu_M| \convas 0$, under the assumption that $n = \lceil \rho M\rceil $, as $M \to \infty$

                \item Then, in~\Cref{lemma:eb-cs-width-3}and~\Cref{lemma:eb-cs-width-4}, we evaluate the numerator and denominator in the limiting width of the \wor \prpieb CI. Together these two results imply that 
                 $\gamma_1^{\prpieb}(\rho)$ is  almost surely equal to $  \frac{2}{\rho}\log \lp \frac{1}{1 - \rho} \rp \frac{\sqrt{2 \log(2/\alpha)}}{\sigma}$.
            \end{itemize}

        \paragraph{Step 1: Concentration of empirical estimates.}  In the next two lemmas, we show that the difference between empirical estimates of the mean, $\muhat_t = \frac{1}{t} \sum_{i=1}^t X_i$, and the variance $V_t = \frac{1}{t+1} \big(  1/4 + \sum_{i=1}^t (X_i - \muhat_{i-1})^2 \big)$ and their true values~($\mu_M$ and $\sigma^2_M$) converge to zero almost surely as $M\to \infty$. 
            \begin{lemma}
                \label{lemma:eb-cs-width-1}
                For any fixed $1 \leq n \leq M$, the event $\mc{E}_n$ defined below, occurs with probability at least $1-2/n^2$.  
                \begin{align}
                    &\mc{E}_{n,1} = \lbr \forall t \in [n]: |\muhat_t - \mu_M| \leq \mc{O}\lp \sqrt{\log n/t}\rp \rbr, \quad
                    \mc{E}_{n,2} = \lbr \forall t \in [n]: |V_t - \sigma_M^2| \leq \mc{O}\lp \sqrt{\log n/ t}\rp \rbr,  \\
                    \quad \text{and} \quad &\mc{E}_n = \mc{E}_{n,1} \cap \mc{E}_{n,2}. 
                \end{align}
                Recall that $\mu_M$ and $\sigma_M^2$ denote the mean and variance computed using all the $M$ values in $\Xsample_M$. 
            \end{lemma}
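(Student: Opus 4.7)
The plan is to mirror the strategy used in the proof of Lemma~\ref{lemma:betting-CI-2}, adapted from the \iid setting to uniform \wor sampling. The crucial input is a time-uniform concentration for the empirical mean, which we build on top of the fixed-$t$ \wor Hoeffding inequality stated as Fact~\ref{fact:wor-hoeffding-1}. My approach is a standard dyadic peeling argument: partition $[n]$ into intervals $I_j = \{2^j, \ldots, \min(2^{j+1}-1, n)\}$ for $0 \leq j \leq k_n \defined \lfloor \log_2 n \rfloor$, apply a time-uniform \wor Hoeffding inequality within each of the $\mc{O}(\log n)$ intervals with failure budget $\delta_n/k_n$, and union-bound. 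Since $t \geq 2^j$ for all $t \in I_j$, this will yield $|\muhat_t - \mu_M| \leq \mc{O}(\sqrt{\log n /t})$ uniformly over $t \in [n]$ on an event of probability at least $1 - 1/n^2$, upon setting $\delta_n = 1/n^2$. This establishes~$\mc{E}_{n,1}$.

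For the second event $\mc{E}_{n,2}$, the plan is to add and subtract $\mu_M$ in the defining sum for $V_t$, producing the decomposition
\begin{align}
t V_t - \frac{1}{4} \;=\; \underbrace{\sum_{i=1}^t (X_i - \mu_M)^2}_{\text{(I)}} \;+\; \underbrace{\sum_{i=1}^t (\muhat_{i-1} - \mu_M)^2}_{\text{(II)}} \;+\; \underbrace{2 \sum_{i=1}^t (X_i - \mu_M)(\mu_M - \muhat_{i-1})}_{\text{(III)}}.
\end{align}
Term (I), once divided by $t$, concentrates around $\sigma_M^2$ by applying the same dyadic peeling argument to the bounded centered random variables $(X_i - \mu_M)^2 - \sigma_M^2 \in [-1,1]$, yielding a uniform $\mc{O}(\sqrt{\log n/t})$ deviation at the cost of an additional $1/n^2$ to the failure budget. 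Under $\mc{E}_{n,1}$, term (II) is bounded by $\sum_{i=1}^t \mc{O}(\log n / i) = \mc{O}(\log^2 n)$, hence contributes $\mc{O}(\log^2 n / t)$ to $V_t$; and term (III) is bounded in absolute value by $2\sum_{i=1}^t \sqrt{\log n / i} = \mc{O}(\sqrt{t \log n})$, which contributes $\mc{O}(\sqrt{\log n / t})$. All three contributions therefore fit inside the claimed rate. A final union bound over $\mc{E}_{n,1}^c$ and $\mc{E}_{n,2}^c$ gives $\mathbb{P}(\mc{E}_n^c) \leq 2/n^2$, as required.

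The main obstacle will be justifying the time-uniform version of the \wor Hoeffding inequality within each dyadic block, since Fact~\ref{fact:wor-hoeffding-1} is stated only at a single, non-random sample size. The sequence $\{X_i - \mu_M\}$ is neither independent nor a martingale difference under its natural filtration, so the supermartingale construction that underlies the \iid time-uniform Hoeffding bound does not apply directly. Two clean workarounds are available and I will use whichever is more transparent: either appeal to the classical Serfling-style maximal inequality for \wor partial sums, or invoke the \wor capital-process supermartingale already developed by~\citet{waudby2023estimating} (the same wealth process that underlies the \wor betting CS of Section~\ref{sub2sec:wor-ci-WSR}) to obtain a line-crossing bound. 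Either route introduces at most an extra $\log\log n$ factor from the level-wise union bound, which is absorbed into the $\mc{O}(\sqrt{\log n/t})$ rate. With this ingredient in hand, the rest of the argument is routine bookkeeping.
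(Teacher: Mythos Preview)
Your proposal is correct and is essentially the detailed execution of the paper's own proof, which is a one-line appeal to the Hoeffding--Serfling inequality (\Cref{fact:wor-hoeffding-1}) for both the mean and variance estimates, without spelling out the peeling step for uniformity in $t$ or the decomposition needed to handle $V_t$. Your peeling plus $V_t$-decomposition argument is exactly the \wor adaptation of the paper's \iid treatment in \Cref{lemma:betting-CI-2}, and the time-uniformity concern you flag is real but, as you note, handled by standard \wor maximal inequalities or the \wor wealth supermartingale of~\citet{waudby2023estimating}.
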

            \begin{proof}
                This result follows by applying the usual, fixed sample size, Hoeffding-Serfling inequality~(e.g., \Cref{fact:wor-hoeffding-1}) for the mean and standard deviation estimates. Both of these results use the fact that the observations $(X_i)$ lie in the interval $[0,1]$. Since we are interested in the asymptotic limiting width, we use $\mc{O}(\cdot)$ in the definition of the events, instead of the exact constants, to simplify the notation.  
            \end{proof}
    
            As a consequence of the above result, we can easily conclude the following. 
            \begin{lemma}
                \label{lemma:eb-cs-width-2}
                Suppose $\rho > 0$, and $n= \lceil \rho M \rceil$ for all $M \geq 1$. Then, we have $\mathbb{P}\lp \mc{E}_n^c \text{i.o.} \rp = 0$ as $M \to \infty$. This in turn implies that $\lim_{M \to \infty} | V_n - \sigma_M^2| \to 0$, and $|\muhat_n - \mu_M| \to 0$, almost surely. 
           \end{lemma}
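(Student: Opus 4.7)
The plan is to combine the per-$M$ concentration bound from \Cref{lemma:eb-cs-width-1} with the first Borel--Cantelli lemma (\Cref{fact:borel-cantelli}), and then transfer the resulting eventual almost-sure occurrence of $\mathcal{E}_n$ into almost-sure convergence of the two empirical quantities evaluated at the sample size $n = \lceil \rho M \rceil$.

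First, for each $M \geq 1$, set $n = n(M) = \lceil \rho M \rceil$. \Cref{lemma:eb-cs-width-1} yields $\mathbb{P}(\mathcal{E}_n^c) \leq 2/n^2$. Since $n \geq \rho M$, we have
\begin{align}
\sum_{M=1}^\infty \mathbb{P}(\mathcal{E}_{n(M)}^c) \;\leq\; \frac{2}{\rho^2} \sum_{M=1}^\infty \frac{1}{M^2} \;<\; \infty.
\end{align}
Applying the first Borel--Cantelli lemma (\Cref{fact:borel-cantelli}) to the sequence of events $\{\mathcal{E}_{n(M)}^c : M \geq 1\}$ then gives $\mathbb{P}(\mathcal{E}_{n(M)}^c \text{ i.o.}) = 0$. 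Equivalently, with probability one there exists a (random) finite index $M_0$ such that $\mathcal{E}_{n(M)}$ holds for every $M \geq M_0$.

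Next, I would unpack the definition of $\mathcal{E}_n = \mathcal{E}_{n,1} \cap \mathcal{E}_{n,2}$. On this event, the inequalities hold for \emph{all} $t \in [n]$; in particular, they hold at $t = n$, giving
\begin{align}
|\muhat_n - \mu_M| \;\leq\; \mathcal{O}\!\left(\sqrt{\tfrac{\log n}{n}}\right), \qquad |V_n - \sigma_M^2| \;\leq\; \mathcal{O}\!\left(\sqrt{\tfrac{\log n}{n}}\right).
\end{align}
Because $n = \lceil \rho M \rceil$, the right-hand side is $\mathcal{O}\big(\sqrt{\log M / M}\big)$, which vanishes as $M \to \infty$. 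Combining with the previous step (which says $\mathcal{E}_{n(M)}$ eventually holds almost surely) then yields $|\muhat_n - \mu_M| \to 0$ and $|V_n - \sigma_M^2| \to 0$ almost surely.

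I do not anticipate a significant obstacle here: the result is essentially a direct Borel--Cantelli argument, made possible by the linear growth $n \asymp M$ which keeps the $2/n^2$ tail bounds summable in $M$. The only small bookkeeping point is to stress that the inequalities constituting $\mathcal{E}_n$ are time-uniform over $t \in [n]$, so we may simply evaluate them at the endpoint $t = n$ without incurring any extra union bound.
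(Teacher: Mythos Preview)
Your proposal is correct and follows essentially the same approach as the paper: apply the first Borel--Cantelli lemma using the $2/n^2$ tail bound from \Cref{lemma:eb-cs-width-1} together with $n \asymp M$, then evaluate the time-uniform inequalities defining $\mathcal{E}_n$ at $t=n$ and let the $\sqrt{\log n / n}$ bound vanish. If anything, your write-up is slightly more explicit than the paper's in justifying summability and in noting that the endpoint $t=n$ suffices.
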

           \begin{proof}
               The first statement follows by a standard application of the (first) Borel-Cantelli lemma. In particular, note that by~\Cref{lemma:eb-cs-width-1}, we have $\mathbb{P}\lp \mc{E}_n^c \rp \leq \frac{1}{n^2}$, which implies that $\sum_{M \geq 1/\rho}^{\infty} \mathbb{P}\lp \mc{E}_{\lceil \rho M \rceil}^c \rp = \mc{O}\lp \sum_{n=1}^{\infty} 1/n^2 \rp <\infty$. Hence, we can conclude that $\mathbb{P}\lp \mc{E}_{\lceil \rho M \rceil}^c \text{i.o.} \rp = 0$, by the first Borel-Cantelli lemma. 
    
               Since $\lim_{M \to \infty} \sqrt{\log \lceil \rho M \rceil / \lceil \rho M \rceil} = 0$, we can thus conclude that $|\muhat_{\lceil \rho M \rceil} - \mu_M| \stackrel{a.s.}{\rightarrow}  0$ and $|V_{\lceil \rho M \rceil}-\sigma^2_M| \stackrel{a.s.}{\rightarrow}0$, due to the definition of the events $\mc{E}_{\lceil \rho M \rceil,1}$ and $\mc{E}_{\lceil \rho M \rceil,2}$ respectively.
           \end{proof}
        \paragraph{Step 2: Limiting width of \wor \prpieb CI.}
            Our next two lemmas compute the limits of the $\lambda_{t,n}$ dependent terms in numerator and denominator respectively of the width of the \wor \prpieb CI of~\citet{waudby2023estimating}. 
            \begin{lemma}
                \label{lemma:eb-cs-width-3}
                The following is true: 
                \begin{align}
                    \lim_{M \to \infty} 4 \sum_{i=1}^n (X_i - \muhat_{i-1})^2 \psi_E(\lambda_{i,n}) \stackrel{a.s.}{=}  \log(2/\alpha).   
                \end{align}
            \end{lemma}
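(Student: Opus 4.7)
The strategy is to Taylor-expand $4\psi_E(\lambda) = -\lambda - \log(1-\lambda) = \lambda^2/2 + O(\lambda^3)$ for small $\lambda$, substitute the definition $\lambda_{i,n}^2 = 2\log(2/\alpha)/(n V_{i-1})$, and thereby reduce the claim to
\begin{equation*}
\frac{1}{n}\sum_{i=1}^n \frac{(X_i-\muhat_{i-1})^2}{V_{i-1}} \; \convas \; 1,
\end{equation*}
since then the leading-order term recovers $\log(2/\alpha) \cdot 1$, which is the asserted limit.

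To justify discarding the cubic remainder, I split the indices at $i_0 := C \log n/\sigma^4$. By the uniform concentration $|V_{i-1} - \sigma_M^2| \le O(\sqrt{\log n/i})$ from~\Cref{lemma:eb-cs-width-1}, $V_{i-1} \ge \sigma^2/2$ eventually for all $i \ge i_0$, so $\lambda_{i,n} = O(1/\sqrt{n})$ on this range, and the cubic contribution is $\sum_{i \ge i_0}(X_i-\muhat_{i-1})^2\, O(\lambda_{i,n}^3) = O(n^{-1/2}) \to 0$. For $i < i_0$, the crude bound $V_{i-1} \ge 1/(4(i-1))$ still yields $\lambda_{i,n} \le O(\sqrt{\log n/n})$, and since $(X_i-\muhat_{i-1})^2 \le 1$ the entire head sum $4\sum_{i<i_0}(X_i-\muhat_{i-1})^2 \psi_E(\lambda_{i,n})$ is $O(i_0 \cdot \log n/n) = O(\log^2 n/n) \to 0$.

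The remaining key step is the displayed limit. Using the same split, for $i \ge i_0$ we have $1/V_{i-1} = 1/\sigma_M^2 + O(\eps_n)$ with $\eps_n := O(\sqrt{\log n/i_0}) \to 0$, so
\begin{equation*}
\frac{1}{n}\sum_{i=i_0}^n \frac{(X_i - \muhat_{i-1})^2}{V_{i-1}} \;=\; \frac{1 + O(\eps_n)}{\sigma_M^2} \cdot \frac{1}{n}\sum_{i=i_0}^n (X_i - \muhat_{i-1})^2.
\end{equation*}
The identity $nV_n = 1/4 + \sum_{i=1}^n (X_i - \muhat_{i-1})^2$ combined with~\Cref{lemma:eb-cs-width-2} (which gives $V_n \convas \sigma^2$ and $\sigma_M^2 \to \sigma^2$) shows that the right side converges almost surely to $1$. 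The head contribution $(1/n)\sum_{i<i_0}(X_i-\muhat_{i-1})^2/V_{i-1}$ is bounded by $(4/n)\sum_{i<i_0}(i-1) = O(i_0^2/n) \to 0$, again using $V_{i-1} \ge 1/(4(i-1))$.

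The only \wor-specific feature is the dependence among the $X_i$, but this has already been absorbed into the Hoeffding-Serfling concentration of~\Cref{lemma:eb-cs-width-1}; once we restrict to the probability-one event from the Borel-Cantelli argument of~\Cref{lemma:eb-cs-width-2} along $n = \lceil \rho M\rceil$, the remainder is deterministic analysis. The main obstacle I anticipate is cleanly tracking the two nested approximations (the Taylor truncation of $\psi_E$ and the replacement $V_{i-1} \approx \sigma_M^2$), especially for small $i$ where $V_{i-1}$ can be far from $\sigma_M^2$ and $\lambda_{i,n}$ is not yet of order $1/\sqrt{n}$; the head-tail split with $i_0 = \Theta(\log n)$ is designed precisely to control the head uniformly while preserving the tail limit.
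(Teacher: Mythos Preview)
Your approach is essentially the same as the paper's: Taylor-expand $4\psi_E(\lambda)\approx\lambda^2/2$, substitute $\lambda_{i,n}^2=2\log(2/\alpha)/(nV_{i-1})$, then use $V_{i-1}\to\sigma^2$ together with $n^{-1}\sum_i(X_i-\muhat_{i-1})^2=V_n+o(1)\to\sigma^2$ to cancel. The paper hides the small-$i$ behaviour inside its $\asymp$ notation, so your explicit head--tail split is actually a gain in rigor over the original.

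There is, however, a small quantitative slip in your cutoff. With $i_0=C\log n/\sigma^4$, the uniform error $\eps_n:=O(\sqrt{\log n/i_0})=O(\sigma^2/\sqrt C)$ is a \emph{constant}, not something tending to zero, so the claim ``$1/V_{i-1}=1/\sigma_M^2+O(\eps_n)$ with $\eps_n\to0$'' fails as stated. Two easy repairs: (i) enlarge the cutoff to, say, $i_0=(\log n)^2$; all your head bounds become $O(\log^4 n/n)\to0$ while now $\eps_n=O(1/\sqrt{\log n})\to0$; or (ii) keep your $i_0$ but do not pull out a uniform factor---instead sum the pointwise error $|1/V_{i-1}-1/\sigma_M^2|=O(\sqrt{\log n/i})$ directly, giving $n^{-1}\sum_{i\ge i_0}O(\sqrt{\log n/i})=O(\sqrt{\log n/n})\to0$. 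Either fix closes the argument.
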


            The proof of this statement is in~\Cref{proof:eb-cs-width-3}. Next, we obtain the limiting value of the normalized denominator  of $w_n$. 
            \begin{lemma}
                \label{lemma:eb-cs-width-4}
                Suppose $\lim_{M \to \infty} n/M = \rho$.  Then, we have the following 
                \begin{align}
                    \frac{1}{\sqrt{n}} \sum_{i=1}^n \lambda_{i, n} \lp 1 + \frac{i-1}{M-(i-1)} \rp \convas    \frac{1}{\rho}\log \lp \frac{1}{1 - \rho} \rp \frac{\sqrt{2 \log(2/\alpha)}}{\sigma}.
                \end{align}
            \end{lemma}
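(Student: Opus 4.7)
The plan is to recognize the sum as a Riemann approximation of an explicit integral once we substitute the definition of $\lambda_{i,n}$, and then to use Lemma~\ref{lemma:eb-cs-width-2} (together with Assumption~\ref{assump:eb-cs-limiting-width}) to replace the random factor $1/\sqrt{V_{i-1}}$ by its almost sure limit $1/\sigma$. First, using $\lambda_{i,n}=\sqrt{2\log(2/\alpha)/(nV_{i-1})}$ and $1+(i-1)/(M-i+1)=M/(M-i+1)$, we rewrite
\begin{align}
S_M \;\defined\; \frac{1}{\sqrt{n}}\sum_{i=1}^n \lambda_{i,n}\lp 1+\frac{i-1}{M-i+1}\rp
\;=\; \sqrt{2\log(2/\alpha)}\cdot\frac{1}{n}\sum_{i=1}^n \frac{1}{\sqrt{V_{i-1}}}\cdot\frac{M}{M-i+1},
\end{align}
so that it suffices to show that the average on the right converges almost surely to $(1/\sigma)\cdot(1/\rho)\log\lp 1/(1-\rho)\rp$.

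Second, with $n=\lceil \rho M\rceil$, the factor $M/(M-i+1)$ is exactly $1/\lp 1-(i-1)/M\rp$, and as $M\to\infty$ the map $i\mapsto (i-1)/M$ on $\{1,\dots,n\}$ traverses a grid on $[0,\rho)$ of spacing $1/M$. Substituting $t=i/n\in(0,1]$ gives $(i-1)/M=\rho t+o(1)$ uniformly, so $M/(M-i+1)\to 1/(1-\rho t)$ uniformly on compact subsets of $[0,1)$. Thus a standard Riemann sum argument yields
\begin{align}
\frac{1}{n}\sum_{i=1}^n \frac{1}{\sigma}\cdot \frac{M}{M-i+1} \;\longrightarrow\; \frac{1}{\sigma}\int_0^1 \frac{dt}{1-\rho t}\;=\;\frac{1}{\sigma\rho}\log\lp \frac{1}{1-\rho}\rp.
\end{align}

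Third, we must show that replacing $1/\sigma$ by $1/\sqrt{V_{i-1}}$ does not change the limit almost surely. Lemma~\ref{lemma:eb-cs-width-2} shows, via Borel--Cantelli applied to the events $\mc{E}_n$ of Lemma~\ref{lemma:eb-cs-width-1}, that on a probability-one event we have the uniform-in-$t$ bound $|V_t-\sigma_M^2|\leq c\sqrt{\log n/t}$ for all $t\in[n]$ and all but finitely many $M$; combined with Assumption~\ref{assump:eb-cs-limiting-width} this gives $\sup_{t\geq \epsilon n}|V_{t-1}-\sigma^2|\to 0$ a.s.\ for every fixed $\epsilon>0$. I will therefore split the sum at some small $\epsilon n$ and bound the two pieces: the tail piece ($i\geq \epsilon n$) is controlled by uniform convergence of $V_{i-1}\to\sigma^2$ together with the uniform bound $M/(M-i+1)\leq 1/(1-\rho)$ for $i\leq n$, while the head piece ($i<\epsilon n$) contributes at most $\epsilon\cdot 2\cdot 1/(1-\rho)$ since $V_{i-1}\geq 1/(4i)$ implies $1/\sqrt{V_{i-1}}\leq 2\sqrt{i}$, giving a harmless $O(\sqrt{\epsilon n}/n)$ head contribution after multiplying by the uniform bound on $M/(M-i+1)$ (actually, using $V_{i-1}\geq 1/(4(i-1)+1)$ one checks the head contributes $O(\sqrt{\epsilon})$). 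Letting $\epsilon\downarrow 0$ after $M\to\infty$ closes the argument.

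The main obstacle is this last step, specifically handling the small-$i$ regime where $V_{i-1}$ has not yet stabilized near $\sigma^2$ and could be anomalously small, making $1/\sqrt{V_{i-1}}$ large. The fix, as sketched, is the deterministic lower bound $V_{i-1}\geq 1/(4(i-1)+1)$ (visible from the $\frac{1}{4}$ in the definition of $tV_t$), which turns the head contribution into a negligible $O(\sqrt{\epsilon})$ term uniformly in $M$; once this is in hand, the remainder of the argument is the standard Riemann-sum/uniform-convergence combination described above, and multiplying the resulting limit by the prefactor $\sqrt{2\log(2/\alpha)}$ yields the claimed almost sure limit of $S_M$.
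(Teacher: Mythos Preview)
Your overall strategy---rewrite the sum so that the deterministic part becomes a Riemann sum for $\int_0^1 (1-\rho t)^{-1}\,dt$, and then argue that the random factor $1/\sqrt{V_{i-1}}$ can be replaced by $1/\sigma$---is sound and close in spirit to the paper's proof, which computes the same deterministic limit via the harmonic-number identity $S_M-S_{M-n}\to\log\bigl(1/(1-\rho)\bigr)$. The difference is in how the random fluctuation of $V_{i-1}$ is controlled: the paper linearizes $\lambda_{i,n}=\lambda_n^*\bigl(1+O((\log n/i)^{1/4})\bigr)$ on the event $\mc{E}_{n,2}$ and kills the correction by Cauchy--Schwarz, whereas you split the sum at a threshold.

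The gap is in your head-piece bound. With $1/\sqrt{V_{i-1}}\le 2\sqrt{i}$ and $M/(M-i+1)\le C$, the head contributes
\[
\frac{1}{n}\sum_{i=1}^{\lfloor \epsilon n\rfloor} 2\sqrt{i}\cdot C \;\asymp\; \frac{C}{n}\,(\epsilon n)^{3/2}\;=\;C\,\epsilon^{3/2}\sqrt{n},
\]
which \emph{diverges} as $M\to\infty$ for every fixed $\epsilon>0$; it is not $O(\sqrt{\epsilon n}/n)$ or $O(\sqrt{\epsilon})$ as you claim. Hence the ``let $\epsilon\downarrow 0$ after $M\to\infty$'' step does not close.

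The fix is to move the split point much earlier. On the Borel--Cantelli event you already invoke, $|V_{t}-\sigma_M^2|\le c\sqrt{\log n/t}$ for \emph{all} $t\in[n]$; so as soon as $t\ge i_n\defined C'(\log n)/\sigma^4$ you have $V_{t}\ge\sigma^2/2$ and thus $1/\sqrt{V_{t}}$ is uniformly bounded. Split at $i_n$ rather than $\epsilon n$: the head over $i\le i_n$ now gives $n^{-1}\sum_{i\le i_n}2\sqrt{i}=O\bigl((\log n)^{3/2}/n\bigr)\to 0$, and the tail over $i>i_n$ can be treated by your Riemann-sum/uniform-convergence argument exactly as written. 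With this correction your route is a clean alternative to the paper's Cauchy--Schwarz bookkeeping.
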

            The proof of this statement is in~\Cref{proof:eb-cs-width-4}.
            Combining these two results, we conclude that the limiting width of the \wor \prpieb CI is equal to $2\rho \sqrt{2 \log(2/\alpha)}/\log(1/(1-\rho))$. 

            \paragraph{Step 3: Obtain the bound on betting CI limiting width.} Our final step is to show that the limiting width of the \wor betting CI  is no larger than that of the \wor \prpieb CI. For this, we employ the general strategy we developed for proving~\Cref{prop:betting-CI-vs-EB-CI} in~\Cref{proof:betting-CI-vs-EB-CI}, with some modifications to deal with \wor sampling. To avoid repetition, we omit some of the details in the steps that proceed almost exactly as the analogous steps in the proof of~\Cref{prop:betting-CI-vs-EB-CI}. 

            Recall that, as in the with replacement case, the wealth $W_n(m)$ is the average of the two components: 
            \begin{align}
                W_n(m) = \frac{1}{2} \lp W_n^+(m) + W_n^-(m) \rp, \quad \text{where} \quad W_n^{\pm}(m) = \prod_{i=1}^n \lp 1 \pm \lambda_{t,n}(m)(X_t - m_t) \rp. 
            \end{align}
            This implies that the betting CI  satisfies the inclusion: 
            \begin{align}
                \CIbet \subset \cap_{a \in \{-, +\} } \lbr m \in [0,1]: \log(W_n^a(m)) < \log(2/\alpha) \rbr. 
            \end{align}
            Next, we observe that the wealth $W^+_n(m)$ can be lower bounded by an application of Fan's inequality~\eqref{eq:fan-inequality-0}: 
            \begin{align}
                \log(W^+_n(m)) &\geq  \sum_{t=1}^n \lambda_{t,n}(X_t-m_t) - 4\sum_{t=1}^t\psi_E(\lambda_{t,n}) (X_t-m_t)^2 \\
                & \geq  \sum_{t=1}^n \lambda_{t,n}(X_t-m_t) - 4\sum_{t=1}^t\psi_E(\lambda_{t,n}). 
            \end{align}
            Repeating this argument for $W_n^-(m)$,  we can define a larger CI that contains the betting CI: 
            \begin{align}
                \CIbet \subset \CIfan \defined \lb \mutilde_n \pm \frac{\widetilde{w}_n^{\dagger}}{2}\rb, \quad \text{with} \quad  \frac{\widetilde{w}_n^{\dagger}}{2} \defined \frac{ \log(2/\alpha) + \sum_{t=1}^n \psi_E(\lambda_{t,n})}{\sum_{t=1}^n \lambda_{t,n}(1+ (t-1)/(M-t+1)}. 
            \end{align}
            Then, following the exact argument as~\Cref{lemma:betting-CI-1}, we can show that \begin{align}
                \limsup_{M \to \infty, n = \lceil \rho M \rceil} \sqrt{n} \times \widetilde{w}_n^{\dagger} = 2c_2 = \lp \frac{2\rho}{\log(1/(1-\rho))} \rp \sqrt{\frac{\log(2/\alpha)}{2}}\lp \sigma_M + \frac{1}{\sigma_M} \rp < \infty. \label{eq:wor-fan-limit} 
            \end{align} 
            Thus, as before, we can define a larger CI, denoted by $\CItilde$, that contains the betting CI. Furthermore, we can also define an  random time~($N$), after which the width of $\CIfan$ is smaller than $4c_2/\sqrt{n}$. 
            \begin{align}
                \CItilde = \lb \mutilde_n \pm 2c_2/\sqrt{n} \rb, \quad \text{and} \quad 
                N = \sup\{M \geq 1: |\CIfan| > 4c_2/\sqrt{n}, \text{ for }\; n=\lceil \rho M \rceil \}.  
                \label{eq:wor-CI-tilde}
            \end{align}

            To proceed with the proof, we need two technical results. The first result shows that under the high probability event $\mc{E}_n$, the modified mean estimate, $\mutilde_n$, is close to the true mean $\mu_M$. 
            \begin{lemma}
                \label{lemma:wor-lemma-2}
                Under the event $\mc{E}_n$ defined in~\Cref{lemma:eb-cs-width-1}, we have the following: 
                \begin{align}
                    |\mutilde_n - \mu_M| = \mc{O}\lp \sqrt{\frac{\log n}{n}} \rp.  
                \end{align}
                Recall that $\mutilde_n$ was defined in~\eqref{eq:weighted-mean}, and $\mu_M = \frac{1}{M} \sum_{i=1}^M x_i$, is the true mean of the $M$ items. 
            \end{lemma}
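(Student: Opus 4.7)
The plan is to write $\mutilde_n - \mu_M = (A - \mu_M B)/B$, where $A$ and $B$ denote the numerator and denominator in~\eqref{eq:weighted-mean}, and analyze these two factors separately. The denominator $B$ already grows like $\Theta(\sqrt{n})$ under $\mc{E}_n$ by essentially the same calculation as in the proof of~\Cref{lemma:eb-cs-width-4}, so the burden is to show that $|A - \mu_M B| = \mc{O}(\sqrt{\log n})$ under $\mc{E}_n$; the ratio then yields the advertised rate $\mc{O}(\sqrt{\log n/n})$.

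Using the identity $\sum_{j=1}^{i-1} X_j = (i-1)\muhat_{i-1}$ together with the \wor conditional-mean formula $E_i \defined \mathbb{E}[X_i \mid \mc{F}_{i-1}] = (M\mu_M - (i-1)\muhat_{i-1})/(M-i+1)$, the numerator can be rewritten as a weighted martingale-difference sum $A - \mu_M B = \sum_{i=1}^n \lambda_{i,n}(X_i - E_i)$, modulo lower-order bookkeeping coming from the exact form of the leave-one-out normalization. Decomposing $X_i - E_i = (X_i - \mu_M) - (i-1)(\muhat_{i-1} - \mu_M)/(M-i+1)$ then splits the task into controlling two sums: a bias correction and a centered weighted sum.

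The bias correction $\sum_{i=1}^n \lambda_{i,n}(i-1)(\muhat_{i-1}-\mu_M)/(M-i+1)$ is immediate from the deterministic bounds $|\muhat_{i-1}-\mu_M| \leq c\sqrt{\log n/(i-1)}$ and $\lambda_{i,n} \leq c'/\sqrt{n}$ that are available under $\mc{E}_n$: the sum collapses to $\mc{O}(\sqrt{\log n/n})\sum_{i=1}^n \sqrt{i}/(M-i+1)$, which is $\mc{O}(\sqrt{\log n})$ because $n = \lceil \rho M\rceil$ forces $\sum_{i=1}^n \sqrt{i}/(M-i+1) = \Theta(\sqrt{n})$. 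For the remaining sum $\sum_i \lambda_{i,n}(X_i - \mu_M)$ I will apply Abel summation against the partial sums $S_i \defined i(\muhat_i - \mu_M)$, which are uniformly $\mc{O}(\sqrt{i\log n})$ under $\mc{E}_n$: this rewrites the sum as $\lambda_{n,n} S_n - \sum_{i<n}(\lambda_{i+1,n}-\lambda_{i,n})S_i$. The boundary piece is $\mc{O}(\sqrt{\log n})$ directly, and for the telescoped piece I will combine $|V_i - V_{i-1}| = \mc{O}(1/i)$ (from the recursion $iV_i = (i-1)V_{i-1} + (X_i-\muhat_{i-1})^2$ together with boundedness of the $X_i$) with $V_{i-1}, V_i \approx \sigma_M^2$ under $\mc{E}_n$ to conclude $|\lambda_{i+1,n}-\lambda_{i,n}| = \mc{O}(1/(i\sqrt{n}))$, giving a total of $\mc{O}(\sqrt{\log n/n})\sum_i 1/\sqrt{i} = \mc{O}(\sqrt{\log n})$.

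The main technical obstacle will be the head indices $i \leq i_0(n)$ (say $i_0 = \polylog(n)$), for which the approximation $V_{i-1} \approx \sigma_M^2$ is not yet in force and the clean bound $\lambda_{i,n} = \mc{O}(1/\sqrt{n})$ can fail. I plan to absorb these into a negligible error term by using the a priori inequality $V_{i-1} \geq 1/(4(i-1))$ built into the construction of $V_{i-1}$, which yields the crude bound $\lambda_{i,n} \leq \sqrt{8(i-1)\log(2/\alpha)/n}$; summing this against the small-index contributions to the bias correction and the Abel decomposition shows the total head contribution is of strictly smaller order than $\sqrt{\log n}$, and so merges harmlessly into the final bound on $|\mutilde_n - \mu_M|$.
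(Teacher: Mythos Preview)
Your approach is correct and takes a genuinely different route from the paper's proof. The paper does not isolate the martingale-difference structure $A-\mu_M B=\sum_i \lambda_{i,n}(X_i-E_i)$ nor use Abel summation. Instead it factors the common scale $\lambda^*_n = \sqrt{2\log(2/\alpha)/(n\sigma^2)}$ out of both numerator and denominator, writing $\lambda_{i,n}=\lambda^*_n(1+r_i)$ and $\muhat_{i-1}=\mu_M+s_{i-1}$ with $r_i,s_i=\mc{O}(\sqrt{\log n/i})$ under $\mc{E}_n$; after cancellation the ratio becomes $\bigl(Q_n\mu_M+\mc{O}(\sum_i q_i(|r_i|+|s_i|))\bigr)\big/\bigl(Q_n+\mc{O}(\sum_i q_i|r_i|)\bigr)$ with $q_i=M/(M-i+1)$ and $Q_n=\sum_i q_i=\Theta(n)$, and the error sums are $\mc{O}(\sqrt{n\log n})$ since $\sum_i\sqrt{\log n/i}=\Theta(\sqrt{n\log n})$. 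In particular the paper never needs to control $\sum_i \lambda_{i,n}(X_i-\mu_M)$ as a weighted sum: once $\lambda^*_n$ is pulled out, the $X_i$'s enter only through $\sum_i X_i=n\muhat_n$ (handled directly by $\mc{E}_n$) and $\sum_i X_i r_i$ (bounded trivially by $\sum_i|r_i|$ since $|X_i|\le 1$). Your route is more structured and makes the cancellation mechanism (the \wor conditional mean $E_i$) explicit, at the cost of the extra Abel-summation step; you are also more careful than the paper about the head indices $i\le i_0$, which the paper's writeup glosses over but which do get absorbed into its $\mc{O}(\cdot)$ terms for the same reason you sketch. One small remark: your identity $A-\mu_M B=\sum_i\lambda_{i,n}(X_i-E_i)$ is exact, with no ``lower-order bookkeeping'' needed, and the sign in your decomposition of $X_i-E_i$ should be $+$ rather than $-$, though this is immaterial for the bounds.
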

            The proof of this statement is in~\Cref{proof:wor-lemma-2}. We now use this to obtain the next technical statement, that says that the term $(X_t - m_t)^2$ can be sufficiently well approximated with $(X_t - \muhat_{t-1})^2$ for all $m \in \CItilde$, and for all $M \geq N$.
            \begin{lemma}
                \label{lemma:wor-lemma-3}
                 The random variable $N$ defined above in~\eqref{eq:wor-CI-tilde} is finite almost surely. Furthermore, with the high probability event $\mc{E}_n$ as defined in~\Cref{lemma:eb-cs-width-1}, for any $m \in \CItilde$ and $2 \leq t \leq n$,   we have 
                    \begin{align}
                        \boldsymbol{1}_{M \geq N} \boldsymbol{1}_{\mc{E}_n} (X_t-m_t)^2   \; \stackrel{a.s.}{\leq} \;  (X_t-\mu_M)^2 + \mc{O}\lp \sqrt{\frac{ \log n}{t} } \rp.  \label{eq:wor-betting-ci-proof-6}
                    \end{align}               
            \end{lemma}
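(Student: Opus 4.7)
The proof plan has two parts: first establishing that $N$ is finite almost surely, and then proving the pointwise approximation for $(X_t - m_t)^2$.

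For the almost-sure finiteness of $N$, I will follow essentially the same argument used in the proof of \Cref{lemma:betting-CI-3}. By \eqref{eq:wor-fan-limit}, there is a probability one event $\Omega_0$ on which $\limsup_{M\to\infty} \sqrt{n}\,\widetilde{w}_n^{\dagger} = 2c_2$ for $n = \lceil \rho M\rceil$. On $\Omega_0$, for every $a \in \mathbb{N}$ the inequality $\sqrt{n}\,\widetilde{w}_n^{\dagger} \leq 2c_2(1+1/a)$ holds for all but finitely many $M$; specializing to $a=1$ gives $\widetilde{w}_n^{\dagger} \leq 4c_2/\sqrt{n}$ eventually, so $\Omega_0 \subset \{N<\infty\}$ and hence $\mathbb{P}(N<\infty)=1$. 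As a consequence, for $M \geq N$ we have the desired inclusion $\CIbet \subset \CIfan \subset \CItilde$.

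For the second claim, the natural strategy is to insert the conditional mean $\mu_t = \tfrac{M\mu_M - \sum_{i=1}^{t-1}X_i}{M-t+1}$ as an intermediate reference point, and expand
\begin{align}
(X_t - m_t)^2 = (X_t - \mu_M)^2 + 2(X_t - \mu_M)(\mu_M - m_t) + (\mu_M - m_t)^2.
\end{align}
Since $|X_t - \mu_M| \leq 1$, it suffices to control $|\mu_M - m_t|$ by $\mc{O}(\sqrt{\log n/t})$ and then both remainder terms will share that same order (the squared term being automatically dominated). I will write $|\mu_M - m_t| \leq |\mu_M - \mu_t| + |\mu_t - m_t|$ and bound each piece separately.

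For the first piece, direct computation gives $\mu_t - \mu_M = \tfrac{(t-1)(\mu_M - \muhat_{t-1})}{M-t+1}$. Under $\mc{E}_n$ the numerator is bounded by $(t-1)\cdot \mc{O}(\sqrt{\log n/(t-1)}) = \mc{O}(\sqrt{(t-1)\log n})$, and since $n = \lceil \rho M\rceil$ we have $M - t + 1 \geq (1-\rho)M + \mc{O}(1)$, so $|\mu_t - \mu_M| = \mc{O}(\sqrt{n \log n}/M) = \mc{O}(\sqrt{\log n / n}) \leq \mc{O}(\sqrt{\log n/t})$. For the second piece, $\mu_t - m_t = \tfrac{M(\mu_M - m)}{M-t+1}$, and for $m \in \CItilde$ the triangle inequality combined with \Cref{lemma:wor-lemma-2} gives $|m - \mu_M| \leq |m - \mutilde_n| + |\mutilde_n - \mu_M| = \mc{O}(1/\sqrt{n}) + \mc{O}(\sqrt{\log n/n}) = \mc{O}(\sqrt{\log n/n})$ under $\mc{E}_n$. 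Therefore $|\mu_t - m_t| \leq \tfrac{M}{(1-\rho)M}\cdot \mc{O}(\sqrt{\log n/n}) = \mc{O}(\sqrt{\log n/n}) \leq \mc{O}(\sqrt{\log n/t})$. Combining the two bounds yields $|\mu_M - m_t| = \mc{O}(\sqrt{\log n/t})$, which plugged into the expansion above gives the claimed inequality \eqref{eq:wor-betting-ci-proof-6}.

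The main technical care is in the bookkeeping: one must simultaneously use the event $\mc{E}_n$ (for the concentration of $\muhat_{t-1}$ and of $\mutilde_n$), the event $\{M \geq N\}$ (to ensure $\CIbet \subset \CItilde$ so that $m$ lies in a $\mc{O}(1/\sqrt{n})$-band around $\mutilde_n$), and the scaling $n = \lceil \rho M\rceil$ with $\rho \in (0,1)$ (so that $M - t + 1 \gtrsim (1-\rho)M$ remains comparable to $n$). None of these steps is individually hard, but the $(1-\rho)^{-1}$ factors appearing in the bounds would become the obstacle if one tried to push $\rho \to 1$; for fixed $\rho$, however, they are absorbed into the $\mc{O}(\cdot)$.
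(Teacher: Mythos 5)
Your proposal is correct and follows essentially the same approach as the paper's proof: the almost-sure finiteness of $N$ is argued identically from \eqref{eq:wor-fan-limit}, and the bound on $(X_t-m_t)^2$ relies on the same three ingredients (concentration of $\muhat_{t-1}$ and $\mutilde_n$ under $\mc{E}_n$, the containment $m \in \CItilde$ under $\{M\geq N\}$, and the comparability $M-t+1 \gtrsim (1-\rho)M$). The only presentational difference is that you expand $(X_t-m_t)^2$ directly around $\mu_M$ and then split $|\mu_M - m_t|$ by the triangle inequality through $\mu_t$, whereas the paper does two successive expansions (first around $\mu_t$, then around $\mu_M$); these are algebraically equivalent and yield the same final estimate.
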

            The proof of this result is in~\Cref{proof:wor-lemma-3}. 
        By the definition of $N$, under the event $\{N \leq M\}$, the confidence interval $\CItilde$ contains $\CIfan$, which by construction, contains $\CIbet$. Hence, we can write the following inclusion for $\CIbet$  under $\{N \leq M\}$: 
        \begin{align}
            \CIbet & \subset \lbr m \in \CItilde: \log\lp W_n^+(m) \rp < \log(2/\alpha) \rbr \cap \lbr m \in \CItilde: \log\lp W_n^-(m) \rp < \log(2/\alpha) \rbr \label{eq:wor-proof-1}. 
        \end{align}
        Let $l^*$ and $u^*$ denote the (random) smallest and largest values of $m$ respectively, that  satisfy the conditions in the right side of~\eqref{eq:wor-proof-1} with equality.   Then, we have the following: 
        \begin{align}
            \boldsymbol{1}_{M \geq N} \times |\CIbet| \; \stackrel{a.s.}{\leq} \; u^* - l^* =  \frac{2 \log(2/\alpha) + 4 \sum_{t=1}^n \psi_E(\lambda_{t,n}) \lp (X_t-u^*_t)^2 + (X_t-l^*_t)^2 \rp }{\sum_{t=1}^n \lambda_{t,n}\lp 1 + \frac{t-1}{N-t+1} \rp}. \label{eq:wor-proof-2}
        \end{align}
        As for all $m \in [0,1]$, we define $l^*_t = \frac{Nl^* - \sum_{i=1}^{t-1} X_i}{N-t+1}$ and $u^*_t = \frac{Nu^* - \sum_{i=1}^{t-1} X_i}{N-t+1}$ above. 
 
                Next, note  that since $N$ is known to be finite almost surely (\Cref{lemma:wor-lemma-3}), we have 
                \begin{align}
                   \gammabet(\rho) \defined \limsup_{M \to \infty, n = \lceil \rho M\rceil} \sqrt{n} \times |\CIbet| \eqas \limsup_{M \to \infty} \sqrt{n} \times |\CIbet| \times \boldsymbol{1}_{M \geq N}. 
                \end{align}
                Combining this observation with~\eqref{eq:wor-proof-2}, we get: 
                \begin{align}
                    \frac{\gammabet(\rho)}{2} \leq \limsup_{M \to \infty}\; \sqrt{n} \times \frac{ \log(2/\alpha) + 2 \sum_{t=1}^n \psi_E(\lambda_{t,n}) \lp (X_t-u^*_t)^2 + (X_t-l^*_t)^2 \rp }{\sum_{t=1}^n \lambda_{t,n}(1+ (t-1)/(M-t+1)} \times \boldsymbol{1}_{M \geq N}. 
                \end{align}
                Thus, to complete the proof, we need to show the following, with $U_t \defined (X_t-u^*_t)^2 + (X_t-l^*_t)^2$:  
                \begin{align}
                    \lim_{M \to \infty, \, n=\rhoM} \lp 2 \sum_{t=1}^n \psi_E(\lambda_{t,n}) \,U_t \rp \times \boldsymbol{1}_{M \geq N} \; \eqas \;  \sqrt{\log(2/\alpha)}. 
                \end{align}
                Since we already know that $\lim_{n \to \infty} 4 \sum_{t=1}^n \psi_E(\lambda_{t,n})(X_t-\muhat_{t-1})^2 \eqas \sqrt{\log(2/\alpha)}$, it suffices to prove 
                 \begin{align}
                    \lim_{M \to \infty,\, n=\rhoM} \lp 2 \sum_{t=1}^n \psi_E(\lambda_{t,n}) \,U_t \rp \times \boldsymbol{1}_{M \geq N} \eqas
                    \lim_{M \to \infty, \, n=\rhoM} 4 \sum_{t=1}^n \psi_E(\lambda_{t,n})(X_t-\muhat_{t-1})^2.  
                \end{align}       
                Since $\mathbb{P}\lp \{ \mc{E}_n\; i.o. \} \rp = 0$, we have 
                \begin{align}
                    \lim_{M \to \infty, \, n=\rhoM} \lp 2 \sum_{t=1}^n \psi_E(\lambda_{t,n}) \,U_t  \rp \times \boldsymbol{1}_{M \geq N} \boldsymbol{1}_{\mc{E}_n} \eqas \lim_{M \to \infty, \, n=\rhoM} \lp 2 \sum_{t=1}^n \psi_E(\lambda_{t,n}) \,U_t \rp \times \boldsymbol{1}_{M \geq N}. 
                \end{align}
                The final step is provided by~\Cref{lemma:wor-lemma-3}, which gives us 
                \begin{align}
                    U_t \times \boldsymbol{1}_{\mc{E}_n} \times \boldsymbol{1}_{M \geq N} \stackrel{a.s.}{\leq} 2(X_t - \mu_M)^2 + \mc{O}\lp \sqrt{ \frac{\log n}{t}} \rp.  
                \end{align}
                Plugging this into the previous equation, we obtain 
                \begin{align}
                    \lim_{M \to \infty, \, n=\rhoM} \lp 2 \sum_{t=1}^n \psi_E(\lambda_{t,n}) \,U_t  \rp \times \boldsymbol{1}_{M \geq N} \boldsymbol{1}_{\mc{E}_n}&\stackrel{a.s.}{\leq} \lim_{M \to \infty, \, n=\rhoM} 4 \sum_{t=1}^n \psi_E(\lambda_{t,n})\lp (X_t - \mu_M)^2 + \mc{O}\lp \sqrt{ \frac{\log n}{t}} \rp \rp \\
                    & \leq \sqrt{\log(2/\alpha)} + \mc{O}\lp \lim_{M \to \infty, \, n=\rhoM} \sum_{t=1}^n \psi_E(\lambda_{t,n})\sqrt{\frac{\log(n)}{t}}   \rp \\
                    & = \sqrt{\log(2/\alpha)}. 
                \end{align}
                The last equality follows from the fact that $\psi_E(\lambda_{t,n}) \approx \lambda_{t,n}^2/8 = \log(2/\alpha)/(4n V_t) \approx \log(2/\alpha)/(4n\sigma_M^2)$, which implies that the term inside the $\mc{O}\lp \cdot \rp$ converges to $0$.  This completes the proof. \hfill \qedsymbol

            \subsubsection{Proof of~\Cref{lemma:eb-cs-width-3}}
            \label{proof:eb-cs-width-3}
                    To simplify the presentation in this proof, we use $A \asymp B$ to indicate that $|A-B| \convas 0$ as $M \to \infty$.  
                    
                    We begin by noting that due to~\Cref{assump:eb-cs-limiting-width}, and~\Cref{lemma:eb-cs-width-2}, we have $V_n \convas \sigma^2>0$. As a result, $\lambda_{n, n} = \sqrt{ 2\log(2/\alpha)/(V_{n-1} n)} \convas 0$, which implies that $|\psi_E(\lambda_{n, n}) - \psi_H(\lambda_{n, n})| \convas 0$, where $\psi_H(\lambda) = \frac{\lambda^2}{8}$. Thus, we can conclude the following:
                    \begin{align}
                        4 \sum_{i=1}^n \lp X_i - \muhat_{i-1} \rp^2 \psi_E(\lambda_{i, n}) & \asymp 4 \sum_{i=1}^n \lp X_i - \muhat_{i-1} \rp^2   \frac{\lambda_{i, n}^2}{8}  \\
                        & \stackrel{(i)}{\asymp} 4 \sum_{i=1}^n \lp X_i - \muhat_{i-1} \rp^2 \frac{2 \log(2/\alpha)}{8 n V_i} \\
                        & \stackrel{(ii)}{\asymp} 4 \sum_{i=1}^n \lp X_i - \muhat_{i-1} \rp^2 \frac{2 \log(2/\alpha)}{8 n \sigma_M^2} 
                         \stackrel{(iii)}{\asymp}  4 \sum_{i=1}^n \lp X_i - \muhat_{i-1} \rp^2 \frac{2 \log(2/\alpha)}{8 n \sigma^2}. 
                    \end{align}
                    In the above display, $(i)$ uses the fact that $\lambda_{i, n}^2 = 2\log(2/\alpha)/n V_{i-1}$, and the fact that $\sum_{i=1}^n |\psi_E(\lambda_{i, n}) - \lambda_{i, n}^2/8| \convas 0$. Next, the equivalence~$(ii)$ uses the fact that $|V_i - \sigma_M^2| = \mc{O}\lp \sqrt{ \log n / i} \rp$ all but finitely often(~\Cref{lemma:eb-cs-width-1}), while $(iii)$ follows by~\Cref{assump:eb-cs-limiting-width}.  

                    Next, we continue with the above chain of inequalities to obtain 
                    \begin{align}
                        4 \sum_{i=1}^n \lp X_i - \muhat_{i-1} \rp^2 \psi_E(\lambda_{i, n}) 
                        & \asymp \frac{\log(2/\alpha)}{ \sigma^2}  \frac{1}{n}\sum_{i=1}^n \lp X_i - \muhat_{i-1} \rp^2  \\
                        & \stackrel{(iv)}{\asymp} \frac{\log(2/\alpha)}{ \sigma^2}  V_n 
                       \stackrel{(v)}{\asymp}  \frac{\log(2/\alpha)}{ \sigma^2}  \sigma_M^2 \\
                         & \stackrel{(vi)}{\asymp} \log(2/\alpha). 
                    \end{align}
                    The equivalence $(iv)$ simply follows from the definition of $V_n = \frac{1}{n+1}\lp  \frac{1}{4}  + \sum_{i=1}^n (X_i - \muhat_{i-1})^2 \rp$, while $(v)$ uses~\Cref{lemma:eb-cs-width-1} and $(vi)$ follows from~\Cref{assump:eb-cs-limiting-width}.  
                
            \subsubsection{Proof of~\Cref{lemma:eb-cs-width-4}}
            \label{proof:eb-cs-width-4}
                    We proceed as follows: 
                    \begin{align}
                        A_{M,n} \defined \frac{1}{\sqrt{n}} \sum_{i=1}^n \lambda_{i, n} \lp 1 + \frac{i-1}{M-(i-1)} \rp &=   \frac{1}{\sqrt{n}} \sum_{i=1}^n \lambda_{i, n} \lp  \frac{M}{M-(i-1)} \rp.  
                    \end{align}
                    Next, note that we have the following, with $\lambda_n^* \defined \sqrt{2 \log(2/\alpha)/n \sigma^2}$: 
                    \begin{align}
                      \lambda_{i, n} = \sqrt{ \frac{2\log(2/\alpha)}{n V_{i-1}} } 
                      = \sqrt{ \frac{2 \log(2/\alpha)}{n \sigma^2 \lp 1 - \frac{\sigma^2-V_{i-1}}{\sigma^2}\rp} } 
                      \asymp \lambda_n^* \sqrt{\lp 1 + \frac{\sigma^2 - V_{i-1}}{\sigma^2} \rp }. 
                    \end{align}
                    The last $\asymp$ follows by noting that the function $f(x) = 1/x$ is concave, hence, it can be bounded (over a bounded interval) from both above and below using linear approximations. That is, $f(1+x) \geq 1 - x$ for all $x \in (-1, \infty)$, and $f(1+x) \leq 1 - 2x$ for $x \in [-0.5, 0]$, and $f(1+x) \leq 1 - (2/3)x$, for $x \in [0, 0.5]$. Under the event $\mc{E}_{n,2}$, we know that the term $\sigma^2 - V_{i-1}$ is $\mc{O}\lp \sqrt{\log n/ i}\rp$. Hence, we have the following: 
                    \begin{align}
                        \frac{1}{\sqrt{n}} \sum_{i=1}^n \lambda_{i, n} \frac{M}{M-(i-1)} &\lesssim 
                        \frac{M}{\sqrt{n}} \sum_{i=1}^n \lambda^*_n \frac{1}{M-(i-1)} \lp 1 + \lp \frac{ \log n}{i} \rp^{1/4} \rp \nonumber \\
                        & \leq \frac{M}{\sqrt{n}} \lambda^*_n \lp S_M - S_{M-n} \rp + \frac{M \lambda^*_n}{\sqrt{n}} \lp \sum_{i=1}^n \frac{1}{(M-i+1)^2} \rp^{1/2} \lp \sqrt{\log n} \sum_{i=1}^n \frac{1}{\sqrt{i}} \rp^{1/2} \label{eq:eb-cs-proof-1}
                    \end{align}
                    where $S_k \defined \sum_{i=1}^k 1/i$. Now, since $S_k = \log k + 0.5772 + \frac{1}{2k} - \mc{O}(1/k^2)$, we can conclude that $S_M - S_{M-n} \asymp \log M - \log(M-n)$, under the assumption that $n = \rho M$ for some $\rho>0$. 
    
                    Next, we note that $\sum_{i=1}^n 1/(M-i+1)^2 \asymp \frac{n}{M(M-n)}$, and $\sum_{i=1}^n 1/\sqrt{i} \asymp \sqrt{n}$.  Combining these facts, we observe that the second term in~\eqref{eq:eb-cs-proof-1} is $\mc{O}\lp (\log n/n)^{1/4} \rp$, which converges to $0$ with $M \to \infty$.  
                    Hence, we get the following: 
                    \begin{align}
                        A_{M,n} \asymp \frac{M}{\sqrt{n}} \log \lp \frac{M \lambda^*_n}{M-n} \rp = \frac{M \sqrt{2 \log(n/\alpha)}}{\sigma n} \log \lp \frac{1}{1 - n/M} \rp, 
                    \end{align}
                    which implies that 
                    \begin{align}
                        A_{M,n} \convas  \frac{1}{\rho}\log \lp \frac{1}{1 - \rho} \rp \frac{\sqrt{2 \log(2/\alpha)}}{\sigma}. 
                    \end{align}
                
            \subsubsection{Proof of~\Cref{lemma:wor-lemma-2}}
            \label{proof:wor-lemma-2}
                Under the event $\mc{E}_n$, we know that $\muhat_i$ and $V_i$ concentrate around the true mean~($\mu \equiv \mu_M$) and variance~($\sigma^2 \equiv \sigma_M^2$) respectively. As a result, we have the following (again, with $\lambda^*_n = \sqrt{2 \log(2/\alpha)/n \sigma^2})$: 
                \begin{align}
                    &\lambda_{i, n} = \lambda^*_n\lp 1 + r_i \rp = \sqrt{\frac{2 \log(2/\alpha)}{n \sigma^2}}\lp 1 + r_i \rp, \quad \text{and} \quad \muhat_i = \mu + s_i, \quad \text{where} \quad r_i, s_i = \mc{O}\lp \sqrt{\log n/i} \rp. 
                \end{align}
                Using this, we rewrite the weighted mean estimate as follows: 
                \begin{align}
                    \mutilde_n = \frac{\sum_{i=1}^n \lambda^*_n\lp X_i + \frac{\muhat_{i-1} (i-1)}{N-i+1} \rp\lp 1 + r_i\rp }{ \sum_{i=1}^n\lambda^*_n\lp 1 + \frac{i-1}{N-i+1} \rp \lp 1 + r_i \rp} 
                    = \frac{\sum_{i=1}^n \lp X_i + \frac{\muhat_{i-1} (i-1)}{N-i+1} \rp\lp 1 + r_i\rp }{ \sum_{i=1}^n\lp 1 + \frac{i-1}{N-i+1} \rp \lp 1 + r_i \rp}
                    = \frac{\numerator}{\denominator}. 
                \end{align}
                We now evaluate the \numerator and the \denominator separately. First, we introduce the terms
                \begin{align}
                    q_i = 1 + \frac{i-1}{N-i+1}, \quad \text{and} \quad Q_n = \sum_{i=1}^n q_i, 
                \end{align}
                and note that  $1 \leq q_i \leq 1/(1-\rho) = \Theta(1)$, which implies that $n \leq Q_n \leq n/(1-\rho)$. Using this, we obtain the following bound on the \numerator: 
                \begin{align}
                    \numerator &= \sum_{i=1}^n X_i + \sum_{i=1}^n \muhat_{i-1}\frac{i-1}{N-i+1} + \mc{O}\lp \sum_{i=1}^n \lp 1 + \frac{i-1}{N-i+1} r_i\rp \rp \\
                    & = n\mu  + \sum_{i=1}^n \frac{\mu (i-1)}{N-i+1} + \mc{O}\lp \sum_{i=1}^n q_i (r_i + s_i) \rp \\
                    & = Q_n\lp \mu +  \frac{1}{Q_n}\mc{O}\lp \sum_{i=1}^n (r_i + s_i) \rp \rp. 
                \end{align}
                Similarly, the \denominator satisfies 
                \begin{align}
                    \denominator = Q_n + \sum_{i=1}^n q_i r_i  = Q_n \lp 1 + \frac{1}{Q_n} \lp \sum_{i=1}^n r_i \rp \rp .
                \end{align}
                On combining these two observations, we get 
                \begin{align}
                    \mutilde_n &= \frac{ \mu +   \frac{1}{Q_n}\mc{O}\lp \sum_{i=1}^n (r_i + s_i) \rp}{1 + \frac{1}{Q_n} \lp \sum_{i=1}^n r_i \rp} = \lp \mu +  \frac{1}{Q_n}\mc{O}\lp \sum_{i=1}^n (r_i + s_i) \rp \rp \lp 1 + \mc{O}\lp\frac{1}{Q_n}  \sum_{i=1}^n r_i \rp \rp  \\
                    & = \mu +  \mc{O}\lp \frac{1}{Q_n}\sum_{i=1}^n (r_i + s_i) \rp = \mu + \mc{O}\lp \frac{1}{n} \sum_{i=1}^n \sqrt{\frac{\log n}{i}} \rp  = \mu + \mc{O}\lp \sqrt{\frac{\log n}{n}} \rp. 
                \end{align}
        \subsubsection{Proof of~\Cref{lemma:wor-lemma-3}}
        \label{proof:wor-lemma-3}
        \paragraph{$\boldsymbol{N}$ is finite almost surely.}
            We know that $\lim_{M \to \infty} \sqrt{n} \,\widthfan = \lim_{M \to \infty} \sqrt{n} |\CIfan| = 2c_2$ almost surely. Or in other words, the set $\Omega = \{\omega: \lim_{M \to \infty} \sqrt{n} \widthfan(\omega) = 2c_2\}$ has probability $1$, which also implies that the set $\{\omega: \exists N(\omega) < \infty, \text{ s.t. } \sqrt{n} \widthfan(\omega) \leq 4c_2 \text{ for all } M \geq N(w)\}$ also has probability $1$. Since the latter is exactly the set on which $N$ is finite, this proves the first part of the lemma. 
            
        \paragraph{Bound on $\boldsymbol{(X_t - m_t)^2}$.} Let $\mu \equiv \mu_M$ denote the true mean of the $M$ items, and define $\mu_t = (N\mu - \sum_{i=1}^{t-1}X_i)/(N-t+1)$. Then, we have the following:
        \begin{align}
            (X_t - m_t)^2 &= (X_t - \mu_t)^2 + (\mu_t - m_t)^2 + 2(X_t - \mu_t)(\mu_t - m_t) \\
            &\leq (X_t - \mu_t)^2 + 3|\mu_t - m_t|  \\
            & \leq (X_t - \mu_t)^2 + 3 \frac{1}{1-\rho}|\mu - m| \\
            & \leq (X_t - \mu_t)^2 + 3 \frac{1}{1-\rho}\lp |\mu - \mutilde_n| + |\mutilde_n - m|\rp.  \label{eq:wor-proof-3}
        \end{align}
        The first inequality uses the fact that both $(\mu_t - m_t)^2$ and $(X_t - \mu_t)(\mu_t- m_t)$ are upper bounded by $|\mu_t - m_t|$.  The second inequality uses the fact that $|\mu_t - m_t| = N|\mu-m|/(N-t+1)$ and $t \leq n = \rho M + 1$, while the last inequality is a direct consequence of the triangle inequality for the absolute value function.  

        Next, we subtract and add a $\mu$ in $(X_t - \mu_t)^2$, to get an upper bound
        \begin{align}
            (X_t - \mu_t)^2 &\leq (X_t - \mu)^2 + 3|\mu_t - \mu| = (X_t - \mu)^2 + 3 \frac{t-1}{N-t+1}|\muhat_{t-1} - \mu| \\
            & \leq (X_t - \mu)^2 +  \frac{3\rho}{1-\rho} |\muhat_{t-1} - \mu|. \label{eq:wor-proof-4}
        \end{align}
        Combining~\eqref{eq:wor-proof-3} and~\eqref{eq:wor-proof-4}, we get 
        \begin{align}
            \boldsymbol{1}_{\mc{E}_n} \boldsymbol{1}_{M \geq N} (X_t - m_t)^2 &\leq \boldsymbol{1}_{\mc{E}_n} \boldsymbol{1}_{M \geq N}  \bigg( (X_t - \mu)^2 + \mc{O}\lp |\mu - \mutilde_n| + |\mutilde_n - m| + |\muhat_{t-1}-\mu| \rp \bigg)  \\
            &\leq (X_t - \mu)^2 + \boldsymbol{1}_{\mc{E}_n} \boldsymbol{1}_{M \geq N}  \bigg( \mc{O}\lp |\mu - \mutilde_n| + |\mutilde_n - m| + |\muhat_{t-1}-\mu| \rp \bigg). 
        \end{align}
        Finally, we observe that all the three terms inside $\mc{O}(\cdot)$ are small under the event $\mc{E}_n \cap \{M \geq N\}$: 
        \begin{itemize}
            \item From~\Cref{lemma:wor-lemma-2}, we know that $|\mutilde_n - \mu| = \mc{O}\lp \sqrt{ \log n /n} \rp$ under the event $\mc{E}_n$. 
            
            \item Since $m \in \CItilde = [\mutilde_n \pm 2c_2/\sqrt{n}]$, it follows that $|\mutilde_n - m| \leq 4c_2/\sqrt{n}$. The event $\{M \geq N\}$ implies that $\CIfan \subset \CItilde$, which in turn implies that $\CIbet \subset \CItilde$, since $\CIfan$ contains $\CIbet$ by construction. 
            
            \item Finally, under the event $\mc{E}_n$, we know that $|\muhat_{t-1} - \mu| = \mc{O}\lp \sqrt{ \log n /t} \rp$, for all $t \leq n$. 
        \end{itemize}
        Thus, these three observations imply that         
        \begin{align}
            \boldsymbol{1}_{\mc{E}_n} \boldsymbol{1}_{M \geq N} (X_t - m_t)^2 &\leq (X_t - \mu)^2 + \mc{O}\lp \sqrt{ \frac{ \log n}{t} } \rp, 
        \end{align}
        as required.

    \section{Second-order limiting width analysis~(Proposition~\ref{prop:second-order-mpeb})}
    \label{proof:second-order-mpeb}

        \paragraph{Second-order width of~\mpeb CI.}
        Let us denote the term $n \lp w_n^{(\mpeb)} - \gamma_1^{(\mpeb)}/\sqrt{n} \rp$ with $S_n$. To ease notation, we will drop the \mpeb from the superscripts for this proof. Then, observe that 
        \begin{align}
            S_n &= 2\sqrt{n}\lp \sigmahat_n - \sigma \rp \sqrt{2 \log(4/\alpha)} + \frac{14 \log(4/\alpha)}{3}\lp 1 + \frac{1}{n-1} \rp  \\
            & \defined c_1 \sqrt{n}\lp \sigmahat_n - \sigma \rp + c_2 \lp 1 + \frac{1}{n-1} \rp. 
        \end{align}
        Since $c_2(1+1/(n-1))$ converges to $c_2$ as $n \to \infty$, we can focus on the first term above, and refer to it by $T_n$. By~\citet[Example 3.2]{vaart2000asymptotic}, we know that 
        \begin{align}
            \sqrt{n} \lp \sigmahat_n^2 - \sigma^2 \rp \convdist N\lp 0, \mu_4 - \sigma^4 \rp, 
        \end{align}
        where $\mu_4 \defined \mathbb{E}[(X-\mathbb{E}[X])^4]$. Then, by the delta-method~\citep[Theorem 3.1]{vaart2000asymptotic}, we have 
        \begin{align}
            \frac{T_n}{c_1} = \sqrt{n} \lp \sigmahat_n - \sigma \rp = \sqrt{n}\lp \sqrt{\sigmahat_n^2} - \sqrt{\sigma^2} \rp  \convdist N\lp 0, \frac{\mu_4 - \sigma^4}{4} \rp. 
        \end{align}
        Thus, we have proved that the statistic $S_n$ satisfies 
        \begin{align}
            S_n = T_n + c_2 + o_P(1), \quad \text{with} \quad T_n \convdist N\lp 0, 2\log(4/\alpha)(\mu_4 - \sigma^4) \rp, \quad \text{and} \quad c_2 = \frac{14 \log(4/\alpha)}{3}.  
        \end{align}
        
        \paragraph{Second order width of~\prpieb CI.} To analyze the statistic involved in the definition of the second-order limiting width, we introduce the following notation~(dropping \prpieb from the superscripts) 
        \begin{align}
            w_n \equiv w_n^{(\prpieb)} = 2\frac{ \log(2/\alpha) + 4\sum_{t=1}^n \psi_E(\lambda_{t,n}) \lp X_t - \muhat_{t-1} \rp^2 }{ \sqrt{n} \,\times\, \frac{1}{\sqrt{n}} \sum_{t=1}^n \lambda_{t,n}  } = \frac{A_n}{\sqrt{n} B_n}. 
        \end{align}
        From~\Cref{fact:limiting-computations}, we know that $A_n \convas 2 \log(2/\alpha)$ and $B_n \convas b \defined \sqrt{2 \log(2/\alpha)}/\sigma$. Now, the second-order width statistic can be rewritten as  follows, with $\Delta_n = (B_n - b)/b$: 
        \begin{align}
            S_n &= n \lp \frac{A_n}{\sqrt{n}b \lp 1 + \Delta_n  \rp} - \frac{\gamma_1}{\sqrt{n}} \rp = \frac{A_n \sqrt{n}}{b} \lp 1 - \Delta_n + \mc{O}\lp  \Delta_n^2   \rp \rp - \sqrt{n} \gamma_1  \\
            & = \sqrt{n} \lp \frac{A_n - b \gamma_1}{b} \rp  - \sqrt{n} \frac{A_n \Delta_n}{b} + \mc{O}\lp \frac{A_n \sqrt{n} \Delta_n^2}{b} \rp \label{eq:prpl-second-order-proof-1}
        \end{align} 
        An application of Borel-Cantelli Lemma implies that  $\{|\Delta_n| \leq \mc{O}\lp \sqrt{\log n / n} \rp \text{ i.o.}\}$ happens with probability one. Thus, the $\mc{O}\lp \sqrt{n} \Delta_n^2 \rp$ terms converges to $0$ almost surely, and hence also in probability.  We now expand the first term in~\eqref{eq:prpl-second-order-proof-1}: 
        \begin{align}
            \frac{\sqrt{n}}{2} \lp \frac{A_n - b \gamma_1}{b} \rp   &= \sqrt{n} \lp \frac{\log(2/\alpha) + 4 \sum_{t=1}^n \psi_E(\lambda_{t,n})(X_t-\muhat_{t-1})^2 - 2 \log(2/\alpha)}{b} \rp  \\ 
            &= \sqrt{n} \lp \frac{4 \sum_{t=1}^n \psi_E(\lambda_{t,n})(X_t-\muhat_{t-1})^2 -  \log(2/\alpha)}{b} \rp  = \sqrt{n} \frac{C_n}{b}
        \end{align}
        Now, we note that $4 \psi_E(\lambda) = \frac{\lambda^2}{2} + \frac{\lambda^3}{3} + \mc{O}\lp \lambda^4 \rp$, which implies that 
        \begin{align}
            4 \psi_E(\lambda_{t,n}) = \frac{\log(2/\alpha)}{n V_{t-1}} + \frac{1}{3}\lp \frac{2 \log(2/\alpha)}{n V_{t-1}}\rp^{3/2} +  \mc{O}\lp \frac{1}{n^2 V_{t-1}^2} \rp 
        \end{align} 
        This implies that 
        \begin{align}
            \sqrt{n} C_n &= \frac{\log(2/\alpha)}{\sqrt{n}} \sum_{t=1}^n \lp \frac{(X_t - \muhat_{t-1})^2}{V_{t-1}} - 1 \rp + \frac{(2\log(/2\alpha))^{3/2}}{3n} \sum_{t=1}^n \frac{(X_t-\muhat_{t-1})^2}{V_{t-1}^{3/2}} + o_P(1) \\
            & \defined G + H + o_P(1).  \label{eq:prpl-second-order-proof-2}
        \end{align}

        We now show that the term $H$ in~\eqref{eq:prpl-second-order-proof-2} is equal to $2b \log(2/\alpha)/3 + o_P(1)$. 
        \begin{align}
            H &= \frac{(2\log(/2\alpha))^{3/2}}{3n \sigma^3} \sum_{t=1}^n (X_t-\muhat_{t-1})^2 \lp 1 - \frac{3(V_{t-1} - \sigma^2)}{2 \sigma^2} + \mc{O} \lp  \lp \frac{V_{t-1} - \sigma^2}{\sigma^2}\rp^2 \rp \rp \\
            & \stackrel{(i)}{=} \frac{(2\log(/2\alpha))^{3/2}}{3\sigma} + \frac{(2\log(/2\alpha))^{3/2}}{3n \sigma^3} \sum_{t=1}^n \lp  (X_t- \muhat_{t-1})^2  - \sigma^2\rp + o_P(1) \\
             & \stackrel{(ii)}{=} \frac{(2\log(/2\alpha))}{3} \times \frac{ \sqrt{2 \log(2/\alpha)}}{\sigma} + o_P(1) \\
             & = \frac{(2\log(/2\alpha))}{3} \times b + o_P(1). 
        \end{align}
        In this display, $(i)$ uses the fact that $V_t \convas \sigma^2$, and $(ii)$ uses the fact that $\frac{1}{n} \sum_{t=1}^n (X_t - \muhat_{t-1})^2 \convas \sigma^2$. Thus, we have proved that $H/b = 2 \log(2/\alpha) + o_P(1)$.

        Next, let $\sigmahat_{t}^2 = \frac{1}{t-1} \sum_{i=1}^t (X_i - \muhat_t)^2$ denote the unbiased empirical estimate of the variance, and observe that $V_t/\sigmahat_t^2 \stackrel{p}{\to} 1$. This implies that the term $G$ in~\eqref{eq:prpl-second-order-proof-2} satisfies
        \begin{align}
            \frac{G}{\log(2/\alpha)} &= \frac{1}{\sqrt{n}} \sum_{t=1}^n \lp \frac{(X_t - \muhat_{t-1})^2}{\sigma^2 (1+\rho_{t-1}) } - 1 \rp + o_P(1), \quad \text{where} \quad \rho_{t-1} = \frac{\sigmahat^2_{t-1} - \sigma^2}{\sigma^2} \label{eq:prpl-second-order-proof-3}
        \end{align}
        is a zero-mean random variable. 
        Now, observe that 
        \begin{align}
            \frac{1}{\sqrt{n}}\sum_{t=1}^n \frac{(X_t - \muhat_{t-1})^2}{1 + \rho_{t-1}} &= \frac{1}{\sqrt{n}}\sum_{t=1}^n \lp X_t - \muhat_{t-1} \rp^2 \lp 1 - \rho_{t-1} + \mc{O}\lp \rho_{t-1}^2 \rp \rp \\ 
            &= \frac{1}{\sqrt{n}}\sum_{t=1}^n (X_t - \muhat_{t-1})^2 ( 1 + \rho_{t-1})  + o_P(1), 
        \end{align}
        where the second equality uses the fact that $(1/\sqrt{n})\sum_{t=1}^n \mathbb{E}[(X_t - \muhat_{t-1})^2 \rho_{t-1}^2] \leq (1/\sqrt{n})\sum_{t=1}^n \mathbb{E}[\rho_{t-1}^2] = \mc{O}(\log n / \sqrt{n}) \to 0$. Next, we add and subtract $\mu$ in $(X_t - \muhat_{t-1})^2$, to get 
        \begin{align}
            \frac{1}{\sqrt{n}}\sum_{t=1}^n (X_t - \muhat_{t-1})^2(1 + \rho_{t-1}) &= \frac{1}{\sqrt{n}}\sum_{t=1}^n \lp (X_t - \mu)^2 + 2(X_t - \mu)(\mu - \muhat_{t-1}) + (\muhat_{t-1}-\mu)^2 \rp (1 + \rho_{t-1}) \\
            & =  \frac{1}{\sqrt{n}}\sum_{t=1}^n \lp (X_t - \mu)^2 + 2(X_t - \mu)(\mu - \muhat_{t-1})  \rp (1 + \rho_{t-1}) + o_P(1). 
        \end{align} 
        The second equality uses the fact that $(1/\sqrt{n})\sum_{t=1}^n \mathbb{E}[(\muhat_{t-1}-\mu)^2] = \mc{O}(\log n/\sqrt{n}) \to 0$. Note that the mean of each term in the summation is equal to $\sigma^2$. Thus, plugging this back into~\eqref{eq:prpl-second-order-proof-3}, we get that 
        \begin{align}
            \frac{G}{b} &= \frac{\log(2/\alpha)}{\sigma^2 b \sqrt{n}} \sum_{t=1}^n \lp \frac{(X_t-\muhat_{t-1})^2}{1 + \rho_{t-1}} - \sigma^2 \rp \\
            &= \frac{\log(2/\alpha)}{\sigma^2 b \sqrt{n}}\sum_{t=1}^n \lp \lp (X_t - \mu)^2 + 2(X_t - \mu)(\mu - \muhat_{t-1})  \rp (1 + \rho_{t-1}) - \sigma^2 \rp + o_P(1) \\
            & \defined T_{n,1} + o_P(1), 
        \end{align}
        where $T_{n,1}$ is a zero-mean random variable. Thus, the first term in~\eqref{eq:prpl-second-order-proof-1} satisfies 
        \begin{align}
            \frac{\sqrt{n}}{2} \lp \frac{A_n - b \gamma_1}{b} \rp = \frac{2 \log(2/\alpha)}{3} +  T_{n,1} + o_P(1). \label{eq:prpl-second-order-proof-4}
        \end{align}

        It remains to characterize the second term in~\eqref{eq:prpl-second-order-proof-1} to complete the proof. In particular, following the same arguments as above, we can show that 
        \begin{align}
            \sqrt{n}\frac{A_n}{b^2} \lp \frac{1}{\sqrt{n}} \sum_{t=1}^n \lambda_{t,n} - b\rp &=  \frac{4 \log(2/\alpha)}{b} \lp \frac{\sqrt{2 \log(2/\alpha)}}{b \sigma} \frac{1}{\sqrt{n}} \sum_{t=1}^n \rho_{t-1}   \rp  + o_P(1) \\
            & = T_{n,2} + o_P(1),  \label{eq:prpl-second-order-proof-5}
        \end{align}
        where $T_{n,2}$ is also a zero-mean random variable. Thus,~\eqref{eq:prpl-second-order-proof-4} and~\eqref{eq:prpl-second-order-proof-5} together imply that the term $S_n$ introduced in in~\eqref{eq:prpl-second-order-proof-1} satisfies 
        \begin{align}
            S_n = \frac{4\log(2/\alpha)}{3} + T_n + o_P(1), 
        \end{align}
        where $T_n = 2T_{n,1} + T_{n,2}$ is a zero-mean random variable. This completes the proof.

\end{appendix}

\end{document}